\numberwithin{equation}{section}
\DeclareFontFamily{OMS}{rsfs}{\skewchar\font'60}
\DeclareFontShape{OMS}{rsfs}{m}{n}{<-5>rsfs5 <5-7>rsfs7 <7->rsfs10 }{}
\DeclareSymbolFont{rsfs}{OMS}{rsfs}{m}{n}
\DeclareSymbolFontAlphabet{\scr}{rsfs}
\newtheorem{Theorem}[equation]{Theorem}
\newtheorem{Lemma}[equation]{Lemma}
\newtheorem{Proposition}[equation]{Proposition}
\newtheorem{Corollary}[equation]{Corollary}
\theoremstyle{definition}
\newtheorem{Definition}[equation]{Definition}
\newtheorem{Remark}[equation]{Remark}
\newcommand{\alg}[2]{#2[#1]} \newcommand{\ainfty}{A_{\infty}}
\DeclareMathOperator{\Alg}{Alg}
\newcommand{\ainftyspaces}{\CatOf{$\ainfty$ spaces}}
\newcommand{\ainftyspectra}{\spectra^{\ainfty}}
\newcommand{\A}{\mathbb{A}}
\newcommand{\E}{\mathbb{E}}
\DeclareMathOperator{\Aut}{Aut}
\DeclareMathOperator{\End}{End}
\newcommand{\BL}[1]{B_{\mathcal{L}}#1}
\newcommand{\CatOf}[1]{(\mbox{#1})}
\newcommand{\rsps}[1]{\uln{#1}}
\newcommand{\cohps}[2]{#1_{#2}}
\DeclareMathOperator*{\colim}{colim}
\newcommand{\cover}[1]{\mathfrak{#1}}
\newcommand{\eqdef}{\overset{\text{def}}{=}}
\newcommand{\einfty}{E_{\infty}}
\newcommand{\einftyspectra}{\spectra[\einfty]}
\newcommand{\EL}[1]{E_{\mathcal{L}}#1}
\newcommand{\EKMM}{\scr{M}}
\newcommand{\GLsym}{GL_{1}}
\newcommand{\GL}[1]{\GLsym #1}
\newcommand{\glsym}{gl_{1}}
\newcommand{\gl}[1]{\glsym #1}
\newcommand{\gplike}[2]{\alg{#1}{#2}^{\times}}
\newcommand{\gplikeainftyspaces}{\ainftyspaces^{\times}}
\DeclareMathOperator{\ho}{ho}
\newcommand{\heq}{\simeq}
\DeclareMathOperator{\Ho}{ho}
\renewcommand{\i}{\infty}
\renewcommand{\smash}{\wedge}
\renewcommand{\SS}{\mathcal{S}}
\DeclareMathOperator{\id}{id}
\newcommand{\iso}{\cong}
\newcommand{\linf}{\Omega^{\infty}}
\newcommand{\Lin}{\mathcal{L}}
\renewcommand{\L}{\mathbb{L}}
\newcommand{\monspectra}{\mathcal{M}}
\newcommand{\mr}{\rho}
\DeclareMathOperator{\Open}{Open}
\newcommand{\plus}{+}
\newcommand{\pt}[1]{#1_{\plus}}
\newcommand{\ptC}{C_{*}}
\newcommand{\ptspace}{\ast}
\newcommand{\ptdspaces}{\spaces_{*}}
\newcommand{\Qeq}{\approx}
\newcommand{\restr}[1]{|_{#1}}
\newcommand{\Rcf}{R^{\circ}}
\newcommand{\Rmod}{\Mod{R}}
\newcommand{\RmodT}{\Rmod_{\monspectra}}
\newcommand{\lRmodules}{\CatOf{right $\linf R$-modules}}
\newcommand{\Rmodules}{\CatOf{right $R$-modules}}
\newcommand{\slot}{\,-\,}
\newcommand{\spaces}{\TT}
\newcommand{\spectra}{\mathscr{S}} \newcommand{\sinf}{\Sigma^{\infty}}
\newcommand{\symmspectra}{\mathcal{S}_{\Sigma}}
\newcommand{\splus}{\sinf_{+}} 
\newcommand{\Smash}{\wedge} \newcommand{\SmashL}{\wedge_{\mathcal{L}}}
\newcommand{\Svee}{\mathbb{S}}
\newcommand{\twistedcohps}[2]{#1_{#2}}
\newcommand{\timesL}{\times_{\mathcal{L}}}
\newcommand{\T}{\mathbb{T}}
\newcommand{\PP}{\mathbb{P}}
\newcommand{\uln}[1]{\underline{#1}}
\newcommand{\universe}[1]{\mathcal{#1}}
\newcommand{\unitscohps}[1]{\uln{#1}^{\times}}
\newcommand{\xra}[1]{\xrightarrow{#1}}
\newcommand{\Z}{\mathbb{Z}}
\let\catsymbfont\mathcal
\newcommand{\C}{\mathscr{C}}
\newcommand{\D}{\mathscr{D}}
\newcommand{\N}{\mathrm{N}}
\newcommand{\TT}{\mathscr{T}}
\newcommand{\aC}{{\catsymbfont{C}}}
\newcommand{\aM}{\EKMM}
\newcommand{\aU}{{\catsymbfont{U}}}
\newcommand{\sL}{{\mathcal{L}}}
\newcommand{\sma}{\wedge}    
\newcommand{\htp}{\simeq}
\newcommand{\sI}{\scr{I}}
\newcommand{\HTT}[1]{[HTT, #1]}
\DeclareMathOperator{\Fun}{Fun}
\DeclareMathOperator{\map}{map}
\newcommand{\MtoI}[1]{#1^{\circ}}
\DeclareMathOperator{\op}{op}
\DeclareMathOperator{\orient}{or}
\DeclareMathOperator{\Cat}{Cat}
\DeclareMathOperator{\Gpd}{Gpd}
\DeclareMathOperator{\Map}{Map}
\DeclareMathOperator{\Pre}{Pre}
\DeclareMathOperator{\Set}{Set}
\DeclareMathOperator{\Stab}{Stab}
\newcommand{\R}{\mathbb{R}}
\newcommand{\Rorient}{\orient_{R}}
\newcommand{\Mod}[1]{{#1}\text{-}\mathrm{mod}}
\newcommand{\Sing}[1]{\SingOp #1}
\newcommand{\SingOp}{\Pi_{\infty}}
\DeclareMathOperator{\SingTxt}{Sing}
\newcommand{\Line}[1]{{#1}\text{-}\mathrm{line}}
\newcommand{\Triv}[1]{{#1}\text{-}\mathrm{triv}}
\newcommand{\lmap}{F_{\times_{\mathcal{L}}}}
\newcommand{\Rwe}{\Line{R}}
\newcommand{\RweT}{\Rwe_{\monspectra}}
\newcommand{\Rtriv}{\Triv{R}}
\begin{document}

\title{Units of ring spectra and Thom spectra}

\author[Ando]{Matthew Ando}
\address{Department of Mathematics \\
The University of Illinois at Urbana-Champaign \\
Urbana IL 61801 \\
USA} \email{mando@math.uiuc.edu}

\author[Blumberg]{Andrew J. Blumberg}
\address{Department of Mathematics \\Stanford University \\
Stanford CA  94305}
\email{blumberg@math.Stanford.edu}

\author[Gepner]{David Gepner}
\address{Department of Mathematics \\
The University of Illinois at Chicago \\ Chicago IL 60607 \\ USA}
\email{gepner@math.uic.edu}

\author[Hopkins]{Michael J.~Hopkins}
\address{Department of Mathematics \\
Harvard University \\
Cambridge MA 02138}
\email{mjh@math.harvard.edu}

\author[Rezk]{Charles Rezk}

\address{The University of Illinois at Urbana-Champaign \\
Urbana IL 61801 \\
USA}
\email{rezk@math.uiuc.edu}

\date{Version 3.56 October 2009}
\thanks{Ando was supported by NSF grant DMS-0705233.  Blumberg was
supported by an NSF Postdoctoral Research Fellowship.  Gepner was
supported by EPSRC grant EP/C52084X/1.  Hopkins was
supported by the NSF.  Rezk was
supported by NSF grant DMS-0505056.}

\begin{abstract}
We review and extend the theory of Thom spectra and the associated
obstruction theory for orientations.
Specifically, we show that for
an $\einfty$ ring spectrum $A$, the classical construction of
$\gl{A}$, the spectrum of units, is the right adjoint of the functor
%
%
\[
\splus\linf: \Ho \CatOf{connective spectra} \rightarrow \Ho
\CatOf{$\einfty$ ring spectra}.
\]
To a map of spectra
\[
   f: b \to b\gl{A},
\]
we associate an $\einfty$ $A$-algebra Thom spectrum $Mf$, which admits
an $\einfty$
$A$-algebra map to $R$ if and only if the composition
\[
    b \to b\gl{A} \to b\gl{R}
\]
is null; the classical case developed by \cite{MQRT:ersers} arises
when $A$ is the sphere spectrum.  We develop the analogous theory
for $\ainfty$ ring spectra.  If $A$ is an
$\ainfty$ ring spectrum, then to a map of spaces
\[
     f: B \to B\GL{A}
\]
we associate an $A$-module Thom spectrum $Mf,$ which admits an
$R$-orientation if and only if
\[
B \to B\GL{A} \to B\GL{R}
\]
is null.  We note that $B\GL{A}$ classifies the twists of $A$-theory.
We take two different approaches to the $\ainfty$ theory which are of
independent interest. The first involves a rigidified model of
$\ainfty$ (and $\einfty$) spaces, as developed in
\cite{Blumberg-thesis,Blumberg-Cohen-Schlichtkrull}.  The second uses
the theory of $\infty$-categories as described in
\cite{math.CT/0608040} and involves an $\infty$-categorical account of
parametrized spectra.  In order to compare these approaches to one
another and to the classical theory, we characterize the Thom spectrum
functor from the perspective of Morita theory.
\end{abstract}

\maketitle

\tableofcontents

\section{Introduction}
\label{sec:prol-thom-isom}

The study of orientations of vector bundles and spherical fibrations
is a classical topic in algebraic topology, deeply intertwined with
the modern development of the subject.  In his 1970 MIT notes
\cite{MR2162361}, Sullivan observed that to describe the
obstructions to orientability and to classify orientations, one should
take seriously the idea of the ``units'' of a multiplicative cohomology
theory.  In order to make such algebraic notions about multiplicative
cohomology theories precise, May, Quinn, Ray, and Tornehave
\cite{MQRT:ersers} developed the notion of an $\einfty$ ring spectrum
and used it to describe the obstruction theory for orientations of
spherical fibrations.  Indeed the Thom spectra of infinite
Grassmannians such as $BO$ and $BU$ provided their starting examples
of $\einfty$ ring spectra.  It is interesting to recall that the
theory of structured ring spectra and ``brave new algebras'' in part
had roots in this extremely concrete geometric theory.

In a forthcoming paper, three of us (Ando, Hopkins, Rezk) construct an
$\einfty$ orientation of $tmf$, the spectrum of topological modular
forms \cite{Hopkins:icm2002,AHR:orientation}: more precisely, we construct a map
of $\einfty$ ring spectra from the Thom spectrum $MO\langle 8
\rangle$, also known as $MString$, to the spectrum $tmf$.  That paper
requires a formulation of the obstruction theory of \cite{MQRT:ersers}
in terms of the adjoint relationship between units and $\splus \linf$
described in Theorem \ref{t-th-units-intro} below.  It also requires a
formulation of the obstruction theory in terms of modern topological
or simplicial model categories of $\einfty$ ring spectra.

This paper began as an effort to connect the results in
\cite{MQRT:ersers} to the results required by \cite{AHR:orientation}.  In the
course of doing this, we realized that modern technology makes it
possible to develop an analogous theory of orientations for $\ainfty$
ring spectra.  We also discovered that recent work on
$\infty$-categories provides an excellent framework for the study of
Thom spectra and orientations, allowing constructions which are very
close to the original sketch in Sullivan's notes.

In this paper we review and extend the theory of orientations of
\cite{MQRT:ersers}, taking advantage of intervening technical
developments, particularly in the theory of multiplicative spectra and
of $\infty$-categories.  We recover the obstruction theory of
\cite{MQRT:ersers}, expressed in terms of the adjunction of Theorem
\ref{t-th-units-intro}, and extend it to $\ainfty$ ring spectra.  We
also extend the theory to more general kinds of fibrations, such as
those which appear in the study of twisted generalized cohomology.

We begin by reviewing the analogy between the theory of orientations
and the theory of locally free sheaves of rank one.  This analogy,
which appears in Sullivan's notes, is an excellent guide to the basic
results of the subject.  Modern technology enables us to hew more
closely to this intuitive picture than was possible thirty years ago,
and so it is remarkable how much of the theory was worked out in the
early seventies.

\subsection{The Thom isomorphism and invertible sheaves}

Let $A$ be a commutative ring spectrum; at this point, we remain vague
about exactly what we mean by the term.  An $A$-module $M$ is free of
rank one if there is a weak equivalence of $A$-modules
\[
   A \to M.
\]
If $X$ is a space, and $V$ is a virtual vector bundle of
rank zero over $X$, then let $X^{V}$ be the associated Thom spectrum.
A Thom isomorphism for $V$ is a weak equivalence of $A^{\pt{X}}$-modules
\[
          A^{\pt{X}} \heq A^{X^{V}}.
\]
The local triviality of $V$ and the suspension isomorphism together
imply that, locally on $X,$ $A^{X^{V}}$ is free of rank one, and
suggest that there is a cohomological obstruction to the existence of
a Thom isomorphism.

Thus let
\[
 \rsps{A}: \CatOf{spaces}^{\op} \rightarrow \CatOf{commutative ring
spectra}
\]
be the presheaf of ring spectra defined by
\[
      \rsps{A} (Y) = A^{\pt{Y}},
\]
and let
\[
   \cohps{A}{X} = \rsps{A} \restr{\Open (X)^{\op}}
\]
be the associated presheaf of ring spectra on $X.$ Similarly, for $U$
open in in $X$, let $U^{V}$ be the Thom space of $V$ restricted to
$U$, and let $\twistedcohps{A}{V}$ be the presheaf of
$\cohps{A}{X}$-modules whose value on the open set $U$ is given by the
formula
\[
       \twistedcohps{A}{V} (U) = A^{U^{V}}.
\]

An open cover $\cover{U} = \{U_{\alpha} \}_{\alpha \in I}$ of $X$ and
trivializations
\[
      h_{\alpha}: U_{\alpha}^{V} \cong \pt{(U_{\alpha})}
\]
of $V$ over $\cover{U}$ give equivalences
\[
     \cohps{A}{X} (U_{\alpha}) = A^{\pt{( U_{\alpha})}}
\xrightarrow{A^{h_{\alpha}}} A^{(U_{\alpha}^{V})} =
\twistedcohps{A}{V} (U_{\alpha}).
\]
Over an intersection $U_{\alpha\beta} = U_{\alpha}\cap U_{\beta}$, we
have an equivalence of $\cohps{A}{X} (U_{\alpha\beta})$-modules
\[
g_{\alpha\beta} : \cohps{A}{X} (U_{\alpha\beta}) \xra{h_{\alpha}}
\twistedcohps{A}{V} (U_{\alpha\beta}) \xra{h_{\beta}^{-1}}
\cohps{A}{X} (U_{\alpha\beta}).
\]

This analysis suggests that we let $\unitscohps{A}$ be the presheaf on
spaces which on $Y$ consists of $\rsps{A} (Y)$-module equivalences
\[
   \rsps{A} (Y) \rightarrow \rsps{A} (Y)
\]
Then the $g_{\alpha\beta}$ are a Cech $1$-cocycle for the cover
$\cover{U}$, with values in the presheaf $\unitscohps{A}$, whose class
\[
   [V] \in H^{1} (X;\unitscohps{A})
\]
depends only on (the isomorphism class of) the vector bundle $V$.
However $[V]$ is defined, it should be both the Cech cohomology class
classifying the invertible $\cohps{A}{X}$-module $\twistedcohps{A}{V}$
and the obstruction to giving a Thom isomorphism for $V$ in
$A$-theory.

The difficulty in making this program precise is at once subtle and
familiar, and arises from the suppleness of the presheaf $\uln{A}.$
For example, in trivializing the cocycle $g_{\alpha\beta},$ one
expects to encounter a family of sections
\[
     f^{0}_{\alpha} \in \unitscohps{A} ( U_{\alpha})
\]
\emph{and homotopies}
\begin{equation} \label{eq:37}
    f^{1}_{\alpha\beta}: f^{0}_{\beta}\heq f^{0}_{\alpha}
g_{\alpha\beta}
\end{equation}
in $\unitscohps{A} (U_{\alpha\beta}\times I).$ The
$f^{1}_{\alpha\beta}$ will satisfy a coherence condition of their own,
and so forth.  Thus, we are brought quickly to the need for
homotopical coherence machinery. 

\subsection{Units} \label{sec:units}

The classical approach to addressing these issues goes as follows.
Let $A$ be an associative ring spectrum, that is, a monoid in the
homotopy category of spectra.  Following \cite{MQRT:ersers}, let
$\GL{A}$ be the pull-back in the diagram of (unpointed)
spaces\footnote{$\GL{A}\to \linf A$ is the inclusion of a set of path
components, so this is a homotopy pull-back.  To make the functor
$A\mapsto \GL{A}$ homotopically well-behaved, we should require $A$
to be a fibrant object in a model category of algebras.  For the
model category of $\ainfty$ ring spectra in Lewis-May-Steinberger
spectra, all objects are fibrant, and the simple definition
\eqref{eq:59} suffices.  See \S\ref{sec:space-units-derived} for
further discussion on this point.}
\begin{equation}\label{eq:59}
\begin{CD}
\GL{A} @>>> \linf{A}
 \\
@VVV @VVV \\
(\pi_{0}A)^{\times} @>>> \pi_{0}A.
\end{CD}
\end{equation}
If $X$ is a
space, then
\[
    [X,\GL{A}] = \{f\in A^{0} (\pt{X})| \pi_{0}f (X) \subset
(\pi_{0}A)^{\times}\} = A^{0} (\pt{X})^{\times},
\]
so $\GL{A}$ is called the \emph{space of units} of $A,$ and so a more
refined definition of $\unitscohps{A}$ is
\[
     \unitscohps{A} (X) = \map (X,\GL{A}).
\]
If $A$ is $\ainfty$ ring spectrum in the sense of
Lewis-May-Steinberger, then $\GL{A}$ is a group-like $\ainfty$ space,
so it has a delooping $B\GL{A}$, and the appropriate cohomology group
is
\[
    H^{1} (X,\unitscohps{A}) = [X,B\GL{A}].
\]
Moreover if $A$ is an $\einfty$ ring spectrum, then there is a spectrum
$\gl{A}$ such that
\[
     \GL{A} \heq \linf \gl{A}.
\]

One could also work in a modern symmetric monoidal category of
spectra, such as the $S$-modules of \cite{EKMM} or one of the
categories of diagram spectra \cite{MR1806878}.  As we discuss in
\S\ref{sec:space-units-derived}, in that case we should require $A$ to be a
cofibrant-fibrant algebra, and then (Proposition
\ref{inf-t-pr-Rwe-and-GL}) the homotopy type of $\GL{A}$
is that of the subspace of the \emph{derived} mapping space of
$A$-module endomorphisms $\hom_{A} (A,A)$ consisting of weak
equivalences.  This approach is related to the
approach studied in \cite{MR2271789}.


For now we continue by imitating how one would proceed if $\GL{A}$
were a topological group, acting on the spectrum $A.$ Associated to
the group $\GL{A}$ we would have the principal fibration
\[
    \GL{A} \to E\GL{A} \to B\GL{A},
\]
and given a map $f: X\to B\GL{A}$, we would form the pull-back
\[
\begin{CD}
   P @>>> E\GL{A} \\
   @VVV @VVV \\
    X @> f >> B\GL{A}.
\end{CD}
\]
One expects that the Thom spectrum associated to $f$ is a sort of
Borel construction
\begin{equation} \label{eq:borelthom}
   Mf = P \times_{\GL{A}} A,
\end{equation}
and that the space of $A$-orientations of $Mf$ is the space of
sections of $P/X$.  To motivate this picture, consider the case that
$G=\GL{R}$ is the group of units of a discrete ring $R$, so that $BG$,
the moduli space of $G$-torsors, is equivalently the moduli space of
free $R$-modules of rank $1$.  To a principal $G$-bundle
\[
    G \to P \to X
\]
we can attach the family of free rank-one $R$-modules
\[
    \pi: \xi = P \times_{G} R \to X
\]
parametrized by X; conversely to such a family $\xi$ we can attach its
$G$-torsor of trivializations
\[
    P = \xi^{\times} = \{ z \in \xi | z \text{ is an $R$-module
    generator of }\xi_{\pi (z)} \}
\]
An obvious $R$-module associated to this situation, at least if $X$ is
discrete, is
\begin{equation} \label{eq:31}
   \bigoplus_{x\in X} \xi_{x} \iso \colim_{x\in X} \xi_{x}
\end{equation}
If $X$ is discrete then $P$ is the $G$-space
\[
    P = \coprod_{x\in X} P_{x},
\]
and we can also form the $R$-module
\begin{equation} \label{eq:32}
    \Z[P]\otimes_{\Z[G]} R.
\end{equation}
Here $\Z[P]$ is the free abelian group on the points of $P,$ $\Z[G]$
is the group ring of $G,$ and the natural map
\[
   \Z[G] \to R
\]
is the counit of the adjunction
\begin{equation} \label{eq:29}
\xymatrix{ {\Z: \CatOf{groups}} \ar@<.3ex>[r] & {\CatOf{rings}:
\GLsym.}  \ar@<.3ex>[l] }
\end{equation}
In fact the two $R$-modules \eqref{eq:31} and \eqref{eq:32} are
isomorphic, as they represent the same functor:
\begin{align} \label{eq:56}
   \CatOf{$R$--modules} \bigl( \bigoplus_{x\in X} \xi_{x},T\bigr) &
\iso
   \prod_{x\in X} \CatOf{$R$--modules} (\xi_{x},T) \\
   & \iso \notag
   \prod_{x\in X} \CatOf{$G$--sets} (\xi_{x}^{\times},T)\\
   & \iso \notag
   \CatOf{$G$--sets} (P, T) \\
   & \iso \notag
      \CatOf{$\Z[G]$--modules} (\Z[P],T) \\
   & \iso \CatOf{$R$--modules}(\Z[P]\otimes_{\Z[G]}R,T). \notag
\end{align}

These constructions generalize to give equivalent notions of
$A$-module Thom spectrum when $A$ is an $\ainfty$ ring spectrum.  The
idea that this should be so is due to the senior author (Hopkins), and
was the starting point for this paper.  For now we set
\begin{equation} \label{eq:30}
   Mf = \Z[P]\otimes_{\Z[G]} R,
\end{equation}
and observe that with $T=R$ above we have
\[
  \CatOf{$R$--modules} (Mf,R) \iso \CatOf{$G$--sets} (P, R).
\]
With respect to this isomorphism, the set of \emph{orientations} of
$Mf$ is to be the subset
\[
\xymatrix{ {\CatOf{$R$--modules} (Mf,R)} \ar@{-}[r]^-{\iso} &
{\CatOf{$G$--sets} (P, R)}
\\
{\CatOf{orientations} (Mf,R)} \ar@{-}[r]^-{\iso} \ar@{>->}[u] &
{\CatOf{$G$--sets} (P,G),} \ar@{>->}[u] }
\]
which in turn is isomorphic to the set of sections of the principal
$G$-bundle $P/X$, as expected.

Returning to the provisional definition of
equation~\eqref{eq:borelthom}, one approach to generalizing to the
input of a ring spectrum $A$ and a space $X$ is to develop the
machinery to mimic that definition, in the form \eqref{eq:32}, in the
setting of $\ainfty$ spaces. We carry this out in
\S\ref{sec:ainfty-thom-spectrum}.

Another possible approach to this problem involves coming to grips
with homotopy sheaves of spectra, and construct $\xi$ as a homotopy
sheaf of $A$-modules.  If $X$ is a paracompact Hausdorff space, then
as in \cite{MR0232393} or \HTT{7.1}, it is equivalent to consider
homotopy local systems of $A$-modules parametrized by $X.$

The parametrized homotopy theory of May and Sigurdsson provides one
context for doing so, and they have discussed twisted generalized
cohomology from this point of view in \cite{MR2271789}.
The apparatus of $\infty$-categories provides another setting for such
questions.  In \S\ref{sec:units-via-infty} we show how to develop a
framework for parametrized spectra in the context of the theory of
quasicategories of Joyal and Lurie that makes it possible to develop
the theory of Thom spectra and orientations essentially as we have
described it above, with a construction of Thom spectra generalizing
\eqref{eq:31}.

As mentioned above, it is remarkable the extent to which these ideas
can and have been implemented using classical methods.  To be more
specific, the space $B\GL{S}$ associated to the sphere spectrum is the
classifying space for stable spherical fibrations, and if $A$ is an
$\ainfty$ ring spectrum, then the unit $S\to A$ gives rise to a map
$B\GL{S} \to B\GL{A}.$

Given a spherical fibration classified by a map $g: X\to B\GL{S}$, we
can form the solid diagram
\begin{equation}\label{eq:28}
\xymatrix{ {P} \ar[r] \ar[d] & {B (S,A)} \ar[r] \ar[d] & {E\GL{A}}
 \ar[d]\\
{X} \ar[r]_-{g} \ar[ur] \ar@{<} `d [r] ` [rr]_-{f} [rr] & {B\GL{S}}
\ar[r] & {B\GL{A},} }
\end{equation}
in which the rectangles are homotopy pull-backs.  In that case,
\[
     Mf = (Mg)\Smash A,
\]
the space $B (S,A)$ is the space of $A$-oriented spherical fibrations,
and to give an orientation
\[
   Mg \to A
\]
is to give a lift as indicated in the diagram.

In his 1970 MIT notes \cite{MR2162361} (in the version available at
\url{http://www.maths.ed.ac.uk/~aar/books/gtop.pdf}, see the note on page
236), Sullivan introduced this picture, and suggested that Dold's
theory of homotopy functors \cite{MR0198464} could be used to construct
the space $B (S,A)$ of $A$-oriented spherical fibrations.  He also
mentioned that the technology to construct the delooping $B\GL{A}$ was
on its way.  Soon thereafter, May, Quinn, Ray, and Tornehave in
\cite{MQRT:ersers} constructed the space $B\GL{A}$ in the case that
$A$ is an $\einfty$ ring spectrum, and described the associated
obstruction theory for orientations of spherical fibrations.
Thus, at least for spherical fibrations and $\einfty$ ring spectra,
the obstruction theory for Thom spectra and orientations has been
available for over thirty years.

Various aspects of the theory of units and Thom spectra have been
revisited by a number of authors as the foundations of stable homotopy
theory have advanced. For example, Schlichtkrull \cite{MR2057776}
studied the units of a symmetric ring spectrum, and May and Sigurdsson
\cite{MR2271789} have studied units and orientations in light of their
categories of parametrized spectra.  Very recently May has prepared an
authoritative paper revisiting operad (ring) spaces and operad (ring)
spectra from a modern perspective, which has substantial overlap with
some of the discussion in this paper \cite{May:rant} (notably
Section~\ref{sec:constr-may-quinn}).

\subsection{Acknowledgments}

We thank Peter May  for his many contributions to this subject and for
useful conversations and correspondence.  We are also very grateful to
Mike Mandell for invaluable help with many parts of this project.
We thank Jacob Lurie for helpful conversations and encouragement.  We
thank John Lind for pointing out an error in a previous draft.  Some
of the results in Section~\ref{sec:ainfty-thom-spectrum} are based on
work in the 2005 University of Chicago Ph.D. thesis of the second
author: he would like to particularly thank his advisors, May and
Mandell, for all of their assistance.

\section{Overview} \label{sec:introduction}

In this section, we give a detailed summary of the contents of the
paper and state the main results.

\subsection{The spectrum of units and $\einfty$ orientations}
\label{sec:obstr-theory-einfty}

We begin in \S\ref{sec:constr-may-quinn} by revisiting the
construction, due to May et al. \cite{MQRT:ersers}, of the spectrum of
units $\gl{R}$ associated to an $\einfty$ ring spectrum $R.$ Motivated
by the adjunction \eqref{eq:29}, or more precisely its restriction to
abelian groups
\[
\xymatrix{ {\Z: \CatOf{abelian groups}} \ar@<.3ex>[r] &
{\CatOf{commutative rings}: \GLsym,} \ar@<.3ex>[l] }
\]
we prove the following (see also \cite{May:rant}).

\begin{Theorem}[Theorem \ref{t-th-units}]\label{t-th-units-intro}
The functor $\glsym$ participates as the right adjoint in an adjunction
\begin{equation}  \label{eq:33}
\xymatrix{ {\splus \linf: \Ho \CatOf{$(-1)$-connected spectra}}
\ar@<.3ex>[r] & {\Ho \CatOf{$\einfty$ ring spectra}: \glsym} \ar@<.3ex>[l] }
\end{equation}
which preserves the homotopy types of derived mapping spaces.
\end{Theorem}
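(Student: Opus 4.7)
The plan is to decompose the asserted adjunction into two stages: first, the operadic refinement of the ordinary $\splus \dashv \linf$ adjunction between spaces and spectra to an adjunction between $\einfty$ spaces and $\einfty$ ring spectra; second, the May recognition principle, which provides an equivalence between grouplike $\einfty$ spaces and $(-1)$-connected spectra. The two pieces are glued together by the elementary observation that any $\einfty$ map from a grouplike $\einfty$ space into $\linf R$ automatically factors through the subspace of units $\GL{R}$, because the image of an invertible element under a multiplicative map is invertible.

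To construct the intermediate adjunction I would begin with the strong symmetric monoidal functor $\splus\colon(\spaces,\times)\to(\spectra,\sma)$ and its lax symmetric monoidal right adjoint $\linf$; passing to algebras over an $\einfty$ operad yields an adjoint pair
\[
\splus : \CatOf{$\einfty$ spaces} \rightleftarrows \einftyspectra : \linf .
\]
Equipped with the transferred model structures on operadic algebras (available in EKMM spectra, where all objects are fibrant, or in symmetric/orthogonal spectra with a positive model structure), this is a Quillen pair and so descends to an adjunction on $\Ho$. The full derived-mapping-space statement reduces to checking that $\linf$ preserves weak equivalences between fibrant objects, which is immediate.

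For a $(-1)$-connected spectrum $b$, the $\einfty$ space $\linf b$ is grouplike, so the factorization above identifies
\[
\Map_{\CatOf{$\einfty$ spaces}}(\linf b,\,\linf R)\simeq \Map_{\CatOf{$\einfty$ spaces}}(\linf b,\,\GL{R}).
\]
Since $\GL{R}$ is itself grouplike, May's recognition principle produces a $(-1)$-connected spectrum $\glsym R$ together with an equivalence $\linf\glsym R\simeq \GL{R}$, extending to an equivalence of (enriched) categories
\[
\linf : \cnspectra \rightleftarrows \CatOf{grouplike $\einfty$ spaces}.
\]
Splicing the three stages then gives the chain of derived mapping space equivalences
\[
\Map_{\einftyspectra}(\splus\linf b,\, R) \simeq \Map_{\CatOf{$\einfty$ spaces}}(\linf b,\, \linf R) \simeq \Map_{\CatOf{$\einfty$ spaces}}(\linf b,\, \GL{R}) \simeq \Map_{\spectra}(b,\, \glsym R),
\]
which yields both the claimed adjunction on $\Ho$ and its enrichment to derived mapping spaces.

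The main obstacle is the model-categorical bookkeeping required to upgrade each stage from an isomorphism on $\pi_{0}$ to a natural equivalence of derived mapping spaces, and to ensure the composite is manifestly adjoint. The first stage requires a version of the Quillen adjunction that behaves well on cofibrant--fibrant objects, best handled in a rigid model of $\einfty$ ring spectra (e.g.\ commutative $S$-algebras). The second stage is formal because $\GL{R}\hookrightarrow\linf R$ is the inclusion of a union of path components, so the factorization is literally an equality of mapping spaces rather than merely a homotopy equivalence. The third stage requires the May recognition principle in simplicially (or topologically) enriched form, which is the standard output of modern operadic treatments; this is the technical heart of the argument and the place where the most care is needed.
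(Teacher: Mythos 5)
Your proposal is correct and follows essentially the same route as the paper: the proof in \S\ref{sec:units:-proof-theorem} assembles exactly your three stages, namely the operadic lift of the $(\splus,\linf)$ adjunction to $C$-algebras (Proposition \ref{t-pr-l-spaces-l-spectra}), the observation that $\GLsym$ is right adjoint to the inclusion of group-like $C$-spaces (Proposition \ref{t-pr-GL}), and the enriched, model-categorical form of the recognition principle identifying $\ho$ of group-like $C$-spaces with $\ho$ of connective spectra (Theorem \ref{t-th-einfty-spaces-spectra}, via group completion). The only point you pass over lightly, and which the paper must arrange explicitly, is the choice of a single unital $\einfty$ operad $C$ admitting both a map to the linear isometries operad and a map of monads $\ptC\to\linf\sinf$, so that the additive $C$-structure on $\linf b$ and the multiplicative one on $\linf R$ live in the same category of $C$-spaces.
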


With this in place, in \S\ref{sec:thom-spectra} we recall and extend
the obstruction theory of \cite{MQRT:ersers} for $\einfty$
orientations.  Let $R$ be an $\einfty$ ring spectrum, and suppose that
$b$ is a spectrum over $b\gl{R} = \Sigma\gl{R}.$ Let $p$ be the
homotopy pull-back in the solid diagram
\begin{equation} \label{eq:13}
\xymatrix{ {\gl{R}} \ar@{=}[r] \ar[d] & {\gl{R}} \ar[d]
 \\
{p} \ar[r] \ar[d] \ar@{-->}[ur] & {e\gl{R}\heq \ptspace } \ar[d]
 \\
{b} \ar[r]^-{f} \ar@{-->}[ur] & {b\gl{R}.}  }
\end{equation}
Note that if $B=\linf b$ and $P=\linf p$, then after looping down we
have a fibration sequence
\begin{equation} \label{eq:5}
  \GL{R} \to P \to B\to B\GL{R}.
\end{equation}
Note also that an $E_\i$ map $\varphi:R\to A$ gives a diagram
\begin{equation} \label{eq:13A}
\xymatrix{ {\gl{R}} \ar[r] \ar[d] & {\gl{A}} \ar[d]
 \\
{p} \ar[r] \ar[d] \ar@{-->}[ur] & {e\gl{A}\heq \ptspace } \ar[d]
 \\
{b} \ar[r]^-{\tilde{\varphi}\circ f} \ar@{-->}[ur] & {b\gl{A},} }
\end{equation}
where we write $\tilde{\varphi}:bgl_1 R\to bgl_1 A$ for the induced
map.

\begin{Definition}\label{def-5}
The Thom spectrum $M=Mf$ of $f: b\to b\gl{R}$ is the homotopy push-out
$M$ of the diagram of $\einfty$ spectra
\begin{equation}   \label{eq:6}
\begin{CD}
\splus \linf  \gl{R} @>>> R \\
@VVV @VVV \\
\splus \linf p @>>> M,
\end{CD}
\end{equation}
where the top map is the counit of the adjunction \eqref{eq:33}.  Note
that the spectrum underlying $M$ is the derived smash product
\begin{equation} \label{eq:60}
    M = \splus P \smash^{L}_{\splus \GL{R}} R,
\end{equation}
generalizing the construction \eqref{eq:30}.
\end{Definition}


The adjunction between $\splus \linf$ and $\glsym$ shows that, for any
$E_\infty$ map $\varphi:R\to A$, we have a homotopy pull-back diagram
of derived mapping spaces
\begin{equation}
\begin{CD}
\CatOf{$\einfty$ ring spectra}  (M,A) @>>> \spectra (p,\gl{A}) \\
@VVV @VVV \\
\CatOf{$\einfty$ ring spectra}  (R,A) @>>> \spectra (\gl{R},\gl{A}),
\end{CD}
\end{equation}
and comparing fibers over $\varphi$ gives the following.

\begin{Theorem} \label{t-th-Thom-iso}
There is a homotopy pull-back diagram
\[
\begin{CD}
\CatOf{$\einfty$ $R$-$\mathrm{algebras}$} (M,A) @>>> \spectra (p,\gl{A})\\
@VVV @VVV \\
\{\varphi \} @>>> \spectra (\gl{R},\gl{A}).
\end{CD}
\]
That is, the space of $R$-algebra maps $M\to A$ is weakly equivalent
to the space of lifts in the diagram \eqref{eq:13A}.
\end{Theorem}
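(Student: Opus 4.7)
The plan is to apply the derived mapping space functor $\CatOf{$\einfty$ ring spectra}(\slot,A)$ to the defining homotopy pushout \eqref{eq:6} of $M$, and then strip away the $\einfty$ bookkeeping using Theorem \ref{t-th-units-intro}. Since \eqref{eq:6} is a pushout in $\einfty$ ring spectra, its image under $\CatOf{$\einfty$ ring spectra}(\slot,A)$ is a homotopy pull-back square of derived mapping spaces
\[
\begin{CD}
\CatOf{$\einfty$ ring spectra}(M,A) @>>> \CatOf{$\einfty$ ring spectra}(R,A) \\
@VVV @VVV \\
\CatOf{$\einfty$ ring spectra}(\splus\linf p,A) @>>> \CatOf{$\einfty$ ring spectra}(\splus\linf\gl{R},A).
\end{CD}
\]

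Next, I would invoke the adjunction of Theorem \ref{t-th-units-intro}, together with its asserted compatibility with derived mapping spaces, to identify the bottom row with
\[
\spectra(p,\gl{A}) \longrightarrow \spectra(\gl{R},\gl{A}),
\]
where the map is restriction along $\gl{R}\to p$. Naturality of the unit of this adjunction identifies the image of $\varphi\in\CatOf{$\einfty$ ring spectra}(R,A)$ in $\spectra(\gl{R},\gl{A})$ with the induced map $\gl{\varphi}$. Taking homotopy fibers over $\varphi$ in both columns then yields the claimed pull-back, since by definition the fiber of $\CatOf{$\einfty$ ring spectra}(M,A)\to\CatOf{$\einfty$ ring spectra}(R,A)$ over $\varphi$ is the derived space of $\einfty$ $R$-algebra maps $M\to A$, while the fiber of $\spectra(p,\gl{A})\to\spectra(\gl{R},\gl{A})$ over $\gl{\varphi}$ is tautologically the space of dashed lifts $p\to\gl{A}$ in \eqref{eq:13A}.

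This is essentially formal given Theorem \ref{t-th-units-intro}; the substantive point, and the one that will require care, is ensuring that the pushout \eqref{eq:6} is computed in a model of $\einfty$ ring spectra in which $R$, $\splus\linf\gl{R}$, and $\splus\linf p$ can be taken simultaneously cofibrant and $\splus\linf\gl{R}\to R$ is a cofibration, so that applying $\CatOf{$\einfty$ ring spectra}(\slot,A)$ actually does produce a homotopy pull-back of derived mapping spaces for every $\einfty$ algebra $A$. Given the model-categorical setup developed earlier in the paper and the derived enhancement of the adjunction supplied by Theorem \ref{t-th-units-intro}, the theorem then falls out by inspection.
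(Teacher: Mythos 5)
Your proposal is correct and follows the paper's own argument: the paper likewise applies $\CatOf{$\einfty$ ring spectra}(\slot,A)$ to the defining pushout \eqref{eq:6}, uses the adjunction of Theorem \ref{t-th-units-intro} to rewrite the resulting homotopy pull-back square with $\spectra(p,\gl{A})$ and $\spectra(\gl{R},\gl{A})$, and then compares fibers over $\varphi$. Your added remark about choosing cofibrant models so that the pushout genuinely yields a pull-back of derived mapping spaces matches the care the paper takes elsewhere (e.g.\ its comments following \eqref{or-eq:8}).
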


An important feature of our construction is that it produces a Thom
spectrum from any map $b \to b\gl{R},$ not just from spherical
fibrations.  To make
contact with the classical situation, let $S$ be the sphere
spectrum, and suppose we are given a map
\[
  g: b\to b\gl{S},
\]
so that
\[
   G = \linf g: B \to B\GL{S}
\]
classifies a stable spherical fibration.  Using Definition
\ref{def-5}, we can form the $\einfty$ Thom spectrum $Mg.$
In \S\ref{sec:comp-thom-spectra}, we show that the spectrum underlying
$Mg$ is the usual Thom spectrum of the spherical fibration classified
by $G,$ as constructed for example in \cite{LMS:esht}.


Now suppose that $R$ is an $\einfty$ spectrum with unit $\iota: S\to
R$, and let
\[
  f = b\gl{\iota}\circ  g: b \to b\gl{S} \to b\gl{R}.
\]
Then
\[
    Mf \heq Mg \smash^{L} R,
\]
and so we have an equivalence of derived mapping spaces
\[
    \einftyspectra  (Mg,R) \heq \CatOf{$\einfty$ $R$-algebras} (Mf,R).
\]
If we let $b (S,R)$ be the pull-back in the solid diagram
\begin{equation} \label{eq:34}
\xymatrix{ {p} \ar[r] \ar[d] & {b (S,R)} \ar[r] \ar[d] & {\ptspace}
\ar[d]
\\
{b} \ar[r] \ar@{-->}[ur] & {b\gl{S}} \ar[r] & {b\gl{R},} }
\end{equation}
then Theorem \ref{t-th-Thom-iso} specializes to a result of May,
Quinn, Ray, and Tornehave \cite{MQRT:ersers}.

\begin{Corollary}\label{t-thom-iso-mqr}
The derived space of $\einfty$ maps $Mg\to R$ is weakly equivalent to
the derived space of lifts in the diagram \eqref{eq:34}.  
\end{Corollary}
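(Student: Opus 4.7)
The plan is to derive Corollary \ref{t-thom-iso-mqr} directly from Theorem \ref{t-th-Thom-iso}, applied to $f = b\gl{\iota}\circ g$ with $A=R$ and $\varphi=\id_R$, together with the change-of-rings identification $Mf \heq Mg \smash^{L} R$ of $\einfty$ $R$-algebras stated just before the corollary. So the argument breaks into three steps: specialize Theorem \ref{t-th-Thom-iso}, rewrite the mapping space via the free/forgetful adjunction between $\einfty$ ring spectra and $\einfty$ $R$-algebras, and identify the resulting space of lifts in \eqref{eq:13A} with the one in \eqref{eq:34}.

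First, I would apply Theorem \ref{t-th-Thom-iso} with $A=R$ and $\varphi = \id_R$. The theorem then produces a homotopy pull-back
\[
\begin{CD}
\CatOf{$\einfty$ $R$-algebras} (Mf,R) @>>> \spectra (p,\gl{R}) \\
@VVV @VVV \\
\{\id_R\} @>>> \spectra (\gl{R},\gl{R}),
\end{CD}
\]
whose upper-left corner is, by the conclusion of the theorem, the derived space of lifts in diagram \eqref{eq:13A} with $A=R$. Next, I would combine the equivalence $Mf \heq Mg \smash^{L} R$ of $\einfty$ $R$-algebras with the extension-of-scalars/restriction-of-scalars adjunction induced by $\iota: S\to R$ to obtain the natural weak equivalence
\[
\einftyspectra (Mg, R) \heq \CatOf{$\einfty$ $R$-algebras} (Mg \smash^{L} R, R) \heq \CatOf{$\einfty$ $R$-algebras} (Mf, R).
\]

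Third, I would identify the space of lifts in \eqref{eq:13A} with that in \eqref{eq:34}. Since $f = b\gl{\iota}\circ g$ factors through $b\gl{S}$, the homotopy pull-back $p$ can be built in two stages: first pull back the universal $\gl{R}$-principal fibration $e\gl{R} \to b\gl{R}$ along $b\gl{\iota}$ to produce $b (S,R) \to b\gl{S}$ as in \eqref{eq:34}, and then pull back further along $g$ to obtain $p \to b$. By the universal property of iterated homotopy pull-backs, the space of lifts $b \to e\gl{R}$ in \eqref{eq:13A} is canonically weakly equivalent to the space of lifts $b \to b (S,R)$ in \eqref{eq:34}; both parametrize the data of a null-homotopy of $f$ that refines the given factorization through $g$.

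The main obstacle I anticipate is the last step: one must be careful to work with honest homotopy pull-backs throughout and to verify that iterated pull-back is compatible with the derived mapping spaces of lifts. In practice this is routine because pull-back along a fibration is right Quillen in any of the parametrized models in play (or, in the $\infty$-categorical framework of \S\ref{sec:units-via-infty}, is literally a limit), so the infrastructure developed in \S\ref{sec:space-units-derived} together with the cofibrant/fibrant replacement conventions enforced on $R$ suffices. No substantial new work beyond assembling the three ingredients above is required, and the corollary is genuinely a direct specialization of Theorem \ref{t-th-Thom-iso} compatible with the original formulation in \cite{MQRT:ersers}.
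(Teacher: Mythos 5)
Your proposal is correct and follows essentially the same route as the paper: the overview deduces the corollary precisely by combining $Mf\heq Mg\smash^{L}R$ with the extension-of-scalars adjunction to identify $\einftyspectra(Mg,R)$ with the derived space of $\einfty$ $R$-algebra maps $Mf\to R$, and then specializing Theorem \ref{t-th-Thom-iso} at $A=R$, $\varphi=\id_R$. Your third step, identifying lifts in \eqref{eq:13A} with lifts in \eqref{eq:34} via the two-stage homotopy pull-back defining $b(S,R)$, just makes explicit the base-change identification the paper leaves implicit.
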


\subsection{The space of units and orientations}
\label{sec:space-units-orient}

The authors of \cite{MQRT:ersers} describe $\einfty$ orientations for
$\einfty$ ring spectra, and they also discuss not-necessarily $\einfty$
orientations of $\einfty$ spectra.  As far as we know, there is no
treatment in the literature of the analogous orientation theory for
$\ainfty$ ring spectra.

In \S\ref{sec:ainfty-thom-spectrum} and in
\S\ref{sec:units-via-infty}, we give two independent approaches to the
theory of Thom spectra and orientations for $\ainfty$ ring spectra.
Our first approach begins by adapting the ideas of the $\einfty$
construction in \S\ref{sec:constr-may-quinn}.  In the
associative case the analogue of the adjunction \eqref{eq:29} is
\[
\xymatrix{ {\CatOf{group-like $A_{\infty}$ spaces}} \ar@<.5ex>[r] &
{\ainftyspaces} \ar@<.5ex>[r]^-{\splus} \ar@<.5ex>[l]^-{\GLsym} &
{\CatOf{$\ainfty$ ring spectra}: \GLsym,} \ar@<.5ex>[l]^-{\linf} }
\]
where the right-hand adjunction is a special case of
\cite[p. 366]{LMS:esht}.  If $R$ is an $\ainfty$ spectrum, then we
have the related adjunction
\[
\xymatrix{ {\splus: \lRmodules} \ar@<.5ex>[r] & {\Rmodules: \linf.}
\ar@<.5ex>[l] }
\]
The main difficulty in using this classical operadic approach is that
$\GL{R}$ is a not a topological group but rather only a group-like
$\ainfty$ space, and so it is not immediately apparent how to form the
(quasi)fibration
\[
    \GL{R} \to E\GL{R} \to B\GL{R},
\]
and then make sense of the constructions suggested in
\S\ref{sec:units}.

In \S\ref{sec:ainfty-thom-spectrum} we present technology to realize
the picture sketched in \S\ref{sec:units}.   The essential strategy is
to adapt the operadic smash product of \cite{MR1361938,EKMM} to the
category $\spaces$ of spaces.  Specifically, we produce a symmetric
monoidal product on a subcategory of $\spaces$ such that monoids
for this product are precisely $\ainfty$-spaces; this allows us to
work with models of $\GL{R}$ which are strict monoids for the new
product.  The observation that one could carry out the program of
\cite{EKMM} in the setting of spaces is due to Mike Mandell, and was
worked out in the thesis of the second author \cite{Blumberg-thesis}
(see also the forthcoming paper \cite{Blumberg-Cohen-Schlichtkrull}).

The relevant category of spaces for this
product is $*$-modules, the space-level analogue of EKMM's
$S$-modules.  Precisely, let $\L$-spaces be the category of spaces
with an action of the $1$-space $\sL (1)$ of the linear isometries
operad.  On this category we can define an operadic product $\timesL$
which is associative and commutative but not unital.  The category of
$*$-modules is the subcategory of $\L$-spaces for which the unit
$*\timesL X \to X$ is an isomorphism.  The category of $*$-modules is
Quillen equivalent to the category of spaces and admits an operadic
symmetric monoidal product.

In this setting, we can form a model of $\GL{R}$ which is a group-like
monoid, and then model $E\GL{R} \to B\GL{R}$ as a quasi-fibration with
an action of $\GL{R}.$  Given a fibration of $*$-modules
\[
f: B\to B\GL{R},
\]
$\GL{R}$ acts on the pull-back $P$ in the diagram
\[
\xymatrix{
P \ar[r] \ar[d] & E\GL{R} \ar[d] \\
B \ar[r]^f & B\GL{R},
}
\]
and the $S$-module $\splus P$ is a right $\splus
\GL{R}$-module.  We can then imitate \eqref{eq:30} to form an
$R$-module Thom spectrum.

\begin{Definition}  \label{def-6}
Given a map of spaces $f \colon B \to B\GL{R}$, form $P$ as the
pullback in the diagram above associated to a fibrant replacement in
$*$-modules of the free map $* \timesL \L B \to B\GL{R}$.  The Thom
spectrum of $f$ is defined to be the derived smash product
\begin{equation} \label{eq:62}
   Mf = \splus P \smash^{L}_{\splus \GL{R}} R.
\end{equation}
\end{Definition}
With this definition, we have
\begin{equation} \label{eq:35}
   \CatOf{right $R$--modules} (M, R) \heq \CatOf{right
$\GL{R}$--spaces} (P,\linf R),
\end{equation}
where here (and in the remainder of this subsection) we are referring
to derived mapping spaces.

\begin{Definition}
The space of \emph{orientations} of $M$ is the subspace of $R$-module
maps $M \to R$ which correspond to
\[
   \CatOf{right $\GL{R}$--modules} (P,\GL{R}) \subset \CatOf{right
$\GL{R}$--modules} (P,\linf R).
\]
under the weak equivalence \eqref{eq:35}.  That is, we have a homotopy
pull-back diagram
\[
\xymatrix{ {\CatOf{orientations} (M,R)} \ar[r]^-{\heq} \ar[d] &
{\CatOf{right $\GL{R}$--spaces} (P,\GL{R})} \ar[d]
\\
{\CatOf{right $R$--modules} (M, R)} \ar[r]^-{\heq} & {\CatOf{right
$\GL{R}$--modules} (P,\linf R).}  }
\]
\end{Definition}

To make contact with classical notions of orientation, for $x\in B$ let
$M_{x}$ be the Thom spectrum associated to the map
\[
\{x \} \to B \xra{f} B\GL{R}.
\]
The map $x \to B$ on passage to Thom spectra gives rise to a map of
$R$-modules $M_{x} \to M$.  Then we have the following (Theorem
\ref{t-th-orientation-ainfty} and Proposition
\ref{t-pr-characterize-orientations}).

\begin{Theorem}  \label{t-th-or-unstable-and-stable}
\hspace{5 pt}
\begin{enumerate}
\item A map of right $R$-modules $u: M\to R$ is an orientation if and
only if for each $x\in B$, the map of $R$-modules
\[
    M_{x} \to M \to R
\]
is a weak equivalence.
\item If $f: B\to B\GL{R}$ is a fibration, then the space of
orientations $M\to R$ is weakly equivalent to the derived space of
lifts
\[
\xymatrix{ {P} \ar[r] \ar[d] & {E\GL{R}} \ar[d]
\\
{B} \ar[r]^-{f} \ar@{-->}[ur] & {B\GL{R}.}  }
\]
\end{enumerate}
\end{Theorem}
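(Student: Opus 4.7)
The plan is to deduce both parts from the equivalence \eqref{eq:35} combined with the principal $\GL{R}$-bundle structure on $P \to B$ inherited from the quasi-fibration $E\GL{R} \to B\GL{R}$ constructed in \S\ref{sec:ainfty-thom-spectrum}.

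For part (1), I would fix an $R$-module map $u\colon M \to R$ with adjunct $\tilde u\colon P \to \linf R$ under \eqref{eq:35}. For each $x \in B$, the fiber $P_x$ is a free $\GL{R}$-torsor, so any choice of $p_x \in P_x$ induces a weak equivalence $P_x \heq \GL{R}$ of right $\GL{R}$-spaces, and consequently a weak equivalence
\[
M_x \;=\; \splus P_x \smash^{L}_{\splus \GL{R}} R \;\heq\; R
\]
of right $R$-modules. Under these identifications the composite $u_x\colon M_x \to M \to R$ becomes the $R$-module self-map $R \to R$ given by multiplication by the element $\tilde u(p_x) \in \linf R$, which is a weak equivalence if and only if $\tilde u(p_x)$ represents a unit in $\pi_0 R$, i.e., lies in $\GL{R} \subset \linf R$. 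Since $\GL{R}$ is stable under multiplication by units, this condition on a single $p_x$ forces $\tilde u$ to carry the whole orbit $P_x$ into $\GL{R}$, and imposing it for all $x \in B$ gives exactly the condition that $\tilde u$ factor through $\GL{R}$, i.e., that $u$ be an orientation.

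For part (2), restricting \eqref{eq:35} to orientations yields a weak equivalence
\[
\CatOf{orientations}(M, R) \heq \CatOf{right $\GL{R}$--spaces}(P, \GL{R}),
\]
so it suffices to identify the right-hand space with the derived space of lifts of $f$ through $E\GL{R} \to B\GL{R}$. For this I would invoke the standard principal bundle correspondence, which identifies $\GL{R}$-equivariant maps $\phi\colon P \to \GL{R}$ with sections of $P \to B$ via $\pi(p) \mapsto p \cdot \phi(p)^{-1}$; since $P = f^{*}E\GL{R}$, sections of $P \to B$ are exactly lifts of $f$ through $E\GL{R} \to B\GL{R}$. The fibrancy assumption on $f$ ensures that $P \to B$ is itself a fibration of $*$-modules, so this space of strict lifts already models the derived space of lifts.

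The main obstacle will be verifying that the $*$-module quasi-fibration $E\GL{R} \to B\GL{R}$, constructed via the two-sided bar construction, genuinely behaves as a universal principal $\GL{R}$-bundle in the derived sense, despite $\GL{R}$ being only an $A_\infty$-monoid rather than a topological group. Specifically, we need (a) that fibers $P_x$ are free $\GL{R}$-torsors up to weak equivalence, making the trivialization $M_x \heq R$ honest, and (b) that the sections-to-equivariant-maps correspondence upgrades from a bijection to a weak equivalence of derived mapping spaces; both will rely on the homotopical control over the $A_\infty$ multiplication afforded by working with $*$-modules. Once these foundational facts from \S\ref{sec:ainfty-thom-spectrum} are in hand, the fiberwise argument for (1) and the section-based argument for (2) both go through.
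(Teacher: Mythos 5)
Your treatment of part (1) is essentially the paper's own argument (Proposition \ref{t-pr-characterize-orientations}): identify $M_{x}\heq R$ via a point of the fiber (Lemma \ref{t-le-thom-spectrum-point}, which rests on the quasifibration statement of Proposition \ref{t-EL-BL-quasi-fib}), observe that under the adjunction the composite $M_{x}\to M\to R$ corresponds to an element of $\linf R$, and use that $\GL{R}\subset\linf R$ is a union of components. That part is fine.

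Part (2), however, has a genuine gap at its core. After restricting the adjunction to orientations (which is indeed essentially the definition), the whole content of the statement is the identification of the derived space of $\GL{R}$-equivariant maps $P\to\GL{R}$ with the derived space of lifts of $f$ through $\EL{\GL{R}}\to\BL{\GL{R}}$. You propose to get this from the classical principal-bundle correspondence $\pi(p)\mapsto p\cdot\phi(p)^{-1}$, but $\GL{R}$ is only a group-like monoid in $*$-modules: there is no inverse $\phi(p)^{-1}$, and $\EL{\GL{R}}\to\BL{\GL{R}}$ is only a quasifibration, so the fiber is not literally a torsor and the pointwise formula is simply unavailable. Moreover, even when inverses exist, a pointwise bijection of strict sections does not by itself produce the required weak equivalence of \emph{derived} mapping spaces (nor does fibrancy of $f$ alone guarantee that strict lifts compute the derived space of lifts, since the target $\pi$ is not a fibration). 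This is exactly the point where the paper does real work: Theorem \ref{t-pr-princ-bundle-obstr-thy} rectifies $\GL{R}$ to a group-like topological monoid via the strong monoidal functor $Q=\ast\times_{\sL(1)}(-)$ (Proposition \ref{t-pr-Q-U}), transports both mapping spaces across the resulting Quillen equivalences, and then invokes the comparison between spaces over $BG$ and free $G$-spaces for group-like topological monoids (Corollary \ref{corliftmon}, resting on Shulman's theorem and the adjunction chain $r_!q_!p^{*}\dashv p_{*}q^{*}r^{*}$ of the appendix) --- an argument that never uses a pointwise inverse formula. Flagging this as ``the main obstacle'' and appealing to ``homotopical control'' does not close it; the rectification-plus-Quillen-equivalence mechanism (or some equivalent of it) is the missing idea, and without it your proof of part (2) does not go through.
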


As for the Thom isomorphism, just as in the classical situation we
have an $R$-module Thom diagonal
\[
   M \to \splus B \Smash M.
\]
and given a map of right $R$-modules $u: M\to R$, we obtain the
composite map of right $R$-modules
\[
  \mr (u): M \to \splus B \Smash M \xra{1 \Smash f} \splus B \Smash R,
\]
about which we have the following (See \cite{MR609673} and \cite[\S
IX]{LMS:esht}).

\begin{Proposition}[Proposition \ref{t-pr-thom-iso}; see also
Corollary \ref{t-thom-iso-infty-cat-I}]
If $u: M \to R$ is an orientation, then $\mr (u)$ is a weak
equivalence.
\end{Proposition}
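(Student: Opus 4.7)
The plan is to show that $\mr(u)$ is a fiberwise weak equivalence over $B$ and then promote this to a global weak equivalence by induction on a cellular filtration of $B$.

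First I would establish the naturality of the Thom diagonal in the base. For any map $g: B' \to B$, let $f' = f \circ g$ and let $P'$ denote the pullback of the principal $\GL{R}$-fibration along $f'$. The diagonal $P' \to B' \times P'$ is the pullback of $P \to B \times P$ along $g$, so on passage to Thom spectra one obtains a commutative square relating $\Delta_{f'}$ to $\Delta_f$. It follows that if $u' = u \circ (M(f') \to M(f))$, then $(\splus g \wedge 1) \circ \mr(u')$ agrees with the restriction of $\mr(u)$ along $M(f') \to M(f)$.

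Next I would examine the case $B' = \{x\}$ for a point $x \in B$. Here $P_x$ is equivalent as a right $\GL{R}$-space to $\GL{R}$, so $M_x = \splus P_x \wedge^L_{\splus \GL{R}} R \simeq R$, and the Thom diagonal $\Delta_{f'}$ reduces to the canonical isomorphism $M_x \to \splus \{x\} \wedge M_x$. Consequently $\mr(u')$ is simply the composite $M_x \to M \to R$, which is a weak equivalence by Theorem \ref{t-th-or-unstable-and-stable}(1). Thus $\mr(u)$ is fiberwise a weak equivalence over $B$.

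Finally I would carry out a cellular induction on $B$. Both of the functors $B' \mapsto M(f|_{B'})$ and $B' \mapsto \splus B' \wedge R$ from spaces over $B\GL{R}$ to $R$-modules convert homotopy pushouts into homotopy pushouts—the former because pullback along $B\GL{R}$ preserves pushouts, $\splus$ preserves colimits, and $(-) \wedge^L_{\splus \GL{R}} R$ is a derived left adjoint; the latter trivially. Filtering $B$ by skeleta and attaching cells $S^{n-1} \hookrightarrow D^n$, the restriction of $\mr(u)$ to each $D^n$ reduces to the point case by homotopy invariance, so the inductive step is immediate from the gluing lemma for homotopy pushouts. The main technical obstacle will be verifying that $\splus(-) \wedge^L_{\splus \GL{R}} R$ really does preserve the relevant homotopy pushouts in the $*$-module setting, which requires care with cofibrancy; this is precisely the kind of bookkeeping the apparatus of \S\ref{sec:ainfty-thom-spectrum} is designed to handle.
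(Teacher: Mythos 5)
Your argument is correct in outline, but it takes a genuinely different route from the paper's. The paper argues globally: it uses the orientation in its equivariant form, a $\GL{R}$-module map $s\colon P\to\GL{R}$, to produce a shearing map $P\to X\times P\to X\times\GL{R}$ which is a weak equivalence (since $s$ corresponds to a section of $P\to X$), hence an equivalence of $\splus\GL{R}$-modules $\splus P\heq \pt{X}\Smash\splus\GL{R}$; smashing over $\splus\GL{R}$ with $R$ immediately gives $M\heq\splus X\Smash R$, and one then checks this equivalence coincides with $\mr(u)$. You instead use only the weaker-looking fiberwise criterion of Proposition \ref{t-pr-characterize-orientations} (equivalently Theorem \ref{t-th-or-unstable-and-stable}(1), proved in the paper before Proposition \ref{t-pr-thom-iso}, so there is no circularity), combined with naturality of the Thom diagonal and the fact that both $B'\mapsto M(f|_{B'})$ and $B'\mapsto\splus B'\Smash R$ carry homotopy pushouts to homotopy pushouts, and then run a cell induction. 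What the paper's proof buys is brevity and the absence of any filtration or cofibrancy bookkeeping: the trivialization of the principal object does all the work at once. What your proof buys is robustness and generality: it only needs a fiberwise characterization of orientations plus colimit-preservation of the Thom functor, so it transports essentially verbatim to other frameworks (indeed it is close in spirit to the colimit description used in \S\ref{sec:comp-thom-spectra}, though the paper's own proof of Corollary \ref{t-thom-iso-infty-cat-I} is again a one-step global trivialization, $p_!f^*\mathscr{L}\heq p_!p^*R$, not an induction). To make your version airtight you would need, as you note, the point-set care with $\Smash^{L}_{\splus\GL{R}}$ (cell or extended cell $\splus\GL{R}$-modules), a CW (or cell $*$-module) approximation of $B$ together with homotopy invariance of both functors over $\BL{\GL{R}}$ to justify the disk case, Mather-cube-type descent to see that pulling back along the quasifibration $\EL{\GL{R}}\to\BL{\GL{R}}$ preserves homotopy pushouts, and a final sequential-colimit step over skeleta when $B$ is infinite-dimensional; none of these is a gap in principle, but they are exactly the overhead the paper's direct argument avoids.
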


As in the $\einfty$ case, we emphasize that our construction
associates an $R$-module Thom spectrum to a map $B \to B\GL{R},$ which
need not arise from a spherical fibration.  To compare to the
classical situation, we suppose that $F$ arises from a stable
spherical fibration via
\[
     F: B \xra{G} B\GL{S} \xra{B\GL{\iota}} B\GL{R}.
\]
Then we can form the Thom spectrum $MG$ using Definition~\ref{def-6}
(in \S\ref{sec:comp-thom-spectra} we show that this coincides with the
Thom spectrum associated to $G$ as in for example \cite{LMS:esht}),
and it follows directly from the definition that
\begin{equation}\label{eq:7}
    MF \heq MG \smash^{L} R.
\end{equation}
We define an $R$-orientation of $MG$ to be a map of spectra
\[
   MG \to R
\]
such that the induced map of $R$-modules
\[
    MF \to R
\]
is an orientation as above.  We then recover the result of Sullivan
and of May, Quinn, Ray, and Tornehave \cite{MR2162361,MQRT:ersers}.

\begin{Corollary}\label{t-th-classical-orientation-sullivan-MQR}
Let $B (S,R)$ to be the pull-back in the solid diagram
\[
\xymatrix{ {P} \ar[r] \ar[d] & {B (S,R)} \ar[r] \ar[d] & {E\GL{R}}
\ar[d]
\\
{B} \ar[r] \ar@{-->}[ur] & {B\GL{S}} \ar[r] & {B\GL{R}.}  }
\]
Then the space of $R$-orientations of $MG$ is the space of indicated
lifts.
\end{Corollary}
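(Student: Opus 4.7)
The plan is to derive the corollary directly from Theorem \ref{t-th-or-unstable-and-stable}(2) applied to the composite $F=B\GL{\iota}\circ G$, by threading together three identifications: the definition of $R$-orientation for $MG$, the base-change formula $MF\heq MG\smash^{L} R$, and the universal property of the pullback $B(S,R)$.

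First I would unwind the definition of an $R$-orientation of $MG$ stated just above the corollary. By construction an $R$-orientation of $MG$ is a map of spectra $MG\to R$ whose adjoint $R$-module map $MG\smash^{L} R\to R$ is an orientation in the sense of Definition following \eqref{eq:35}. The free-forgetful adjunction between spectra and right $R$-modules, together with the equivalence \eqref{eq:7}, exhibits the derived mapping space $\spectra(MG,R)$ as equivalent to $\CatOf{right $R$--modules}(MF,R)$, and restricts to an equivalence between the subspace of $R$-orientations of $MG$ and the subspace of orientations of $MF$.

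Second, assuming (after fibrant replacement if necessary, which is harmless for both the homotopy type of $MF$ and that of the space of lifts) that $F\colon B\to B\GL{R}$ is a fibration of $*$-modules, Theorem \ref{t-th-or-unstable-and-stable}(2) identifies the space of orientations of $MF$ with the derived space of lifts in the square
\[
\xymatrix{ {P} \ar[r] \ar[d] & {E\GL{R}} \ar[d]
\\
{B} \ar[r]^-{F} \ar@{-->}[ur] & {B\GL{R}.}  }
\]
Third, because $F$ factors through $G$ as $B\xra{G} B\GL{S}\xra{B\GL{\iota}} B\GL{R}$, and $B(S,R)$ is by construction the homotopy pullback of $E\GL{R}\to B\GL{R}\leftarrow B\GL{S}$, the universal property of the (homotopy) pullback identifies lifts of $F$ through $E\GL{R}$ with lifts of $G$ through $B(S,R)\to B\GL{S}$. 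Chaining these three identifications yields the corollary.

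The main obstacle I expect is a bookkeeping one rather than a conceptual one: namely, checking that the adjunction identifying $\spectra(MG,R)\heq\CatOf{right $R$--modules}(MF,R)$ really does match the two notions of orientation on the nose, i.e.\ that under the equivalence \eqref{eq:35} the condition ``fiberwise weak equivalence'' of Theorem \ref{t-th-or-unstable-and-stable}(1) for $MF\to R$ corresponds precisely to the condition that the adjoint $MG\to R$ lift fiberwise to $\GL{R}\subset\linf R$. Once this compatibility is in place, all three steps above are formal and the corollary follows.
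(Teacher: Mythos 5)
Your proposal is correct and matches the paper's own (implicit) argument: the corollary is obtained exactly by combining the definition of an $R$-orientation of $MG$ via the induced $R$-module map on $MF \heq MG\smash^{L}R$, Theorem \ref{t-th-or-unstable-and-stable}(2) applied to $F = B\GL{\iota}\circ G$, and the (homotopy) pullback property of $B(S,R)$ translating lifts of $F$ through $E\GL{R}$ into lifts of $G$ through $B(S,R)$. The compatibility you flag at the end is not an extra obstacle, since the paper's definition of $R$-orientation is stated directly in terms of the induced $R$-module map being an orientation.
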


\subsection{Quasicategories and units} \label{sec:quasicatgories-units}

As we observed in \S\ref{sec:prol-thom-isom}, the useful notion of
$\GL{R}$-bundle is that of a homotopy sheaf.  Joyal's theory of
quasicategories, as developed in Lurie's book \cite{math.CT/0608040},
allows us to be precise about this. Specifically, we use this theory
in \S\ref{sec:units-via-infty} and \S\ref{sec:i-cat-ii} to give an
account of parametrized spectra (homotopy sheaves), Thom spectra and
orientations which is very close to the intuitive picture discussed in
\S\ref{sec:prol-thom-isom}.

In the new theory, the analogue of $B\GL{R}$ is $\Rwe$, the
subcategory of the $\infty$-category $\Rmod$ of $R$-modules consisting
of equivalences of free rank-one cofibrant and fibrant $R$-modules.  To see the
virtues of $\Rwe$, we note suggestively that it contains all
two-simplices of the form
\[
\xymatrix{ {L} \ar[r]^{f} \ar[dr]_{h} & {M} \ar[d]^{g}
\ar@{}[dl]|(.32){\sigma}
\\
 & {N,} }
\]
where $f,$ $g,$ and $h$ are $R$-module weak equivalences, and $\sigma$
is a homotopy of weak equivalences from $gf$ to $h.$ This is
the sort of data mentioned at \eqref{eq:37}, and it exhibits $\Rwe$
as the classifying space for ``homotopy local systems of free
$R$-modules of rank $1$'' (we call them bundles of $R$-lines).

Specifically, for a fibrant simplicial set $X$, there is an
equivalence of $\infty$-categories between maps $X \to \Rmod$ and
bundles of $R$-modules over $X$.  The identity map $\Rmod \to \Rmod$
classifies the ``universal bundle of $R$-modules'', and pulling back
along the inclusion $\Rwe \to \Rmod$ gives the the universal bundle of
$R$-lines.  We discuss this general approach to parametrized spectra
in detail in Section~\ref{sec:infinity-category-r}.

Now suppose that $R$ is a cofibrant and fibrant algebra in spectra.
Let $\Rcf$ be an object of $R$-line, and let
\[
    \Aut (\Rcf) \eqdef \Rwe (\Rcf,\Rcf) \subset \Rmod (\Rcf,\Rcf)
\]
be the subspace of weak equivalences.  It is a group in the
$\i$-categorical sense: as the automorphisms of $\Rcf$
in the $\i$-category $\Rwe$, it is a group-like monoidal
$\i$-groupoid, or equivalently a group-like $A_{\i}$ monoid.  Note
that $\Rmod (\Rcf,\Rcf)$ is the derived space of endomorphisms of $R$,
and so $\Aut (\Rcf)$ is the derived space of self weak equivalences of
$R.$  We show (Proposition \ref{inf-t-pr-Rwe-and-GL} and
\S\ref{sec:i-category-r}) that
\[
   \GL{(\Rcf)} \heq \Aut (\Rcf).
\]

Now the full $\i$-subcategory
of $\Rwe$ on the single object $\Rcf$ is just $B\Aut (\Rcf).$
By definition, $\Rwe$ is a connected $\i$-groupoid (connected Kan
complex), and so the inclusion
\[
     B\Aut (\Rcf) \to \Rwe
\]
is an equivalence, and it follows that
\[
   B\GL{(\Rcf)} \heq \Rwe.
\]

The analogue of $E\GL{R}$ is $\Rtriv$, the
$\infty$-category of trivialized $R$-lines: $R$-modules equipped with
a specific equivalence to $R$.  It is a contractible Kan complex, and
the natural map
\[
     \Rtriv \rightarrow \Rwe
\]
is a Kan fibration, our model for the fibration $E\GL{R} \to B\GL{R}.$

In this setting, a map $X\to B\GL{R}$ corresponds to a map of simplicial sets
\[
  f: \Sing{X} \to \Rwe,
\]
where $\Sing{X}$ is the $\i$-groupoid (the singular complex) of the
space $X$.  By construction, $\Rwe$ comes equipped with  a universal
bundle $\mathscr{L}$ of $R$-lines, and the map $f$ classifies
the bundle of
$R$-lines $f^{*}\mathscr{L}$ over $X.$  A lift in the diagram
\begin{equation} \label{eq:53}
\xymatrix{ & {\Rtriv}
 \ar[d] \\
{\Sing{X}} \ar@{-->}[ur] \ar[r]_-{f} & {\Rwe;}  }
\end{equation}
corresponds to an equivalence of bundles of $R$-lines
\[
     f^{*}\mathscr{L} \xra{\heq} R_{X},
\]
where $R_{X}$ denotes the trivial bundle of $R$-lines over $X.$

In analogy to \eqref{eq:31}, we have the following.

\begin{Definition}\label{def-7}
The Thom spectrum $Mf$ of $f$ is the colimit of the map of
$\infty$-categories
\[
    \Sing{X} \xra{f} \Rwe \xra{} \Rmod.
\]
\end{Definition}

Note that the colimit is the same as the left Kan extension along the
map to a point, so this definition is an analogue of the May-Sigurdsson
description of the Thom spectrum as the composite of the pullback of a
universal parametrized spectrum followed by the base change along the
map to a point \cite[23.7.1,23.7.4]{MR2271789}.   Using this
definition, it is straightforward to prove the following.

\begin{Theorem} \label{t-i-cat-obstr-intro}
The space of orientations of $Mf$ is weakly equivalent to the space of
lifts in the diagram \eqref{eq:53},
equivalently, to the space of trivializations
\[
    f^{*}\mathscr{L} \to R_{X}.
\]
\end{Theorem}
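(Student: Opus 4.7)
The plan is to combine the universal property of the colimit defining $Mf$ with the identification of $\Rtriv$ as the slice $\i$-category $(\Rwe)_{/R}$, using that $\Sing X$ is an $\i$-groupoid.

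By Definition~\ref{def-7}, $Mf$ is the colimit in $\Rmod$ of the diagram $\Sing X \xra{f} \Rwe \hookrightarrow \Rmod$, so the defining universal property of $\i$-categorical colimits gives a natural equivalence of derived mapping spaces
\[
\Rmod(Mf, R) \heq \Map_{\Fun(\Sing X, \Rmod)}(f, \text{const}_R),
\]
where $\text{const}_R$ is the constant functor at $R$. Under this equivalence, the restriction of a map $Mf \to R$ to a fiber $M_x = f(x)$ corresponds to the value of the associated natural transformation at $x \in \Sing X$. An orientation of $Mf$ --- an $R$-module map whose restriction to every fiber is a weak equivalence, in the sense of Theorem~\ref{t-th-or-unstable-and-stable} --- therefore corresponds to a natural transformation $f \to \text{const}_R$ that is pointwise an equivalence. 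Because $\Rwe \subset \Rmod$ is the sub-$\i$-groupoid of $R$-lines and equivalences, and $\Sing X$ is already an $\i$-groupoid, such transformations are exactly the morphisms from $f$ to $\text{const}_R$ in the functor $\i$-category $\Fun(\Sing X, \Rwe)$.

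To finish, I would identify $\Rtriv$ with the over-category $(\Rwe)_{/R}$ --- an object of $\Rtriv$ is an $R$-line equipped with an equivalence to $R$, which is precisely a morphism in $\Rwe$ with codomain $R$. The universal property of the slice \cite{math.CT/0608040} then gives a natural equivalence between lifts $\Sing X \to \Rtriv$ of $f$ and morphisms $f \to \text{const}_R$ in $\Fun(\Sing X, \Rwe)$. Combining this with the previous paragraph yields the weak equivalence between the space of orientations and the space of lifts of \eqref{eq:53}; the further identification with the space of trivializations $f^{*}\mathscr{L} \to R_X$ was already noted in the excerpt's discussion of diagram~\eqref{eq:53}.

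The main obstacle I anticipate is the first step: ensuring that the universal property of the $\i$-categorical colimit in $\Rmod$ correctly recovers the classical notion of ``restriction to a fiber,'' so that the orientation condition translates cleanly to a pointwise condition on natural transformations. This amounts to verifying that the natural map $M_x \to Mf$ agrees, up to equivalence, with the structure map from $f(x)$ into the colimit, and that orientations are indeed detected pointwise. Once this is settled, the remaining steps reduce to standard manipulations with slice and functor $\i$-categories from \cite{math.CT/0608040}.
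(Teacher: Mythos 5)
Your proposal is correct and follows essentially the same route as the paper's proof of Theorem \ref{t-th-or-thy-infty-lifting}: the identification $\Rmod(Mf,R)\heq\Map_{\Fun(\Sing{X},\Rmod)}(f,\mathrm{const}_R)$ is the paper's $p_!\dashv p^*$ adjunction (stated as $\Rmod(Mf,R)\heq\Mod{R_X}(f^*\mathscr{L},p^*R)$), and your slice-category step identifying lifts $\Sing{X}\to\Rtriv$ with morphisms $f\to\mathrm{const}_R$ in $\Fun(\Sing{X},\Rwe)$ is exactly Corollary \ref{triv(f)=triv(f^*L)}, which rests on $\Fun(X,\Rwe_{/R})\heq\Fun(X,\Rwe)_{/p^*R}$ together with Propositions \ref{RXline=map(X,Rline)} and \ref{inf-t-pr-egl-bgl-fib}. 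One clarification about the step you flag as the main obstacle: in the $\i$-categorical framework there is nothing to verify there, because Definition \ref{inf-def-2} \emph{defines} the space of orientations as the subspace of $\Rmod(Mf,R)$ corresponding under this adjunction to $\Line{R_X}(f^*\mathscr{L},p^*R)$, i.e.\ to the transformations $f\to\mathrm{const}_R$ that are pointwise equivalences; so the translation you worry about is definitional rather than a fiberwise-detection statement to be proved. By contrast, invoking Theorem \ref{t-th-or-unstable-and-stable} as the definition of orientation would be importing the criterion for the operadic Thom spectrum of Definition \ref{def-6}, which only becomes legitimate for the colimit construction of Definition \ref{def-7} after the comparison of the two Thom spectrum functors in \S\ref{sec:comp-thom-spectra}; with Definition \ref{inf-def-2} in hand, your argument closes up and coincides with the paper's.
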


This definition also implies the following characterization, which
plays a role in \S\ref{sec:comp-thom-spectra} when we compare our
approaches to Thom spectra.  Recall that $\Aut(\Rcf) = \Rwe(\Rcf,\Rcf)$ is
a group in the $\i$-categorical sense: it is a group-like monoidal
$\i$-groupoid, or equivalently a group-like $A_{\i}$ monoid.   In general
if $G$ is a group-like monoidal $\i$-groupoid, then it has a classifying
$\i$-groupoid $BG.$

\begin{Proposition} \label{t-thom-sp-is-R-mod-G-intro}
Let $G$ be a group-like monoidal $\i$-groupoid.
Then the Thom spectrum of a map $BG \xra{} \Rmod$ is equivalent to the
homotopy quotient $\Rcf / G$.
\end{Proposition}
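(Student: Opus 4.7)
The plan is to interpret the map $f: BG \to \Rmod$ as encoding a $G$-action on its value $\Rcf := f(*)$, and then to recognize the colimit defining $Mf$ as the standard homotopy orbits of this action.

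Since $G$ is a group-like monoidal $\i$-groupoid, its classifying $\i$-groupoid $BG$ is a connected Kan complex with a distinguished vertex $*$ satisfying $\Omega_{*}BG \heq G$. A functor $f: BG \to \Rmod$ thus determines the object $\Rcf := f(*)$ together with an induced map of monoidal $\i$-groupoids
\[
G \heq \Omega_{*}BG \to \Rmod(\Rcf,\Rcf).
\]
Since $BG$ is an $\i$-groupoid, this map factors through the maximal sub-$\i$-groupoid of $\Rmod$, and in particular through $\Aut(\Rcf) = \Rwe(\Rcf,\Rcf)$. Equivalently, $f$ is precisely the data of an $A_{\i}$-action of $G$ on the $R$-module $\Rcf$, which is the natural $\i$-categorical counterpart of the classical Borel set-up $EG \times_{G} \Rcf$ from the motivating discussion in \S\ref{sec:units}.

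By Definition \ref{def-7}, taking $BG$ itself as a Kan complex representing $X = |BG|$, the Thom spectrum $Mf$ is the colimit in $\Rmod$ of the composite $BG \xra{f} \Rwe \to \Rmod$. The universal property of $BG$ as the classifying $\i$-groupoid for $G$-actions, combined with the cocompleteness of $\Rmod$, identifies the colimit in a cocomplete $\i$-category $\C$ of any functor $BG \to \C$ classifying a $G$-action on $c$ with the homotopy quotient $c/G$ (equivalently, the geometric realization of the two-sided bar construction $B_{\bullet}(*,G,c)$); see the treatment of colimits indexed by $\i$-groupoids in \cite{math.CT/0608040}. Applying this to our situation yields $Mf \heq \Rcf/G$, as claimed.

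The one step deserving attention is the general identification of an $\i$-categorical colimit over $BG$ with the homotopy orbits construction; this is essentially built into the meaning of $BG$ as the classifying $\i$-groupoid for $G$-actions, and presents no substantial obstacle once the foundations of \cite{math.CT/0608040} are in place. The main work of the proof is therefore bookkeeping: matching the straightening of $f$ with the notion of $G$-action on an $R$-module $\Rcf$ used in the statement of the proposition.
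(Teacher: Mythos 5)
Your proof is correct and takes essentially the same route as the paper: both unwind Definition \ref{def-7} to identify $Mf$ as the colimit of $BG \xra{} \Rwe \xra{} \Rmod$ and then recognize that colimit as the homotopy quotient $\Rcf/G$. The only difference is that the paper treats this last identification as definitional (cf.\ Proposition \ref{quotient}, where the colimit is viewed as the left Kan extension along $BG\to\ast$), whereas you unpack it via the straightening of $f$ to a $G$-action and the bar-construction description of homotopy orbits --- a harmless elaboration of the same argument.
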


%
The Proposition follows immediately from the construction of the Thom
spectrum, since by definition the quotient in the statement is the
colimit of the map of $\i$-categories
\[
   BG \xra{} \Rwe \to \Rmod.
\]

In these introductory remarks, we have taken some care to relate the
$\i$-categorical treatment to other approaches.  In fact, as explained
in \cite{math.CT/0608040,DAGI,DAGII}, it is possible to develop the $\i$-categories of
of spectra, algebras in spectra, and $R$-modules entirely in the
setting of quasicategories, without appeal to external models of
spectra or even of spaces.  We take this approach to Thom spectra and
orientations in \S\ref{sec:units-via-infty}.  It leads
to a clean discussion, free from distractions of comparison to other
models for spectra.  We also hope that it will serve as a useful
introduction to the results of \cite{DAGI}.

For the reader who prefers to begin with a classical
model for spectra,  in \S\ref{sec:i-cat-ii} we recapitulate some of
the discussion in \S\ref{sec:units-via-infty}, building the
$\i$-category $\Rmod$ from a monoidal simplicial model category of
spectra.  We make use of some basic results from \cite{math.CT/0608040}, but do
not require anything from \cite{DAGI,DAGII}.  We hope that this
section offers a useful introductory example of doing
homotopy theory with quasicategories.

\subsection{Comparison of Thom spectra}

So far we have given three constructions of Thom spectra, Definitions
\ref{def-5}, \ref{def-6}, and \ref{def-7}.   In section
\S\ref{sec:comp-thom-spectra}, we compare these notions to each other
and to the Thom spectra of Lewis-May-Steinberger and May-Sigurdsson
\cite{LMS:esht,MR2271789}.

An inspection of Definitions \ref{def-5} and \ref{def-6} shows that
the following result is a consequence of the familiar fact that, if
$Mf$ is the homotopy pushout in a diagram of $\einfty$ ring spectra
\[
\begin{CD}
   \splus\linf \gl{R} @>>> R \\
   @VVV @VVV \\
   \splus \linf p @>>> Mf,
\end{CD}
\]
then the spectrum underlying $Mf$ is the derived smash product
\[
      Mf \heq \splus \linf p \smash^{L}_{\splus \linf \gl{R}} R.
\]

\begin{Proposition}
Let $R$ be an $\einfty$ ring spectrum, and let $f: b \to b\gl{R}$  be
a map.  The spectrum underlying the $\einfty$ Thom spectrum $Mf$ of
Definition \ref{def-5} is equivalent to the spectrum $M\linf f$ of
Definition \ref{def-6}. 
\end{Proposition}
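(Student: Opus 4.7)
The plan is to compute both spectra as derived smash products over $\splus\GL{R}$ and show that the three pieces $R$, $\splus\GL{R}$, and the spectrum of the ``total space'' of the pullback agree in the two definitions. By Definition \ref{def-5}, the $\einfty$ ring spectrum $Mf$ is the homotopy pushout in the diagram \eqref{eq:6} of $\einfty$ ring spectra, with upper left corner $\splus\linf\gl{R}$, upper right $R$, and lower left $\splus\linf p$. The preceding paragraph already exhibits the underlying spectrum of this pushout as the derived smash product $\splus\linf p \smash^{L}_{\splus\linf\gl{R}} R$, invoking the standard fact that pushouts of $\einfty$ ring spectra (along a sufficiently cofibrant leg) are computed at the level of underlying spectra by derived smash products. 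On the other side, Definition \ref{def-6} applied to $\linf f \colon B \to B\GL{R}$ defines $M(\linf f)$ directly as $\splus P \smash^{L}_{\splus\GL{R}} R$, where $P$ is the pullback of $E\GL{R} \to B\GL{R}$ along a fibrant model of $\linf f$.

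Thus the proof reduces to matching the three inputs of the smash product. For the base ring spectrum this is the identity map on $R$. For $\splus\GL{R}$, one uses that $\GL{R} = \linf\gl{R}$ by the very definition of $\gl{R}$ as a connective $\einfty$ delooping of the unit space; in particular $\splus\linf\gl{R} \heq \splus\GL{R}$ as $\einfty$ ring spectra, compatibly with the map to $R$. Finally, the spectrum $p$ in Definition \ref{def-5} is by construction the homotopy fiber of $f \colon b \to b\gl{R}$; since $b\gl{R}$ is connective, $\linf$ preserves this homotopy pullback, so $\linf p$ is the homotopy fiber of $\linf f \colon B \to B\GL{R}$, which is canonically equivalent to the pullback $P$ formed in Definition \ref{def-6}. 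These identifications are compatible with the right $\splus\GL{R}$-module structures, so the two derived smash products agree.

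The main technical nuisance will be keeping track of cofibrancy: to conclude both that the $\einfty$ pushout equals the derived smash product and that $\splus$ applied to the identifications of $P$ and $\GL{R}$ produces equivalences of $\splus\GL{R}$-modules, one needs cofibrant replacements at the space level and a cofibrant replacement of $\splus\linf\gl{R} \to \splus\linf p$ as a map of $\einfty$ ring spectra (or of $\splus\linf\gl{R}$-modules). This is standard in the $S$-module framework of \cite{EKMM} and in the $*$-module framework set up in \S\ref{sec:ainfty-thom-spectrum}, so no new input is required beyond verifying that the comparison map is compatible with these cofibrant replacements.
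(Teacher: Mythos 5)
Your proposal is correct and is essentially the paper's own argument: the paper treats this proposition as an immediate consequence of the familiar fact that the spectrum underlying the $\einfty$ pushout \eqref{eq:6} is the derived smash product $\splus\linf p\smash^{L}_{\splus\linf\gl{R}}R$, combined with the identifications $\linf\gl{R}\heq\GL{R}$ and $\linf p\heq P$ already recorded at \eqref{eq:38} and \eqref{eq:5}. Your extra remarks on cofibrancy and on $\linf$ preserving the homotopy pullback simply make explicit what the paper leaves as an ``inspection of Definitions \ref{def-5} and \ref{def-6}.''
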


Given this simple observation, our main focus in
\S\ref{sec:comp-thom-spectra} is on comparing the $\ainfty$
construction of Definition \ref{def-6} and
\S\ref{sec:ainfty-thom-spectrum}, the quasicategorical
construction of Definition \ref{def-7} and
\S\ref{sec:units-via-infty}, and the constructions of
\cite{LMS:esht,MR2271789}.

In \S\ref{sec:i-category-r} (see Proposition
\ref{inf-ii-t-pr-egl-bgl-fib}) we show that $B\GL{R} \heq \Rwe,$
so in \S\ref{sec:comp-thom-spectra} we focus on more conceptual
matters.   Essentially, we are confronted with topological analogues
of the two definitions \eqref{eq:31} and \eqref{eq:32}.    Definition
\ref{def-6} associates to a fibration of $*$-modules $f: B\to B\GL{R}$
the pull-back in the diagram
\[
\begin{CD}
P @>>> E\GL{R}\\
@VVV @VVV \\
B @> f >> B\GL{R},
\end{CD}
\]
The Thom spectrum is then defined to be
\[
    \splus P \Smash^{L}_{\splus \GL{R}}  R.
\]
Definition \ref{def-7} associates to $f$ the ($\i$-category) colimit
of the map
\begin{equation} \label{eq:57}
    \Sing{B} \xra{f} \Sing{B\GL{R}} \heq B\Aut (\Rcf)\heq \Rwe  \to \Rmod.
\end{equation}

The $\i$-categorical setting makes it possible for the comparison of
these two constructions to proceed much as in the discrete case, discussed in
\S\ref{sec:units} (see \eqref{eq:56}).  Just as a set $X$ is the
colimit of the constant map
\[
     X \iso \colim ( X \to \CatOf{sets}),
\]
so if $X$ is a space then it is weakly equivalent to the
$\i$-categorical colimit of the constant map
\[
     X \heq \colim (\Sing{X} \to \CatOf{spaces}),
\]
where $\CatOf{spaces}$ denotes the $\i$-category of spaces.
More generally, if $P\to B$ is principal $G$-bundle and $\CatOf{free
$G$-spaces}$ is the $\i$-category of free $G$-spaces, then
\[
     P \heq \colim (\Sing{B}\to \CatOf{free $G$-spaces} \heq BG);
\]
in our setting this becomes
\[
    P  \heq \colim (\Sing{B} \to \CatOf{$\GL{R}$-spaces} \heq B\GL{R}),
\]
and so $\splus P \Smash_{\splus\GL{R}} R $ is the colimit of
\begin{equation} \label{eq:58}
 \Sing{B} \to \Sing{B\GL{R}} \heq \CatOf{$\GL{R}$-modules} \xra{\splus (\slot)}
    \CatOf{$\splus \GL{R}$-spaces}
       \xra{(\slot)\Smash_{\splus \GL{R}} R} \Rmod.
\end{equation}
From this point of view our job is to show that the two functors
\eqref{eq:57} and \eqref{eq:58}
\[
     \Sing{B\GL{R}} \heq \Rwe \rightarrow \Rmod
\]
are equivalent, which amounts to showing that in each case the functor
is equivalent to the standard inclusion of $\Rwe$ in $\Rmod.$

We develop an efficient proof along these lines in
\S\ref{sec:algebr-thom-spectr}.  However, much of
\S\ref{sec:comp-thom-spectra} is devoted to a more general
characterization of the Thom
spectrum functor from the point of view of Morita theory.  This
viewpoint is implicit in the definition of the Thom spectrum in
Definition~\ref{def-6} as the derived smash product with $R$ regarded
as an $\splus \GL{R}$-$R$ bimodule specified by the canonical action of
$\splus \GL{R}$ on $R$.  Recalling that the target category of
$R$-modules is stable, we can regard this Thom spectrum as essentially
given by a functor from (right) $\splus \GL{R}$-modules to $R$-modules.

Now, roughly speaking, Morita theory (more precisely, the
Eilenberg-Watts theorem) implies that any continuous functor from
(right) $\splus \GL{R}$-modules to $R$-modules which preserves homotopy
colimits and takes $\GL{R}$ to $R$ can be realized as tensoring with
an appropriate $(\splus \GL{R})$-$R$ bimodule.  In particular, this
tells us that the Thom spectrum functor is characterized amongst such
functors by the additional data of the action of $\GL{R}$ on $R$.

In \S\ref{sec:comp-thom-spectra} we develop these ideas in the setting of
$\i$-categories.  Given a functor
\[
      F: \spaces_{/B\Aut (\Rcf)} \to \Rmod
\]
which takes $\ptspace/B\Aut (\Rcf)$ to $\Rcf,$
we can restrict along the Yoneda embedding \eqref{eq:63}\footnote{If $X$ is a space, viewed as an $\i$-groupoid, then the equivalence of $\i$-categories $\spaces/X\simeq\Fun(X^{op},\spaces)$ allows us to regard the Yoneda embedding as a functor $X\to\spaces/X$ which, roughly, sends the point $p$ of $X$ to the ``path fibration'' $X_{/p}\to X$.}
\[
      B\Aut (\Rcf) \xra{} \spaces_{/B\Aut (\Rcf)} \xra{F} \Rmod;
\]
since it takes the object of $B\Aut (\Rcf)$ to $\Rcf$, we may view
this as a functor (map of simplicial sets)
\[
   k: B\Aut (\Rcf) \to B\Aut (\Rcf).
\]
Conversely, given a map $k: B\Aut
(\Rcf) \to B\Aut (\Rcf),$ we get a colimit-preserving functor
\[
    F: \spaces_{/B\Aut (\Rcf)} \to \Rmod
\]
whose value on $B \to B\Aut (\Rcf)$ is
\[
   F (B/B\Aut (\Rcf))  =  \colim (B \xra{} B\Aut (\Rcf) \xra{k} B\Aut
   (\Rcf)\hookrightarrow  \Rmod).
\]
About this correspondence we prove the following.
\begin{Proposition}[Corollary \ref{t-co-axiom-geom-thom-functor}]
A functor $F$ from the $\i$-category $\spaces_{/B\Aut (R)}$ to the
$\i$-category of $R$-modules is equivalent to the Thom spectrum
functor if and only if it
preserves colimits and its restriction along the Yoneda embedding
\[
B\Aut (\Rcf) \to \spaces_{/B\Aut (\Rcf)} \xra{F} \Rmod
\]
is
equivalent to the canonical inclusion
\[
  B\Aut (\Rcf) \xra{\heq} \Rwe \xra{} \Rmod.
\]
\end{Proposition}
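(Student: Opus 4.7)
The plan is to exploit the universal property of the $\i$-category of presheaves. Via the equivalence $\spaces_{/B\Aut(\Rcf)} \heq \Fun(B\Aut(\Rcf)^{\op}, \spaces)$ obtained from straightening/unstraightening (\HTT{2.2}), together with the fact that $B\Aut(\Rcf)$ is an $\i$-groupoid, we identify $\spaces_{/B\Aut(\Rcf)}$ with the free cocompletion of $B\Aut(\Rcf)$. Once this identification is in place, the proposition is essentially a direct application of the universal property of free cocompletion (\HTT{5.1.5.6}).

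For the ``only if'' direction, recall from Definition \ref{def-7} that the Thom spectrum of a map $f: B \to B\Aut(\Rcf)$ is the colimit of the composite $\Sing{B} \xra{f} \Rwe \hookrightarrow \Rmod$. Promoting this construction to a functor
\[
T: \spaces_{/B\Aut(\Rcf)} \to \Rmod,
\]
we recognize $T$ as the left Kan extension along the Yoneda embedding $y: B\Aut(\Rcf) \to \spaces_{/B\Aut(\Rcf)}$ of the canonical inclusion $B\Aut(\Rcf) \heq \Rwe \hookrightarrow \Rmod$. As a left adjoint to restriction, $T$ preserves colimits, and by the general theory of Kan extensions along a fully faithful embedding, the restriction of $T$ along $y$ recovers the canonical inclusion. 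Thus $T$ satisfies both hypotheses of the proposition.

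For the converse, the universal property (\HTT{5.1.5.6}) asserts that restriction along $y$ induces an equivalence of $\i$-categories
\[
\Fun^{L}(\spaces_{/B\Aut(\Rcf)}, \Rmod) \xra{\heq} \Fun(B\Aut(\Rcf), \Rmod),
\]
where $\Fun^{L}$ denotes colimit-preserving functors. Here we use that $\Rmod$ is cocomplete, which is established in \S\ref{sec:units-via-infty}. Since both $F$ and $T$ lie in $\Fun^{L}$ and have equivalent restrictions along $y$ --- both equal to the canonical inclusion $B\Aut(\Rcf) \heq \Rwe \hookrightarrow \Rmod$ --- they must be equivalent as objects of $\Fun^{L}(\spaces_{/B\Aut(\Rcf)}, \Rmod)$.

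The main obstacle will be bookkeeping around the Yoneda embedding: one must verify that the ``Yoneda embedding'' $B\Aut(\Rcf) \to \spaces_{/B\Aut(\Rcf)}$ described in the footnote (sending a point $p$ to the path fibration $X_{/p} \to X$) agrees, under the straightening/unstraightening equivalence $\spaces_{/B\Aut(\Rcf)} \heq \Fun(B\Aut(\Rcf)^{\op}, \spaces)$, with the standard Yoneda embedding into the presheaf $\i$-category; and one must confirm that $T$ as defined above is indeed the left Kan extension of the canonical inclusion. These are straightforward but non-trivial unpackings; once handled, the rest of the argument proceeds formally from the universal property.
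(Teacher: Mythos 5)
Your proposal is correct and follows essentially the same route as the paper: the paper also identifies $\spaces_{/B\Aut(\Rcf)}\heq\Fun(B\Aut(\Rcf)^{\op},\Gpd_\i)$ (Proposition \ref{t-pr-G-space-space-over-bg}), shows via Corollary \ref{t-co-yoneda-inverse} and Remark \ref{rem-2} that the Thom spectrum functor is exactly the colimit-preserving (Yoneda/Kan) extension of the canonical inclusion $\Rwe\to\Rmod$ (Proposition \ref{t-pr-yoneda-taut}), and then concludes by the universal property \HTT{5.1.5.6}. The bookkeeping points you flag are precisely what those intermediate results handle, so nothing essential is missing.
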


It follows easily (Corollary \ref{t-co-thom-spectra-equivalent}) that
the Thom spectrum functors of
Definitions \ref{def-6} and \ref{def-7} are equivalent.
It also follows that, as in Proposition
\ref{t-thom-sp-is-R-mod-G-intro}, the Thom spectrum of a group-like $\ainfty$ map
\[
    \phi: G \to \GL{S}
\]
is the homotopy quotient
\[
    \colim (BG \to \Rmod) \heq R_{h G}.
\]
This observation is the basis for our comparison with the Thom
spectrum of Lewis and May.
In \S\ref{sec:neo-classical-thom} we show that the Lewis-May Thom
spectrum associated to the map
\[
   B\phi: BG \to B\GL{S}
\]
is a model for the homotopy quotient $S_{h G},$ and it follows easily
that we have the following.

\begin{Proposition}[Corollary \ref{t-co-LM-thom-comp}]
The Lewis-May Thom spectrum associated to a map
\[
    f: B \to B\GL{S}
\]
is equivalent to the Thom spectrum associated by Definition
\ref{def-7} to the map of $\infty$-categories
\[
     \Sing{B} \xra{\Sing{f}}  \Sing{B\GL{S}} \heq \Line{S}. 
\]
\end{Proposition}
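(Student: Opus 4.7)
The plan is to reduce the statement to the assertion, made in the prose preceding the Proposition, that the Lewis--May Thom spectrum of a map $B\phi: BG \to B\GL{S}$ arising from a group-like $A_\i$ map $\phi: G \to \GL{S}$ is a model for the homotopy quotient $S_{hG}$; the Proposition will then follow by combining this identification with Proposition \ref{t-thom-sp-is-R-mod-G-intro} on the $\i$-categorical side.

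First I would reduce to this special case. Given an arbitrary $f: B \to B\GL{S}$, replace $B$ up to weak equivalence by $BG$, where $G$ is the group-like $A_\i$ monoid $\Omega B$; the map $f$ then corresponds to a group-like $A_\i$ map $\phi: G \to \GL{S}$ with $B\phi \heq f$. Since both the Lewis--May Thom spectrum functor and the functor of Definition \ref{def-7} are homotopy invariant in their input, it suffices to treat maps of the form $B\phi$.

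Next, I would compute the $\i$-categorical Thom spectrum in this case. Under the equivalence $\Sing{B\GL{S}} \heq \Line{S} \heq B\Aut(\Rcf)$ for a cofibrant-fibrant $S$-line $\Rcf$ established in \S\ref{sec:i-category-r}, the map $\Sing{B\phi}$ becomes a map $BG \to B\Aut(\Rcf)$. Proposition \ref{t-thom-sp-is-R-mod-G-intro} then identifies the Thom spectrum of Definition \ref{def-7} with the homotopy quotient $\Rcf / G \heq S_{hG}$.

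The main obstacle lies on the Lewis--May side: one must show that the Lewis--May Thom spectrum of $B\phi$ is a model for $S_{hG}$. This requires an explicit analysis of the classical construction. The Lewis--May Thom spectrum arises by applying the Thom functor to the universal spherical quasifibration pulled back along $B\phi$. Using the $\ast$-module machinery of \S\ref{sec:ainfty-thom-spectrum} to rectify $\GL{S}$ to a strict monoid, I would identify this Thom spectrum with a two-sided bar construction model of $S_{hG}$, exploiting that the total space of the universal $\GL{S}$-bundle is a free contractible $\GL{S}$-space. Once this identification is carried out (this is the content of \S\ref{sec:neo-classical-thom}), the Proposition follows by assembling the chain of equivalences from the preceding steps.
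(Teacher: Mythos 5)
Your overall route is the same as the paper's: reduce to a map of the form $B\phi$ for a group-like $A_\i$ map $\phi\colon G\to\GL{S}$, identify the $\i$-categorical Thom spectrum of $\Sing{B\phi}$ with the homotopy quotient $S/G$ via Proposition \ref{t-thom-sp-is-R-mod-G-intro} (Proposition \ref{quotient}), and feed in the identification of the Lewis--May Thom spectrum of $B\phi$ with $S_{hG}$ (Proposition \ref{L-Mquotient}), which you correctly flag as the real content and defer to \S\ref{sec:neo-classical-thom}.

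There is, however, a genuine gap in your reduction step. Replacing $B$ by $BG$ with $G=\Omega B$ is only legitimate when $B$ is \emph{connected}; the statement imposes no such hypothesis, and ``homotopy invariance'' of the two functors only licenses replacing $B$ by a weakly equivalent space, not by the classifying space of its based loops when $B$ has several components. The paper closes this gap by first observing that both Thom spectrum functors preserve colimits --- for the Lewis--May functor this is \cite[IX.4.3]{LMS:esht}, and for the functor of Definition \ref{def-7} it is automatic since $Mf$ is itself a colimit (a left adjoint) --- so that one may decompose $B$ into its path components and thereby assume $B$ connected before invoking $B\heq BG$. Without this coproduct-preservation argument your proof does not cover disconnected $B$. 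A secondary remark, not a logical gap since you defer to \S\ref{sec:neo-classical-thom}: your sketch of the Lewis--May side (rectify $\GL{S}$ to a strict monoid in $*$-modules and compare bar constructions) is not how the paper proceeds; the Lewis--May construction is built level by level from Thom spaces of pulled-back spherical quasifibrations, and the paper's argument hinges on recognizing the $V$-th Thom space as the pointed quotient $S^V/F(V)$ in the $\sI_c$-FCP model and then commuting the colimit over shift desuspensions, $MF=\colim_V\Sigma^{-V}\Sigma^\i MF(V)$, with the homotopy quotient to obtain $S/F(\R^\infty)$; any honest proof of Proposition \ref{L-Mquotient} has to engage with that Thom-space-level structure rather than with the algebraic model of \S\ref{sec:ainfty-thom-spectrum}.
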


\subsection{Twisted generalized cohomology}

Our construction of Thom
spectra begins with an $\ainfty$ or $\einfty$
ring spectrum $R$, and attaches to a map
\[
    f: X \to B\GL{R}
\]
an $R$-module Thom spectrum $Mf.$  As we have explained, $B\GL{R}$
can be thought of as the classifying space for bundles of free
$R$-modules of rank $1.$  As such, it is the classifying space for
``twists'' of $R$-theory.  Let $F_{R}$ denote the $R$-module function
spectrum.  Given the map $f$, the \emph{$f$-twisted $R$-homology} of
$X$ is by definition
\[
     R^{f}_{k}X \eqdef \pi_{0}\Rmod (\Sigma^{k}R,Mf) \iso \pi_{k}Mf,
\]
while the \emph{$f$-twisted $R$-cohomology} of $X$ is
\[
    R_{f}^{k}X \eqdef \pi_{0}\Rmod (Mf, \Sigma^{k} R).
\]
If $f$ factors as
\begin{equation}\label{eq:17}
   f: X \xra{g} B\GL{S} \xra{i} B\GL{R},
\end{equation}
then as in \eqref{eq:7} we have
\[
     Mf \heq (Mg)\smash^{L} R,
\]
and so
\begin{align*}
  R_{k}^{f} (X) & = \pi_{k}Mf  \iso \pi_{k}Mg\smash^{L} R  = R_{k} Mg \\
  R^{k}_{f} (X) & = \pi_{0}\Rmod (Mf,\Sigma^{k}R) \iso \pi_{0} \Mod{S} (Mg,\Sigma^{k}R) \iso R^{k} Mg,
\end{align*}
so the $f$-twisted homology and cohomology coincide with untwisted
$R$-homology and cohomology of the usual Thom spectrum of the
spherical fibration classified by $g$.  Thus the constructions in this
paper exhibit twisted generalized cohomology as the cohomology of a
generalized Thom spectrum.  In general the twists correspond to maps
\[
  X \to B\GL{R};
\]
the ones which arise from Thom spectra of spherical fibrations are the
ones which factor as in \eqref{eq:17}.  We shall discuss the
relationship to other approaches to generalized twisted cohomology in
another paper in preparation.

\section{Units after May-Quinn-Ray}

\label{sec:constr-may-quinn}

Let $A$ be an $\einfty$ ring spectrum: then there is a spectrum
$\gl{A}$ such that
\begin{equation} \label{eq:38}
     \linf \gl{A}\heq \GL{A}.
\end{equation}
We recall the construction of $\gl{A},$ which is due to
\cite{MQRT:ersers}.  Since $A$ is an $\einfty$ spectrum, $\GL{A}$ is a
group-like $\einfty$ space, and group-like $\einfty$ spaces model
connective spectra.  More precisely, we prove the following result.

\begin{Theorem} \label{t-th-units}
The functors $\splus \linf$ and $\glsym$ induce adjunctions
\begin{equation} \label{eq:22}
\xymatrix{ {\splus \linf: \Ho \CatOf{$(-1)$-connected spectra}}
\ar@<.3ex>[r] & {\Ho \einftyspectra: \glsym} \ar@<.3ex>[l] }
\end{equation}
of categories enriched over the homotopy category of spaces.
\end{Theorem}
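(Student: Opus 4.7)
The plan is to prove Theorem \ref{t-th-units} by factoring the would-be adjunction through the intermediate category of group-like $\einfty$ spaces, working along the chain
\[
\Ho \CatOf{$(-1)$-connected spectra} \;\longleftrightarrow\; \Ho \CatOf{group-like $\einfty$ spaces} \;\longleftrightarrow\; \Ho \einftyspectra.
\]
First I would establish the rightmost adjunction
\[
\splus : \Ho \CatOf{$\einfty$ spaces} \;\rightleftarrows\; \Ho \einftyspectra : \linf
\]
by fixing compatible operadic models (for instance, $\mathcal{L}$-spectra from \cite{EKMM} and algebras over the linear isometries operad in based spaces). Since the underlying functor $\linf : \spectra \to \ptdspaces$ is lax symmetric monoidal and right Quillen with left adjoint $\splus$, the adjunction lifts to algebras over an $\einfty$ operad, giving a simplicial (or topological) Quillen adjunction whose derived unit and counit induce weak equivalences of derived mapping spaces.

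Second, I would show that restricting to group-like $\einfty$ spaces on the left and replacing $\linf$ by $\GLsym$ on the right preserves the adjunction. Concretely, I need to check that for a group-like $\einfty$ space $Y$ and a cofibrant-fibrant $\einfty$ ring spectrum $A$, the natural inclusion of derived mapping spaces
\[
\CatOf{$\einfty$ spaces}(Y, \GL{A}) \to \CatOf{$\einfty$ spaces}(Y, \linf A)
\]
is a weak equivalence. This is a path-component count: since $\GL{A} \to \linf A$ is the inclusion of the union of path components lying over $(\pi_0 A)^\times \subset \pi_0 A$, it suffices to note that any $\einfty$ map $Y \to \linf A$ realizes an $H$-map of commutative monoids $\pi_0 Y \to \pi_0 A$ whose source is a group, so its image lies in $(\pi_0 A)^\times$ and the map factors (essentially uniquely) through $\GL{A}$.

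Third, I would invoke the recognition principle of May--Thomason to supply an enriched Quillen equivalence between $\Ho \CatOf{group-like $\einfty$ spaces}$ and $\Ho \CatOf{$(-1)$-connected spectra}$, with the right adjoint modeled by $\linf$ and the left adjoint implemented by an explicit spectrification machine producing $\gl{A}$ as the connective spectrum whose zeroth space is $\GL{A}$; this step also matches the $\linf$ on spectra with the $\linf$ on $\einfty$ ring spectra under the forgetful functor. Composing the three pieces identifies the composite left adjoint with $\splus \linf$ and the composite right adjoint with $\glsym$, as required.

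The main obstacle is arranging the enrichment carefully, so that at each stage one has an equivalence of derived mapping spaces and not merely a bijection of hom-sets in homotopy categories. This requires choosing model structures in which all the functors involved are simplicial Quillen adjoints, and in particular verifying that the group completion/recognition machine yields a \emph{simplicial} Quillen equivalence between group-like $\einfty$ spaces and connective spectra. The EKMM setup handles the $\einfty$ side, and the May--Thomason machine, applied to the fibrant replacement implicit in the definition \eqref{eq:59} of $\GL{A}$, handles the passage to connective spectra; once these model-categorical foundations are in place, the unit/counit manipulations identifying the derived mapping spaces are routine.
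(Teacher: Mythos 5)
Your proposal is correct and uses the same decomposition as the paper: a composite of three adjunctions $\CatOf{$(-1)$-connected spectra} \leftrightarrows \gplike{C}{\spaces} \leftrightarrows \alg{C}{\spaces} \leftrightarrows \alg{C}{\spectra}$, in which $(\splus,\linf)$ is lifted to operad algebras (Proposition \ref{t-pr-l-spaces-l-spectra}; note that this rests on the isomorphism $C\splus X\iso\splus CX$ of Lemma \ref{t-le-C-commutes-with-susp} rather than on mere lax monoidality of $\linf$ --- that isomorphism is what makes the lifted left adjoint literally $\splus$), and $\GLsym$ is the right adjoint of the inclusion of group-like $C$-spaces (Proposition \ref{t-pr-GL}); your path-component argument in the second step is exactly the proof given there. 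The one place you genuinely diverge is the recognition step: you invoke May--Thomason \cite{MR508885} as a black box for the enriched equivalence $\ho\gplike{C}{\spaces}\heq\ho\CatOf{connective spectra}$, whereas the paper constructs an explicit Quillen pair $(\Sigma^{f},\Omega^{f})$ from a map of monads $f:\ptC\to\linf\sinf$ and proves Theorem \ref{t-th-einfty-spaces-spectra} directly, via the bar construction $B(\ptC,\ptC,X)$ and May's group completion theorem \cite{May:gils,MR0339152}, under point-set hypotheses (nondegenerate basepoint of $C(1)$, $\Sigma_{n}$-CW operad spaces) ensuring the relevant simplicial objects are proper. The Remark closing \S\ref{sec:einfty-spac-conn} records your route as an alternative: May--Thomason together with Santhanam's comparison of $C$-spaces with $\Gamma$-spaces \cite{Santhanam:Thesis} and the MMSS equivalence of group-like $\Gamma$-spaces with connective spectra \cite{MR1806878} does give another proof of that step. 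What the paper's direct argument buys is that the right adjoint of the recognition equivalence is literally $\linf$ with its $C$-structure, so the composite right adjoint is visibly $\glsym$, the composite left adjoint is visibly $\splus\linf$, and the enrichment follows formally from Lemma \ref{t-le-conn-equiv}; your version must still carry out the rigidification bookkeeping you flag at the end, namely promoting the machine to a simplicial/topological Quillen-level comparison compatible with the other two adjunctions and identifying its zeroth space with $\GL{A}$.
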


In more detail, in
\S\ref{sec:einfty-spectra}--\ref{sec:einfty-spaces-einfty-1} we recall
that if $C$ is an operad over the linear isometries operad, then there
are Quillen
model categories $\alg{C}{\spectra}$ and $\alg{C}{\spaces}$ of
$C$-algebras in spectra and spaces, and that
the adjunction
\[
       \splus: \spaces \leftrightarrows \spectra: \linf
\]
induces by restriction a continuous Quillen adjunction
\[
       \splus: \alg{\ptC}{\ptdspaces}\iso
\alg{C}{\spaces}\leftrightarrows \alg{C}{\spectra}: \linf
\]
We also recall that if $C$ and $D$ are $\einfty$ operads, then there is
a zig-zag of continuous Quillen equivalences
\[
   \alg{C}{\spectra} \heq \alg{D}{\spectra},
\]
so we have a robust notion of the homotopy category of $\einfty$ ring
spectra, which we denote $\ho \einftyspectra.$

If $X$ is a $C$-space then $\pi_{0}X$ is a monoid, and $X$ is said to
be \emph{group-like} if $\pi_{0}X$ is a group.  We write
$\gplike{C}{\spaces}$ for the full-subcategory of group-like
$C$-spaces.  If $X$ is a $C$-space, then in \S\ref{sec:einfty-spaces-group} we
define $\GL{X}$ to be the pull-back in the diagram
\[
\begin{CD}
   \GL{X} @>>> X \\
@VVV  @VVV \\
\pi_{0} (X)^{\times} @>>> \pi_{0}X.
\end{CD}
\]
The functor $X\mapsto \GL{X}$ is the \emph{right} adjoint of the
inclusion
\[
     \gplike{C}{\spaces} \rightarrow \alg{C}{\spaces}.
\]

One of the main results of \cite{May:gils,MR0339152} is that, for
suitable $\einfty$ operads $C$, $\gplike{C}{\spaces}$ is a model for connective
spectra.  In \S\ref{sec:einfty-spac-conn}, we express this result in
the language of model categories.  Let $C$ be a unital operad
(i.e. the zero space of the operad is a point),
equipped with a map of monads on pointed spaces
\[
   f:  C_{*} \to Q.
\]
Then $\linf: \spectra \to \spaces$ factors through $\alg{C}{\spaces},$
and we show that it has a left adjoint
\[
     \Sigma^{f}: \alg{C}{\spaces} \rightarrow \spectra.
\]
If the basepoint $\ptspace \to C (1)$ is non-degenerate, and if for
each $n$ the space $C (n)$ has the homotopy type of of a
$\Sigma_{n}$-CW complex, then the
adjoint pair $(\Sigma^{f},\linf)$ induces an equivalence of enriched
homotopy categories
\[
\xymatrix{ {\Sigma^{f}: \ho \gplike{C}{\spaces}} \ar@<.3ex>[r] & {\ho
\CatOf{connective spectra}: \Omega^{\infty}} \ar@<.3ex>[l] }.
\]

In \S\ref{sec:units:-proof-theorem} we put all this together.  For a
suitable $\einfty$ operad $C$, we have a sequence of
adjunctions (the left
adjoints are listed on top, and equivalence of homotopy categories is
indicated by $\Qeq$)
\[
\xymatrix{ {\splus \linf: \CatOf{$(-1)$-connected spectra}}
\ar@<-.3ex>[r]_-{\Omega^{f}} & \gplike{C}{\spaces}
\ar@<-.3ex>[l]_-{\Sigma^{f},\Qeq} \ar@<.3ex>[r] & {\alg{C}{\spaces}}
\ar@<.3ex>[l]^-{\GLsym} \ar@<.3ex>[r]^-{\splus} & {\alg{C}{\spectra}:
\glsym} \ar@<.3ex>[l]^-{\linf} }.
\]
This is our model for the adjunction of Theorem \ref{t-th-units}.

The rest of this paper depends on this section only through the
relationship \eqref{eq:38} between $\glsym$ and $\GLsym$ and Theorem
\ref{t-th-units}.  The reader will notice that our construction of
$\glsym$ and the proof of Theorem \ref{t-th-units} mostly amount to
assembling results from the literature, particularly
\cite{May:gils,MR0339152,MQRT:ersers,LMS:esht,EKMM}.  We wrote this
section in the hope that it can serve as a useful guide to
the literature.  In the meantime May has prepared a review of the
relevant multiplicative infinite loop space theory \cite{May:rant},
which also includes the results we need.

\begin{Remark}
Theorem \ref{t-th-units} can be
formulated as an adjunction of $\i$-categories
\[
\xymatrix{ {\splus \linf: \CatOf{$(-1)$-connected spectra}}
\ar@<.3ex>[r] & {\einftyspectra: \glsym} \ar@<.3ex>[l] }.
\]
\end{Remark}

\subsection{$\einfty$ spectra} \label{sec:einfty-spectra}

In this section we review the notion of a $C$-spectrum, where $C$ is
an operad (in spaces) over the linear isometries operad.  We also
recall the fact that the homotopy category of $\einfty$ spectra is
well defined, in the sense that if $C$ and $D$ are two $\einfty$
operads over the linear isometries operad, then the categories of
$C$-spectra and $D$-spectra are connected by a zig-zag of continuous
Quillen equivalences.

If $C$ is an operad, then for $k\geq 0$ we write $C (k)$ for the
$k^\text{th}$ space of the operad.   We also write $C$ for the
associated monad.  Let $\spectra = \spectra_{\universe{U}}$ denote the
category of spectra based on a universe $\universe{U}$, in the sense
of \cite{LMS:esht}.  Let $\Lin$ denote the linear isometries operad of
$\universe{U}$, and let $C\to \Lin$ be an operad over $\Lin$.  Then
\[
     CV = \bigvee_{k\geq 0} C (k)\ltimes_{\Sigma_{k}} V^{\wedge k}.
\]
is the free $C$-algebra on $V$.  We write $\alg{C}{\spectra}$
for the category of $C$-algebras in $\spectra$, and we call its
objects $C$-spectra.

In general $C (\ptspace) \iso \splus C (0)$ is the initial object of
the category of $C$-spectra.  We shall say that $C$ is \emph{unital}
if $C (0) = \ptspace,$ so that $C (0)\iso S$ is the sphere spectrum.

Lewis-May-Steinberger work with unital operads and the free
$C$-spectrum with prescribed unit.  If $S\to V$ is a spectrum under
the sphere, then we write $\ptC V$ for the free $C$ spectrum
on $V$ with unit $\iota: S\to V \to \ptC V.$ This is the pushout in
the category of $C$-spectra in the diagram
\begin{equation} \label{eq:41}
\begin{CD}
CS @>>> C (\ptspace )  = S \\
@V C \iota VV @VVV \\
CV @>>> \ptC V.
\end{CD}
\end{equation}
By construction, $\ptC$ participates in a monad on the category
$\spectra_{S/}$ of spectra under the sphere spectrum.

As explained in \cite[II, Remark 4.9]{EKMM},
\[
    \Svee (V) = S \vee V
\]
defines a monad on $\spectra,$ and we have an equivalence of
categories
\[
   \spectra_{S/} \iso \alg{\Svee}{\spectra}.
\]
It follows that there is a natural isomorphism
\begin{equation}\label{eq:44}
       C (V) \iso C_{*} \Svee (V).
\end{equation}
and (\cite[II, Lemma 6.1]{EKMM}) an equivalence of categories
\[
   \alg{C}{\spectra} \iso \alg{C_{*}}{\spectra_{S/}}.
\]

We recall the following, which can be proved easily using the argument
of \cite{EKMM,MR1806878}, in particular an adaptation of the
``Cofibration Hypothesis'' of \S{VII} of \cite{EKMM}.

\begin{Proposition} \label{t-pr-C-spectra-level-model}
The category of $C$-spectra has the structure of a cofibrantly
generated topological closed model category, in which the forgetful
functor to $\spectra$ creates fibrations and weak equivalences.  If
$\{A\to B \}$ is a set of generating (trivial) cofibrations of
$\spectra$, then $\{CA\to CB \}$ is a set of generating (trivial)
cofibrations of $\alg{C}{\spectra}$. 
\end{Proposition}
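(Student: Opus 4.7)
The plan is to lift the model structure from $\spectra$ along the free-forgetful adjunction $C \colon \spectra \leftrightarrows \alg{C}{\spectra} : U$, following the standard transfer principle for cofibrantly generated model categories. First I would declare that a morphism $f$ in $\alg{C}{\spectra}$ is a weak equivalence (resp.\ fibration) if and only if $Uf$ is, and take the generating cofibrations $I_{C} = \{CA\to CB\}$ (resp.\ generating trivial cofibrations $J_{C}$) to be the images under $C$ of the generating (trivial) cofibrations of $\spectra$. By the free-forgetful adjunction, a morphism in $\alg{C}{\spectra}$ has the right lifting property against $I_{C}$ (resp.\ $J_{C}$) if and only if its image under $U$ has the right lifting property against the generating (trivial) cofibrations of $\spectra$, i.e.\ is a trivial fibration (resp.\ fibration), so the choice of fibrations and trivial fibrations is consistent with the proposed generating sets.

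Next I would verify the prerequisites for the small object argument, namely cocompleteness of $\alg{C}{\spectra}$ and smallness of the domains of $I_{C}$ and $J_{C}$. Limits are computed in $\spectra$ via $U$; colimits are constructed by the standard recipe for algebras over a monad, and in particular reflexive coequalizers in $\alg{C}{\spectra}$ are created by $U$, because the formula $CV = \bigvee_{k\geq 0} C(k)\ltimes_{\Sigma_{k}} V^{\wedge k}$ together with the compatibility of $\wedge$ and of $\Sigma_{k}$-orbits with reflexive coequalizers shows that $C$ preserves them. An analogous argument yields preservation of sequential colimits along the relevant cofibrations, and smallness of the domains of $I_{C}$ and $J_{C}$ in $\alg{C}{\spectra}$ then follows from smallness in $\spectra$ together with the fact that $U$ preserves such colimits.

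The main obstacle — and the reason the Cofibration Hypothesis has to be imported — is to show that relative $J_{C}$-cell complexes become trivial cofibrations after applying $U$. Given a pushout in $\alg{C}{\spectra}$
\[
\begin{CD}
CA @>>> X \\
@VVV @VVV \\
CB @>>> Y
\end{CD}
\]
with $A\to B$ a generating trivial cofibration of $\spectra$, one needs to identify $UY$ with the colimit of a sequential filtration $X = Y_{0}\to Y_{1}\to Y_{2}\to\cdots$ in $\spectra$, in which each stage $Y_{n-1}\to Y_{n}$ is obtained as a pushout in $\spectra$ along a map built out of $C(n)$, the $n$-fold iterated pushout-product of $A\to B$ with itself (suitably symmetrized), and smash powers of $X$. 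One then argues that each stage is a trivial cofibration in $\spectra$, using the pushout-product axiom for $\wedge$ together with appropriate freeness of the $\Sigma_{n}$-action on $C(n)\ltimes_{\Sigma_{n}}(\slot)$, and transfinite composition and closure of trivial cofibrations under pushout in $\spectra$ finish the argument that every relative $J_{C}$-cell complex is underlain by a trivial cofibration of spectra. This filtration is the substance of the Cofibration Hypothesis of \cite{EKMM} adapted to the present setting, and it is the only step requiring genuine technical work beyond bookkeeping.

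Once the filtration and its properties are in hand, the standard recognition theorem for cofibrantly generated model categories produces the model structure on $\alg{C}{\spectra}$ with the stated generating sets, and by construction fibrations and weak equivalences are created by $U$. The topological enrichment is inherited because the cotensor (power) in $\alg{C}{\spectra}$ is created by $U$, so the pushout-product/SM7-style compatibility reduces to the corresponding property in $\spectra$.
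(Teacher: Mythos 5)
Your overall architecture---transfer the model structure along the free--forgetful adjunction, verify cocompleteness via reflexive coequalizers, check smallness, and then analyze pushouts along $CA\to CB$---is exactly the route the paper intends; the paper gives no independent proof but cites the argument of \cite{EKMM,MR1806878}, in particular the Cofibration Hypothesis of \S{VII} of \cite{EKMM}. The gap is in how you justify the one step you correctly identify as the crux. You propose to show that each stage of the filtration of $X\to X\amalg_{CA}CB$ is a trivial cofibration ``using the pushout-product axiom for $\wedge$ together with freeness of the $\Sigma_n$-action on $C(n)$.'' That is the argument for diagram spectra (symmetric or orthogonal, as in \S 15 of \cite{MR1806878}), and it is not available here: $\spectra$ is the Lewis--May--Steinberger category, the smash powers in $CV=\bigvee_{k}C(k)\ltimes_{\Sigma_k}V^{\wedge k}$ are external powers fed into a twisted half-smash product along $C(k)\to\Lin(k)$, and the internal smash product of $\spectra$ does not underlie a symmetric monoidal model structure with an SM7/pushout-product axiom---this is precisely the kind of obstruction quantified by Lewis's theorem \cite{MR1124786}, which the paper itself invokes. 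Moreover, homotopy invariance of $C(n)\ltimes_{\Sigma_n}(-)$ on such constructions is a genuine point about tame/CW spectra (Appendix A and \S X of \cite{EKMM}) and is exactly what the paper only obtains later, under added hypotheses on the operad (CW homotopy types, nondegenerate basepoints), for the comparison Proposition; it should not be an input to the bare model structure.

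The cited argument sidesteps all of this. The generating trivial cofibrations of $\spectra$ can be chosen to be inclusions of deformation retracts; since $C$ is a continuous functor it preserves homotopies and hence such inclusions; the pushout in $\alg{C}{\spectra}$ is identified with a two-sided bar construction $B(CB,CA,X)$, and the Cofibration Hypothesis asserts that the underlying spectrum map of each cell attachment $X\to Y$ is a Hurewicz cofibration and that sequential colimits of cell maps are created in $\spectra$. From this one concludes directly that relative $J_C$-cell complexes are homotopy equivalences (not merely weak equivalences) of underlying spectra, with no monoidal model axiom and no $\Sigma_n$-freeness needed. Your remaining points (fibrations and weak equivalences created by the forgetful functor, the adjunction argument for lifting properties, colimits via reflexive coequalizers preserved by $C$, smallness, and topological enrichment via cotensors created by $U$) are fine and agree with the cited proof; it is only the justification of the key step that needs to be replaced by the deformation-retract and bar-construction argument above.
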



In particular, the category of $C$-spectra is cocomplete (this is
explained on pp.  46---49 of \cite{EKMM}), a fact we use in the
following construction.  Let $f: C\to D$ be a map of operads over
$\Lin,$ so there is a forgetful functor
\[
    f^{*} : \alg{D}{\spectra} \rightarrow \alg{C}{\spectra}.
\]
We construct the left adjoint $f_{!}$ of $f^{*}$ as a certain
coequalizer in $C$-algebras; see \cite[\S II.6]{EKMM} for further
discussion of this construction.

Denote by $m: DD\to D$ the multiplication for $D$, and let $A$ be a
$C$-algebra with structure map $\mu: CA\to A$.  Define $f_{!}A$ to be
the coequalizer in the diagram of $D$-algebras
\begin{equation} \label{eq:11}
\xymatrix{ {DCA} \ar@<.3ex>[rr]^{D\mu} \ar@<-.3ex>[rr] \ar[dr]_{Df} &
& {DA} \ar[r] & {f_{!}A.}
\\
& {DDA} \ar[ur]_{m} }
\end{equation}

In fact, it's enough to construct $f_{!}A$ as the coequalizer in
spectra.  Then $D$, applied to the unit $A\to CA$, makes the diagram a
reflexive coequalizer of spectra, and so $f_{!}A$ has the structure of
a $D$-algebra, and as such is the $D$-algebra coequalizer \cite[Lemma
6.6]{EKMM}.

\begin{Proposition} \label{t-pr-D-tens-C}
The functor $f_{!}$ is a continuous left adjoint to $f^{*}$; moreover, for any
spectrum $V$, the natural map
\begin{equation} \label{eq:8}
     f_{!} CV \to DV
\end{equation}
is an isomorphism.
\end{Proposition}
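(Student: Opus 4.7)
The proof naturally splits into two parts: establishing the adjunction, and then identifying the value of $f_!$ on free $C$-spectra.

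\textbf{Step 1: Construct the adjunction from the universal property of \eqref{eq:11}.} Fix a $D$-spectrum $B$ with structure map $\nu: DB\to B$. By the universal property of the coequalizer, a map of $D$-algebras $f_! A\to B$ is the same as a $D$-algebra map $\tilde g: DA\to B$ equalizing the parallel pair $D\mu$ and $m\circ Df$. The free/forgetful adjunction $D\dashv (\text{forget})$ further identifies $\tilde g$ with a map of spectra $g:A\to B$, related to $\tilde g$ by $\tilde g = \nu\circ Dg$ and $g=\tilde g\circ \eta^D_A$. I will trace the two parallel composites under this identification. Using the naturality of the unit $\eta^D$ with respect to $\mu: CA\to A$ and the identity $\tilde g \circ \eta^D_A=g$, the first composite reduces to $g\circ\mu$. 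Using instead the naturality of $\eta^D$ applied to $f_A:CA\to DA$ together with the $D$-algebra axiom $\tilde g\circ m = \nu\circ D\tilde g$ and the monad unit law, the second composite reduces to $\nu\circ f_B\circ Cg$, which is precisely the $C$-algebra structure map $\nu\circ f_B:CB\to B$ of $f^*B$ applied to $Cg$. Hence $\tilde g$ coequalizes the pair if and only if $g\circ \mu = (\nu\circ f_B)\circ Cg$, i.e.\ if and only if $g$ is a map of $C$-algebras $A\to f^*B$. This gives the required natural bijection
\[
\alg{D}{\spectra}(f_!A,B)\;\cong\;\alg{C}{\spectra}(A,f^*B).
\]
Continuity of the bijection is inherited from the topological enrichment of the free/forgetful adjunctions, the continuity of $f^*$, and the fact that reflexive coequalizers in $\spectra$ (hence in $C$-spectra, by Proposition~\ref{t-pr-C-spectra-level-model}) are formed in the topological sense.

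\textbf{Step 2: Identify $f_!CV\cong DV$.} This is formal from uniqueness of left adjoints. The composite of forgetful functors
\[
\alg{D}{\spectra}\xrightarrow{f^*}\alg{C}{\spectra}\longrightarrow \spectra
\]
is the usual forgetful functor from $D$-spectra to spectra, since $f^*$ does not alter underlying spectra. Its composite left adjoint is $f_!\circ C$; on the other hand, $D:\spectra\to \alg{D}{\spectra}$ is left adjoint to the same composite forgetful functor. Uniqueness of left adjoints up to canonical natural isomorphism therefore produces a natural isomorphism $f_!\circ C\Rightarrow D$, evaluated at $V$ as \eqref{eq:8}. One then checks that the isomorphism so produced agrees with the map built out of \eqref{eq:11} by $m\circ Df_V:DCV\to DDV\to DV$, since both represent the identity transformation on $\spectra(V,\slot)$ after applying $\spectra(\slot,B)\circ(\text{forget})$.

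\textbf{Main obstacle.} The only real work is Step~1: correctly matching the two legs of the coequalizer with the two sides of the $C$-algebra axiom. This reduces to a careful but mechanical diagram chase using naturality of $\eta^D$ and the monad morphism axioms for $f:C\to D$, so no genuine technical difficulty arises once the identifications are set up properly. Step~2 is categorical and Step~3 is automatic.
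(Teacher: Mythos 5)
Your proposal is correct and follows essentially the same route as the paper: the adjunction is verified directly from the universal property of the coequalizer \eqref{eq:11} (which the paper dismisses as ``straightforward if time-consuming''), and the identification $f_!CV \iso DV$ is obtained because both represent the functor $\spectra(V,\slot)$ on $D$-algebras via the commuting forgetful functors \eqref{eq:20}, which is the same Yoneda/uniqueness-of-left-adjoints argument the paper writes as a chain of hom-set isomorphisms. The only cosmetic slip is the reference to a nonexistent ``Step 3.''
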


\begin{proof}
It is straightforward if time-consuming to check that $f_{!}$ is
continuous and the left adjoint of $f^*$; in fact, the  statement and
argument work for any map of monads $f:C\to D$ on any category
$\mathcal{C}$, provided that the coequalizer \eqref{eq:11} exists
(naturally in $A$).

For the second part, note that if $T$ is any $D$-algebra, then we have
\[
    \alg{D}{\spectra} (f_{!} CV,T) \iso \alg{C}{\spectra} (CV, f^{*}T
) \iso \spectra (V,T) \iso \alg{D}{\spectra} (DV, T).
\]
In the last two terms in this sequence of isomorphisms we have omitted
the notation for the forgetful functors, and we have used the fact
that the diagram
\begin{equation}\label{eq:20}
\xymatrix{ {\alg{D}{\spectra}} \ar[r]^{f^{*}} \ar[dr] &
{\alg{C}{\spectra}} \ar[d]
\\
& {\spectra} }
\end{equation}
commutes.
\end{proof}

\begin{Remark}\label{rem-3}
The reader may prefer to write $C\otimes V$ for the free $C$-algebra
$CV$, and then $D\otimes_{C}A$ for $f_{!}A.$ With this notation, the
coequalizer diagram defining $D\otimes_{C}A$ takes the form
\[
\xymatrix{ {D\otimes C \otimes A} \ar@<.3ex>[rr] \ar@<-.3ex>[rr] & &
{D\otimes A} \ar[r] & {D\otimes_{C} A,} }
\]
and the isomorphism of Proposition \ref{t-pr-D-tens-C} becomes
\[
    D\otimes_{C} C \otimes V \iso D \otimes V.
\]
\end{Remark}

About this adjoint pair there is the following well-known result.

\begin{Proposition}\label{t-pr-coeo-lms-spectra}
Let $f: C\to D$ be a map of operads over $\Lin$.  The pair
$(f_{!},f^{*})$ is a continuous Quillen pair.
\end{Proposition}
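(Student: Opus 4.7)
The plan is to leverage Proposition \ref{t-pr-D-tens-C}, which already supplies the adjunction $(f_!, f^*)$ and the continuity of $f_!$, and then verify the Quillen condition by examining the right adjoint. Since the model structure on both $\alg{C}{\spectra}$ and $\alg{D}{\spectra}$ is (by Proposition \ref{t-pr-C-spectra-level-model}) lifted from the underlying category of spectra—that is, fibrations and weak equivalences are precisely those maps which become fibrations and weak equivalences after forgetting to $\spectra$—the work reduces to understanding the behavior of $f^*$ with respect to these forgetful functors.

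The key observation is the commuting diagram \eqref{eq:20}, which expresses that the forgetful functor $\alg{D}{\spectra}\to\spectra$ factors as $f^*$ followed by the forgetful functor $\alg{C}{\spectra}\to\spectra$. Thus if $g\colon X\to Y$ is a fibration (respectively trivial fibration) in $\alg{D}{\spectra}$, then its image in $\spectra$ is a fibration (respectively trivial fibration), and this image is equally well the underlying map of $f^{*}g$; hence $f^{*}g$ is a fibration (respectively trivial fibration) in $\alg{C}{\spectra}$. By the usual characterization of Quillen adjunctions via preservation of fibrations and trivial fibrations by the right adjoint, this shows that $(f_{!},f^{*})$ is a Quillen pair.

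For continuity, one notes that both forgetful functors $\alg{C}{\spectra}\to\spectra$ and $\alg{D}{\spectra}\to\spectra$ are continuous (they are topologically enriched), and the hom-spaces in both algebra categories are subspaces of the spectrum-level mapping spaces cut out by closed equalizer conditions expressing compatibility with the operad action. The adjunction isomorphism of Proposition \ref{t-pr-D-tens-C} is induced by the universal property of the coequalizer \eqref{eq:11}, and inspecting the sequence of identifications
\[
\alg{D}{\spectra}(f_{!}A,T) \iso \alg{C}{\spectra}(A,f^{*}T) \iso \spectra(A,T)^{C\text{-}\mathrm{equiv}}
\]
shows that the bijection is in fact a homeomorphism, since each step is a natural homeomorphism of topologized subspaces. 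Continuity of $f_{!}$ was already recorded in Proposition \ref{t-pr-D-tens-C}, completing the verification that $(f_{!},f^{*})$ is a continuous Quillen pair.

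There is no real obstacle here; the content of the proposition is essentially packaged into Propositions \ref{t-pr-C-spectra-level-model} and \ref{t-pr-D-tens-C}, and the proof amounts to observing that lifting fibrations and weak equivalences from $\spectra$ along compatible forgetful functors makes the change-of-operad adjunction automatically Quillen. The only mild subtlety to be mindful of is ensuring the model structures on both sides are set up symmetrically with respect to the same classes of generating cofibrations in $\spectra$, which is guaranteed by the uniform statement of Proposition \ref{t-pr-C-spectra-level-model}.
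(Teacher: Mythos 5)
Your proposal is correct and follows essentially the same line as the paper: both establish that $f^{*}$ preserves fibrations and the relevant acyclicity by exploiting the commuting forgetful-functor diagram \eqref{eq:20} and the fact that fibrations and weak equivalences are created in $\spectra$. The paper additionally remarks that $f_{!}$ carries a generating set of cofibrations of $\alg{C}{\spectra}$ to a generating set of cofibrations of $\alg{D}{\spectra}$ (via the isomorphism $f_{!}C V \iso D V$); this is an informative alternative verification on the left-adjoint side, but as you observe it is logically redundant once $f^{*}$ is seen to preserve fibrations and weak equivalences.
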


\begin{proof}
Since the diagram \eqref{eq:20} commutes and the forgetful functor to
spectra creates fibrations and weak equivalences in $C$-algebras,
$f^{*}$ preserves fibrations and weak equivalences.  By Proposition
\ref{t-pr-C-spectra-level-model}, $C$ carries a generating set of
cofibrations of $\spectra$ to a generating set of cofibrations of
$\alg{C}{\spectra}.$ The isomorphism $f_{!}C\iso D$ \eqref{eq:8} shows
that $f_{!}$ carries this set to a generating set of cofibrations of
$\alg{D}{\spectra}.$
\end{proof}

It is folklore that various $E_{\infty}$ operads over $\Lin$ give rise
to the same homotopy theory.   Over the years, various arguments have
been given to show this, starting with May's use of the bar
construction to model $f_!$ (see \cite[\S II.4.3]{EKMM} for the most
recent entry in this line).  We present a model-theoretic formulation
of this result in the remainder of the subsection.

\begin{Proposition} \label{t-pr-C-spectra-equiv-D-spectra}
If $f$ is a map of $\einfty$ operads, then $(f_{!},f^{*})$ is a
Quillen equivalence.  More generally, if each map
\[
    f: C (n) \to D (n)
\]
is a weak equivalence of spaces, then $(f_{!},f^{*})$ is a Quillen
equivalence.
\end{Proposition}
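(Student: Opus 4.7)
Since $f^{*}$ is the identity on underlying spectra, it both preserves and reflects weak equivalences, and detects fibrations as well. Hence by the standard criterion for Quillen equivalence, it suffices to show that for every cofibrant $C$-spectrum $A$, the unit
\[
\eta_{A}: A \longrightarrow f^{*} f_{!} A
\]
is a weak equivalence on underlying spectra. The plan is a transfinite induction on the cellular construction of $A$, reducing to two points: behavior on free $C$-algebras, and behavior under cell attachments.

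First I would handle the case of a free algebra $A=CV$ on a cofibrant spectrum $V$. By Proposition \ref{t-pr-D-tens-C} the natural map $f_{!}CV\to DV$ is an isomorphism, so $\eta_{CV}$ is identified with the coproduct of maps
\[
C(n)\ltimes_{\Sigma_{n}} V^{\wedge n} \longrightarrow D(n)\ltimes_{\Sigma_{n}} V^{\wedge n}
\]
induced by $f(n):C(n)\to D(n)$. Since $C$ and $D$ live over the linear isometries operad, one has the customary free $\Sigma_{n}$-action on $C(n)$ and $D(n)$; when $V$ is a cofibrant spectrum, $V^{\wedge n}$ is $\Sigma_{n}$-cofibrant in the sense needed for the twisted half-smash construction (this is the standard consequence of working with $\mathcal{L}$-spectra, see \cite[I.2, I.8]{EKMM}). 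Consequently, applying $(-)\ltimes_{\Sigma_{n}}V^{\wedge n}$ to the $\Sigma_{n}$-equivariant weak equivalence $f(n)$ yields a weak equivalence, and the coproduct of weak equivalences is a weak equivalence.

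Next I would treat cell attachments. By Proposition \ref{t-pr-C-spectra-level-model} a cofibrant $C$-algebra $A$ is built as a transfinite composition of pushouts of the form
\[
\xymatrix{ CV \ar[r] \ar[d]_{Ci} & A_{\alpha} \ar[d] \\ CW \ar[r] & A_{\alpha+1}, }
\]
with $i:V\to W$ a generating cofibration of $\spectra$. The crucial input, provided by a Cofibration-Hypothesis-style filtration (as in \cite[VII]{EKMM}), is that both sides of $\eta$ admit compatible filtrations whose associated subquotients are, up to $\Sigma_{n}$-equivariant cofibrancy, again half-smash products of $C(n)$ (respectively $D(n)$) with cofibrant $\Sigma_{n}$-spectra built from $V$, $W$, and the underlying spectrum of $A_{\alpha}$. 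Assuming inductively that $\eta_{A_{\alpha}}$ is a weak equivalence, the free-algebra argument of the previous paragraph, applied subquotient by subquotient, shows that $\eta_{A_{\alpha+1}}$ is a weak equivalence; the transfinite composition step and passage to retracts are then routine.

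The main obstacle will be the filtration analysis of pushouts of free algebras along $CV\to CW$. This is the standard ``operad cell attachment'' computation and requires precisely the $\Sigma_{n}$-cofibrancy built into the definition of $\mathcal{L}$-spectra together with the hypothesis, used in Proposition \ref{t-pr-C-spectra-level-model}, that the spaces $C(n)$ and $D(n)$ have the homotopy type of $\Sigma_{n}$-CW complexes; granting these inputs, the inductive step goes through and the Quillen equivalence follows. The $\einfty$ case is an immediate special case, since a map of $\einfty$ operads over $\mathcal{L}$ is levelwise a weak equivalence between $\Sigma_{n}$-free contractible spaces.
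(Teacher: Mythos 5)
Your overall skeleton — reduce to the unit $\eta_A\colon A\to f^*f_!A$ on cofibrant (hence cell) $C$-algebras, handle free algebras via $f_!CV\iso DV$, then induct over cell attachments — matches the shape of the paper's argument, and the free-algebra conclusion is correct. One caveat there: in LMS spectra the justification is not that "$V^{\wedge n}$ is $\Sigma_n$-cofibrant" (that is the diagram-spectra statement, Lemma 15.5 of \cite{MR1806878}, which the paper explicitly contrasts with the present setting); what actually makes $CV\to DV$ a weak equivalence is that $V$ and its smash powers are \emph{tame} (e.g.\ of CW homotopy type) while $C(n)\to D(n)$ is a map of $\Sigma_n$-free spaces of the homotopy type of $\Sigma_n$-CW complexes, so that Propositions X.4.7, X.4.9 and A.7.4 of \cite{EKMM} apply to the twisted half-smash products.

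The genuine gap is in your inductive step. The filtration of the pushout $A_{\alpha+1}=A_\alpha\coprod_{CA}CB$ does \emph{not} have subquotients of the form $C(n)\ltimes_{\Sigma_n}(\text{something built from }A,B,A_\alpha)$: its associated graded pieces are $U_{A_\alpha}(n)\ltimes_{\Sigma_n}(B/A)^{\wedge n}$, where $U_{A_\alpha}$ is the enveloping operad of $A_\alpha$, an object that mixes $C$ and $A_\alpha$ through coequalizers. To run your comparison you would additionally have to show that $f_!$ carries this filtration to the corresponding $D$-side filtration of $f_!A_\alpha\coprod_{DA}DB$ and that the induced maps $U^{C}_{A_\alpha}(n)\ltimes_{\Sigma_n}(B/A)^{\wedge n}\to U^{D}_{f_!A_\alpha}(n)\ltimes_{\Sigma_n}(B/A)^{\wedge n}$ are weak equivalences — an auxiliary induction on enveloping operads that is precisely the ``flatness'' analysis Goerss–Hopkins carry out for LMS spectra, and which is the hard content here, not a routine citation. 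The paper sidesteps exactly this: it writes $X_{n+1}=CB\coprod_{CA}X_n$ as the two-sided bar construction $B(CB,CA,X_n)$, uses that $f_!$ is a continuous left adjoint and hence commutes with geometric realization and with coproducts of algebras, and then needs only the free-algebra case applied levelwise to $CB\coprod^q CA\coprod X_n$ (objects buildable in $\le n$ stages, hence covered by the induction), together with properness — guaranteed by the nondegenerate basepoints of $C(1)$ and $D(1)$ — and tameness to pass to realizations. As written, your ``granting these inputs, the inductive step goes through'' conceals the actual substance of the proposition; either supply the enveloping-operad comparison or switch to the bar-construction argument.
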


Before giving the proof, we make a few remarks.  Assume $f$ is a weak
equivalence of operads.  Since the pullback $f^* \colon \alg{D}{\spectra}
\to \alg{C}{\spectra}$ preserves fibrations and weak equivalences, to
show that $(f_!, f^*)$ is a Quillen equivalence it suffices to show
that for a cofibrant $C$-algebra $X$ the unit of the adjunction $X
\to f^* f_! X$ is a weak equivalence.

If $X = C Z$ is a \emph{free}
$C$-algebra, then $f_{!}X = f_{!} CZ \iso DZ$ by \eqref{eq:8}, and so
the map in question is the natural map
\[
      C Z \to DZ.
\]
It follows from Propositions X.4.7, X.4.9, and A.7.4 of \cite{EKMM} that
if the operad spaces $C (n)$ and $D (n)$ are CW-complexes, and if $Z$
is a wedge of spheres or disks, then $CZ\to DZ$ is a homotopy
equivalence.   In fact, this argument applies to the wider class of
\emph{tame} spectra, whose definition we now recall.

\begin{Definition}[\cite{EKMM}, Definition I.2.4]
A prespectrum $D$ is \emph{$\Sigma$-cofibrant} if each of the structure maps
$\Sigma^{W}D (V)\to D (V\oplus W)$ is a (Hurewicz) cofibration.  A
spectrum $Z$ is \emph{$\Sigma$-cofibrant} if it is isomorphic to one of the
form $LD$, where $D$ is a $\Sigma$-cofibrant prespectrum.  A spectrum
$Z$ is \emph{tame} if it is homotopy equivalent to a
$\Sigma$-cofibrant spectrum.  In particular, a spectrum $Z$ of the
homotopy type of a CW-spectrum is tame.
\end{Definition}

For a general cofibrant $X$, the argument proceeds by reducing to
the free case $X=CZ$. In this paper, we present an inductive argument
due to Mandell \cite{Mandell:thesis}.  A different induction of this
sort appeared in \cite{Mandell:TAQ} in the algebraic setting; that
argument can be adapted to the topological context with minimal
modifications.

Our induction will involves the geometric realization
of simplicial spectra.  As usual, we would like to ensure that a map
of simplicial spectra
\[
f_{\bullet}: K_{\bullet} \to K'_{\bullet}
\]
in which each $f_{n}: K_{n} \to K'_{n}$ is a weak equivalence yields a
weak equivalence upon geometric realization.   The required condition
is that the spectra $K_{n}$ and $K_{n}'$ are tame: Theorem X.2.4 of
\cite{EKMM} says that the realization of weak equivalences of tame
spectra is a weak equivalence if $K_{\bullet}$ and $K'_{\bullet}$ are
``proper'' \cite[\S X.2.1]{EKMM}.  Recall that a simplicial spectrum
$K_\bullet$ is proper if the natural map of coends
\[
\int^{D_{q-1}} K_p \Smash D(q,p)_+ \to \int^{D_q} K_p \Smash D(q,p)_+
\cong K_q
\]
is a Hurewicz cofibration, where $D$ is the subcategory of $\Delta$
consisting of the monotonic surjections (i.e. the degeneracies), and
$D_q$ is the full subcategory of $D$ on the objects $0 \leq i \leq
q$.  This is a precise formulation of the intuitive notion that the
inclusion of the union of the degenerate spectra $s_j K_{q-1}$ in
$K_q$ should be a Hurewicz cofibration.

Thus, to ensure that the spectra that arise in our argument are tame
and the simplicial objects proper, we make the following simplifying
assumptions on our operads.

\begin{enumerate}
\item We assume that the spaces $C(n)$ and $D(n)$ have the
  homotopy type of $\Sigma_{n}$-$CW$-complexes.
\item We assume that $C(1)$ and $D(1)$ are equipped with
  nondegenerate basepoints.
\end{enumerate}

We believe these assumptions are reasonable, insofar as they are
satisfied by many natural examples; for instance, the linear
isometries operad and the little $n$-cubes operad both satisfy the
hypotheses above (see \cite[XI.1.4, XI.1.7]{EKMM} and
\cite[4.8]{May:gils} respectively).  More generally, if $\mathcal{O}$
is an arbitrary operad over the linear isometries operad, then taking
the geometric realization of the singular complex of the spaces
$\mathcal{O}$ produces an operad $|S (\mathcal{O})|$ with the
properties we require.

Goerss and Hopkins have proved two versions of Proposition
\ref{t-pr-C-spectra-equiv-D-spectra} using resolution model structures
to resolve an arbitrary cofibrant $C$-space by a simplicial $C$-space
with free $k$-simplices for every $k$.  A first version
\cite{GH:mpsht} proves the Proposition for LMS spectra, avoiding our
simplifying assumptions on the operads via a detailed study of
``flatness'' for spectra (as an alternative to the theory of
``tameness'').  A more modern treatment \cite{GH:obstruct} works with
operads of simplicial sets and symmetric spectra in topological
spaces.  In that case, as they explain, a key point is that if $X$ is
a cofibrant spectrum, then $X^{(n)}$ is a \emph{free}
$\Sigma_{n}$-spectrum (see Lemma 15.5 of \cite{MR1806878}).  This
observation helps explain why the general form of the Proposition is
reasonable, even though the analogous statement for spaces is much too
strong.


\begin{proof}
A cofibrant $C$-spectrum is a retract of a cell $C$-spectrum, and
so we can assume without loss of generality that $X$ is a cell
$C$-spectrum.  That is, $X = \colim_n X_n$, where $X_0 = C(*)$ and
$X_{n+1}$ is obtained from $X_n$ via a pushout (in $C$-algebras) of
the form
\[
\xymatrix{
C A \ar[r] \ar[d] & X_n \ar[d] \\
C B \ar[r] & X_{n+1} \\
}
\]
where $A \to B$ is a wedge of generating cofibrations of spectra.  By
the proof of Proposition~\ref{t-pr-C-spectra-level-model}
(specifically, the Cofibration Hypothesis), the map $X_n \to X_{n+1}$
is a Hurewicz
cofibration of spectra.   The hypotheses on $C$ and the fact that $A$
and $B$ are CW-spectra imply that that $CA$ and $CB$ have the homotopy
type of CW-spectra, and thus inductively so does $X_n$.  Therefore,
since $f_!$ is a left adjoint, it suffices to show that $X_n \to f^*
f_! X_n$ is a weak equivalence for each $X_n$ --- under these
circumstances, a sequential colimit of weak equivalences is a weak
equivalence.

We proceed by induction on the number of stages required to build the
$C$-spectrum.  The base case follows from the remarks preceding the
proof.  For the induction hypothesis, assume that $f_!$ is a weak
equivalence for all cell $C$-algebras that can be built in $n$ or
fewer stages.  The spectrum $X_{n+1}$ is a pushout $C B \coprod_{C
  A} X_n$ in $C$-algebras, and this pushout is homeomorphic to a bar
construction $B(C B, C A, X_n)$.  Since $f_!$ is a continuous left
adjoint,
it commutes with geometric realization and
coproducts in $C$-algebras, and so $f_!(B(C B, C A, X_n))$ is
homeomorphic to $B(D B, D A, f_! X_n)$.

The bar constructions we are working with are proper simplicial
spectra by the hypothesis that $C(1)$ and $D(1)$ have
nondegenerate basepoints, and thus it suffices to show that at each
level in the bar construction
\[B_q (C B, C A, X_n) \to B_q (D B, D A, f_! X_n)\]
we have a weak equivalence of tame spectra.  This follows
from the inductive hypothesis: we have already shown that the spectra
are tame, and $C B \coprod^q C A \coprod X_n$ can be built in $n$
stages, since $X_n$ can be built in $n$ stages and the free algebras
can be built and added in a single stage.
\end{proof}

The idea of the following corollary goes all the way back to
\cite{May:gils}.

\begin{Corollary}\label{t-co-ho-einfty-well-defined}
If $C$ and $D$ are any two $\einfty$ operads over the linear
isometries operad, then the categories of $C$-algebras and
$D$-algebras are connected by a zig-zag of continuous Quillen
equivalences.
\end{Corollary}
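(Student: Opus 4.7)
The plan is to use the linear isometries operad $\Lin$ itself as the intermediate object in the zig-zag, rather than forming any product or pullback of operads. This works because $\Lin$ is an $\einfty$ operad over itself (via the identity), and it satisfies the standing hypotheses (its spaces have the homotopy type of $\Sigma_n$-CW complexes with nondegenerate basepoint at level $1$, as recorded in the text immediately after Proposition~\ref{t-pr-C-spectra-equiv-D-spectra}).

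First I would observe that every $\einfty$ operad $C$ over $\Lin$ comes equipped with a structure map $c\colon C\to\Lin$ which is a map of operads over $\Lin$. Since $C$ is $\einfty$, each space $C(n)$ is contractible; since $\Lin$ is $\einfty$, each $\Lin(n)$ is likewise contractible. Hence the map of spaces $c\colon C(n)\to\Lin(n)$ is a weak equivalence for every $n\ge 0$. Applying Proposition~\ref{t-pr-C-spectra-equiv-D-spectra} to $c$ gives a continuous Quillen equivalence
\[
   c_!\colon \alg{C}{\spectra}\rightleftarrows \alg{\Lin}{\spectra}\,:\!c^*.
\]
Carrying out exactly the same argument for the structure map $d\colon D\to\Lin$ of the second $\einfty$ operad produces a second continuous Quillen equivalence
\[
   d_!\colon \alg{D}{\spectra}\rightleftarrows \alg{\Lin}{\spectra}\,:\!d^*.
\]
Splicing these together yields the zig-zag
\[
   \alg{C}{\spectra}\;\xrightleftharpoons[c^*]{c_!}\;\alg{\Lin}{\spectra}\;\xrightleftharpoons[d_!]{d^*}\;\alg{D}{\spectra}
\]
of continuous Quillen equivalences, as required.

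There is essentially no obstacle once Proposition~\ref{t-pr-C-spectra-equiv-D-spectra} is in hand; the only bookkeeping is that one must know $\Lin$ itself meets the standing technical hypotheses on the operads (CW-homotopy type and nondegenerate basepoint), which is exactly what is cited from \cite[XI.1.4, XI.1.7]{EKMM}. Note that it would also have been possible to use the pullback operad $C\times_{\Lin}D$ as intermediate, but that forces one to replace one of $c,d$ by a fibration in order to guarantee that the pullback remains levelwise contractible; using $\Lin$ directly bypasses this complication entirely.
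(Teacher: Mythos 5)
Your argument is correct, and it leans on exactly the same key input as the paper (Proposition \ref{t-pr-C-spectra-equiv-D-spectra}, applied twice to produce a length-two zig-zag), but with a different choice of intermediate operad. The paper routes through the product $C\times D$ and its two projections $C \leftarrow C\times D \rightarrow D$, whereas you route through $\Lin$ itself via the augmentations $c\colon C\to\Lin$ and $d\colon D\to\Lin$, which are tautologically maps of operads over $\Lin$ and are levelwise weak equivalences because all the spaces in sight are contractible. What your choice buys is that nothing needs to be verified about the intermediate: the standing hypotheses (CW homotopy type of the $\Sigma_n$-spaces, nondegenerate basepoint in arity $1$) are already recorded for $\Lin$ in the text, and one never has to ask whether the intermediate is $\einfty$, satisfies those hypotheses, or maps to $\Lin$ compatibly with both legs of the zig-zag. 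What the paper's product trick buys is robustness: it is the classical May-style comparison and only uses the ``levelwise weak equivalence'' clause of the Proposition, so it is the pattern one falls back on when the two operads are not both equipped with equivalences to a single well-behaved base; here, since $C$ and $D$ are by hypothesis augmented over $\Lin$ and $\Lin$ has the required properties, your shortcut is perfectly legitimate. One small correction to your closing remark: the paper's intermediate is the honest product $C\times D$, not the fiber product $C\times_{\Lin}D$, so the complication you describe (levelwise contractibility of a strict pullback) is not quite the one at stake there; for the product the point requiring care is rather that both projections be maps of operads \emph{over} $\Lin$, i.e.\ compatible with a single augmentation of $C\times D$, and your route through $\Lin$ sidesteps that issue as well.
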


\begin{proof}
Apply Proposition \ref{t-pr-coeo-lms-spectra} to the maps of
$\einfty$ operads over $\Lin$
\[
    C \leftarrow C\times D \rightarrow D.
\]
\end{proof}

Backed by this result, we adopt the following convention.

\begin{Definition}\label{def-8}
We write $\ho \einftyspectra$ for the homotopy category of $\einfty$
ring spectra.  By this we mean the homotopy category $\ho
\alg{C}{\spectra}$ for any $\einfty$ operad $C$ over the linear
isometries operad.
\end{Definition}

\subsection{$\einfty$ spaces} \label{sec:einfty-spaces}

We adopt notation for operad actions on spaces analogous to our
notation for spectra in \S\ref{sec:einfty-spectra}.  Let $C$ be an
operad in topological spaces.  The free $C$-algebra on a space $X$ is
\begin{equation} \label{eq:25}
   CX = \coprod_{k\geq 0} C (k) \times_{\Sigma_{k}} X^{k}.
\end{equation}
We set $C (\emptyset) = C (0).$ The category of $C$-algebras in
spaces, or $C$-spaces, will be denoted $\alg{C}{\spaces}.$

Note that the sequence of spaces given by
\begin{align*}
    P (0) &  = \ptspace = P (1) \\
   P (k) & = \emptyset  \text{ for }k>1
\end{align*}
has a unique structure of operad, whose associated monad is
\[
    PX = \pt{X},
\]
so
\[
     \alg{P}{\spaces}\iso \ptdspaces.
\]

If $C$ is a unital operad and if $Y$ is a pointed space, let $\ptC Y$
be the pushout in the category of $C$-algebras
\begin{equation} \label{eq:42}
\begin{CD}
    C\ptspace @>>> C (\emptyset) = \ptspace \\
@VVV @VVV \\
CY @>>> \ptC Y.
\end{CD}
\end{equation}
Then $\ptC$ participates in a monad on the category of pointed spaces.
Indeed $\ptC$ is isomorphic to the monad $C_{May}$ introduced in
\cite{May:gils}, since for a test $C$-space $T$,
\[
    \alg{C}{\spaces}(\ptC Y,T)\iso \ptdspaces (Y,T) \iso
\alg{C}{\spaces} (C_{May}Y,T).
\]
There is a natural isomorphism
\[
     C X \iso \ptC (\pt{X}),
\]
and an equivalence of categories
\begin{equation} \label{eq:43}
        \alg{C}{\spaces} \iso \alg{\ptC}{\ptdspaces}.
\end{equation}
Part of this equivalence is the observation that, if $X$ is a
$C$-algebra, then it is a $\ptC$ algebra via
\[
   \ptC X \to \ptC (\pt{X}) \iso C X \to X.
\]
We have the following analogue of Proposition~\ref{t-pr-C-spectra-level-model}.

\begin{Proposition}\label{t-pr-C-spaces-level-model}
\hspace{5 pt}
\begin{enumerate}
\item \label{item:1} The category $\alg{C}{\spaces}$ has the structure of a
cofibrantly generated topological closed model category, in which the
forgetful functor to $\spaces$ creates fibrations and weak
equivalences.  If $\{A\to B \}$ is a set of generating (trivial)
cofibrations of $\spaces$, then $\{C A\to C B \}$ is a set of
generating (trivial) cofibrations of $\alg{C}{\spaces}$.
\item \label{item:2} The analogous statements hold for $\ptC$ and
$\alg{\ptC}{\ptdspaces}.$
\item Taking $C=P$, the resulting model category structure on the
category $\alg{P}{\spaces} \iso \ptdspaces$ is the usual one.
\item The equivalence $\alg{C}{\spaces}\iso \alg{\ptC}{\ptdspaces}$
\eqref{eq:43} carries the model structure arising from part
\eqref{item:1} to the model structure arising from part \eqref{item:2}.
\end{enumerate}
\end{Proposition}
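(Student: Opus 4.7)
The plan is to obtain parts (1) and (2) by a single application of the Kan/Crans transfer theorem for cofibrantly generated topological model categories along the free--forgetful adjunction
\[
    C : \spaces \leftrightarrows \alg{C}{\spaces} : U,
\]
and its pointed analogue for $\ptC$. Take the usual generating cofibrations $I=\{\pt{S^{n-1}}\to \pt{D^{n}}\}$ and generating trivial cofibrations $J=\{\pt{D^{n}}\to \pt{(D^{n}\times I)}\}$ of $\spaces$ (and their pointed versions). One must verify two things: (a) the domains of $CI$ and $CJ$ are small enough relative to $\{CA\to CB\}$-cell complexes for the small object argument to run, and (b) every relative $CJ$-cell complex is a weak equivalence after forgetting to $\spaces$. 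Smallness follows because $U$ preserves filtered colimits: indeed $C$ is built out of the spaces $C(n)$, finite powers, and $\Sigma_{n}$-orbits, and these all commute with filtered colimits on the underlying spaces, so colimits in $\alg{C}{\spaces}$ are created from $\spaces$ along directed systems of Hurewicz cofibrations.

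The core of the proof is the acyclicity condition, which is the direct space-level analogue of the ``Cofibration Hypothesis'' used for Proposition~\ref{t-pr-C-spectra-level-model}. Given a pushout of $C$-algebras
\[
\xymatrix{
CA \ar[r] \ar[d] & X \ar[d] \\
CB \ar[r] & Y = X \coprod_{CA} CB,
}
\]
one filters $Y$ as $X = Y_{0} \to Y_{1} \to Y_{2} \to \cdots$, where $Y_{n}\to Y_{n+1}$ is the pushout in $\spaces$ of a map built from $C(n)\times_{\Sigma_{n}}(-)$ applied to the $n$-fold pushout product of $A\to B$ with itself, smashed appropriately with $X$. Each $Y_{n}\to Y_{n+1}$ is a Hurewicz cofibration, and when $A\to B$ is a trivial cofibration in $\spaces$ the pushout-product is too, so each filtration step is a trivial cofibration of spaces; thus the transfinite composition $X\to Y$ is a weak equivalence. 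Iterating over the cells of a $CJ$-cell complex and passing to colimits gives (b).

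For (2) the identical argument applies to $\ptC$ on $\ptdspaces$, using the pointed generating cofibrations and the fact that the defining pushout \eqref{eq:42} makes $\ptC$ a monad on $\ptdspaces$ with the same cellular control on pushouts. For (3) note that when $C=P$ the monad is $X\mapsto \pt{X}$, so $\alg{P}{\spaces}\iso \ptdspaces$ sends $PA=\pt{A}$ to the usual generating (trivial) cofibrations of pointed spaces and the transferred structure is manifestly the standard one. Part (4) is then formal: the equivalence \eqref{eq:43} is induced by the natural isomorphism $CA\iso \ptC(\pt{A})$, so it takes the set $\{CA\to CB\}$ of generators on the left to the set $\{\ptC\pt{A}\to \ptC\pt{B}\}$ of generators on the right, identifying the two cofibrantly generated structures.

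The main obstacle will be the analysis underlying the Cofibration Hypothesis: one has to describe the filtration of a pushout $X\coprod_{CA}CB$ explicitly and check that at each stage the relevant $\Sigma_{n}$-equivariant pushout-product with $A\to B$ lands in Hurewicz cofibrations that are weak equivalences when $A\to B$ is. This is where the hypotheses on the operad spaces $C(n)$ (nondegenerate basepoints and $\Sigma_{n}$-CW type, as assumed later in the paper) would normally enter, but for the level model structure of this Proposition only the standard equivariant cofibration behavior of the orbit construction $C(n)\times_{\Sigma_{n}}(-)$ is needed, and the argument is a direct transcription of the one sketched for $\alg{C}{\spectra}$ into $\spaces$.
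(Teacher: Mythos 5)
Your proposal is correct and follows essentially the same route as the paper, which simply invokes the transfer argument of \cite{EKMM,MR1806878} for parts (1)--(2), notes (3) is standard, and deduces (4) formally; your write-up just fills in that standard lifting argument (smallness plus the Cofibration Hypothesis analysis of pushouts along $CA\to CB$). One small slip: the generating (trivial) cofibrations of the unpointed category $\spaces$ are $S^{n-1}\to D^{n}$ and $D^{n}\to D^{n}\times I$, not their $\pt{(\,\cdot\,)}$ versions, which belong to the pointed case used in parts (2)--(3).
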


\begin{proof}
The statements about the model structure on $\alg{C}{\spaces}$ or on
$\alg{\ptC}{\ptdspaces}$ can be proved for example by adapting the
argument in \cite{EKMM,MR1806878}.  The third part is standard, and
together the first three parts imply the last.
\end{proof}

We conclude this subsection with two results
which will be useful in \S\ref{sec:einfty-spac-conn}.
For the first, note that a point of $C (0)$ determines a map of
operads
\[
   P \to C,
\]
and so we have a forgetful functor
\[
  \alg{C}{\spaces} \to \alg{P}{\spaces} \iso \ptdspaces.
\]
We say that a point of $Y$ is \emph{non-degenerate} if $(Y,\ptspace)$ is
an NDR pair, i.e. that $\ptspace \to Y$ is a Hurewicz cofibration.

\begin{Proposition}\label{t-pr-cof-c-alg-cof-ptd-space}
Suppose that $C$ is a unital operad in topological spaces (or more
generally, an operad in which the base point of $C (0)$ is
nondegenerate).  If $X$ is a cofibrant object
$\alg{\ptC}{\ptdspaces},$ then its base point is nondegenerate.
\end{Proposition}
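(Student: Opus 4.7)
The plan is to proceed by cellular induction, following the standard pattern used throughout \cite{EKMM} and \cite{MR1806878} for this type of ``cofibration hypothesis'' statement. Since a retract of a pointed space with nondegenerate basepoint again has a nondegenerate basepoint, it suffices to treat the case where $X$ is a cell $\ptC$-algebra, built as a transfinite composition $X_{0} \to X_{1} \to X_{2} \to \cdots$ in $\alg{\ptC}{\ptdspaces}$, where $X_{0} = \ptC (\ptspace) = \ptspace$ and each $X_{n+1}$ is obtained from $X_{n}$ as a pushout of $\ptC$-algebras
\[
\begin{CD}
\ptC A @>>> X_{n} \\
@VVV @VVV \\
\ptC B @>>> X_{n+1},
\end{CD}
\]
where $A \to B$ is a coproduct of generating cofibrations $S^{k-1}_{+} \to D^{k}_{+}$ in pointed spaces.

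The base case $X_{0} = \ptspace$ is trivially nondegenerately based. For the inductive step, the key claim is that the map of underlying pointed spaces $X_{n} \to X_{n+1}$ is a Hurewicz cofibration; this is the content of the ``Cofibration Hypothesis'' adapted from \cite[\S VII]{EKMM} to the space-level setting (the same hypothesis invoked in Proposition~\ref{t-pr-C-spectra-level-model}). Granted this, since compositions of Hurewicz cofibrations are Hurewicz cofibrations, the inclusion $\ptspace = X_{0} \to X_{n}$ is a Hurewicz cofibration for every $n$, so $X_{n}$ is nondegenerately based; then passing to the sequential colimit (which is a colimit along closed Hurewicz cofibrations) preserves this property.

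To verify the key claim, one analyzes the pushout $X_{n+1}$ concretely via the standard ``pushout filtration'' for algebras over a monad arising from an operad. The pushout decomposes into a sequence of stages $X_{n} = Y_{0} \to Y_{1} \to Y_{2} \to \cdots$ with $Y_{k}/Y_{k-1}$ expressed in terms of the operad spaces $C(j)$ smashed with $(B/A)^{\wedge k}$ and appropriate iterates of $X_{n}$, where each $Y_{k-1} \to Y_{k}$ is a pushout along an explicit Hurewicz cofibration of pointed spaces. This is where the non-degeneracy hypothesis on the basepoint of $C(0)$ (and implicitly on $C(1)$ through the monad structure) is used: it is needed precisely to ensure that the smash products appearing in the filtration have nondegenerate basepoints and that the attaching maps are genuine Hurewicz cofibrations, so that standard facts about NDR pairs (cf. \cite[\S A.7]{EKMM}) apply. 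The hard part, therefore, is not the overall induction but rather the bookkeeping in the pushout filtration — verifying that each filtration quotient is a suitable NDR pair, which in turn requires the non-degeneracy assumption on $C(0)$. Once this is in place, the proposition follows formally by induction and passage to colimits.
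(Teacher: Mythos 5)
Your overall skeleton (retract of a cell object, show each stage map is an unbased Hurewicz cofibration, pass to the sequential colimit) matches the paper, but the two load-bearing points are off. First, the mechanism you propose for the inductive step is not the one that works here, and as sketched it has a gap: you invoke an EKMM-style pushout filtration with subquotients built from $C(j)$ and smash powers $(B/A)^{\wedge k}$, but the monad $\ptC$ on pointed spaces is a quotient of $\coprod_k C(k)\times Y^k$ by basepoint identifications, not a wedge of smash powers, so the spectrum-level filtration does not transport directly; making such a filtration work (and making its attaching maps Hurewicz cofibrations) is exactly where one would be forced to assume nondegeneracy of the basepoint of $C(1)$ or CW-type conditions on the $C(n)$ --- hypotheses the Proposition does not include. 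The paper avoids all of this by a space-specific argument: the pushout $X_{n+1}$ is identified with the two-sided bar construction $B(CB,CA,X_n)$, and the key Lemma (valid for an arbitrary operad in spaces) that $A\to A\coprod_{C}CB$ is the inclusion of a component of a disjoint union shows the degeneracies are unbased Hurewicz cofibrations, hence the simplicial space is proper and the inclusion of the zero simplices, and therefore $X_n\to X_{n+1}$, is an unbased Hurewicz cofibration. That cartesian-product splitting is the actual reason no extra cofibrancy hypotheses on the operad are needed.

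Second, you have misplaced where the hypothesis on $C(0)$ enters. In the unital case your claimed use of it is vacuous ($C(0)=\ptspace$), and in the general case covered by the parenthetical the base of the cellular induction is not a point but $X_0=C(\emptyset)=C(0)$; the nondegeneracy of its basepoint is used precisely and only to start the induction, after which one composes Hurewicz cofibrations and uses that the colimit of the cell filtration is computed on underlying spaces. By taking $X_0=\ptspace$ you have silently restricted to the unital case and relocated the hypothesis to a step where, in the paper's argument, nothing needs it.
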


Note that Rezk
\cite{Rezk:thesis} and Berger and Moerdijk \cite{MR2016697} have
proved a similar result, for algebras in a general model category over
an \emph{cofibrant} operad.  In our case, we need only assume that the
zero space $C (0)$ of our operad has a non-degenerate base point.

\begin{proof}
In the model structure described in Proposition
\ref{t-pr-C-spaces-level-model}, a cofibrant object is a retract of a
cell object, and so we can assume without loss of generality that $X$
is a cell $C$-space.  That is,
\begin{equation}\label{eq:50}
X = \colim_{n} X_{n}
\end{equation}
where $X_{0} = C (\emptyset)$ and $X_{n+1}$ is obtained from $X_n$ as
a pushout in $C$-spaces
\begin{equation} \label{eq:49}
\xymatrix{
C A \ar[r] \ar[d] & X_n \ar[d] \\
C B \ar[r] & X_{n+1}, \\
}
\end{equation}
where $A \to B$ is a disjoint union of generating cofibrations of
$\spaces$.

Our argument relies on a form of the Cofibration Hypothesis of \S{VII} of
\cite{EKMM}.  The key points are the following.

\begin{enumerate}
\item By assumption $X_0 = C (\emptyset) = C (0)$ is non-degenerately
based.
\item The space underlying the $C$-algebra colimit $X$ in
\eqref{eq:50} is just the space-level colimit.
\item In the pushout above,
\[
   X_{n} \to X_{n+1}
\]
is a based map and an unbased Hurewicz cofibration.
\end{enumerate}

The second point is easily checked (and is the space-level analog of
Lemma 3.10 of \cite{EKMM}).  For the last part, the argument in
Proposition 3.9 of \S{VII} of \cite{EKMM} (see also Lemma 15.9 of
\cite{MR1806878}) shows that the pushout \eqref{eq:49} is isomorphic
to a two-sided bar construction $B(C B, C A, X_n)$: this is the
geometric realization of a simplicial space where the $k$-simplices
are given as
$$C B \coprod_{C} (C A)^{\coprod k}\coprod_{C} X_n,$$ and the
simplicial structure maps are induced by the folding map and the maps
$C A \to C B$ and $C A \to X_n$.  Note that by $\coprod_{C}$ we mean
the coproduct in the category of $C$-spaces.  Recall that coproducts
(and more generally all colimits) in $C$-spaces admit a description as
certain coequalizers in $\spaces$.  Specifically, for $C$-spaces $X$
and $Y$ the coproduct $X \coprod_{C} Y$ can be described as the
coequalizer in $\spaces$
\[
\xymatrix{ C (C X \coprod C Y) \ar@<1ex>[r] \ar@<-1ex>[r] & C(X
\coprod
Y) \ar[r]&  X\coprod_{C} Y, \\
}
\]
where the unmarked coproducts are taken in $\spaces$ and the maps are
induced from the action maps and the monadic structure map,
respectively.  Following an argument along the lines of \cite[\S
VII.6]{EKMM} we obtain the following lemma.

\begin{Lemma}
Let $C$ be an operad in spaces.  Let $A$ be a $C$-space and $B$ a
space.  The map $A \to A \coprod_{C} C B$ is an inclusion of a
component in a disjoint union. 
\end{Lemma}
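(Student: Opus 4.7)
The plan is to construct a locally constant ``$B$-counting'' map $\chi : A \coprod_C CB \to \mathbb{N}$ from the universal property of the coproduct, and to identify $\chi^{-1}(0)$ with $A$; this exhibits $A$ as an open-and-closed subspace of the coproduct with its original topology.

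First I would equip $\mathbb{N}$ (discrete) with the $C$-algebra structure in which any $c \in C(n)$ acts as addition $(n_1,\ldots,n_n)\mapsto n_1+\cdots+n_n$, well-defined by the associativity and commutativity of addition on $\mathbb{N}$ together with the fact that $C(n)\to\ast$ is uniquely determined. The constant-$0$ map $A\to\mathbb{N}$ is then a $C$-algebra map (since $0+\cdots+0=0$), and the constant-$1$ map $B\to\mathbb{N}$ is continuous, so these assemble via the universal property of the coproduct $A\coprod_C CB$ into a $C$-algebra map $\chi:A\coprod_C CB\to\mathbb{N}$. Because $\mathbb{N}$ carries the discrete topology, $\chi$ is locally constant, so $\chi^{-1}(0)$ is open-and-closed.

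Next I would show $\chi^{-1}(0)$ coincides with the image of $A$. The containment $A\subseteq\chi^{-1}(0)$ is built in. Conversely, any element $x\in A\coprod_C CB$ admits a representative of the form $c\cdot(a_1,\ldots,a_k,b_1,\ldots,b_l)$ with $a_i\in A$, $b_j\in B$, $c\in C(k+l)$; applying $\chi$ to such a representative yields $l$. Hence $\chi(x)=0$ forces $l=0$ on every representative, so some representative of $x$ has the form $c\cdot(a_1,\ldots,a_k)$, which equals an element of $A$ via the $C$-algebra structure map $CA\to A$. Finally, the canonical map $A\to A\coprod_C CB$ is a homeomorphism onto its image: when $A$ is non-empty, any constant map $B\to A$ together with $\mathrm{id}_A$ determines a continuous $C$-algebra retraction $A\coprod_C CB\to A$, yielding injectivity and providing a continuous inverse on $\chi^{-1}(0)$; when $A=\emptyset$ the claim is vacuous.

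The main point of care is verifying that $\chi$ is a well-defined $C$-algebra map, since this is what forces the $B$-count to be a genuine topological invariant on the coproduct rather than merely on a presentation. But this reduces to checking the elementary identities $0+\cdots+0=0$ and the $\Sigma_n$-equivariance and associativity compatibilities for addition on $\mathbb{N}$, after which the argument is purely formal and sidesteps any explicit analysis of the coequalizer presentation of the coproduct in $C$-algebras.
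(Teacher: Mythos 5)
Your proof is correct, but it takes a genuinely different route from the paper's. The paper obtains this lemma by running the explicit analysis of coproducts with free algebras from \cite[\S VII.6]{EKMM}: one filters $A\coprod_{C} CB$ by the number of $B$-coordinates and identifies the whole coproduct, up to homeomorphism, with a disjoint union of pieces whose zeroth piece is $A$. You instead exploit the fact that any commutative topological monoid --- in particular discrete $\mathbb{N}$ --- is an algebra over every operad via the augmentation to the terminal operad, so the ``$B$-count'' descends to a continuous, locally constant $C$-algebra map $\chi$ on the coproduct; combined with the retraction induced by a constant map $B\to A$ (when $A\neq\emptyset$), this exhibits the image of $A$ as a clopen subspace homeomorphic to $A$, which is exactly the assertion (and, since a clopen summand inclusion is an unbased Hurewicz cofibration, it suffices for the application to the degeneracies of the bar construction). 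What the EKMM-style computation buys instead is the full decomposition of the coproduct, which is the form of the statement one needs in based or spectrum-level settings where ``clopen summand'' is not an available notion and one must produce cofibrations directly. One small caveat: the step where you assert that every element admits a representative $c\cdot(a_1,\dots,a_k,b_1,\dots,b_l)$ does rest on the coequalizer presentation --- surjectivity of $C(A\sqcup CB)\to A\coprod_{C}CB$ together with one application of the monadic relations to flatten elements of $CB$ into $B$-inputs and to absorb nullary elements of $C(0)$ into the operad coordinate --- so your closing claim that the argument sidesteps any analysis of the presentation is a slight overstatement; the fact itself is standard and your use of it is correct (alternatively, one can compute $\chi$ on raw representatives in $C(A\sqcup CB)$ and note that inputs lying in $C(0)\subset CB$ already land in the image of $A$).
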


This implies that the simplicial degeneracy maps in the bar
construction are unbased Hurewicz cofibrations and hence that the
simplicial space is proper, that is, Reedy cofibrant in the
Hurewicz/Str\o{}m model structure.  Thus the inclusion of the zero
simplices $C B \coprod_{C} X_n$ in the realization is an unbased
Hurewicz cofibration, and hence the map $X_n \to X_{n+1}$ is itself a
unbased Hurewicz cofibration.  As a map of $C$-algebras, it's also a
based map.
\end{proof}

The second result we need is the following.

\begin{Proposition}\label{t-pr-cx-cw}
Let $C$ be an operad and suppose that each $C(n)$ has the homotopy
type of a $\Sigma_{n}$-$CW$ complex.  Let $X$ be a $C$-space with the
homotopy type of a cofibrant $C$-space.  Then $CX$ has the homotopy
type of a cofibrant $C$-space and the underlying space of $X$ has the
homotopy type of a $CW$-complex.
\end{Proposition}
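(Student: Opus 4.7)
The plan is to establish the underlying-space claim first and then bootstrap to the claim about $CX$. Since weak equivalences in $\alg{C}{\spaces}$ are created by the forgetful functor to $\spaces$ and having CW homotopy type is invariant under weak equivalence of spaces, I may replace $X$ by a cofibrant $C$-space in its weak-equivalence class. Thus without loss of generality $X$ is a cell $C$-space, so $X = \colim_n X_n$ with $X_0 = C(\emptyset) = C(0)$ and $X_{n+1}$ the $C$-algebra pushout along $X_n \leftarrow CA \to CB$, where $A \to B$ is a coproduct of generating cofibrations of $\spaces$ (so $A$ and $B$ are CW). I would induct on $n$: the base $X_0 = C(0)$ is CW by hypothesis. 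For the inductive step, the proof of Proposition~\ref{t-pr-cof-c-alg-cof-ptd-space} presents $X_{n+1}$ as the geometric realization of the proper simplicial $C$-space $B(CB, CA, X_n)$, whose space of $k$-simplices is the $C$-algebra coproduct of $CB$, $k$ copies of $CA$, and $X_n$. Since coproducts of free $C$-algebras are free on the underlying coproduct of spaces, this simplifies to $C(B \sqcup A^{\sqcup k}) \coprod_C X_n$. The lemma preceding this proposition identifies the underlying space as $X_n \sqcup Z_k$, where $Z_k$ collects the $C$-operations involving at least one factor from $B \sqcup A^{\sqcup k}$. Since $Z_k$ decomposes as a coproduct of $\Sigma_j$-balanced products $C(j) \times_{\Sigma_j} (\text{products of } X_n, A, B)$ with $C(j)$ a $\Sigma_j$-CW complex and each factor CW by hypothesis and induction, $Z_k$ itself has CW homotopy type. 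Properness together with CW simplicial levels gives $X_{n+1} = |B_\bullet|$ CW homotopy type, and a sequential colimit of Hurewicz cofibrations between CW-type spaces is itself CW.

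For $CX$: now that the underlying space of $X$ has CW homotopy type, pick a CW approximation $X' \xra{\heq} X$, which is an honest homotopy equivalence. Then $CX'$ is cofibrant as the free $C$-space on a cofibrant space, and I claim the induced map $CX' \to CX$ is a weak equivalence. Summand by summand in $CX = \coprod_n C(n) \times_{\Sigma_n} X^n$, the map $(X')^n \to X^n$ is a $\Sigma_n$-equivariant homotopy equivalence, and $C(n) \times_{\Sigma_n} (-)^n$ preserves $\Sigma_n$-equivariant homotopy equivalences whenever $C(n)$ has $\Sigma_n$-CW homotopy type (a $\Sigma_n$-equivariant homotopy lifts along the balanced product since the acting $I$ is given trivial $\Sigma_n$-action). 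Coproducts of equivalences are equivalences, so $CX \heq CX'$ and $CX$ has the homotopy type of a cofibrant $C$-space.

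The main obstacle is the inductive step for the underlying-space claim: verifying that the complementary summand $Z_k$ in each level of the bar construction has CW homotopy type. The preceding lemma peels off $X_n$ as a component, but one must describe $Z_k$ concretely enough---as an assembly of $\Sigma_j$-orbits of $C(j)$ against products built from $X_n$, $A$, and $B$---to combine the $\Sigma_j$-CW hypothesis on the operad with the inductive CW hypothesis for $X_n$. Once the simplicial levels are CW, properness and the standard preservation of CW homotopy type under geometric realization and sequential colimits along Hurewicz cofibrations complete the argument.
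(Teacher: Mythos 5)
Your overall architecture matches the paper's: reduce to a cell $C$-space, induct over the cell filtration using the bar-construction presentation $X_{n+1}\cong |B_\bullet(CB,CA,X_n)|$, invoke properness, geometric realization and sequential colimits along Hurewicz cofibrations, and then deduce the statement about $CX$ from a CW approximation together with the fact that the free functor preserves homotopies (the paper compresses all of this into a citation of \cite[p.~372]{LMS:esht} and ``an inductive argument along the lines of the preceding proposition''). The genuine gap is in your inductive step. The complement $Z_k$ of the component $X_n$ inside $B_k(CB,CA,X_n)\cong C(B\sqcup A^{\sqcup k})\coprod_C X_n$ does \emph{not} decompose as a coproduct of balanced products $C(j)\times_{\Sigma_j}(\text{products of } X_n, A, B)$. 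The $C$-algebra coproduct of $X_n$ with a free algebra is a \emph{quotient} of the disjoint union of such balanced products: any sub-operation all of whose inputs lie in $X_n$ is identified with its image under the action map $CX_n\to X_n$, and the operad unit creates further identifications. Only $X_n$ itself splits off as a clopen component --- that is exactly what the Lemma in the proof of Proposition \ref{t-pr-cof-c-alg-cof-ptd-space} asserts, and no more. Already for the associative operad this fails: $X_n\coprod_C CW$ is the free product of the monoid $X_n$ with the free monoid on $W$, whose underlying space is a proper quotient of the word spaces, not their disjoint union. So ``each summand is CW, hence $Z_k$ is CW'' is not a valid step. To conclude that each simplicial level has CW homotopy type one must filter it by the number of inputs drawn from $A\sqcup B$, verify the filtration maps are cofibrations using the $\Sigma_j$-CW and nondegeneracy hypotheses, and identify the subquotients; this is precisely the content of the fact the paper quotes instead ($CA$ has CW homotopy type when $A$ does, \cite[p.~372]{LMS:esht}), extended to coproducts with $X_n$ by a Cofibration-Hypothesis-style analysis as in \cite[\S VII.6]{EKMM}.

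A smaller point: your opening reduction appeals to invariance of CW homotopy type under weak equivalence of spaces, which is false in general. The reduction is still available, but it should be justified from the hypothesis itself: $X$ is genuinely homotopy equivalent to a cofibrant $C$-space, a homotopy equivalence of underlying spaces preserves CW homotopy type, and passing from a cofibrant object to a cell object is handled by the fact that a retract of a space of CW homotopy type again has CW homotopy type.
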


\begin{proof}
The first statement is an easy consequence of the fact that $C$
preserves homotopies and cofibrant objects.  To see the second,
observe that the forgetful functor preserves homotopies, so it
suffices to suppose that $X$ is a cofibrant $C$-space.  Under our
hypotheses on $C$, if $A$ has the homotopy type of a CW-complex then
so does the underlying space of $CA$ (see for instance page 372 of
\cite{LMS:esht} for a proof).  The result now follows from an
inductive argument along the lines of the preceding proposition.
\end{proof}



\subsection{$\einfty$ spaces and $\einfty$ spectra}

\label{sec:einfty-spaces-einfty-1}

Suppose that $C\to \Lin$ is an operad over $\Lin$.  In this section we
recall the proof of the following result:

\begin{Proposition}[\cite{MQRT:ersers}, \cite{LMS:esht}
p. 366]\label{t-pr-l-spaces-l-spectra} The continuous Quillen pair
\begin{equation} \label{eq:2}
       \splus: \spaces \leftrightarrows \spectra: \linf
\end{equation}
induces by restriction a continuous Quillen adjunction
\begin{equation} \label{eq:3}
       \splus: \alg{\ptC}{\ptdspaces}\iso
\alg{C}{\spaces}\leftrightarrows \alg{C}{\spectra}: \linf
\end{equation}
between topological model categories.
\end{Proposition}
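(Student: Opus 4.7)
The plan is to exhibit the adjunction on algebra categories as a direct consequence of the strong symmetric monoidality of $\splus$ and the correspondingly lax symmetric monoidality of $\linf$. Once both functors are lifted to $C$-algebras, the Quillen and continuity statements follow formally.

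Step 1 (Lifting $\linf$). The adjunction $(\splus,\linf)$ together with the natural isomorphism $\splus(X\times Y)\iso \splus X\Smash \splus Y$ furnishes, by adjointness, lax symmetric monoidal structure maps $\linf E\times \linf F\to \linf(E\Smash F)$ and a unit $\ptspace\to \linf S$, as well as a natural identification of $\linf(C(k)\ltimes_{\Sigma_{k}} E^{\Smash k})$ with a space mapping to $C(k)\times_{\Sigma_{k}}(\linf E)^{k}$. The action maps $C(k)\ltimes_{\Sigma_{k}}R^{\Smash k}\to R$ of a $C$-spectrum $R$ therefore induce compatible maps $C(k)\times_{\Sigma_{k}}(\linf R)^{k}\to \linf R$ that exhibit $\linf R$ as a $C$-space. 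This construction is natural in $R$ and produces the right adjoint $\linf:\alg{C}{\spectra}\to \alg{C}{\spaces}$.

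Step 2 (Lifting $\splus$). The strong monoidality of $\splus$ together with the isomorphisms $\splus(A\times_{\Sigma_{k}}Y^{k})\iso \splus A\Smash_{\Sigma_{k}}(\splus Y)^{\Smash k}$ yield a natural isomorphism $\alpha:\splus CY\iso C\splus Y$, where $C$ denotes the relevant free-algebra monad on each side (one unpacks $\splus$ of the disjoint-union-of-products formula \eqref{eq:25}). For a general $C$-space $Y$ with structure map $\mu:CY\to Y$, one defines $\splus Y\in \alg{C}{\spectra}$ as the reflexive coequalizer
\begin{equation*}
\xymatrix{ {C\splus CY} \ar@<.4ex>[r]^-{\splus\mu} \ar@<-.4ex>[r]_-{m\circ C\alpha} & {C\splus Y} \ar[r] & {\splus Y,} }
\end{equation*}
in the spirit of \eqref{eq:11}, where $m$ is the monad multiplication on the spectrum side. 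The formal monadic argument used in the proof of Proposition \ref{t-pr-D-tens-C} then yields a natural bijection
\[
\alg{C}{\spectra}(\splus Y,R)\iso \alg{C}{\spaces}(Y,\linf R)
\]
witnessing the adjunction.

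Step 3 (Quillen property and continuity). By Propositions \ref{t-pr-C-spectra-level-model} and \ref{t-pr-C-spaces-level-model}, both model structures are cofibrantly generated, with generators of the form $CA\to CB$ for $A\to B$ a generating (trivial) cofibration of the underlying category. Since the underlying pair $(\splus,\linf)$ is Quillen, $\splus$ sends generating (trivial) cofibrations of $\spaces$ to those of $\spectra$; the isomorphism $\splus CA\iso C\splus A$ from Step 2 then shows that the lifted $\splus$ carries generators of $\alg{C}{\spaces}$ to generators of $\alg{C}{\spectra}$, which suffices. Continuity is inherited from the underlying categories, since algebra mapping spaces sit as equalizers inside the underlying mapping spaces on which $\splus$ and $\linf$ are continuous.

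The chief technical care is required in Step 2, in setting up the strong-monoidality comparison $\alpha$ correctly in the Lewis-May-Steinberger framework (reconciling the wedge-of-half-smashes definition of the free $C$-spectrum with the disjoint-union-of-products definition of the free $C$-space) and checking that the construction is compatible with the equivalence $\alg{C}{\spaces}\iso \alg{\ptC}{\ptdspaces}$ of \eqref{eq:43}. Once $\alpha$ is in place the remainder of the argument is a formal exercise in monads and adjunctions.
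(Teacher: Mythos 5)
Your proposal is correct, and it correctly identifies the central ingredient---the natural isomorphism $\splus CX \iso C\splus X$ of Lemma \ref{t-le-C-commutes-with-susp}---but it takes a more circuitous route than the paper on the two remaining points. In Step 2 you build the lift of $\splus$ to $C$-algebras as a reflexive coequalizer, in the style of the change-of-monads construction \eqref{eq:11}. This is not wrong, but it is unnecessary: since $\splus C \iso C\splus$ is an \emph{isomorphism} (not merely a monad morphism), the functor $\splus$ already carries a $C$-space $(X,\mu)$ directly to the $C$-spectrum $\splus X$ with structure map $C\splus X \iso \splus CX \xra{\splus\mu} \splus X$, no colimit required; this is exactly the content of Lemma \ref{t-pr-l-spaces-l-spectra-adj}. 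Your coequalizer does compute the same thing (it is the canonical presentation of $\splus X$ as a reflexive coequalizer of free algebras), but going through it adds checks you then have to unwind---e.g.\ that the underlying spectrum agrees with $\splus$ of the underlying space, and that applied to a free object $CA$ the construction returns $C\splus A$ rather than a nontrivial colimit---before the argument in Step 3 applies. In Step 3 you verify the Quillen property by showing that $\splus$ sends generating (trivial) cofibrations $CA\to CB$ to generating (trivial) cofibrations $C\splus A \to C\splus B$; the paper instead observes that $\linf$ preserves fibrations and weak equivalences because the forgetful functors create them in both algebra categories and $\linf:\spectra\to\spaces$ is right Quillen. Both are standard and equivalent routes (via \cite[Lemma 1.3.4]{Hovey:MC}); the paper's runs through the right adjoint and avoids invoking the explicit form of the generating sets. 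Aside from these detours and the unverified bookkeeping they create, the overall structure of your argument is sound.
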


The first thing to observe is that $C$ and $\splus$ satisfy a strong
compatibility condition.

\begin{Lemma}\label{t-le-C-commutes-with-susp}
There is a natural isomorphism
\begin{equation}\label{eq:4}
    C \splus X \iso \splus C X.
\end{equation}
\end{Lemma}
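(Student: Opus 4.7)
The strategy is to compare both sides summand by summand, using the explicit formulas
\[
    C X = \coprod_{k \geq 0} C(k) \times_{\Sigma_k} X^k
    \qquad \text{and} \qquad
    C V = \bigvee_{k \geq 0} C(k) \halfsmash_{\Sigma_k} V^{\wedge k},
\]
together with three formal properties of the functor $\splus \colon \spaces \to \spectra$: (i) $\splus$ carries disjoint unions to wedges, sending $\coprod_i X_i$ to $\bigvee_i \splus X_i$, since adding a single disjoint basepoint to a disjoint union is the wedge of the individually-basepointed pieces; (ii) $\splus$ is strong symmetric monoidal, with natural isomorphisms $\splus(A \times B) \iso \splus A \smash \splus B$ that are compatible with the symmetry; and (iii) as a left adjoint, $\splus$ preserves all colimits, in particular the reflexive coequalizers that present the orbit functors $(\slot)/\Sigma_k$.

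First I would apply (i) to pull $\splus$ inside the coproduct indexed by $k$, reducing the claim to a natural isomorphism
\[
    \splus\bigl( C(k) \times_{\Sigma_k} X^k \bigr) \iso C(k) \halfsmash_{\Sigma_k} (\splus X)^{\wedge k}
\]
for each $k \geq 0$. For fixed $k$, I would then use (iii) to identify this with $\splus\bigl(C(k) \times X^k\bigr)/\Sigma_k$, and then (ii) to rewrite the numerator as $C(k)_+ \smash (\splus X)^{\wedge k}$, verifying that the resulting $\Sigma_k$-action is the diagonal one: permutation of the $X$-factors on the right, and the given $\Sigma_k$-action on $C(k)$ on the left. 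By definition of the equivariant half-smash, this quotient is $C(k) \halfsmash_{\Sigma_k} (\splus X)^{\wedge k}$, completing the $k$-th case. The $k=0$ term is handled separately: both sides reduce to $C(0)_+$, since $X^0 = \ptspace$ and $(\splus X)^{\wedge 0} = S$. All of the isomorphisms constructed are plainly natural in $X$, so their wedge over $k$ gives the required natural isomorphism.

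The main point requiring attention is verifying that the strong-monoidality isomorphism of (ii) is strictly $\Sigma_k$-equivariant (not merely equivariant up to coherent homotopy), and that the half-smash $C(k) \halfsmash_{\Sigma_k} V^{\wedge k}$, which in the Lewis--May--Steinberger framework is built via twisted half-smash products over the linear isometries operad, agrees with the naive orbit construction $\bigl(C(k)_+ \smash V^{\wedge k}\bigr)/\Sigma_k$ when $V = \splus X$ is a suspension spectrum. Both are standard facts in the LMS setup (see \cite{LMS:esht}, VI), so the argument is essentially bookkeeping once they are in hand.
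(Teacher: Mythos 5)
Your argument is correct and follows essentially the same route as the paper: both reduce to a summand-by-summand comparison and rest on the untwisting isomorphism of \cite{LMS:esht}, \S VI (Proposition 1.5), which identifies the twisted half-smash of a suspension spectrum with the suspension spectrum of the corresponding space-level construction, together with $\splus$ carrying disjoint unions to wedges. The paper simply cites that LMS result in its already $\Sigma_k$-quotiented form, whereas you unfold it via orbits and colimit preservation, which is only a bookkeeping difference.
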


\begin{proof}
It follows from \S{VI}, Proposition 1.5 of \cite{LMS:esht} that, if
$X$ is a space, then
\[
   C (k) \ltimes_{\Sigma_{k}} (\splus X)^{\wedge k} \iso \splus (C (k)
\times_{\Sigma_{k}} X^{k}),
\]
and so
\[
   C \splus X = \bigvee_{k\geq 0} C(k) \ltimes_{\Sigma_{k}} (\splus
X)^{\wedge k} \iso \bigvee_{k\geq 0} \splus (C (k)\times X^{k}) \iso
\splus\left(\coprod_{k\geq 0} C (k) \times X^{k} \right) = \splus CX.
\]
\end{proof}

Next we have the following, from \cite[p. 366]{LMS:esht}.

\begin{Lemma}  \label{t-pr-l-spaces-l-spectra-adj}
The adjoint pair
\begin{equation} \label{eq:23}
       \splus: \spaces \leftrightarrows \spectra: \linf
\end{equation}
induces an adjunction
\begin{equation} \label{eq:36}
       \splus: \alg{C}{\spaces}\leftrightarrows \alg{C}{\spectra}:
\linf
\end{equation}
and so also
\[
       \splus: \alg{\ptC}{\ptdspaces}\iso
\alg{C}{\spaces}\leftrightarrows \alg{C}{\spectra}: \linf
\]
\end{Lemma}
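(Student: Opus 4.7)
The plan is to lift the space-level adjunction $\splus \dashv \linf$ to $C$-algebras by using the isomorphism $\alpha \colon C\splus \xra{\iso} \splus C$ of Lemma~\ref{t-le-C-commutes-with-susp} as a morphism of monads along $\splus$. This is an instance of a standard piece of categorical algebra: whenever $F \dashv G$ is an adjunction between categories equipped with monads $T_1$ and $T_2$, a monad morphism $\alpha \colon T_2 F \to F T_1$ lifts $F \dashv G$ to an adjunction between the associated categories of algebras. In our setting both monads are denoted $C$ and $\alpha$ is invertible, which makes the verification routine.

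The first step is to check that $\alpha$ is a morphism of monads, i.e.\ that it intertwines the units $\eta^C \colon \id \to C$ and the multiplications $\mu^C \colon CC \to C$ on each side. Unwinding the proof of Lemma~\ref{t-le-C-commutes-with-susp} through the identification $C(k)\ltimes_{\Sigma_k}(\splus X)^{\wedge k} \iso \splus(C(k)\times_{\Sigma_k}X^k)$, and using that $\splus$ is strong symmetric monoidal and preserves coproducts, both compatibility diagrams commute essentially by inspection at the level of the operad $C(k)$.

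The second step is to construct the lifted functors. For a $C$-space $A$ with structure map $\mu_A \colon CA \to A$, equip $\splus A$ with the $C$-spectrum structure $\splus \mu_A \circ \alpha_A \colon C\splus A \to \splus A$. Dually, for a $C$-spectrum $B$ with structure $\nu_B$, equip $\linf B$ with the $C$-space structure $\linf \nu_B \circ \beta_B$, where $\beta \colon C\linf \to \linf C$ is the mate of $\alpha^{-1}$ under $\splus \dashv \linf$; explicitly
\[
\beta \;=\; \linf C\varepsilon \;\circ\; \linf\alpha^{-1}\linf \;\circ\; \eta C\linf,
\]
with $\eta$ and $\varepsilon$ the space-level unit and counit. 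The monad-morphism property of $\alpha$ translates directly into the $C$-algebra axioms for both constructions.

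Finally, I would verify that $\eta \colon \id \to \linf\splus$ and $\varepsilon \colon \splus\linf \to \id$ are morphisms of $C$-algebras with respect to these lifted structures; equivalently, that under the natural bijection $\spectra(\splus A, B) \iso \spaces(A, \linf B)$, a map of spectra is a map of $C$-spectra if and only if its adjoint is a map of $C$-spaces. This is a formal diagram chase using naturality of $\alpha$, the definition of $\beta$ as a mate, and the triangle identities. The second displayed adjunction then follows immediately from the equivalence $\alg{C}{\spaces} \iso \alg{\ptC}{\ptdspaces}$ of Proposition~\ref{t-pr-C-spaces-level-model}. The only real obstacle is bookkeeping around the mate construction; the invertibility of $\alpha$ removes any substantive difficulty, so the whole argument is formal from Lemma~\ref{t-le-C-commutes-with-susp}.
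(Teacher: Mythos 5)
Your proposal is correct and takes essentially the same approach as the paper: you endow $\splus X$ with its $C$-structure via the isomorphism $C\splus X \iso \splus C X$ of Lemma \ref{t-le-C-commutes-with-susp}, and $\linf A$ with the structure coming from the mate map $C\linf A \to \linf C A$ adjoint to the counit, which is exactly the paper's construction. Your monad-morphism/doctrinal-adjunction packaging merely makes explicit the formal verifications the paper leaves implicit.
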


\begin{proof}
We show that the adjunction \eqref{eq:23} restricts to the
adjunction \eqref{eq:36}.  If $X$ is a $C$-space with structure map
$\mu: CX \to X$, then, using the isomorphism \eqref{eq:4}, $\splus X$
is a $C$-algebra via
\[
     C\splus X \iso \splus C X \xra{\splus \mu} \splus X.
\]
If $A$ is a $C$-spectrum, then $\linf A$ is a $C$-space via
\[
    C \linf A \rightarrow \linf C A \xra{} \linf A.
\]
The second map is just $\linf$ applied to the $C$-structure on $A$;
the first map is the adjoint of the map
\[
    \splus C \linf A \iso C \splus \linf A \rightarrow C A
\]
obtained using the counit of the adjunction.
\end{proof}

This adjunction allows us to prove the pointed analogue of Lemma
\ref{t-le-C-commutes-with-susp}.

\begin{Lemma}[\cite{LMS:esht}, \S VII, Prop. 3.5] \label{t-le-ptC-adjunction}
If $C$ is a unital operad over $\Lin,$ then there is a natural isomorphism
\begin{equation} \label{eq:45}
    \splus\ptC Y \iso \ptC \splus Y \iso C\sinf Y.
\end{equation}
\end{Lemma}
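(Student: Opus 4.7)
The plan is to establish each of the two isomorphisms in \eqref{eq:45} separately, deducing each from facts already available: Lemma \ref{t-le-C-commutes-with-susp}, the adjunction of Lemma \ref{t-pr-l-spaces-l-spectra-adj}, and the formula \eqref{eq:44} identifying the free $C$-spectrum with $\ptC\Svee$.

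\emph{First isomorphism.} I would construct $\splus \ptC Y \iso \ptC \splus Y$ by showing that $\splus$ carries the defining pushout \eqref{eq:42} of $\ptC Y$ in $\alg{C}{\spaces}$ to the defining pushout \eqref{eq:41} of $\ptC \splus Y$ in $\alg{C}{\spectra}$. Since by Lemma \ref{t-pr-l-spaces-l-spectra-adj} the functor $\splus\colon\alg{C}{\spaces}\to\alg{C}{\spectra}$ is a left adjoint, it preserves pushouts. Applying $\splus$ to \eqref{eq:42} and using Lemma \ref{t-le-C-commutes-with-susp} to identify $\splus C\ptspace \iso C S$ and $\splus C Y\iso C\splus Y$, together with $\splus\ptspace \iso S \iso C(\ptspace)$ (which uses the unitality of $C$), yields precisely the pushout \eqref{eq:41} defining $\ptC\splus Y$ with respect to the unit $S\to\splus Y$ obtained by applying $\splus$ to the basepoint of $Y$.

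\emph{Second isomorphism.} Here the key observation is that, because $Y$ is already pointed, the natural map $\sinf Y\to\splus Y=\sinf Y_{+}$ admits a retraction giving a canonical splitting $\splus Y \iso S\vee \sinf Y = \Svee(\sinf Y)$ as a spectrum under $S$. Thus, setting $V=\sinf Y$ in \eqref{eq:44} gives
\[
C \sinf Y \;\iso\; \ptC \Svee(\sinf Y) \;\iso\; \ptC \splus Y.
\]
It should be straightforward to check that the map $S\to\splus Y$ used here agrees with the one produced by $\splus$ from the basepoint of $Y$ in the first step, so that the two identifications are consistent.

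The routine but only mildly delicate point will be the second step: verifying that the splitting $\splus Y\iso S\vee\sinf Y$ of spectra under $S$ is the same (up to natural isomorphism) as the structure used to construct $\ptC\splus Y$ via the pushout in the first step. Once this compatibility is recorded, both isomorphisms are natural in $Y$ and the composite $\splus\ptC Y\iso \ptC\splus Y\iso C\sinf Y$ follows, which is the content of the lemma.
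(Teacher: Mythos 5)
Your proof follows the same route as the paper's: the first isomorphism is obtained by using that $\splus$ is a left adjoint (Lemma~\ref{t-pr-l-spaces-l-spectra-adj}) carrying the defining pushout \eqref{eq:42} to the defining pushout \eqref{eq:41}, identifying each corner via Lemma~\ref{t-le-C-commutes-with-susp}; and the second comes from combining \eqref{eq:44} at $V = \sinf Y$ with the pointed-space splitting $\splus Y \iso \Svee(\sinf Y)$. One small slip in your write-up of the splitting: there is no natural pointed map $Y \to Y_+$, hence no natural map $\sinf Y \to \splus Y$ to speak of; the splitting instead arises because the basepoint inclusion $S \to \splus Y$ is retracted by the collapse $\splus Y \to S$, with $\sinf Y$ appearing as the complementary cofiber rather than as the image of a pre-chosen section.
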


\begin{proof}
Let $Y$ be a pointed space.  By Lemma
\ref{t-pr-l-spaces-l-spectra-adj} and the isomorphism \eqref{eq:4},
applying the left adjoint $\splus$ to the pushout diagram
\eqref{eq:42} defining $\ptC Y$ identifies $\splus \ptC Y$ with the
pushout of the diagram \eqref{eq:41} defining $\ptC \splus Y.$ The
second isomorphism is just the isomorphism \eqref{eq:44} together with
the isomorphism (for pointed spaces) $Y$
\[
    \splus Y \iso \sinf (S \vee Y).
\]
\end{proof}

\begin{proof}[Proof of Proposition \ref{t-pr-l-spaces-l-spectra}] It
remains to show that the adjoint pair $(\splus,\linf)$ induces a
Quillen adjunction.  For this it suffices to show that the right
adjoint $\linf$ preserves fibrations and weak equivalences (see, for
example, \cite[Lemma 1.3.4]{Hovey:MC}).  Now recall that the forgetful
functor $\alg{C}{\spectra}\to\spectra$ creates fibrations and weak
equivalences, and similarly for $\spaces$ \cite{EKMM,MR1806878}.  It
follows that the functor
\[
     \linf: \alg{C}{\spectra}\to \alg{C}{\spaces}
\]
preserves fibrations and weak equivalences, since
\[
     \linf: \spectra\rightarrow \spaces
\]
does.
\end{proof}

\begin{Remark}
Note that if $A$ is an $\einfty$ ring spectrum, then $\linf A$ is an
$\einfty$ space in two ways: one is described above, and arises from
the multiplication on $A$.  The other arises from the additive
structure of $A$, i.e. the fact that $\linf A$ is an infinite loop
space.  Together these two $\einfty$ structures give an $\einfty$ ring
space in the sense of \cite{MQRT:ersers} (see also \cite{May:rant}).
\end{Remark}

\subsection{$\einfty$ spaces and group-like $\einfty$ spaces}
\label{sec:einfty-spaces-group}

Suppose that $C$ is a unital operad (pointed would be enough), and let
$X$ be a $C$-algebra in spaces.  The structure maps
\begin{align*}
  \ptspace \to C (0) &\to X \\
  C (2)\times X \times X &\to X
\end{align*}
correspond to a family of $H$-space structures on $X$ and give to
$\pi_{0}X$ the structure of a monoid. 

\begin{Definition}  \label{def-1}
$X$ is said to be \emph{group-like} if $\pi_{0}X$ is a group.  We
write $\gplike{C}{\spaces}$ for the full subcategory of
$\alg{C}{\spaces}$ consisting of group-like $C$-spaces.
\end{Definition}

Note that if $f: X\to Y$ is a weak equivalence of $C$-spaces, then $X$
is group-like if and only $Y$ is.

\begin{Definition}\label{def-2}
We write $\Ho \gplike{C}{\spaces}$ for the image of
$\gplike{C}{\spaces}$ in $\Ho \alg{C}{\spaces}$.  It is the full
subcategory of homotopy types represented by group-like spaces.
\end{Definition}

If $X$ is a $C$-space, let $\GL{X}$ be the (homotopy) pull-back in the
diagram
\begin{equation} \label{eq:12}
\begin{CD}
   \GL{X} @>>> X \\
@VVV  @VVV \\
\pi_{0} (X)^{\times} @>>> \pi_{0}X.
\end{CD}
\end{equation}
Then $\GL{X}$ is a group-like $C$-space.

\begin{Proposition}  \label{t-pr-GL}
The functor $\GLsym$ is the right adjoint of the inclusion
\[
        \gplike{C}{\spaces} \rightarrow \alg{C}{\spaces}
\]
\end{Proposition}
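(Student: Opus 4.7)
The plan is to verify two claims: first, that $\GL{X}$ as defined by the pullback square \eqref{eq:12} carries a natural $C$-space structure making it a group-like $C$-space, and second, that the inclusion $\GL{X} \hookrightarrow X$ has the required universal property.

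For the first claim, the key observation is that $\pi_{0}(X)^{\times} \to \pi_{0}X$ is an inclusion of discrete sets, so the pullback $\GL{X}$ is simply the union of those path components of $X$ whose image in $\pi_{0}X$ lies in $\pi_{0}(X)^{\times}$. To see that the $C$-action on $X$ restricts to $\GL{X}$, I need to check that each structure map $C(k) \times \GL{X}^{k} \to X$ lands in $\GL{X}$. At the level of $\pi_{0}$, the operadic action factors through the induced monoid structure on $\pi_{0}X$ (this is precisely the content of the assertion at the start of the subsection that the $C$-structure maps endow $\pi_{0}X$ with the structure of a monoid); since $\pi_{0}(X)^{\times}$ is by definition the group of units of this monoid and hence closed under iterated multiplication, the image of $C(k) \times \GL{X}^{k}$ in $\pi_{0}X$ lies in $\pi_{0}(X)^{\times}$, as required. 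Because $\pi_{0}\GL{X} = \pi_{0}(X)^{\times}$ is a group, $\GL{X}$ is group-like.

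For the adjunction, let $Y$ be a group-like $C$-space and let $f \colon Y \to X$ be a map of $C$-spaces. I will show that $f$ factors uniquely through the inclusion $\GL{X} \hookrightarrow X$. The induced map $\pi_{0}f \colon \pi_{0}Y \to \pi_{0}X$ is a monoid homomorphism; since by hypothesis $\pi_{0}Y$ is a group, every element has an inverse which must map to an inverse of its image, so $\pi_{0}f$ factors through $\pi_{0}(X)^{\times}$. By the universal property of the pullback in spaces, $f$ factors uniquely through $\GL{X}$ as a continuous map, and this factorization is automatically a map of $C$-spaces, since the $C$-structure on $\GL{X}$ is defined as the restriction of the $C$-structure on $X$ along a monomorphism. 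Since $\gplike{C}{\spaces} \subset \alg{C}{\spaces}$ is a full subcategory, this gives a natural bijection
\[
\gplike{C}{\spaces}(Y, \GL{X}) \iso \alg{C}{\spaces}(Y, X).
\]

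The main obstacle will be the verification that the operadic action restricts, which relies on the fact that operadic operations of higher arity on $\pi_{0}X$ are determined by the underlying monoid multiplication. This is standard but deserves careful formulation, especially because $C$ is assumed only to be unital rather than $E_{\infty}$ or $A_{\infty}$; one should argue that for any $c \in C(k)$, the induced $k$-ary operation on $\pi_{0}X$ agrees with an iterated binary product (via a path in $C(k)$ to an operadic composite of elements of $C(2)$ and $C(0)$), so that units are preserved.
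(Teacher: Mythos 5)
Your proof is correct and takes essentially the same approach as the paper, whose own proof simply records the hom-set bijection $\alg{C}{\spaces}(Y,X)\iso\gplike{C}{\spaces}(Y,\GL{X})$ by analogy with groups inside monoids. Your extra care in checking that the $C$-action restricts to the unit components (which, as you note, uses that higher-arity operations on $\pi_{0}X$ agree with iterated binary products --- automatic in the intended setting where the spaces $C(k)$ are connected, e.g.\ for the $\einfty$ operads actually used) just makes explicit what the paper leaves implicit.
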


\begin{proof}
If $X$ is a group-like $C$-space, and $Y$ is a $C$-space, then
\[
\alg{C}{\spaces} (X,Y) \iso \gplike{C}{\spaces} (X,\GL{Y});
\]
just as, if $G$ is a group and $M$ is a monoid, then
\[
    \CatOf{monoids} (G,M) = \CatOf{groups} (G,\GL{M}).
\]
\end{proof}

\subsection{Group-like $\einfty$ spaces and connective spectra}
\label{sec:einfty-spac-conn}

A guiding result of infinite loop space theory is that group-like
$\einfty$ spaces provide a model for connective spectra.  We take a
few pages to show how the primary sources (in particular
\cite{MR0420609,May:gils,MR0339152}) may be used to prove a
formulation of this result in the language of model categories.


%

To begin, suppose that $C$ is a \emph{unital} $\einfty$ operad, and
$f$ is a map of monads (on pointed spaces)
\[
    f: \ptC \to Q \eqdef \linf \sinf.
\]
For example, we can take $C$ to be a unital $\einfty$ operad over the
infinite little cubes operad, but it is interesting to note that any
map of monads will do. If $V$ is a spectrum, then $\linf V$ is
a group-like $C$-algebra, via the map
\[
     \ptC \linf V \xra{f} \linf \sinf \linf V \to \linf V.
\]
Thus we have a factorization
\begin{equation} \label{eq:21}
\xymatrix{ {\spectra} \ar[r]^-{\Omega^{f}} \ar[dr]_-{\linf} &
{\gplike{C}{\spaces}} \ar[d]
\\
& {\ptdspaces} }
\end{equation}
We next show that the functor $\Omega^{f}$ has a left adjoint
$\Sigma^{f}.$ By regarding a $C$-space $X$ as a pointed space via
$\ptspace \to C (0) \to X$, we may form the spectrum $\sinf X$.  Let
$\Sigma^{f} X$ be the coequalizer in the diagram of spectra
\[
\xymatrix{ {\sinf \ptC X} \ar@<.3ex>[rr]^{\sinf \mu} \ar@<-.3ex>[rr]
\ar[dr]_{\sinf f} & & {\sinf X} \ar[r] & {\Sigma^{f} X.}
\\
& {\sinf \linf \sinf X} \ar[ur] }
\]
Then we have the following.

\begin{Lemma} \label{t-le-Sigma-f-C-sinf}
The pair
\begin{equation} \label{eq:27}
   \Sigma^{f}: \alg{C}{\spaces} \leftrightarrows \spectra: \Omega^{f}
\end{equation}
are a Quillen pair.  Moreover, the natural transformation
\[
     \Sigma^{f}\ptC \to \sinf
\]
is an isomorphism.
\end{Lemma}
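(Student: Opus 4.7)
My plan is to establish the adjunction by Yoneda, using the universal property of the coequalizer defining $\Sigma^{f}$; to derive the isomorphism $\Sigma^{f}\ptC \iso \sinf$ as an immediate consequence; and then to verify the Quillen property directly. Throughout I work on the pointed side via the equivalence $\alg{C}{\spaces} \iso \alg{\ptC}{\ptdspaces}$ of \eqref{eq:43}, so that $\Omega^{f} V$ has underlying pointed space $\linf V$ with $\ptC$-action $\mu_{\Omega^{f}V} = \linf\epsilon_{V} \circ f_{\linf V}$, where $\epsilon$ is the counit of $\sinf \dashv \linf$.

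For the adjunction, the universal property of the coequalizer in spectra identifies $\spectra(\Sigma^{f}X,V)$ with the set of maps $\alpha\colon \sinf X\to V$ satisfying $\alpha\circ\sinf\mu = \alpha\circ\epsilon_{\sinf X}\circ\sinf f_{X}$. Taking adjoints across $\sinf\dashv\linf$ and writing $\tilde\alpha = \linf\alpha\circ\eta_{X}\colon X\to\linf V$ (so $\alpha = \epsilon_{V}\circ\sinf\tilde\alpha$), naturality of the unit $\eta$ transposes the first composite to $\tilde\alpha\circ\mu$. For the second composite, naturality of $\eta$ followed by the triangle identity $\linf\epsilon\circ\eta\linf = 1_{\linf}$ transposes it to $\linf\alpha\circ f_{X}$, and then naturality of the monad map $f\colon \ptC\to \linf\sinf$ rewrites this as $\mu_{\Omega^{f}V}\circ \ptC\tilde\alpha$. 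Thus the coequalizing condition on $\alpha$ is exactly the condition that $\tilde\alpha$ be a map of $\ptC$-algebras $X\to\Omega^{f}V$, yielding a natural bijection $\spectra(\Sigma^{f} X,V)\iso\alg{\ptC}{\ptdspaces}(X,\Omega^{f}V)$.

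The natural isomorphism $\Sigma^{f}\ptC Y \iso \sinf Y$ now follows from a second Yoneda argument: for any spectrum $V$,
\[
\spectra(\Sigma^{f}\ptC Y,V) \iso \alg{\ptC}{\ptdspaces}(\ptC Y,\Omega^{f} V) \iso \ptdspaces(Y,\linf V) \iso \spectra(\sinf Y,V),
\]
naturally in $Y$ and $V$, via the free--forgetful adjunction for $\ptC$; this is the precise analogue of Proposition \ref{t-pr-D-tens-C}. For the Quillen property it suffices to check that $\Omega^{f}$ preserves fibrations and weak equivalences. The forgetful functor $\alg{C}{\spaces}\to\ptdspaces$ creates fibrations and weak equivalences by Proposition \ref{t-pr-C-spaces-level-model}, and its composite with $\Omega^{f}$ is simply $\linf\colon\spectra\to\ptdspaces$, which preserves both as the right adjoint of the Quillen pair $(\splus,\linf)$. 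The only delicate step is the naturality and triangle-identity chase in the second paragraph; once that is carried out honestly, the remainder is formal.
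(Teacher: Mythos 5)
Your proposal is correct and follows essentially the same route as the paper: the adjunction is extracted from the universal property of the defining coequalizer (the paper declares this "a formal consequence of the construction," citing the argument of Proposition \ref{t-pr-D-tens-C}, while you spell out the naturality/triangle-identity chase), the isomorphism $\Sigma^{f}\ptC\iso\sinf$ is obtained by the identical Yoneda string of adjunctions, and the Quillen property is checked exactly as in the paper by noting that $\Omega^{f}$ composed with the forgetful functor is $\linf$, which preserves fibrations and weak equivalences.
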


\begin{proof}
As mentioned in the proof of Proposition \ref{t-pr-D-tens-C}, it is
essentially a formal consequence of the construction that $\Sigma^{f}$
is the left adjoint of $\Omega^{f}$.  Given the adjunction, we find
that $\Sigma^{f}\ptC\iso \sinf$, since, for any pointed space $X$ and
any spectrum $V$, we have
\begin{align*}
\spectra (\Sigma^{f}\ptC X,V) & \iso
   \alg{C}{\spaces} (\ptC X,\Omega^{f}V) \\
  & \iso  \ptdspaces (X,\linf V) \\
  & \iso \spectra (\sinf X, V).
\end{align*}
To show that we have a Quillen pair, it suffices (\cite[Lemma
  1.3.4]{Hovey:MC}) to show that $\Omega^{f}$ preserves weak
equivalences and fibrations.  This follows from the commutativity of
the diagram \eqref{eq:21}, the fact that $\linf$ preserves weak
equivalences and fibrations, and the fact that the forgetful functor
\[
\alg{C}{\spaces} \rightarrow \spaces
\]
creates fibrations and weak equivalences.
\end{proof}

Lemma \ref{t-le-Sigma-f-C-sinf} implies that the pair
$(\Sigma^{f},\Omega^{f})$ induce a continuous Quillen adjunction
\[
   \Sigma^{f}: \alg{C}{\spaces} \leftrightarrows \spectra:
\Omega^{f}.
\]
It is easy to see that this cannot be a Quillen equivalence.  Instead,
one expects that it induces an equivalence between the homotopy
categories of \emph{group-like} $C$-spaces and \emph{connective}
spectra.  In \cite{MR1806878}, this situation is called a ``connective
Quillen equivalence.''  The rest of this subsection is devoted to the
proof of the following result along these lines:

\begin{Theorem}  \label{t-th-einfty-spaces-spectra}
Suppose that $C$ is a unital operad, equipped with a map of monads
\[
      f: C \to \linf \sinf.
\]
Suppose moreover that
\begin{enumerate}
\item the base point $\ptspace \to C (1)$ is non-degenerate, and
\item for each $n$, the $n$-space $C (n)$ has the homotopy type of a $\Sigma_{n}$-$CW$-complex.
\end{enumerate}
Then the adjunction $(\Sigma^{f},\Omega^{f})$
induces an equivalence of categories
\[
\xymatrix{ {\Sigma^{f}: \ho \gplike{C}{\spaces}} \ar@<.3ex>[r] & {\ho
\CatOf{connective spectra}: \Omega^{f}} \ar@<.3ex>[l] }
\]
enriched over $\ho \spaces.$
\end{Theorem}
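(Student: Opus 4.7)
The plan is to identify the derived functor of $\Sigma^{f}$ on cofibrant $C$-spaces with May's infinite loop space machine, and then invoke the classical recognition principle to conclude the equivalence on group-like / connective objects. Throughout, the continuous Quillen adjunction of Lemma \ref{t-le-Sigma-f-C-sinf} lets us work with derived unit and counit maps.

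First I would show that for a cofibrant $C$-space $X$, the strict coequalizer defining $\Sigma^{f}X$ agrees (up to weak equivalence of spectra) with the geometric realization of the two-sided bar construction $B_{\bullet}(\sinf, \ptC, X)$, whose $k$-simplices are $\sinf (\ptC)^{k+1} X$ with faces built from $\sinf \mu$ and from $\sinf f$ followed by the counit $\sinf\linf\sinf \to \sinf$. This is the standard monadic resolution: its augmentation to $\Sigma^{f}X$ is a split reflexive coequalizer, and it suffices to check that on cofibrant $X$ the bar construction is homotopically well-behaved. This is done by the same cell-by-cell induction as in the proof of Proposition \ref{t-pr-C-spectra-equiv-D-spectra}: the non-degenerate basepoint hypothesis on $C(1)$ and the $\Sigma_{n}$-CW hypothesis on $C(n)$ ensure that the simplicial spectra involved are proper and level-tame, so realization preserves weak equivalences by \cite[X.2.4]{EKMM}, and Proposition \ref{t-pr-cx-cw} then gives the homotopy invariance of the bar construction.

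Second I would recognize $|B_{\bullet}(\sinf,\ptC,X)|$ as May's infinite loop space machine $\mathbb{E}X$ applied to the $C$-space $X$: indeed the bar construction is exactly the telescope $\hocolim_{n} \Omega^{n}\Sigma^{n} \ptC X$-style model assembled in \cite{May:gils,MR0339152}, with the requisite structure maps coming from the monad map $f : \ptC \to Q = \linf\sinf$. Granting this identification, the classical recognition principle gives both halves of the equivalence: the derived unit
\[
X \to \Omega^{f}\Sigma^{f} X \htp \linf \mathbb{E}X
\]
is May's group-completion map, which is a weak equivalence precisely when $X$ is group-like; and the derived counit
\[
\Sigma^{f}(\Omega^{f} V)^{\mathrm{cof}} \htp \mathbb{E}\linf V \to V
\]
is a weak equivalence whenever $V$ is $(-1)$-connected, since applying $\mathbb{E}$ to an infinite loop space recovers its connective cover. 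The two ingredients together imply that the adjunction, when restricted to $\gplike{C}{\spaces}$ on the left and to connective spectra on the right, descends to an equivalence of the homotopy categories; the enrichment over $\ho\spaces$ is automatic because both $\Sigma^{f}$ and $\Omega^{f}$ are continuous functors of topological model categories.

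The main obstacle will be the first step: pinning down the derived behavior of $\Sigma^{f}$ on cofibrant $C$-spaces. The issue is that $\Sigma^{f}X$ was defined as a strict (i.e.\ unresolved) coequalizer, while the recognition principle fundamentally uses a simplicial resolution; identifying the two requires exactly the combination of cofibrancy, properness of the associated simplicial spectra, and tameness that was exploited in Proposition \ref{t-pr-C-spectra-equiv-D-spectra}. Once that identification is secured, the remainder of the argument is a direct invocation of results of May and May--Thomason, and the restriction of the Quillen adjunction to group-like / connective objects follows formally.
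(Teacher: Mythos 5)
Your proposal is correct and is essentially the paper's own argument: both hinge on resolving a cofibrant $C$-space by the bar construction (so that $\Sigma^{f}$ of the resolution is $B(\sinf,\ptC,X)$), using the nondegenerate-basepoint and $\Sigma_{n}$-CW hypotheses to get properness/tameness, in order to show that the unit $X \to \Omega^{f}\Sigma^{f}X$ is May's group completion map, and then to conclude on group-like/connective objects by a connectivity argument. The only differences are organizational: the paper finishes via the weak-equivalence-detection criterion of Lemma \ref{t-le-conn-equiv} rather than a separate counit statement, establishes homotopy invariance of the bar construction at the space level (Propositions \ref{t-pr-cof-c-alg-cof-ptd-space}, \ref{t-pr-cx-cw} and Lemma \ref{t-le-BCC-cof}) rather than by a spectrum-level cell induction, and makes explicit the commutation of $\Omega^{f}$ with geometric realization (the step the paper flags as the hardest in \cite{May:gils}), which you absorb into your citation of the recognition principle.
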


\begin{Remark}
As observed in \cite{May:gils}, adding a whisker to a degenerate
basepoint produces a new operad $C'$ from $C.$   Also if $C$ is a
unital $\einfty$ operad equipped with a map of monads $f: C\to \linf
\sinf$, then  taking the geometric realization of the
singular complex of the spaces $C (n)$ produces
an operad $|S (C)|$ with the properties we require.
\end{Remark}

The following Lemma, easily checked, is implicit in \cite{MR1806878}.
Let
\[
   F: \mathcal{M} \leftrightarrows \mathcal{M'}: G
\]
be a Quillen adjunction between topological closed model categories.
Let $\mathcal{C} \subseteq \mathcal{M}$ and $\mathcal{C'} \subseteq
\mathcal{M'}$ be full subcategories, stable under weak equivalence, so
we have sensible subcategories $\ho \mathcal{C} \subseteq \ho
\mathcal{M}$ and $\ho \mathcal{C'} \subseteq \ho \mathcal{M'}.$
Suppose that $F$ takes $\mathcal{C}$ to $\mathcal{C'}$, and $G$ takes
$\mathcal{C'}$ to $\mathcal{C}.$

\begin{Lemma}\label{t-le-conn-equiv}
If, for every cofibrant $X\in \mathcal{C}$ and every fibrant $Y\in
\mathcal{C'}$, a map
\[
     \phi: FX \to Y
\]
is a weak equivalence if and only if its adjoint
\[
     \psi: X \to GY
\]
is, then $F$ and $G$ induce equivalences
\[
    F: \ho \mathcal{C} \leftrightarrows \ho \mathcal{C'}: G
\]
of categories enriched over $\ho \spaces.$ 
\end{Lemma}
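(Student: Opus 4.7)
The strategy is to verify that the derived adjunction $(LF, RG)$ on homotopy categories restricts to an equivalence between $\ho\mathcal{C}$ and $\ho\mathcal{C}'$. Since $\mathcal{C}$ and $\mathcal{C}'$ are stable under weak equivalence, the homotopy categories $\ho\mathcal{C}$ and $\ho\mathcal{C}'$ are well-defined full subcategories of $\ho\mathcal{M}$ and $\ho\mathcal{M}'$ respectively. The hypotheses $F(\mathcal{C}) \subseteq \mathcal{C}'$ and $G(\mathcal{C}') \subseteq \mathcal{C}$, together with stability under weak equivalence (which ensures that cofibrant and fibrant replacements remain in the respective subcategories), guarantee that $LF$ and $RG$ restrict to functors between $\ho\mathcal{C}$ and $\ho\mathcal{C}'$.

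The core step is to check that the derived unit $\eta \colon \id \to RG \circ LF$ is a natural isomorphism on $\ho\mathcal{C}$. Given cofibrant $X \in \mathcal{C}$, choose a fibrant replacement $r \colon FX \to Y$ in $\mathcal{M}'$; then $Y$ is fibrant, $r$ is a weak equivalence, and $Y \in \mathcal{C}'$ by stability. The derived unit at $X$ is represented by the adjoint $\tilde{r} \colon X \to GY$, which by the hypothesis of the lemma is a weak equivalence if and only if $r$ is---and $r$ is by construction. The counit $\varepsilon \colon LF \circ RG \to \id$ is handled symmetrically: for fibrant $Y \in \mathcal{C}'$, take a cofibrant replacement $s \colon X \to GY$ in $\mathcal{M}$ (so $X \in \mathcal{C}$ by stability) and apply the biconditional in the reverse direction to conclude that the adjoint $FX \to Y$ is a weak equivalence.

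The enrichment over $\ho\spaces$ is essentially automatic. Since the Quillen adjunction is continuous (implicit in the paper's standing convention for ``Quillen adjunction between topological closed model categories''), for cofibrant $X$ and fibrant $Y$ the set-level adjunction isomorphism promotes to a homeomorphism $\Map_{\mathcal{M}'}(FX, Y) \cong \Map_{\mathcal{M}}(X, GY)$, and hence to a weak equivalence of derived mapping spaces. Combined with the equivalence of underlying categories established above, this yields the claimed equivalence of $\ho\spaces$-enriched homotopy categories. The argument presents no serious obstacle; the lemma merely repackages the hypothesized biconditional on maps as the statement that the derived unit and counit are weak equivalences on the chosen subcategories, and the enrichment is a formal consequence of continuity of the adjunction.
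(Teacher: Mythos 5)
Your proof is correct. The paper itself gives no proof of this lemma, labeling it as "easily checked" and "implicit in [MMSS01]"; what you have written is precisely the standard argument that the paper has in mind: the hypothesized biconditional, applied to a fibrant replacement $FX\to Y$ of $FX$, shows the derived unit $X\to GY$ is a weak equivalence, and the symmetric application to a cofibrant replacement $X\to GY$ shows the derived counit $FX\to Y$ is a weak equivalence, while the enrichment follows from continuity of the Quillen adjunction and Ken Brown's lemma applied to cofibrant/fibrant objects.
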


The key result in our setting is the following classical proposition;
we recall the argument from \cite{May:gils,MR0339152}.

\begin{Proposition}\label{t-pr-unit-we-May}
Let $C$ be a unital $\einfty$ operad, equipped with a map of monads
\[
   f: C \to \linf \sinf.
\]
Suppose that the basepoint $\ptspace \to C (1)$ is non-degenerate, and
that each $C (n)$ has the homotopy type of a
$\Sigma_{n}$-$CW$-complex.
If $X$ is a cofibrant $C$-space, then the unit
of the adjunction
\[
    X \to \Omega^{f}\Sigma^{f} X
\]
is group completion, and so a weak equivalence if $X$ is group-like.
\end{Proposition}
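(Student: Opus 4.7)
The plan is to rewrite both sides of the unit map in terms of an explicit two-sided bar construction and then appeal to the classical approximation and group completion theorems of May and May-Thomason.

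First, I would replace the coequalizer defining $\Sigma^{f} X$ with the geometric realization of the two-sided simplicial bar construction $B_\bullet(\sinf, \ptC, X)$, whose $q$-th level is $\sinf \ptC^{q} X$, with face maps given by the monad multiplication $\ptC\ptC \to \ptC$ and by the $\ptC$-structure map $\mu : \ptC X \to X$, together with, at the leftmost position, the composite $\sinf \ptC \xra{\sinf f} \sinf \linf \sinf \to \sinf$ built from $f$ and the counit of $(\sinf, \linf)$. The coequalizer defining $\Sigma^{f} X$ is precisely the coequalizer of $d_0, d_1 : B_1 \to B_0$, so
\[
\Sigma^{f} X \simeq |B_\bullet(\sinf, \ptC, X)|.
\]

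Next, apply $\linf$ termwise. Cofibrancy of $X$, the nondegenerate basepoint on $C(1)$, and Proposition \ref{t-pr-cof-c-alg-cof-ptd-space} together ensure that $B_\bullet(\sinf, \ptC, X)$ is proper, while Proposition \ref{t-pr-cx-cw} together with the $\Sigma_{n}$-CW assumption on $C(n)$ ensures that each level $\sinf \ptC^{q} X$ is tame. The results of \cite[X.1.3, X.2.4]{EKMM} then permit commutation of $\linf$ past the realization up to weak equivalence, yielding
\[
\Omega^{f}\Sigma^{f} X \simeq |B_\bullet(Q, \ptC, X)|, \qquad Q = \linf \sinf.
\]
Under this identification the unit map $X \to \Omega^{f}\Sigma^{f} X$ is equivalent to the natural comparison $X \simeq |B_\bullet(\ptC, \ptC, X)| \to |B_\bullet(Q, \ptC, X)|$ induced by $f : \ptC \to Q$ on the leftmost factor, where the first equivalence uses the extra degeneracies available on $B_\bullet(\ptC, \ptC, X)$ to contract its realization onto $X$.

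The final step is to invoke the approximation and group completion theorems of May and May-Thomason (\cite{May:gils, MR0339152, MR0420609}): under the stated hypotheses on $C$, for every well-pointed space $Y$ of the homotopy type of a CW complex the map $f_Y : \ptC Y \to Q Y$ is a group completion; a standard levelwise/simplicial argument then promotes this to the statement that the comparison $|B_\bullet(\ptC, \ptC, X)| \to |B_\bullet(Q, \ptC, X)|$ is a group completion of $X$. When $X$ is group-like, a group completion is a weak equivalence. The main obstacle is the commutation of $\linf$ with the realization of $B_\bullet(\sinf, \ptC, X)$: this hinges on the verification of properness (governed by the nondegenerate basepoint of $C(1)$ and cofibrancy of $X$) together with tameness at each level (governed by the $\Sigma_{n}$-CW hypothesis on $C(n)$); once this is established, the remainder is a formal consequence of the classical approximation and group completion results.
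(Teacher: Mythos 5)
Your outline follows the same architecture as the paper's proof (identify $\Omega^{f}\Sigma^{f}X$ up to weak equivalence with $|B_{\bullet}(Q,\ptC,X)|$, identify the unit with $|B_{\bullet}(\ptC,\ptC,X)|\to|B_{\bullet}(Q,\ptC,X)|$, and quote levelwise group completion plus properness, as in Lemma \ref{t-le-gp-completion}), but two of your justifications fail, and they sit exactly where the hypotheses on $C$ and the cofibrancy of $X$ have to do their real work. First, the inference ``$\Sigma^{f}X$ is the coequalizer of $d_{0},d_{1}\colon B_{1}\to B_{0}$, so $\Sigma^{f}X\heq|B_{\bullet}(\sinf,\ptC,X)|$'' is not valid: the realization of a simplicial object is not in general weakly equivalent to the coequalizer of its bottom faces, and $B_{\bullet}(\sinf,\ptC,X)$ has no extra degeneracies with which to contract it. The actual argument is $|B_{\bullet}(\sinf,\ptC,X)|\iso\Sigma^{f}|B_{\bullet}(\ptC,\ptC,X)|=\Sigma^{f}B(\ptC,\ptC,X)\to\Sigma^{f}X$, using $\Sigma^{f}\ptC\iso\sinf$ (Lemma \ref{t-le-Sigma-f-C-sinf}); the last map is a weak equivalence only because $\Sigma^{f}$ preserves weak equivalences between $C$-spaces of the homotopy type of cofibrant $C$-spaces and $B(\ptC,\ptC,X)$ has such a homotopy type (Lemma \ref{t-le-BCC-cof}, which is precisely where Propositions \ref{t-pr-cof-c-alg-cof-ptd-space} and \ref{t-pr-cx-cw}, i.e.\ the nondegenerate basepoint of $C(1)$ and the $\Sigma_{n}$-CW condition, get used). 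You instead spend these hypotheses on ``properness and tameness'' for the next step, leaving this identification without its justification.

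Second, commuting $\linf$ past geometric realization is not a consequence of the EKMM properness/tameness results you cite: Theorem X.2.4 of \cite{EKMM} says that realization carries levelwise weak equivalences of proper simplicial tame spectra to weak equivalences, which is a different statement from $|\linf K_{\bullet}|\to\linf|K_{\bullet}|$ being an equivalence. The input genuinely needed here is May's theorem that $|\Omega Y_{\bullet}|\to\Omega|Y_{\bullet}|$ is a weak equivalence for suitable simplicial pointed spaces \eqref{eq:73}, which, as the paper notes following May, is the hardest ingredient of the whole argument; it is neither formal nor implied by tameness, and your proposal neither proves nor invokes it. With these two repairs your sketch becomes the paper's proof of the commutative square \eqref{eq:14}: left vertical map an equivalence by the simplicial contraction, right vertical map an equivalence by May's theorem together with Lemma \ref{t-le-BCC-cof}, and top map a group completion by Lemma \ref{t-le-gp-completion}.
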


The proof of the proposition follows from analysis of the following
commutative diagram of simplicial $C$-spaces:

\begin{equation} \label{eq:14}
\xymatrix{
 B_{\bullet} (\ptC ,\ptC ,X) \ar[r] \ar[d] & \ar[d] \Omega^{f}
 \Sigma^{f} B_{\bullet} (\ptC ,\ptC ,X) \\
 X \ar[r] & \Omega^{f} \Sigma^{f}X.
}
\end{equation}

Specifically, we will show that under the hypotheses, on passage to
realization the vertical maps are weak equivalences and the top
horizontal map is group completion.

We begin by studying the left-hand vertical map; the usual simplicial
contraction argument shows the underlying map of spaces is a homotopy
equivalence, and so on passage to realizations we have a weak
equivalence of $C$-spaces.

\begin{Lemma}\label{t-le-BCCX-to-X}
For any operad $C$ and any $C$-space $X$, the left vertical arrow is a
map of simplicial $C$-spaces and a homotopy equivalence of simplicial
spaces, and so induces a weak
equivalence of $C$-spaces
\[
  B (\ptC,\ptC,X) \to X
\]
upon geometric realization.
\end{Lemma}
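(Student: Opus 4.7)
\medskip

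The plan is to invoke the classical \emph{extra degeneracy} argument for two-sided bar constructions over a monad. Writing $B_n = B_n(\ptC,\ptC,X) = \ptC^{\,n+1}X$, the face maps are given by the multiplication $\mu\colon \ptC\ptC\to\ptC$ in positions $i<n$ and by the action $\theta\colon\ptC X\to X$ in position $n$, while the degeneracies $s_0,\dots,s_n$ are obtained by inserting the unit $\eta\colon\id\to\ptC$ in an interior slot. The $C$-algebra structure on $B_n$ comes from the leftmost copy of $\ptC$ via $\mu$, and all face/degeneracy maps manifestly commute with this action since they modify only the inner or outer slots using $\mu$, $\eta$, or $\theta$. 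The augmentation $\epsilon\colon B_0 = \ptC X\to X$ is the action $\theta$, which is a $C$-algebra map by definition of a $C$-space. This establishes the ``map of simplicial $C$-spaces'' half.

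For the homotopy equivalence claim, I define the \emph{extra degeneracy} $s_{-1}\colon \ptC^{\,n+1}X\to \ptC^{\,n+2}X$ by $\eta_{\ptC^{\,n+1}X}$, i.e.\ inserting the monadic unit in the leftmost position. The simplicial identities $d_0 s_{-1} = \id$ and $d_i s_{-1} = s_{-1} d_{i-1}$, $s_i s_{-1} = s_{-1} s_{i-1}$ follow from the monad axioms (naturality of $\eta$ plus the unit identity $\mu\circ\eta\ptC = \id$). Similarly, setting the $(-1)$-simplex to be $X$ itself and declaring the augmentation $\epsilon$ to play the role of $d_0$ at level $0$, one checks $\epsilon s_{-1} = \id_X$. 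The presence of such an extra degeneracy makes the augmented simplicial space $B_\bullet\to X$ into a split (``simplicially contractible'') object, and a standard simplicial homotopy then exhibits $\eta_X\circ\epsilon \simeq \id$ on $B_\bullet$, giving a simplicial homotopy equivalence of underlying simplicial spaces.

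A simplicial homotopy equivalence realizes to a genuine homotopy equivalence of spaces (this requires no properness hypothesis, since the homotopy is built out of the simplicial structure itself). Hence $|B_\bullet(\ptC,\ptC,X)|\to X$ is a homotopy equivalence of spaces. Since the forgetful functor $\alg{C}{\spaces}\to\spaces$ creates weak equivalences by Proposition~\ref{t-pr-C-spaces-level-model}, and the augmentation is a map of $C$-spaces, it is a weak equivalence of $C$-spaces upon realization, as claimed.

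There is essentially no obstacle; the only subtle point worth flagging is that the homotopy inverse $\eta_X\colon X\to\ptC X$ is \emph{not} a map of $C$-algebras (the unit of a monad need not respect algebra structures), so the simplicial contraction lives only at the level of underlying simplicial spaces. This is exactly why the lemma is phrased as ``a map of simplicial $C$-spaces \emph{and} a homotopy equivalence of simplicial spaces''—the two structures must be separated—and why the conclusion is only a weak equivalence of $C$-spaces, promoted through the fact that the forgetful functor reflects such equivalences.
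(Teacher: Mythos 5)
Your proof is correct and is precisely the ``usual simplicial contraction argument'' that the paper invokes without spelling out: the paper states this lemma with no formal proof, only the surrounding sentence that ``the usual simplicial contraction argument shows the underlying map of spaces is a homotopy equivalence.'' You have supplied the standard write-up of that argument — extra degeneracy $s_{-1} = \eta$ inserted on the left, split (contractible) augmented simplicial object, realization of a simplicial homotopy equivalence is a genuine homotopy equivalence with no properness needed, and promotion to a weak equivalence of $C$-spaces via the forgetful functor creating weak equivalences. The remark that $\eta_X$ fails to be a $C$-algebra map — so that the contraction lives only at the level of underlying simplicial spaces, explaining the lemma's careful phrasing and why the conclusion is a weak equivalence rather than a homotopy equivalence of $C$-spaces — is exactly the right subtlety to flag.
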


The right vertical map is more difficult to analyze, because we do not
know that $\Sigma^{f}$ preserves homotopy equivalences of spaces.  May
\cite{May:gils} shows that, for suitable simplicial pointed spaces
$Y_{\bullet}$, the natural map
\begin{equation} \label{eq:73}
         |\Omega Y_{\bullet}| \to \Omega|Y_{\bullet}|
\end{equation}
is a weak equivalence, and he explains in \cite{May:gils,May:rant} how this weak
equivalence gives rise to a weak equivalence of $C$-spaces
\[
        |\Omega^{f} \Sigma^{f} B_{\bullet} (\ptC,\ptC,X)|\to
	\Omega^{f}|\Sigma^{f}B_{\bullet} (\ptC,\ptC,X)| \iso
	\Omega^{f}\Sigma^{f}B (\ptC,\ptC,X)
\]
by passage to colimits.   We note that in \cite{May:rant}, May
describes proving that \eqref{eq:73} is a weak equivalence as the
hardest thing in \cite{May:gils}.  Therefore, to show that the map
\[
|\Omega^{f} \Sigma^{f} B_{\bullet} (\ptC,\ptC,X)| \to
 \Omega^{f}\Sigma^{f} X
\]
is a weak equivalence, it suffices to show that for cofibrant $X$, the
map $\Sigma^f B(\ptC, \ptC, X) \to X$ is a weak equivalence.
As it is straightforward to check from the definition that
$\Sigma^{f}$ does preserve weak equivalences between $C$-spaces with
the homotopy type of cofibrant $C$-spaces, the desired result will
follow once we show that $B(\ptC, \ptC, X)$ has the homotopy type of
a cofibrant $C$-space if $X$ is cofibrant.



\begin{Lemma} \label{t-le-BCC-cof}
Suppose that $C$ is a unital operad, such that the base point $* \to
C(1)$ is non-degenerate and each $C(n)$ has the homotopy type of a
$\Sigma_{n}$-$CW$-complex.  Let $X$ be a cofibrant $C$-space.  Then $B
(\ptC,\ptC,X)$ has the homotopy type of a cofibrant $C$-space.
\end{Lemma}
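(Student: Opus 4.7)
The plan is to exhibit $B(\ptC,\ptC,X)$ as the geometric realization (taken in $C$-spaces) of a proper simplicial $C$-space whose levels have the homotopy type of cofibrant $C$-spaces, and then invoke a standard realization-preserves-cofibrancy principle.

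First, I would identify $B_\bullet(\ptC,\ptC,X)$ as the simplicial $C$-space with $k$-simplices $\ptC^{k+1}X$, where the final $\ptC$ factor acts on $X$ via its $C$-structure and the remaining face maps use the monad multiplication of $\ptC$. Since $X$ is a cofibrant $C$-space, Proposition \ref{t-pr-cx-cw} shows that its underlying space has the homotopy type of a CW-complex, and inductively each iterated space $\ptC^k X$ has the homotopy type of a CW-complex. Then $\ptC^{k+1}X = \ptC(\ptC^k X)$ is a free $C$-algebra on a space with the homotopy type of a CW-complex, hence has the homotopy type of a cofibrant $C$-space by another application of Proposition \ref{t-pr-cx-cw}.

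Second, I would verify that $B_\bullet(\ptC,\ptC,X)$ is proper in the sense of \cite[\S X.2.1]{EKMM}, that is, that the latching maps from the unions of degenerate simplices are unbased Hurewicz cofibrations of the underlying spaces. The degeneracies insert the unit of the monad $\ptC$, so this reduces to showing that each inclusion $\ptspace \to \ptC(Z)$ is a Hurewicz cofibration for the pointed spaces $Z = \ptC^j X$. This follows from the non-degeneracy of the basepoint of $C(1)$ together with the fact, established in the proof of Proposition \ref{t-pr-cof-c-alg-cof-ptd-space}, that the coproduct of $C$-spaces along free constructions is the realization of a proper simplicial bar construction, so basepoints of $\ptC(Z)$ are non-degenerate whenever those of $Z$ are; since $X$ itself has a non-degenerate basepoint by Proposition \ref{t-pr-cof-c-alg-cof-ptd-space}, the conclusion propagates inductively through all the $\ptC^j X$.

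Finally, given a proper simplicial $C$-space all of whose levels have the homotopy type of cofibrant $C$-spaces, the realization --- formed as a sequential colimit of skeleta along unbased Hurewicz cofibrations of $C$-spaces, each stage obtained by attaching a cofibrant $C$-space along a cofibrant $C$-space --- has the homotopy type of a cofibrant $C$-space. Applied to $B_\bullet(\ptC,\ptC,X)$, this yields the claim. The main obstacle is the properness step: the monadic bar construction must be analyzed carefully enough to exhibit the latching maps as Hurewicz cofibrations, which is precisely the kind of space-level input that the ``Cofibration Hypothesis'' of \cite[\S VII]{EKMM} is designed to provide in analogous spectrum-level arguments, and which here rests on the non-degenerate basepoint hypothesis for $C(1)$.
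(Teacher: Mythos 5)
Your outline---identify the simplicial levels as $\ptC^{k+1}X$, show they have the homotopy type of cofibrant $C$-spaces via Proposition \ref{t-pr-cx-cw}, establish properness, and then invoke a realization principle in the style of Theorem X.2.7 of \cite{EKMM}---is the same skeleton as the paper's proof, and the first and third steps are fine as you state them.

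The properness step, however, contains a genuine gap. Properness (Reedy cofibrancy for the Hurewicz/Str\o{}m structure) requires that in each simplicial level the union of the images of the degeneracy maps includes by a Hurewicz cofibration. Since the degeneracies of $B_\bullet(\ptC,\ptC,X)$ insert the monad unit $\eta_Z\colon Z\to \ptC Z$, the subspace of $\ptC^{q+1}X$ that must be shown to be nicely embedded is the union of the images of the maps $\ptC^{\,i}\eta_{\ptC^{q-i}X}$---essentially copies of $\{1\}\times Z$ sitting inside $\ptC Z$---and not the basepoint. Your claimed reduction of properness to the statement that the basepoint inclusions $\ptspace\to\ptC(Z)$ are Hurewicz cofibrations is therefore not valid: well-pointedness of the levels is far weaker than properness and does not yield the latching cofibrations. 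What is actually needed is May's classical argument (recalled in the proof of Lemma \ref{t-le-gp-completion}; see \cite{May:gils,MR0339152}): when the identity point of $C(1)$ is nondegenerate and $X$ is well-pointed, one can produce compatible NDR representations exhibiting $(\ptC Z,\eta(Z))$, and the unions of degeneracy images, as NDR pairs, so the bar construction is proper. This is exactly how the paper proceeds: Proposition \ref{t-pr-cof-c-alg-cof-ptd-space} supplies the nondegenerate basepoint of the cofibrant $C$-space $X$, and properness is then quoted from May using the hypothesis on $C(1)$. So you have the right ingredients in hand, but the implication you draw from them---properness from levelwise well-pointedness---must be replaced by the unit-map/NDR argument.
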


\begin{proof}
With our hypotheses, it follows from Proposition \ref{t-pr-cx-cw} that
the spaces $\ptC^{n}X$ have the homotopy type of cofibrant
$C$-spaces.  By Proposition \ref{t-pr-cof-c-alg-cof-ptd-space}, the
simplicial space $B_{\bullet} (\ptC,\ptC,X)$ is proper.  Finally, we
apply an argument analogous to that of Theorem X.2.7 of \cite{EKMM} to
show that if $Y_{\bullet}$ is a proper $C$-space in which each level
has the homotopy type of a cofibrant $C$-space, then $|Y_{\bullet}|$
has the the homotopy type of a cofibrant $C$-space.
\end{proof}





Finally, we consider the top horizontal map in \eqref{eq:14}.   We have
\emph{isomorphisms} of simplicial $C$-spaces
\[
      \Omega^{f}\Sigma^{f}B_{\bullet} (\ptC,\ptC,X) \iso B_{\bullet}
(\Omega^{f}\Sigma^{f}\ptC,\ptC,X) \iso B_{\bullet}
(\Omega^{f}\sinf,\ptC,X) \iso B_{\bullet} (Q,\ptC,X)
\]
(we used the isomorphism $\Sigma^{f}\ptC\iso \sinf$ of Lemma
\ref{t-le-Sigma-f-C-sinf}), and so an isomorphism of $C$-spaces
\[
      B (Q,\ptC,X) \iso |\Omega^{f}\Sigma^{f} B_{\bullet} (\ptC ,\ptC ,X)|
\]
We then apply the following result from \cite{MR0339152}.

\begin{Lemma} \label{t-le-gp-completion}
Let $C$ be a unital $\einfty$ operad, equipped with a
map of monads
\[
  f:   \ptC  \to \linf \sinf.
\]
Let $X$ be a $C$-space (and so pointed via $C (0)\to X$).   Suppose that
the base point of $C (1)$ and the base point of $X$ are
non-degenerate.  Then the map
\[
    B (\ptC,\ptC,X) \to  B (Q,\ptC,X),
\]
and so
\[
   B (\ptC,\ptC,X) \to |\Omega^{f}\Sigma^{f} B_{\bullet} (\ptC ,\ptC ,X)|,
\]
is group-completion.
\end{Lemma}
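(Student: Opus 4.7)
The plan is to reduce the lemma to May's approximation theorem applied level-wise to the two-sided bar construction, and then pass to geometric realization using the properness already established. Recall that the target $B(Q,\ptC,X)$ is, by the analysis just above the statement, homeomorphic to $|\Omega^{f}\Sigma^{f} B_{\bullet}(\ptC,\ptC,X)|$, and the source $B(\ptC,\ptC,X)$ is weakly equivalent to $X$ by Lemma \ref{t-le-BCCX-to-X}. So the content of the lemma is that the canonical map
\[
B(\ptC,\ptC,X) \to B(Q,\ptC,X)
\]
models the unit $X \to \Omega^{f}\Sigma^{f}X$ as a group-completion, and it is natural to prove this by comparing the simplicial objects $B_{\bullet}(\ptC,\ptC,X)$ and $B_{\bullet}(Q,\ptC,X)$ level by level.

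First, I would invoke the approximation theorem of \cite{May:gils}: for an $\einfty$ operad $C$ (satisfying the hypotheses of the present lemma) and any space $Y$ with non-degenerate basepoint, the natural map of monads $f\colon\ptC Y \to QY$ is a group-completion, that is, it induces an isomorphism $H_{*}(\ptC Y)[\pi_{0}(\ptC Y)^{-1}] \xra{\heq} H_{*}(QY)$. In the bar construction the map in question is, at simplicial degree $k$,
\[
f\colon \ptC^{k+1}X \to Q\ptC^{k}X,
\]
so each level is a group-completion as long as the iterated applications of $\ptC$ inherit non-degenerate basepoints. Non-degeneracy is guaranteed by Proposition \ref{t-pr-cof-c-alg-cof-ptd-space} together with Proposition \ref{t-pr-cx-cw} (and the hypotheses on $C$), which ensure that cofibrant $C$-algebras, and the spaces $\ptC^{n}X$ built from them, all have non-degenerate basepoints.

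Second, I would pass to realization. By Lemma \ref{t-le-BCC-cof} and the standard argument of \S{VII} of \cite{EKMM}, both simplicial spaces $B_{\bullet}(\ptC,\ptC,X)$ and $B_{\bullet}(Q,\ptC,X)$ are proper, so their homology is computed by a convergent bar/skeletal spectral sequence with $E^{1}_{p,q} = H_{q}$ of the $p$-simplices. A level-wise group-completion induces a map on $E^{1}$-pages that becomes an isomorphism after localizing each $E^{1}_{p,*}$ at $\pi_{0}$, and this localization is exact and commutes with the spectral sequence because $\pi_{0}$ of the simplicial monoid $B_{\bullet}$ is compatible across levels. One then concludes that
\[
H_{*}B(\ptC,\ptC,X)\bigl[\pi_{0}(B(\ptC,\ptC,X))^{-1}\bigr] \xra{\heq} H_{*}B(Q,\ptC,X),
\]
which is the definition of group-completion; note that the target is automatically $\pi_{0}$-grouplike since $B(Q,\ptC,X) \heq \Omega^{f}\Sigma^{f}X$ is an infinite loop space.

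The main obstacle is the compatibility of homology localization with geometric realization of proper simplicial $C$-spaces; one must check that inverting $\pi_{0}$ commutes with the realization spectral sequence and that the monoid structures are coherent across simplicial degrees. This is essentially the content of \cite{MR0339152}, and once it is in hand, the level-wise approximation theorem of \cite{May:gils} yields the stated group-completion without further difficulty.
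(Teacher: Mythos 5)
Your proposal follows essentially the same route as the paper's proof: the level-wise group-completion $\ptC Y \to \linf\sinf Y$ applied to the bar construction, combined with properness of the simplicial spaces so that the homology spectral sequence argument of \cite{MR0339152} carries the conclusion through geometric realization. The only quibbles are attributional: the group-completion statement for $\ptC Y \to QY$ is due to \cite{MR0339176,MR0436146,MR0402733} rather than the approximation theorem of \cite{May:gils}, and properness here needs only that $(C(1),\ptspace)$ and $(X,\ptspace)$ are NDR pairs (as the paper notes), not the cofibrancy hypotheses behind Lemma \ref{t-le-BCC-cof} and Proposition \ref{t-pr-cof-c-alg-cof-ptd-space}.
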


\begin{proof}
The point is that in general
\[
    \ptC Y \to \linf \sinf Y
\]
is group-completion \cite{MR0339176,MR0436146,MR0402733}, and so we
have the level-wise group completion
\[
     \ptC (\ptC )^{n}X \to \linf \sinf (\ptC)^{n}X
\]
(see \cite{MR0339152}).

The argument requires the simplicial spaces involved to be ``proper,''
that is, Reedy cofibrant with respect to the Hurewicz/Str\o{}m model
structure on topological spaces, so that the homology spectral
sequences have the expected $E_{2}$-term.  May proves that they are,
provided that $(C (1),\ptspace)$ and $(X,\ptspace)$ are NDR-pairs.
\end{proof}

%
We can now finish the proof of Theorem~\ref{t-th-einfty-spaces-spectra}.

\begin{proof}
It remains to show that if $X$ is a group-like cofibrant $C$-algebra
and $V$ is a (fibrant) $(-1)$-connected spectrum, then a map
\[
      \phi: \Sigma^{f} X \to V
\]
is a weak equivalence if and only if its adjoint
\[
       \psi: X \to \Omega^{f} V
\]
is.  These two maps are related by the factorization
\[
    \psi: X \to \Omega^{f} \Sigma^{f} X \xra{\Omega^{f}\phi}
\Omega^{f} V.
\]
The unit of adjunction is a weak equivalence by Proposition
\ref{t-pr-unit-we-May}.  It follows that $\psi$ is a weak
equivalence if and only if $\Omega^{f}\phi$ is.  Certainly if $\phi$
is a weak equivalence, then so is $\Omega^{f}\phi.$ Since both
$\Sigma^{f}X$ and $V$ are $(-1)$-connected, if $\Omega^{f}\phi$ is a
weak equivalence, then so is $\phi.$
\end{proof}


\begin{Remark}
There is another perspective on
Theorem~\ref{t-th-einfty-spaces-spectra} which elucidates the role of
the ``group-like'' condition on $C$-spaces.  Let's define a map
\[
    \alpha: X\to Y
\]
of $C$-spaces to be a \emph{stable} equivalence if the induced map
\[
    \Sigma^{f}\alpha': \Sigma^{f} X' \to \Sigma^{f}Y'
\]
is a weak equivalence ($X'$ and $Y'$ are cofibrant replacements of $X$
and $Y$).  The``stable'' model structure on $C$-spaces is the
localization of the model structure we have been considering in which
the weak equivalences are the stable equivalences, and the
cofibrations are as before.

In this stable model structure a $C$-space is fibrant if and only if
it is group-like; compare the model structure on $\Gamma$-spaces
discussed in \cite{MR1670249,MR1806878}.  The homotopy category
associated with the stable model structure is exactly $\ho
\gplike{C}{\spaces}$, and so this is a better encoding of the homotopy
theory of $\aC$-spaces. We have avoided discussing this approach in
detail in order to minimize technical complications, as we do not need
it for the applications.
\end{Remark}

\begin{Remark}
In \cite{MR1806878} it is shown that
\[
   \ho \CatOf{group-like $\Gamma$-spaces} \iso \ho \CatOf{connective
spectra}.
\]
Rekha Santhanam \cite{Santhanam:Thesis} has shown that the work of May
and Thomason \cite{MR508885} can be used to prove that the category of
$C$-spaces is Quillen equivalent to the category of $\Gamma$-spaces.
These two results give another proof of the equivalence
\[
    \ho \gplike{C}{\spaces} \heq \ho \CatOf{connective spectra}.
\]
\end{Remark}

\subsection{Units: proof of Theorem \ref{t-th-units}}
\label{sec:units:-proof-theorem}

Let $C$ be unital $\einfty$ operad, equipped with a map of operads
\[
         C \to \Lin,
\]
a map of monads on pointed spaces
\[
         f: \ptC \to \linf \sinf,
\]
and satisfying the hypotheses of Theorem
\ref{t-th-einfty-spaces-spectra}.  For example, we can take $C$ to be
\[
     C = | \SingTxt  (\mathcal{C}\times \Lin)|,
\]
the geometric realization of the singular complex on the product
operad $\mathcal{C}\times \Lin$, where $\mathcal{C}$ is infinite
little cubes operad of Boardman and Vogt \cite{MR0420609}.

Then we have a sequence of continuous adjunctions (the left adjoints
are listed on top, and connective Quillen equivalence is indicated by
$\Qeq$).
\[
\xymatrix{ {\splus \linf: \CatOf{$(-1)$-connected spectra}}
\ar@<-.3ex>[r]_-{\Omega^{f}} & {\gplike{C}{\spaces}}
\ar@<-.3ex>[l]_-{\Sigma^{f},\Qeq} \ar@<.3ex>[r] & {\alg{C}{\spaces}}
\ar@<.3ex>[l]^-{\GLsym} \ar@<.3ex>[r]^-{\splus} & {\alg{C}{\spectra}:
\glsym} \ar@<.3ex>[l]^-{\linf} }
\]
By Proposition \ref{t-pr-coeo-lms-spectra}, $\alg{C}{\spectra}$ is a
model for the category of $\einfty$ spectra.  This completes the proof
of Theorem \ref{t-th-units}.

\section{$\einfty$ Thom spectra and orientations}
\label{sec:thom-spectra}


With the adjunction of Theorem \ref{t-th-units} in hand, one can
construct and orient $\einfty$ Thom spectra as described in
\S\ref{sec:obstr-theory-einfty}, where we emphasize the more novel
case of the Thom spectrum associated to a map of spectra
\[
     b \to b\gl{R}.
\]
We add a few additional remarks here, emphasizing the classical
$\einfty$ Thom spectra associated to maps of spectra $b\to b\gl{S}.$

\subsection{Commutative $S$-algebra Thom
spectra}\label{sec:comm-s-algebra}

We write $S$ for the sphere spectrum, $b\gl{S}$ for $\Sigma \gl{S}$,
and $B\GL{S}$ for $\linf b\gl{S}$.  $B\GL{S}$ is the classifying space
for stable spherical fibrations.  Theorem \ref{t-th-units} gives a map
(in $\Ho \einftyspectra$)
\[
   \epsilon: \splus \linf \gl{S} \to S.
\]

Given a map
\[
    \zeta: b\to b\gl{S},
\]
let $j=\Sigma^{-1}\zeta: g=\Sigma^{-1}b \to \gl{S},$ and form the
diagram
\[
\xymatrix{ {g} \ar[r]^-{j} \ar[d] & {\gl{S}} \ar@{=}[r] \ar[d] &
{\gl{S}} \ar[d]
\\
{\ptspace} \ar[r] & {Cj} \ar[r] \ar[d] & {e\gl{S} \heq \ptspace}
\ar[d]
 \\
& {b} \ar[r] & {b\gl{S}} }
\]
by requiring that the upper left and bottom right squares are homotopy
Cartesian.  Note that we may also view $b$ as an infinite loop map
\[
     f: B \xra{} B\GL{S}.
\]

As in Definition \ref{def-5}, the \emph{Thom spectrum} of $f$, or of
$\zeta$, or of $j$, is the homotopy pushout $M=M\zeta$ in the diagram
of $\einfty$ spectra
\begin{equation} \label{or-eq:8}
\begin{CD}
\splus \linf g @> \splus \linf j >>   \splus \linf \gl{S} @> \epsilon >> S \\
@V \splus \linf \ptspace VV @VVV  @VVV \\
S = \splus \linf \ptspace @>>> \splus \linf Cj @>>> M,
\end{CD}
\end{equation}
which is to say that
\[
       M \cong \splus \linf Cj \Smash^{L}_{\splus \linf \gl{S}} S
\cong S \Smash^{L}_{\splus \linf g} S.
\]
(In \S\ref{sec:comp-thom-spectra} we compare this notion to classical
definitions).
Note that when writing this homotopy pushout, we are suppressing the
choice of a point-set representative of the homotopy class $\epsilon$.
Since all objects are fibrant in the model structure of
Proposition~\ref{t-pr-C-spectra-level-model}, it suffices to choose a
cofibrant model for $\linf \gl{S}$ (and subsequently of $\linf g$).

Now suppose that $R$ is an $\einfty$ spectrum with unit $\iota: S\to
R$; let $i=\gl{\iota}$, and let $k = ij: g\to \gl{R},$ so that we have
the solid arrows of the diagram
\begin{equation}  \label{eq:10}
\xymatrix{ {g} \ar[r]^-{j} \ar[dr]_{k} & {\gl{S}} \ar[r] \ar[d]_{i} &
{Cj} \ar@{-->}[dl]^{u}
\\
& {\gl{R},} }
\end{equation}
in which the row is a cofiber sequence.  The homotopy pushout diagram
\eqref{or-eq:8} and the adjunction of Theorem \ref{t-th-units} gives
the following.

\begin{Theorem} \label{t-obstruction}
Each of the squares in the commutative diagram of derived mapping
spaces
\begin{equation} \label{eq:9}
\begin{CD}
\einftyspectra  (M,R) @>>> \spectra (Cj,\gl{R}) @>>> *\\
@VVV @VVV @VVV\\
\{ i \} @>>> \spectra (\gl{S},\gl{R}) @>>> \spectra (g,\gl{R}).
\end{CD}
\end{equation}
are homotopy cartesian.  That is, the map $k$ is the obstruction to
the existence of an $\einfty$ map $M\to R$, and $\einftyspectra (M,R)$ is
weakly equivalent to the space of lifts in the diagram \eqref{eq:10}.
\end{Theorem}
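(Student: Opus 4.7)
The plan is to prove each square is homotopy cartesian separately, then paste to obtain the space-of-lifts interpretation. For the right square, observe that by construction $Cj$ sits in a cofiber sequence $g \xra{j} \gl{S} \to Cj$ of spectra, i.e.\ is the homotopy pushout of $\ptspace \leftarrow g \xra{j} \gl{S}$. Applying the contravariant derived functor $\spectra(\slot,\gl{R})$ converts this pushout into a homotopy pullback, which is precisely the right square; equivalently, $\spectra(Cj,\gl{R})$ is identified with the homotopy fiber of $j^{*}\colon \spectra(\gl{S},\gl{R}) \to \spectra(g,\gl{R})$ over the null map.

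For the left square, start from the defining homotopy pushout of $M$ in $\einfty$ ring spectra appearing on the right of \eqref{or-eq:8}: $M$ is the pushout of $S \leftarrow \splus\linf\gl{S} \to \splus\linf Cj$, the left leg being the counit $\epsilon$ of Theorem \ref{t-th-units}. Applying the derived functor $\einftyspectra(\slot,R)$ yields a homotopy pullback
\[
\begin{CD}
\einftyspectra(M,R) @>>> \einftyspectra(\splus\linf Cj,R) \\
@VVV @VVV \\
\einftyspectra(S,R) @>>> \einftyspectra(\splus\linf\gl{S},R).
\end{CD}
\]
Now Theorem \ref{t-th-units} identifies $\einftyspectra(\splus\linf X,R) \heq \spectra(X,\gl{R})$ for $X$ connective (which covers both $X = Cj$ and $X = \gl{S}$), while $\einftyspectra(S,R) \heq \ptspace$ because $S$ is initial among $\einfty$ ring spectra. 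By the triangle identity for the adjunction, the restriction of $\iota\colon S \to R$ along $\epsilon$ corresponds under adjunction to $\glsym(\iota) = i \in \spectra(\gl{S},\gl{R})$, so the bottom map selects precisely the point $\{i\}$; this produces the left square.

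Pasting the two cartesian squares exhibits $\einftyspectra(M,R)$ as the homotopy fiber over $i$ of the restriction map $\spectra(Cj,\gl{R}) \to \spectra(\gl{S},\gl{R})$. Unwinding, an element of this fiber is a map $u\colon Cj \to \gl{R}$ together with a homotopy $u|_{\gl{S}} \heq i$, which is exactly a lift in \eqref{eq:10}. The only real technical subtlety is to interpret the pushout defining $M$ as a genuine homotopy pushout in an appropriate model structure on $\einfty$ ring spectra --- requiring cofibrant replacement of the $\splus\linf$ terms --- so that $\einftyspectra(\slot,R)$ does in fact convert it into a homotopy pullback of derived mapping spaces; with the apparatus of \S\ref{sec:constr-may-quinn} in hand (notably Propositions \ref{t-pr-C-spectra-level-model} and \ref{t-pr-l-spaces-l-spectra}), this is essentially bookkeeping.
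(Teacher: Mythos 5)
Your proof is correct and is exactly the argument the paper is sketching when it says, immediately before the statement, that ``[t]he homotopy pushout diagram \eqref{or-eq:8} and the adjunction of Theorem \ref{t-th-units} gives the following.'' You have carried out the two steps the paper has in mind: the right square is the fiber sequence of derived mapping spaces arising from the cofiber sequence $g\to\gl{S}\to Cj$, and the left square results from applying $\einftyspectra(\slot,R)$ to the right-hand pushout square of \eqref{or-eq:8}, translating the three corners $\splus\linf Cj$, $\splus\linf\gl{S}$, $S=\splus\linf\ptspace$ via the adjunction $\splus\linf\dashv\glsym$, and identifying the basepoint of $\einftyspectra(S,R)$ with $i=\gl{\iota}$ by the triangle identity and naturality (since the adjoint of $\epsilon_{S}$ is $\id_{\gl{S}}$). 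Your closing remark about cofibrancy is precisely the caveat the paper itself makes right after \eqref{or-eq:8}. No gap.
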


If $\einftyspectra (M,R)$ is non-empty (i.e. if $i$ is homotopic to the
trivial map $g\to\gl{R}$) then we have equivalences of derived mapping spaces
$$
\einftyspectra(M,R)\heq\Omega\spectra(g,\gl{R})\heq\spectra(b,\gl{R})
\heq \einftyspectra (\splus B,R);
$$
this is an $E_\i$ analogue of the usual Thom isomorphism.

\subsection{$R$-algebra Thom spectra}

More generally, suppose that $R$ is an $\einfty$ ring spectrum.  Given
a map
\[
   \zeta: b \to b\gl{R},
\]
we obtain a map of cofiber sequences
\[
\xymatrix{
g\ar[r]\ar[d] & \gl{R}\ar[d]\\
{\gl{R}} \ar@{=}[r] \ar[d] & {\gl{R}} \ar[d]
 \\
{p} \ar[r] \ar[d] \ar@{-->}[ur] & {e\gl{R}\heq \ptspace } \ar[d]
 \\
{b} \ar[r]^-{\zeta} \ar@{-->}[ur] & {b\gl{R}.}  }
\]
in which $g=\Sigma^{-1} b$ and $p$ is the fiber of $b\to b\gl{R}$.

\begin{Definition} \label{def-4}
The \emph{$R$-algebra Thom spectrum} of $\zeta$ is the $\einfty$
$R$-algebra $M$ which is the pushout in the diagram of $\einfty$
spectra
\[
\begin{CD}
\splus \linf g @>>>  \splus \linf \gl{R} @>>> R \\
@VVV @VVV @VVV \\
\splus \linf * @>>> \splus \linf p @>>> M.
\end{CD}
\]
\end{Definition}

If $\zeta$ factors as
\[
     b \xra{\zeta'} b\gl{S} \xra{b\gl{\iota}} b\gl{R},
\]
then $M\zeta$ is the derived smash product
\[
       M\zeta' \Smash^{L}_{S} R,
\]
and so the following result is a generalization of Theorem
\ref{t-obstruction}.

\begin{Theorem} \label{t-obstruction-R}
Let $A$ be a commutative $R$-algebra, and write
$$
i:\gl{R}\to\gl{A}
$$
for the induced map on unit spectra.  Then each of the squares in the
commutative diagram
\begin{equation}
\begin{CD}
\CatOf{$\einfty$ $R$-$\mathrm{algebras}$} (M,A) @>>> \spectra (p,\gl{A}) @>>> *\\
@VVV @VVV @VVV\\
\{ i \} @>>> \spectra (\gl{R},\gl{A}) @>>> \spectra(g,\gl{A})
\end{CD}
\end{equation}
is homotopy cartesian.
\end{Theorem}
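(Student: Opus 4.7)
The plan is to apply the enriched functor $\einftyspectra(-,A)$ to the defining pushout diagram of Definition \ref{def-4}, invoke the adjunction of Theorem \ref{t-th-units} to rewrite the resulting mapping spaces, and then pass to homotopy fibers over the $R$-algebra structure map $\varphi: R\to A$.

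After replacing $g$, $\gl{R}$, $p$ by cofibrant spectra and $A$ by a cofibrant-fibrant $\einfty$ $R$-algebra, both inner squares of the diagram of Definition \ref{def-4} become homotopy cocartesian in $\einfty$ spectra; for the left square this amounts to the fact that the left adjoint $\splus\linf$ preserves the cofiber-sequence pushout $\ptspace\leftarrow g\to \gl{R}$ (whose value is $p$). Applying $\einftyspectra(-,A)$ then yields a diagram of derived mapping spaces
\[
\begin{CD}
\einftyspectra(M,A) @>>> \einftyspectra(\splus\linf p,A) @>>> \einftyspectra(\splus\linf\ptspace,A)\\
@VVV @VVV @VVV\\
\einftyspectra(R,A) @>>> \einftyspectra(\splus\linf \gl{R},A) @>>> \einftyspectra(\splus\linf g,A)
\end{CD}
\]
whose two inner squares are homotopy cartesian. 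The enriched adjunction of Theorem \ref{t-th-units} identifies $\einftyspectra(\splus\linf X,A)\heq\spectra(X,\gl{A})$ (for $X$ a connective spectrum, using that $\gl{A}$ is $(-1)$-connected), together with $\einftyspectra(\splus\linf \ptspace,A)\heq\einftyspectra(S,A)\heq \ptspace$ since $S$ is the initial $\einfty$ ring spectrum. The diagram becomes
\[
\begin{CD}
\einftyspectra(M,A) @>>> \spectra(p,\gl{A}) @>>> \ptspace\\
@VVV @VVV @VVV\\
\einftyspectra(R,A) @>>> \spectra(\gl{R},\gl{A}) @>>> \spectra(g,\gl{A}),
\end{CD}
\]
still with both inner squares homotopy cartesian. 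The right square is the fiber sequence of mapping spaces coming directly from the cofiber sequence $g\to \gl{R}\to p$ of spectra.

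Finally, the bottom map $\einftyspectra(R,A)\to\spectra(\gl{R},\gl{A})$ sends $\varphi$ to $\gl{\varphi}=i$, so taking homotopy fibers of the leftmost column over $\varphi$ converts the left square into
\[
\begin{CD}
\CatOf{$\einfty$ $R$-algebras}(M,A) @>>> \spectra(p,\gl{A})\\
@VVV @VVV\\
\{i\} @>>> \spectra(\gl{R},\gl{A}),
\end{CD}
\]
which remains homotopy cartesian; by definition the top-left space is the homotopy fiber of $\einftyspectra(M,A)\to\einftyspectra(R,A)$ over $\varphi$, that is, the derived space of $\einfty$ $R$-algebra maps $M\to A$. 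This yields the asserted diagram. The only real technical obstacle is marshalling cofibrant-fibrant replacements so that the relevant pushouts and pullbacks are honestly homotopical; this is standard given the model-category machinery recalled in Section \ref{sec:constr-may-quinn} together with the assertion of Theorem \ref{t-th-units} that the adjunction preserves the homotopy types of derived mapping spaces.
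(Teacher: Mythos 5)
Your proposal is correct and follows essentially the same route as the paper: the paper likewise obtains this theorem by applying derived $\einfty$ mapping spaces into $A$ to the defining homotopy pushout of Definition \ref{def-4}, rewriting the resulting spaces via the adjunction of Theorem \ref{t-th-units} (with $\einftyspectra(S,A)\heq\ptspace$ and the cofiber sequence $g\to\gl{R}\to p$ giving the right-hand square), and then comparing homotopy fibers over the structure map $\varphi$, i.e.\ over $\{i\}$. The only point you handle more explicitly than the paper is the bookkeeping of cofibrant/fibrant replacements and the connectivity caveat for the adjunction, which is consistent with the paper's level of detail.
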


Taking $A=R$, we see that the space of $R$-algebra orientations of
$M\zeta$ is the space of lifts
\[
\xymatrix{ & {e\gl{R}} \ar[d]
\\
{b} \ar[r]_-{\zeta} \ar@{-->}[ur] & {b\gl{R}.}  }
\]
In this form the obstruction theory generalizes to the associative
case.  We discuss this generalization operadically in
\S\ref{sec:ainfty-thom-spectrum} and again using quasicategories in
\S\ref{sec:units-via-infty}.

\section{$\ainfty$ Thom spectra and orientations}

\label{sec:ainfty-thom-spectrum}

\subsection{Sketch of the construction}
\label{sec:sketch-construction}

In \S\ref{sec:space-units-orient}, we outlined how one might
generalize the study of orientations of $\einfty$ ring spectra in
\S\ref{sec:constr-may-quinn} and \S\ref{sec:thom-spectra} to the
associative case.  We briefly review what was proposed there.  We have
adjunctions
\begin{equation} \label{eq:ainfadj}
\xymatrix{ {\gplikeainftyspaces} \ar@<.5ex>[r] & {\ainftyspaces}
\ar@<.5ex>[r]^-{\splus} \ar@<.5ex>[l]^-{\GLsym} & {\ainftyspectra:
\GLsym.}  \ar@<.5ex>[l]^-{\linf} }
\end{equation}
Moreover, if $R$ is an $\ainfty$ spectrum, then $\splus$ and $\linf$
induce continuous Quillen adjunctions
\[
\xymatrix{ {\splus: \lRmodules} \ar@<.5ex>[r] & {\Rmodules: \linf.}
\ar@<.5ex>[l] }
\]

Using the fact that $\GL{R}$ is a group-like $\ainfty$ space, one
would like to form a ``principal $\GL{R}$-bundle''
\begin{equation} \label{eq:39}
  \GL{R} \rightarrow E\GL{R} \rightarrow B\GL{R},
\end{equation}
so that $E\GL{R}$ is a right $\ainfty$ $\GL{R}$-module.  Given a map
of spaces
\begin{equation} \label{or-eq:11}
  f: X\to B\GL{R},
\end{equation}
we would pull back this bundle as in the diagram
\begin{equation} \label{eq:40}
\begin{CD}
\GL{R} @= \GL{R} \\
@VVV @VVV \\
P @>>> E\GL{R} \\
@VVV @VVV \\
X @> f >> B\GL{R}.
\end{CD}
\end{equation}
Then $P$ should be a right $\ainfty$ $\GL{R}$-module, and so $\splus
P$ should be a right $\splus \GL{R}$-module.  The Thom spectrum of $f$
is to be the derived smash product
\[
        Mf = \splus P \Smash^{L}_{\splus \GL{R}} R.
\]
We would then have a weak equivalence
\[
 \Rmodules (Mf,R) \heq \CatOf{$\GL{R}$--modules} (P, \linf R).
\]
With respect to this isomorphism, the space of \emph{orientations} of
$Mf$ should be the homotopy pull-back in the diagram
\[
\begin{CD}
\CatOf{orientations} (M,R) @> \heq >>
\CatOf{right $\GL{R}$--spaces} (P,\GL{R})  \\
@VVV @VVV \\
\CatOf{right $R$--modules} (M, R) @> \heq >> \CatOf{right
$\GL{R}$--spaces} (P,\linf R),
\end{CD}
\]
and we should have weak equivalences
\begin{align} \label{eq:24}
\CatOf{orientations} (M,R) & \heq \CatOf{right $\GL{R}$--spaces}
(P,\GL{R}) \\
& \heq \spaces_{/B\GL{R}} (B,E\GL{R}). \label{eq:48}
\end{align}

The difficulties in making this sketch precise arise from the fact
that $\GL{R}$ is not a topological group but rather only a group-like
$\ainfty$ space.  This means for example it is more delicate to form
the space $P$ on which $\GL{R}$ will act, in such a way that we can
prove the homotopy equivalence \eqref{eq:48}.

In this section we present an approach to carrying out the program
described above.  The essential strategy is to adapt the operadic smash
product of \cite{MR1361938,EKMM} to the category of spaces.
Specifically, we produce a symmetric monoidal product on a subcategory
of $\spaces$ such that monoids for this product are precisely
$\ainfty$-spaces; this allows us to work with models of $\GL{R}$
which are strict monoids for the new product.  The observation that
one could carry out the program of \cite{EKMM} in the setting of
spaces is due to Mike Mandell, and was worked out in the thesis of the
second author \cite{Blumberg-thesis}.  In this paper we present a
streamlined exposition covering the part of the theory we need for our
applications.  The interested reader should consult the forthcoming
paper \cite{Blumberg-Cohen-Schlichtkrull} for further discussion and
in particular proofs of the foundational theorems stated below.

\subsection{$\L$-spectra and $\L$-spaces} \label{sec:l-spectra-l}

We mimic the definitions of \cite[\S I]{EKMM}.  Fix a universe
$\universe{U}$ (a countably infinite-dimensional real vector space),
and let $\Lin(1)$ denote the space of linear isometries
$\universe{U} \to \universe{U}$.  As the notation suggests, this is
the first piece of the linear isometries operad.

There is a monad $\L$ on spaces with
\[
   \L X = \Lin (1) \times X,
\]
where the product map comes from the composition on $\Lin(1)$ and the
unit from the inclusion of the identity map.  This is the space-level
analogue of the monad $\L$ on spectra defined as $\L Y = \Lin(1)
\ltimes Y$.  An $\L$-space is precisely a space with an action of
$\Lin(1)$.  In direct analogy with the commutative and associative
product on the category of $\L$-spectra \cite[\S I.5.1]{EKMM} we can
define an operadic product on $\L$-spaces:

\begin{Definition}
Let $X,Y$ be $\L$-spaces.  Define the operadic product $X\timesL Y =
\Lin (2)\times_{\Lin (1)\times \Lin (1)} X \times Y$ to be the
coequalizer in the diagram
\[
\xymatrix{ {\Lin (2)\times (\Lin (1)\times \Lin (1)) \times (X \times
Y)} \ar@<.3ex>[rr]^-{\gamma\times 1} \ar@<-.3ex>[rr]_-{1\times \xi} &
& {\Lin (2)\times X \times Y} \ar[r] & {X\timesL Y.}  }
\]
\end{Definition}
Here $\xi$ denotes the map using the $\L$-algebra structure of $X$ and
$Y,$ and $\gamma$ denotes the operad structure map $\Lin (2)\times
\Lin (1)\times \Lin (1)\to \Lin (2)$.  The left action of $\Lin (1)$
on $\Lin (2)$ induces an action of $\Lin (1)$ on $X\timesL Y.$

With this definition, many of the results and arguments of \cite[\S
I]{EKMM} carry over directly to the case of $\alg{\L}{\spaces}$.  For
instance, a result of the senior author (see \cite[\S I.5.4]{EKMM})
implies that $\timesL$ is associative and commutative:

\begin{Proposition}\label{t-pr-L-spaces-L-spectra}
\hspace{5 pt}
\begin{enumerate}
\item The operation $\timesL$ is associative.  Precisely, for any
$\L$-spaces $X_{1},\dotsc ,X_{k}$ and any way of associating the
product on the left, there is a canonical and natural isomorphism of
$\L$-spaces
\[
     X_{1}\timesL \dotsb \timesL X_{k} \iso \Lin (k)\times_{\Lin
(1)^{k}} X_{1}\times \dotsb \times X_{k}.
\]
\item The operation $\timesL$ is commutative in the sense that there
is a natural isomorphism of $\L$-spaces
\[
     \tau: X\timesL Y \iso Y\timesL X
\]
with the property that $\tau^{2}=1.$
\end{enumerate}
\end{Proposition}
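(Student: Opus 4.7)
The plan is to adapt the proof of the corresponding assertion for $\L$-spectra given in \cite[\S I.5]{EKMM}, which itself rests on a crucial formal property of the linear isometries operad (``Hopkins' lemma''): the operadic composition maps
\[
\Lin(j) \times_{\Lin(1)^j} \bigl(\Lin(k_1) \times \cdots \times \Lin(k_j)\bigr) \to \Lin(k_1 + \cdots + k_j)
\]
are homeomorphisms. Since $\Lin$ is a single operad of spaces used identically in both settings, this property is available to us verbatim; what needs to be checked is that the coequalizer manipulations used in \cite{EKMM} to get from the definition of $\timesL$ to the ``single-step'' formula work equally well with $\ltimes$ replaced by $\times$ and $\wedge$ replaced by $\times$.

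For associativity, I would proceed by induction on $k$, the base case $k=2$ being trivial. For the inductive step I would iterate the definition: for instance $(X \timesL Y) \timesL Z$ is the coequalizer
\[
\Lin(2) \times \Lin(1)^2 \times (X \timesL Y) \times Z \rightrightarrows \Lin(2) \times (X \timesL Y) \times Z,
\]
and after substituting the coequalizer presentation of $X \timesL Y$ and interchanging colimits (using that products of spaces commute with colimits), one obtains an iterated quotient of $\Lin(2) \times \Lin(2) \times X \times Y \times Z$. The operadic composition $\gamma \colon \Lin(2) \times \Lin(2) \to \Lin(3)$ (formally $\Lin(2) \times \Lin(2) \times \Lin(1) \to \Lin(3)$ using the unit) induces a natural map to $\Lin(3) \times_{\Lin(1)^3} X \times Y \times Z$, and Hopkins' lemma says this map is an isomorphism on the relevant quotient. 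One then checks by hand that the two possible associations of a three-fold product give canonically isomorphic answers, which extends to $k$-fold products by the usual coherence argument.

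For commutativity, the twist $\tau \colon \Lin(2) \to \Lin(2)$ coming from the $\Sigma_2$-action (precomposition with the swap $\universe{U} \oplus \universe{U} \to \universe{U} \oplus \universe{U}$) is $\Lin(1) \times \Lin(1)$-equivariant up to interchange of factors, so it descends to a map $\tau \colon X \timesL Y \to Y \timesL X$. Since the swap on $\Lin(2)$ is an involution and the coequalizer defining $\timesL$ is functorial in this action, $\tau^2 = 1$ follows formally.

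The main obstacle is really a bookkeeping one: confirming that the formal colimit arguments from \cite[\S I.5]{EKMM}, which were written in the context of $\L$-spectra and the half-smash $\ltimes$, transfer cleanly to the category of $\L$-spaces with Cartesian product. The key inputs --- associativity and commutativity of products, the behavior of colimits under products, and Hopkins' lemma for the underlying operad of spaces --- are either automatic for $\spaces$ or are properties of $\Lin$ itself, so no genuinely new argument should be required. In practice I would refer to \cite{Blumberg-Cohen-Schlichtkrull} for the detailed verification and content myself here with indicating the structural analogy.
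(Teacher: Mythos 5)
Your proposal is correct and follows essentially the same route as the paper, which likewise observes that the arguments of \cite[\S I.5]{EKMM} — in particular Hopkins' lemma \cite[\S I.5.4]{EKMM} on the operadic composition maps of $\Lin$ — carry over verbatim to $\L$-spaces with $\ltimes$ and $\wedge$ replaced by $\times$, and defers the detailed verification to \cite{Blumberg-Cohen-Schlichtkrull}. No further changes are needed.
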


There is a corresponding mapping space $\lmap (X,Y)$ which
satisfies the usual adjunction; in fact, the definition is forced by
the adjunctions.

\begin{Definition}
The mapping space $\lmap (X,Y)$ is the equalizer of the diagram
\[
\xymatrix{
\Map_{\alg{\L}{\spaces}}(\sL(2) \times X, Y) \ar[r]<2pt> \ar[r]<-2pt> &
\Map_{\alg{\L}{\spaces}}(\sL(2) \times \sL(1) \times \sL(1) \times X, Y).\\
}
\]
Here one map is given by the action of $\sL(1) \times \sL(1)$ on
$\sL(2)$ and the other via the adjunction
\[\Map_{\alg{\L}{\spaces}}(\sL(2) \times \sL(1) \times \sL(1) \times X, Y) \cong
\Map_{\alg{\L}{\spaces}}(\sL(2) \times \sL(1) \times X, \Map_{\alg{\L}{\spaces}}(\sL(1),
Y))\]
along with the action $\sL(1) \times X \to X$ and coaction
\[Y \to \Map_{\alg{\L}{\spaces}}(\sL(1),Y).\]
\end{Definition}

A diagram chase verifies the following proposition.

\begin{Proposition}
Let $X$, $Y$, and $Z$ be $\sL(1)$-spaces.  Then there is an adjunction
homeomorphism
\[\Map_{\alg{\L}{\spaces}}(X \timesL Y, Z) \cong \Map_{\alg{\L}{\spaces}}(X,
\lmap(Y,Z)).
\]
\end{Proposition}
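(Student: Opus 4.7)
The plan is to verify the adjunction by matching two universal properties, reading a map $X \timesL Y \to Z$ through the defining coequalizer of $X \timesL Y$ and a map $X \to \lmap(Y,Z)$ through the defining equalizer of $\lmap(Y,Z)$. Both descriptions will cut out the same subset of the underlying space of maps $\sL(2) \times X \times Y \to Z$, and the cartesian-closed structure on $\spaces$ will supply the homeomorphism between them.

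First I would unpack the left-hand side. By the coequalizer that defines $X \timesL Y$, a map of $\L$-spaces $\psi \colon X \timesL Y \to Z$ is the same datum as a left-$\sL(1)$-equivariant map $\varphi \colon \sL(2) \times X \times Y \to Z$ satisfying the two-sided balancing relation
\[
\varphi\bigl(\sigma \cdot (\ell_1, \ell_2),\, x,\, y\bigr) = \varphi(\sigma,\, \ell_1 \cdot x,\, \ell_2 \cdot y)
\]
for all $\sigma \in \sL(2)$, $\ell_1, \ell_2 \in \sL(1)$, $x \in X$, $y \in Y$. Applying the exponential adjunction of $\spaces$ to transpose the factor $X$ to the target yields an equivalent map $\widehat{\varphi} \colon X \to \Map(\sL(2) \times Y, Z)$ whose adjoint satisfies the corresponding equivariance and balancing relations.

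Next I would show that the relation in $\ell_2$, together with suitable left $\sL(1)$-compatibility, is exactly what carves out $\lmap(Y,Z) \subset \Map(\sL(2) \times Y, Z)$ as the equalizer in the definition: one of the two arrows records the right action of $\sL(1) \times \sL(1)$ on $\sL(2)$, while the other combines the $\sL(1)$-action on $Y$ with the coaction of $\sL(1)$ on $Z$ (adjoint to the $\L$-structure on $Z$), and these are precisely the two things paired by the balancing in $\ell_2$. Consequently $\widehat{\varphi}$ factors through $\lmap(Y,Z)$. The remaining relation in $\ell_1$, together with the left $\L$-equivariance of $\varphi$, translates after transposition into $\sL(1)$-equivariance of $\widehat{\varphi} \colon X \to \lmap(Y, Z)$ with respect to the $\L$-action on $\lmap(Y,Z)$ induced by the left action on $\sL(2)$; this step is a direct computation using that the $\L$-actions on $X$, on $\sL(2)$, and on $Z$ all arise from operad composition with $\sL(1)$.

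The main bookkeeping obstacle is keeping straight the several copies of $\sL(1)$ in play: the left action on $\sL(2)$ that provides the $\L$-structure on operadic products, and the two right actions that enter into the balancing. Once each relation is labeled by the variable in which it acts, the bijection is immediate, and continuity of the correspondence is inherited from continuity of the exponential adjunction in $\spaces$ together with the fact that both $X \timesL Y$ and $\lmap(Y,Z)$ inherit their topologies as a quotient and a subspace, respectively, of continuous maps. This gives the required natural homeomorphism.
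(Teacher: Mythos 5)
Your overall strategy is exactly the diagram chase that the paper leaves to the reader: read a map out of the coequalizer defining $X \timesL Y$ as a left-equivariant map $\varphi\colon \sL(2)\times X\times Y\to Z$ satisfying the balancing relation, transpose the $X$-variable through the exponential adjunction of (compactly generated) spaces, and match the resulting conditions against the equalizer defining $\lmap(Y,Z)$. That outline, and the continuity argument via the quotient and subspace topologies, is the intended proof.

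The one step that would fail as you have written it is the identification of the $\L$-structure on $\lmap(Y,Z)$, and hence the equivariance check for $\widehat\varphi$. The left $\sL(1)$-action on $\sL(2)$ cannot be what induces the module structure on $\lmap(Y,Z)$: that action is already consumed, together with the action on $Z$, by the equivariance condition which (along with the $\ell_2$-balancing against the $Y$-action) defines membership in $\lmap(Y,Z)$; moreover, precomposition with a left action of the noninvertible monoid $\sL(1)$ yields a \emph{right} action on a mapping space, so ``equivariance with respect to the action induced by the left action on $\sL(2)$'' does not even typecheck as stated. The action for which the adjunction holds is the one coming from the remaining right-hand slot of $\sL(2)$, namely $(\ell\cdot g)(\sigma,y)=g\bigl(\gamma(\sigma;\ell,1),y\bigr)$, and it is against this action that the $\ell_1$-part of the balancing, $\varphi\bigl(\gamma(\sigma;\ell,1),x,y\bigr)=\varphi(\sigma,\ell x,y)$, says precisely that $\widehat\varphi\colon X\to\lmap(Y,Z)$ is a map of $\L$-spaces; the left equivariance of $\varphi$ is not needed at this step, having already been used to place each $\widehat\varphi(x)$ in $\lmap(Y,Z)$. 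With this correction the bookkeeping closes up and the rest of your argument goes through.
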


The unital properties of $\timesL$ are again precisely analogous to
those spelled out in \cite[\S I.8]{EKMM}.  For a general $\L$-space,
there is always a unit map
\[
   \lambda: \ptspace\timesL X \to X.
\]
The unit map is compatible with $\timesL$.  Specifically, one can
adapt the arguments of \cite[\S I.8.5]{EKMM} to prove the following
proposition.

\begin{Proposition} \label{t-pr-L-space-unit}
\hspace{5 pt}
\begin{enumerate}
\item $\ptspace\timesL\ptspace\to\ptspace$ is an isomorphism.
\item For any $\L$-space $X$, the unit map $* \timesL X \to X$ is a
weak equivalence.
\item The unit map and $\timesL$ specify the structure of a weak
symmetric monoidal category on $\alg{\L}{\spaces}$.
\end{enumerate} 
\end{Proposition}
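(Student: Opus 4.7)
The plan is to adapt the argument of \cite[\S I.8]{EKMM}, which proves the analogous statements for $\L$-spectra and the sphere $\L$-spectrum $S$, to the space-level setting with unit $\ptspace$. The key inputs will be the operadic identities for the linear isometries operad (as in Proposition \ref{t-pr-L-spaces-L-spectra}) together with the contractibility of each $\Lin(j)$.

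For Part (1), I would unravel the coequalizer to identify $\ptspace \timesL \ptspace$ as the orbit space of $\Lin(2)$ under the right action of $\Lin(1)\times\Lin(1)$ given by operadic composition $(g_1,g_2)\cdot f = \gamma(f;g_1,g_2)$. The natural projection to $\ptspace$ is then an isomorphism by the space-level incarnation of the Hopkins' unity lemma (\cite[I.5.4, I.8.1]{EKMM}); an explicit inverse is built from the operad structure using the fact that the image of the $\L$-action provides a canonical section, and one checks continuity directly.

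For Part (2), given an arbitrary $\L$-space $X$, I would verify that $\lambda\colon \ptspace \timesL X \to X$ is a weak equivalence by producing a homotopy inverse up to coherent homotopy. The source is the coequalizer $\Lin(2)\times_{\Lin(1)\times\Lin(1)} X$, and the argument proceeds in two steps: first, use the operadic structure map $\gamma\colon \Lin(2)\times\Lin(1)\to\Lin(2)$ combined with the $\L$-action on $X$ to produce a natural map $X \to \ptspace \timesL X$, and second, use the contractibility of $\Lin(j)$ (and hence of the twisted orbit spaces) to show this is a homotopy inverse. This parallels the spectrum-level proof in EKMM, but avoids twisted half-smash/spectrification machinery since we are working at the space level.

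For Part (3), the coherence data needed for a weak symmetric monoidal structure --- pentagon, triangle, and hexagon axioms for the associator, unitor, and symmetry --- follows formally from the associativity and commutativity of $\timesL$ (Proposition \ref{t-pr-L-spaces-L-spectra}) together with Parts (1) and (2). Each axiom is verified by a direct diagram chase on $\Lin(k)$-indexed products, exactly as in \cite[\S I.8]{EKMM}. The main obstacle will be Part (2): the EKMM proof of the corresponding spectral statement relies on technical manipulations of twisted half-smash products, and producing a clean space-level substitute requires care in constructing the homotopy inverse and verifying its properties, which is precisely the content developed in \cite{Blumberg-thesis,Blumberg-Cohen-Schlichtkrull}.
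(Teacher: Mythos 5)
Strictly speaking the paper contains no proof of Proposition \ref{t-pr-L-space-unit}: it says only that ``one can adapt the arguments of [EKMM, \S I.8.5]'' and refers the reader to \cite{Blumberg-thesis,Blumberg-Cohen-Schlichtkrull} for proofs of the foundational statements about $\timesL$. So at the level of strategy your plan --- adapt EKMM \S I.8, with the unit weak equivalence (2) as the hard case, and defer the delicate point-set work to the Blumberg--Cohen--Schlichtkrull treatment --- is exactly the route the paper itself takes.

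However, the mechanism you sketch for part (2) has a real gap. The phrase ``contractibility of $\Lin(j)$ (and hence of the twisted orbit spaces)'' is not a legitimate inference: $\ptspace\timesL X$ is the balanced product $\Lin(2)\times_{\Lin(1)\times\Lin(1)}X$, a quotient by an action of a monoid that is neither a group nor free, and such quotients do not inherit contractibility, nor do they preserve weak equivalences in general --- this failure is precisely why the unit statement is delicate (at the spectrum level EKMM obtain only a weak equivalence in general, and a homotopy equivalence only under tameness hypotheses). Concretely, if you fix $f_0\in\Lin(2)$ and set $\sigma(x)=[f_0;\ast,x]$, then $\lambda\sigma\simeq\mathrm{id}_X$ does follow from a path in $\Lin(1)$ from $f_0 i_2$ to the identity ($i_2$ the second-summand inclusion), but the other composite sends $[f;\ast,x]$ to $[f_0\circ(1\oplus f i_2);\ast,x]$, and to homotope this to the identity you need a contraction of $\Lin(2)$ that is sufficiently equivariant for precomposition in the second variable (up to arbitrary twisting in the first) to descend through the coequalizer defining $\timesL$; bare contractibility of $\Lin(2)$ does not provide this, and producing such equivariant homotopies from the explicit structure of the linear isometries spaces is the actual content of EKMM \S I.8 and of its space-level adaptation in \cite{Blumberg-thesis,Blumberg-Cohen-Schlichtkrull}, which you cite but do not supply. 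A smaller point on (1): Hopkins' lemma (EKMM I.5.4) treats $\Lin(2)\times_{\Lin(1)\times\Lin(1)}(\Lin(i)\times\Lin(j))$ only for $i,j\geq 1$ and does not cover the unit case; what (1) requires is the separate fact that the orbit space $\Lin(2)/(\Lin(1)\times\Lin(1))$ is a single point, proved by a zigzag argument with orthogonal decompositions of images of isometries (as in EKMM's proof that $S\SmashL S\cong S$), not by exhibiting a ``canonical section.''
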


Recall that a weak symmetric monoidal category is a category with a
product that satisfies all of the axioms of a symmetric monoidal
category with the exception that the unit map is not required to be an
isomorphism \cite[\S II.7.1]{EKMM}.

The product $\timesL$ on $\alg{\L}{\spaces}$ is a version of the
cartesian product; in order to make a precise statement of the
relationship between $\timesL$ and $\times$, we need to discuss model
structures.

\begin{Proposition}
There is a compactly generated topological model structure on
$\alg{\L}{\spaces}$ in which
\begin{enumerate}
\item{The weak equivalences are the maps which are weak equivalences
of spaces,}
\item{The fibrations are the maps which are fibrations of spaces,}
\item{and the cofibrations are determined by the left-lifting
property.}
\end{enumerate}
The generating cofibrations and generating acyclic cofibrations are
the sets $\{\L A \to \L B\}$ for $A \to B$ a generating cofibration in
$\spaces$ and $\{\L C \to \L D\}$ for $C \to D$ a generating acyclic
cofibration in $\spaces$, respectively.
\end{Proposition}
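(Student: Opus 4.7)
The plan is to establish this model structure by transfer along the free-forgetful adjunction
\[
   \L : \spaces \leftrightarrows \alg{\L}{\spaces} : U,
\]
using Kan's recognition theorem for cofibrantly generated model categories. The key observation is that an $\L$-space is precisely a space with a continuous action of the topological monoid $\Lin(1)$, so $\alg{\L}{\spaces}$ is equivalent to the category of functors from $B\Lin(1)$ (the one-object topological category with endomorphism monoid $\Lin(1)$) to $\spaces$. Consequently both limits and colimits in $\alg{\L}{\spaces}$ are computed pointwise, and the forgetful functor $U$ preserves \emph{all} (co)limits; in particular $U$ has its own right adjoint $X\mapsto \Map(\Lin(1),X)$. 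This reduces each verification below to a corresponding fact about the standard model structure on $\spaces$.

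For the transfer theorem two hypotheses must be checked. \emph{Smallness}: the domains of the generating cofibrations and acyclic cofibrations in $\spaces$ are compact (small relative to sequential colimits of closed inclusions), and since $U$ preserves filtered colimits, the objects $\L A$ are small relative to the class of relative $\{\L A\to\L B\}$-cell complexes. \emph{Acyclicity}: any transfinite composition of pushouts of the proposed generating acyclic cofibrations $\L C\to\L D$ must be a weak equivalence on underlying spaces. Because $U$ preserves colimits, the underlying space of such a pushout along a map $\L C\to X$ is the pushout in $\spaces$ of $\Lin(1)\times C\to\Lin(1)\times D$ along the adjoint map $\Lin(1)\times C\to UX$. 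Now $\Lin(1)\times C\to\Lin(1)\times D$ is an acyclic cofibration in $\spaces$ (product with any fixed space preserves acyclic cofibrations), and acyclic cofibrations in $\spaces$ are closed under pushout and transfinite composition, which yields the desired conclusion.

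The topological enrichment then follows formally: $\alg{\L}{\spaces}$ is tensored and cotensored over $\spaces$ via $(X,K)\mapsto X\times K$ (with $\Lin(1)$ acting only on the first factor) and $(X,K)\mapsto\Map(K,X)$, and the pushout-product axiom is inherited from the analogous axiom on $\spaces$ because $U$ creates both the tensoring and the weak equivalences. The main obstacle is largely one of bookkeeping: because $U$ is simultaneously a left and right adjoint, the nontrivial verifications all reduce to classical facts in $\spaces$. The one step requiring any care is checking the smallness hypothesis with the correct choice of smallness class for the generating cofibrations of $\spaces$ (closed $T_1$-inclusions, say), but this is entirely standard.
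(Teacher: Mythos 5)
Your proposal is correct and is essentially the approach the paper intends: the paper states this proposition without proof, deferring such foundational statements to \cite{Blumberg-Cohen-Schlichtkrull} and to adaptations of the arguments of \cite{EKMM,MR1806878}, and those arguments amount precisely to the transfer along the free--forgetful adjunction $\L:\spaces\leftrightarrows\alg{\L}{\spaces}:U$ that you carry out, simplified (as you observe) by the fact that $\L$-algebras are just $\Lin(1)$-spaces, so that $U$ creates colimits as well as limits. One justification should be tightened: in the Quillen model structure on $\spaces$, crossing with a fixed space does not literally preserve (acyclic) cofibrations, so $\Lin(1)\times C\to\Lin(1)\times D$ need not have the left lifting property against Serre fibrations; what is true, and all that the acyclicity condition requires, is that these maps are closed inclusions of deformation retracts (Hurewicz cofibrations and weak equivalences), and the class of maps that are simultaneously h-cofibrations and weak equivalences is closed under cobase change and transfinite composition --- the standard argument of \cite{EKMM,MR1806878}.
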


The resulting model category is Quillen equivalent to spaces, since
$\sL(1)$ is contractible:

\begin{Proposition}
The free-forgetful adjunction induces
a Quillen equivalence between the usual model structure on $\spaces$
and the model structure on $\alg{\L}{\spaces}$ given in the preceding
proposition.
\end{Proposition}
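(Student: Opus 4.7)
The plan is to verify that this is a Quillen adjunction and then check the Quillen equivalence by analyzing the unit and counit, both of which reduce to the contractibility of $\sL(1)$.

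First I would observe that, by construction of the model structure on $\alg{\L}{\spaces}$ stated in the preceding proposition, the forgetful functor $U : \alg{\L}{\spaces} \to \spaces$ preserves (indeed creates) fibrations and weak equivalences; hence $(\L, U)$ is a Quillen adjunction (e.g.\ via \cite[Lemma 1.3.4]{Hovey:MC}). Moreover, since $U$ creates weak equivalences, $U$ detects and reflects weak equivalences on all objects — a fortiori between fibrant objects — so to conclude Quillen equivalence it suffices to show that for every cofibrant $X$ in $\spaces$, the unit map
\[
   \eta_X : X \longrightarrow U\L X = \sL(1) \times X
\]
is a weak equivalence.

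The map $\eta_X$ is the inclusion $x \mapsto (\mathrm{id}_{\universe{U}}, x)$. The space $\sL(1)$ of linear isometries $\universe{U}\to\universe{U}$ is contractible — this is the classical fact underlying the linear isometries operad being an $\einfty$ operad — so the projection $\sL(1) \times X \to X$ is a weak equivalence, and hence so is its section $\eta_X$. (The contractibility is used only for weak equivalence of spaces; no cofibrancy hypothesis on $X$ is needed.) This handles the unit and thus establishes that $(\L, U)$ is a Quillen equivalence.

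As a sanity check one can also verify directly that the counit $\varepsilon_Y : \L U Y = \sL(1) \times Y \to Y$, given by the $\sL(1)$-action, is a weak equivalence: the composite $Y \xra{\eta_{UY}} \sL(1) \times Y \xra{\varepsilon_Y} Y$ is the identity by the unitality of the $\sL(1)$-action, and $\eta_{UY}$ is a weak equivalence by the argument above, so two-out-of-three gives that $\varepsilon_Y$ is a weak equivalence. The main (and only) substantive input is the contractibility of $\sL(1)$; everything else is formal from the definition of the model structure on $\alg{\L}{\spaces}$.
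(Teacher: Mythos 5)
Your proof is correct and follows exactly the approach the paper indicates: the paper states this proposition without a separate proof environment, pointing only to the contractibility of $\sL(1)$ in the preceding sentence, and your argument (Quillen adjunction via $U$ creating fibrations and weak equivalences, reduction to the derived unit, 2-out-of-3 using the projection $\sL(1)\times X\to X$) is the standard way to make that one-line justification rigorous.
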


Furthermore, we have the following key
comparison result, which says that the derived functor of $\timesL$ is
$\times$.

\begin{Proposition}
Let $X$ and $Y$ be cofibrant $\L$-spaces.
Then the natural map
\[
X \timesL Y \to X \times Y
\]
is a weak equivalence.
\end{Proposition}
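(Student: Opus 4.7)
The plan is a cell induction on $X$ (and then on $Y$), reducing to the free case, where the weak equivalence follows directly from contractibility of the spaces of the linear isometries operad.

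For the base case $X = \L A$ and $Y = \L B$, unwinding the coequalizer defining $\timesL$ produces a natural isomorphism
\[
\L A \timesL \L B \iso \Lin(2) \times A \times B
\]
via $[(\ell; (\ell_1, x), (\ell_2, y))] \mapsto (\ell \circ (\ell_1 \oplus \ell_2), x, y)$, with inverse $(\ell, x, y) \mapsto [(\ell; (1, x), (1, y))]$. Likewise $\L A \times \L B \iso \Lin(1)^2 \times A \times B$. Under these identifications the natural comparison map becomes the product of $\id_{A \times B}$ with the restriction $\Lin(2) \to \Lin(1) \times \Lin(1)$, $\ell \mapsto (\ell|_{\universe{U} \oplus 0}, \ell|_{0 \oplus \universe{U}})$. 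Since all $\Lin(k)$ are contractible, this is a weak equivalence, settling the free case.

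For the inductive step, I would write a cofibrant $X$ as a retract of a cell $\L$-space, i.e., a sequential colimit $X = \colim_n X_n$ in which $X_0$ is initial and $X_{n+1}$ is obtained from $X_n$ by a pushout in $\alg{\L}{\spaces}$ along a coproduct of generating cofibrations $\L A \to \L B$ (where $A \to B$ is a generating cofibration in $\spaces$). The functors $(\slot) \timesL Y$ and $(\slot) \times Y$ preserve all colimits (being left adjoints in the first variable), so $X_{n+1} \timesL Y$ and $X_{n+1} \times Y$ are the expected pushouts, and $X \timesL Y = \colim_n X_n \timesL Y$ (similarly for $\times$). Invoking the space-level analog of the Cofibration Hypothesis of \cite[\S VII]{EKMM} --- developed in detail in \cite{Blumberg-Cohen-Schlichtkrull} --- the transitions $X_n \to X_{n+1}$ are unbased Hurewicz cofibrations, and they remain so after applying $(\slot) \timesL Y$ or $(\slot) \times Y$. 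The gluing lemma and passage to sequential colimits of weak equivalences along Hurewicz cofibrations then reduce the induction in $X$ to verifying the comparison map for $X = \L B$ and each cofibrant $Y$. A symmetric induction on $Y$ --- whose base $Y = \L D$ is the free case already handled --- completes the proof.

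The main obstacle is the cofibration hypothesis in this operadic space-level setting: specifically, one must check that cell attachments in $\alg{\L}{\spaces}$ produce unbased Hurewicz cofibrations of underlying spaces which are preserved by both $\timesL$ and $\times$ with a cofibrant second factor. This is a technical but essentially routine adaptation of the spectrum-level arguments of \cite[\S VII]{EKMM}; once in place, the rest of the argument is formal.
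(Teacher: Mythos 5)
The paper does not actually prove this proposition: it is one of the foundational statements about $\L$-spaces whose proofs are explicitly deferred to the forthcoming Blumberg--Cohen--Schlichtkrull paper, so there is no internal argument to compare yours against. Your proof is correct and is essentially the standard argument one expects there: identify the free case via $\L A \timesL \L B \iso \Lin (2)\times A\times B$, observe that the comparison map is (up to isomorphism) the restriction $\Lin (2)\to \Lin (1)\times \Lin (1)$ times the identity, hence a weak equivalence by contractibility of the isometry spaces, and then bootstrap to arbitrary cofibrant objects by cell induction, colimit preservation in each variable, and the retract argument. One simplification worth noting: you do not need a space-level Cofibration Hypothesis as a black box here, since $\L A\timesL Y \iso A\times \bigl(\Lin (2)\times_{\Lin (1)} Y\bigr)$, so the attached maps $\L A\timesL Y\to \L B\timesL Y$ are visibly Hurewicz cofibrations (a generating cofibration $A\to B$ crossed with a fixed space), and the gluing lemma and the sequential-colimit lemma apply directly; the same computation handles the cartesian-product side. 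With that observation your double induction closes using nothing beyond the contractibility of the $\Lin (k)$ and standard facts about pushouts along $h$-cofibrations.
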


The force of the construction of $\timesL$ is the fact that it gives
us control of $A_\infty$-spaces and $E_\infty$-spaces on the
``point-set'' level; just as in the setting of spectra, this makes it
simple to define monoids and modules, and more generally to carry out
definitions from homological algebra.  Specifically, define monads
$\T$ and $\PP$ on $\alg{\L}{\spaces}$ and $\A$ and $\E$ on $\spaces$
by the formulae
\[
\xymatrix{ \T X = \coprod_{n\geq 0} X^{\timesL n} && \relax\A X =
\coprod_{n\geq 0}
\Lin (n)\times X^{n} \\
\PP X = \coprod_{n\geq 0} X^{\timesL n} / \Sigma_n && \relax\E X =
\coprod_{n\geq 0} \Lin (n)\times_{\Sigma_n} X^{n}.  }
\]

Recall that $\T$-algebras in $\alg{\L}{\spaces}$ are monoids (i.e.
$\L$-spaces $X$ equipped with multiplication maps $X \timesL X \to X$
which are coherently associative and unital) and similarly
$\PP$-algebras are commutative monoids.  As discussed in
Section~\ref{sec:einfty-spaces}, $\A$-algebras in $\spaces$ are
$A_\infty$-spaces structured by the (non-$\Sigma$) linear isometries
operad and $\E$-algebras in $\spaces$ are $E_\infty$-spaces structured
by the linear isometries operad.  Just as in \cite[\S I.4.6]{EKMM},
these monads are closely related.

\begin{Proposition}\label{t-pr-assoc-L-spaces-assoc-L-spectra}
There are canonical isomorphisms
\[
\xymatrix{ \A \iso \T \L && \E \iso \PP \L }
\]
of monads on $\spaces$.
\end{Proposition}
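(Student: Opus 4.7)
The plan is to reduce the proposition to a direct calculation of $(\L X)^{\timesL n}$ using the iterated product formula from Proposition~\ref{t-pr-L-spaces-L-spectra}, and then verify that the resulting bijections on the underlying spaces respect the monad structures. This mirrors the strategy used in \cite[\S I.4.6]{EKMM} to relate the linear isometries operad monad on spectra to $\T\L$ and $\PP\L$.

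First I would compute $(\L X)^{\timesL n}$ for a space $X$. By Proposition~\ref{t-pr-L-spaces-L-spectra}(1) we have
\[
(\L X)^{\timesL n} \iso \Lin(n) \times_{\Lin(1)^n} (\Lin(1)\times X)^n \iso \Lin(n) \times_{\Lin(1)^n} \Lin(1)^n \times X^n.
\]
The right $\Lin(1)^n$-action on $\Lin(n)$ (via the operad structure) together with the left $\Lin(1)^n$-action on $\Lin(1)^n$ (the $\L$-action on the $n$ factors of $\L X$, which is translation) makes $\Lin(1)^n$ into a free $\Lin(1)^n$-torsor factor, so the coequalizer simplifies to $\Lin(n) \times X^n$. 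Taking coproducts over $n$ yields a natural isomorphism of functors $\T\L X \iso \A X$. Modding out by $\Sigma_n$ on each summand --- using that the $\Sigma_n$-action arising from the commutativity isomorphism of Proposition~\ref{t-pr-L-spaces-L-spectra}(2) corresponds, under the identification above, to the standard $\Sigma_n$-action on $\Lin(n)$ and $X^n$ --- yields the analogous isomorphism $\PP\L X \iso \E X$.

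Next I would verify that these isomorphisms are compatible with the monad structures. The unit $X \to \T\L X$ is the composite of the unit $X \to \L X$ of the free $\L$-space adjunction with the inclusion of the $n=1$ summand of $\T\L X$, which under the isomorphism lands in $\Lin(1)\times X \subset \A X$ at the identity isometry; this is precisely the unit of $\A$. For the multiplication, one has to identify, for a composable iterated $\L$-product of length $n_1+\cdots+n_k$, the structure map of $\T\L$ (which gathers $k$ strings of lengths $n_i$ into a single string via the monoid multiplication on $\T$) with the operad composition
\[
\gamma \colon \Lin(k) \times \Lin(n_1) \times \cdots \times \Lin(n_k) \to \Lin(n_1+\cdots+n_k).
\]
Under the iterated product formula, the structure map of $\T$ is induced by exactly the maps of $\Lin$-spaces coming from $\gamma$, so the two agree. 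The $\PP \L \iso \E$ case then follows by passing to $\Sigma$-orbits and noting that the operad composition is appropriately equivariant.

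The main obstacle is the bookkeeping in the last step: one must carefully track how the $\Lin(1)^n$-torsor structure interacts simultaneously with (a) the unit and multiplication of $\T$ (or $\PP$), (b) the symmetric group action used to pass from $\T$ to $\PP$, and (c) the operad composition defining the monad structure on $\A$ (or $\E$). Once one fixes a canonical point-set representative of the iterated product, all the diagrams one has to check commute by construction, because the $\timesL$-product on free $\L$-spaces is, by Proposition~\ref{t-pr-L-spaces-L-spectra}, literally built out of the operad spaces $\Lin(n)$. I would therefore expect the proof to be a careful but essentially formal diagram chase, closely parallel to the spectrum-level argument in \cite[\S I.4.6]{EKMM}.
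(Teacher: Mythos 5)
Your argument is essentially the intended one: the paper states this proposition without proof, pointing to the spectrum-level analogue in \cite[\S I.4.6]{EKMM} (with details deferred to \cite{Blumberg-Cohen-Schlichtkrull}), and your untwisting $(\L X)^{\timesL n}\iso \Lin(n)\times_{\Lin(1)^n}(\Lin(1)^n\times X^n)\iso \Lin(n)\times X^n$ followed by the check that units and multiplications match the operad structure maps is exactly that adaptation to spaces. One small correction: since $\Lin(1)$ is only a monoid (linear isometries of $\universe{U}$ need not be surjective), the collapse of the balanced product is not a torsor argument but the standard cancellation $M\times_{A}(A\times N)\iso M\times N$ for the extended (free) left $A$-object $A\times N$, which is all your computation actually uses.
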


Via \cite[\S I.6.1]{EKMM}, this has the following consequence.

\begin{Corollary}
\hspace{5 pt}
\begin{enumerate}
\item The categories of $\A$-algebras in $\spaces$ ($\ainfty$-spaces)
and of $\T$-algebras in $\alg{\L}{\spaces}$ are equivalent.
\item The categories of $\E$-algebras in $\spaces$ ($E_\infty$-spaces)
and $\PP$-algebras in $\alg{\L}{\spaces}$ are equivalent.
\end{enumerate}
\end{Corollary}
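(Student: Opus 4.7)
The plan is to deduce the Corollary from the preceding Proposition by a purely formal categorical argument, namely an application of \cite[Lemma I.6.1]{EKMM}. In abstract form, this lemma states: given a monadic adjunction $F \colon \mathcal{C} \rightleftarrows \mathcal{D} \colon U$ and a monad $M$ on $\mathcal{D}$, the composite endofunctor $UMF$ inherits a canonical monad structure on $\mathcal{C}$ (the multiplication being assembled from that of $M$ together with the structure maps of the adjunction), and the category of $M$-algebras in $\mathcal{D}$ is canonically equivalent to the category of $UMF$-algebras in $\mathcal{C}$.

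I would apply this with $\mathcal{C} = \spaces$, $\mathcal{D} = \alg{\L}{\spaces}$, and the free-forgetful adjunction $\L \colon \spaces \rightleftarrows \alg{\L}{\spaces} \colon U$. This adjunction is monadic essentially by construction: $\alg{\L}{\spaces}$ is by definition the category of algebras for the monad $\L$ on $\spaces$, so the comparison functor to the Eilenberg--Moore category is the identity. Taking $M = \T$ produces an induced monad on $\spaces$ whose underlying functor is $U\T\L$, which I shall simply write $\T\L$; under the isomorphism of monads $\A \cong \T\L$ furnished by the preceding Proposition, $\A$-algebras in $\spaces$ are therefore equivalent to $\T$-algebras in $\alg{\L}{\spaces}$, giving part (1). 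An identical argument with $M = \PP$ and the isomorphism $\E \cong \PP\L$ gives part (2).

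The only potentially subtle point is that the preceding Proposition must be read as an isomorphism \emph{of monads} on $\spaces$, not merely of underlying functors, in order to license the transport of algebra structures. This monadic compatibility is however exactly what the Proposition asserts, so no additional work is required here; in effect, the Corollary is a purely categorical reformulation of the Proposition, obtained by passing from monad isomorphisms to equivalences of algebra categories via the monadicity of $\L$.
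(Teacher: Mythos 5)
Your proposal is correct and matches the paper's own argument: the paper's entire proof is the citation of \cite[\S I.6.1]{EKMM}, i.e.\ the same composite-monad lemma you invoke, applied to the free--forgetful adjunction for $\L$ together with the monad isomorphisms $\A \iso \T\L$ and $\E \iso \PP\L$ of the preceding Proposition. Your closing remark that the Proposition must be an isomorphism of monads (not merely of functors) is exactly the point that makes the transport of algebra structures legitimate.
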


Finally, the work of \cite[p. 366]{LMS:esht} reviewed in
\S\ref{sec:einfty-spaces-einfty-1} and \cite{EKMM} implies that the
category of $\L$-spaces has the expected relationship to the category
of $\L$-spectra.  There is a subtle point here, however: for an
$\L$-space $X$, the Lewis-May suspension spectrum $\splus X$
admits two structures as an $\L$-space.  There is a trivial structure
described in \cite[\S I.4.5]{EKMM}, and the structure induced by the
isomorphism $\sL(1) \ltimes \Sigma^\infty (-) \cong \Sigma^\infty
(\sL(1) \times X)$.  In the following discussion, we always use the
latter.

\begin{Proposition}\label{t-pr-mbl-spaces-mbl-spectra}
\hspace{5 pt}
\begin{enumerate}
\item
If $X$ and $Y$ are $\L$-spaces, there is a natural isomorphism of
$\L$-spectra
\[
   \splus (X\timesL Y) \iso \splus X \SmashL \splus Y
\]
which is compatible with the commutativity isomorphism $\tau$.

\item The Quillen pair
\begin{equation}
       \splus: \spaces \leftrightarrows \spectra: \linf
\end{equation}
induces by restriction a continuous Quillen adjunction
\begin{equation}
       \splus: \alg{\L}{\spaces}\leftrightarrows \alg{\L}{\spectra}:
\linf
\end{equation}
between topological model categories.
\item If $X$ is a space then
\[
\xymatrix{
\splus \A X \iso \bigvee_{n} \Lin (n)\ltimes (\splus X)^{\smash n} &&
\splus \E X \iso \bigvee_{n} \Lin (n)\ltimes_{\Sigma_n} (\splus X)^{\smash n},
}
\]
and if $X$ is an $\L$-space then
\[
\xymatrix{
\splus \T X \iso \bigvee_{n} (\splus X)^{\SmashL n} && \splus \PP X
\iso \bigvee_{n} (\splus X)^{\SmashL n}/\Sigma_n.
}
\]
\item If $R$ is an $\ainfty$ spectrum, then $\linf R$ is a monoid in
$\alg{\L}{\spaces}$, and $\GL{R}$ is a group-like monoid in
$\alg{\L}{\spaces}$.  Similarly, if $R$ is an $\einfty$ spectrum, then
  $\linf R$ is a commutative monoid in $\alg{\L}{\spaces}$, and
  $\GL{R}$ is a group-like commutative monoid in $\alg{\L}{\spaces}$.
\end{enumerate}
\end{Proposition}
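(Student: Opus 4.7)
The plan is to prove the four assertions in sequence, with assertion (1) serving as the foundation from which (3) and (4) follow more or less formally.

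For assertion (1), I would unwind both sides as coequalizers. By definition $X\timesL Y$ is the coequalizer in $\spaces$ of the parallel pair
\[
\Lin(2)\times \Lin(1)\times \Lin(1)\times X\times Y \rightrightarrows \Lin(2)\times X\times Y,
\]
while $\splus X \SmashL \splus Y$ is the coequalizer of the corresponding pair in $\spectra$ with $\times$ replaced by $\ltimes$ and $\smash$ in the appropriate places. Since $\splus$ is a left adjoint it preserves coequalizers and coproducts, and the basic identities $\splus(A\times B)\iso \splus A \smash \splus B$ and $\splus(\Lin(k)\times Z)\iso \Lin(k)\ltimes \splus Z$ (the latter is \cite[VI.1.5]{LMS:esht}, already used in Lemma \ref{t-le-C-commutes-with-susp}) translate the space-level coequalizer into the spectrum-level one. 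Compatibility with the twist $\tau$ reduces to the observation that the $\Sigma_2$-action on $\Lin(2)$ is preserved by $\splus$, and the commutativity isomorphism on both sides is induced by this action.

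For assertion (2), I would note that the action of $\Lin(1)$ on an $\L$-space $X$ transports via $\splus(\Lin(1)\times X)\iso \Lin(1)\ltimes \splus X$ to make $\splus X$ into an $\L$-spectrum, functorially in $X$; conversely an $\L$-action on a spectrum $V$ furnishes $\linf V$ with a space-level $\L$-action via the map $\Lin(1)\times \linf V \to \linf(\Lin(1)\ltimes V)\to \linf V$. These two constructions are adjoint because the original adjunction $\splus\dashv\linf$ is, and the bijection of adjoints respects the equivariance data. The Quillen property is then automatic: fibrations and weak equivalences in both $\alg{\L}{\spaces}$ and $\alg{\L}{\spectra}$ are created by the forgetful functors to $\spaces$ and $\spectra$, where $\linf$ already preserves them.

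Assertion (3) now follows by inductive application of (1) and the elementary identity $\splus\coprod_n Z_n\iso \bigvee_n \splus Z_n$. For $\T X=\coprod_n X^{\timesL n}$ we iterate the isomorphism of (1); for $\A X=\coprod_n \Lin(n)\times X^n$ we use the iterated associativity isomorphism of Proposition \ref{t-pr-L-spaces-L-spectra} together with $\splus(\Lin(n)\times X^n)\iso \Lin(n)\ltimes (\splus X)^{\smash n}$; the cases of $\PP$ and $\E$ are the same but with $\Sigma_n$-quotients, which $\splus$ also preserves since it is a left adjoint. Finally for assertion (4), if $R$ is an $\ainfty$ spectrum, then via Proposition \ref{t-pr-assoc-L-spaces-assoc-L-spectra} (and its spectrum analogue from \cite[\S I.4.6]{EKMM}) we may regard it as a $\T$-algebra in $\alg{\L}{\spectra}$, i.e. a monoid with multiplication $R\SmashL R\to R$. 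Applying $\linf$ and using (1), the composite
\[
\linf R\timesL \linf R \to \linf(\splus\linf R\SmashL \splus\linf R)\to \linf(R\SmashL R)\to \linf R
\]
(built from the unit of $\splus\dashv\linf$ together with the counit on each factor and the multiplication on $R$) furnishes $\linf R$ with a monoid structure in $\alg{\L}{\spaces}$; $\GL R$ then inherits a monoid structure as a pullback and is group-like by construction. The commutative case is identical after replacing $\T$ by $\PP$. I expect the main obstacle to be the careful verification of coherence in assertion (1), in particular that the coequalizer defining $\timesL$ really is preserved on the nose by $\splus$ and not merely up to homotopy; the rest of the proposition is formal manipulation of the resulting monoidal structure.
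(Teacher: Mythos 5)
The paper states Proposition \ref{t-pr-mbl-spaces-mbl-spectra} without an explicit proof, relying on the preceding discussion citing \cite[p.\,366]{LMS:esht} and \cite{EKMM} and on Lemmas \ref{t-le-C-commutes-with-susp} and \ref{t-pr-l-spaces-l-spectra-adj}; your argument is correct and unwinds exactly the identities those references supply (most importantly $\splus(\Lin(k)\times Z)\iso\Lin(k)\ltimes\splus Z$ and preservation of coequalizers since $\splus$ is a genuine left adjoint). Your closing worry about coequalizers being preserved only up to homotopy is unfounded for the reason you yourself already invoked: $\splus$ is a strict left adjoint at the point-set level, so $\timesL$ really is taken to $\SmashL$ on the nose.
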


\subsection{$S$-modules and $*$-modules}

In order to work with modules over an $\ainfty$ space, it is
convenient to work with a symmetric monoidal category.  In this
section, we discuss the analogue of $S$-modules in the context of
$\L$-spaces.  Just as in \cite{EKMM}, one can restrict to the
subcategory of $\L$-spaces which are unital: $\L$-spaces $X$ such that
the unit map $* \timesL X \to X$ is an isomorphism.

\begin{Definition}
The category $\aM_*$ of $*$-modules is the subcategory of
$\sL(1)$-spaces such that the unit map $\lambda\colon * \timesL X \to
X$ is an isomorphism.  For $*$-modules $X$ and $Y$, define $X
\boxtimes Y$ as $X \timesL Y$ and $F_\boxtimes (X,Y)$ as $* \timesL
\lmap(X,Y)$.
\end{Definition}

The category $\aM_*$ is a closed symmetric monoidal category with unit
$*$ and product $\boxtimes$.  The inclusion functor $\aM_* \to
\alg{\L}{\spaces}$ has both a left and a right adjoint: the right
adjoint from $\L$-spaces to $*$-modules is given by $* \timesL (-)$,
and the left adjoint by $\lmap (*, -)$, just as in
the stable setting the inclusion functor from $S$-modules to
$\L$-spectra has both a left and a right adjoint \cite[\S II.2]{EKMM}.
Proposition 2.7 of \cite{EKMM} shows that these adjunctions are
respectively monadic and 
comonadic, and therefore as discussed in the proofs of \cite[\S
  II.1.4]{EKMM} and \cite[\S VII.4.6]{EKMM}, standard arguments
establish the following theorem.

\begin{Theorem}
The category $\aM_*$ admits a cofibrantly generated topological model
structure in which the weak equivalences are detected by the forgetful
functor to $\L$-spaces.  A map $f \colon X \to Y$ of $*$-modules
is a fibration if the induced map $F_\boxtimes(*, X) \to
F_\boxtimes(*,Y)$ is a fibration of spaces.  Colimits are created in
the category of $\L$-spaces, and limits are created by applying $*
\timesL (-)$ to the limit in the category of $\L$-spaces.
\end{Theorem}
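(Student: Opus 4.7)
The plan is to transfer a cofibrantly generated model structure from $\alg{\L}{\spaces}$ to $\aM_*$ using the (co)monadic adjunctions described immediately before the theorem, following the template of \cite[\S II.1.4]{EKMM} for $S$-modules. First I would make precise that the inclusion $i\colon\aM_*\to\alg{\L}{\spaces}$ is fully faithful with right adjoint $*\timesL(-)$ and left adjoint $\lmap(*,-)$; that the counit $*\timesL X\to X$ of the coreflection is an isomorphism for $X\in\aM_*$ (this is the defining condition); and that the unit of the reflection is a weak equivalence, by Proposition~\ref{t-pr-L-space-unit}(2). Together with the computation $*\timesL*\cong *$ from Proposition~\ref{t-pr-L-space-unit}(1), these facts make the monad $*\timesL(-)$ well-behaved.

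Next I would specify the generating (acyclic) cofibrations as the images under $*\timesL\L(-)$ of the generating (acyclic) cofibrations of $\spaces$, and define weak equivalences and fibrations by the forgetful functor to spaces (equivalently, to $\L$-spaces, by the model structure of the preceding subsection). The right adjoint characterization of fibrations stated in the theorem follows because $F_\boxtimes(*,-)$ is the composite of $i$ with the right adjoint $*\timesL(-)$ of the reflection pair, and the latter is homotopically trivial on fibrant objects of $\aM_*$. To obtain the model structure I would invoke Kan's recognition theorem for cofibrantly generated transfers (cf.\ \cite[\S VII.4.6]{EKMM}): this reduces to (a) verifying the small object argument applies, using compactness of the cells, and (b) proving that every relative cell complex built from generating acyclic cofibrations is a weak equivalence.

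The main obstacle is (b), which amounts to establishing a Cofibration Hypothesis for $\aM_*$ in the sense of \cite[\S VII]{EKMM}. Concretely, one must show that pushouts in $\aM_*$ along maps of the form $*\timesL\L A\to *\timesL\L B$ are realized on underlying spaces by Hurewicz cofibrations, and that sequential colimits of such maps preserve weak equivalences. This requires a careful analysis of $\timesL$ on cell attachments, using the contractibility of $\sL(1)$ and the fact that $\timesL$ is a quotient of the cartesian product by a free $\sL(1)\times\sL(1)$ action. Once this is in hand, the pushout-product and unit axioms for the topological (equivalently simplicial) enrichment follow from the corresponding statements for $\alg{\L}{\spaces}$, pulled across the adjunction.

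Finally, the descriptions of limits and colimits are formal consequences of the two adjunctions. Since $i$ is a left adjoint (of $*\timesL(-)$), it preserves colimits; by full faithfulness, the colimit in $\alg{\L}{\spaces}$ of a diagram lying in $\aM_*$ already satisfies the unit-isomorphism condition, so colimits are created in $\L$-spaces. For limits, while $i$ also preserves limits formally (being a right adjoint of $\lmap(*,-)$), the most useful explicit construction is to take the limit in $\alg{\L}{\spaces}$ and then apply the coreflection $*\timesL(-)$, which lands in $\aM_*$ and computes the correct limit there; the idempotence $*\timesL*\cong *$ ensures this agrees with the limit viewed abstractly.
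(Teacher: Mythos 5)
Your overall plan---transfer a cofibrantly generated topological model structure to $\aM_*$ via the monadic/comonadic adjunctions with $\L$-spaces, verify the small object argument and a Cofibration Hypothesis in the style of \cite[\S VII]{EKMM}, and read off limits and colimits from the reflection/coreflection adjunctions---is the same strategy the paper itself invokes, simply by pointing to \cite[\S II.1.4, \S VII.4.6]{EKMM} for the ``standard arguments.'' Your account of the limit and colimit constructions at the end is correct.

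There is, however, a genuine gap in your handling of fibrations. The theorem characterizes fibrations via the functor $F_\boxtimes(*,-)$; you instead propose to \emph{define} fibrations by the forgetful functor to spaces and then ``deduce'' the stated characterization. The deduction does not work as written: you claim that $F_\boxtimes(*,-)$ is ``the composite of $i$ with the right adjoint $*\timesL(-)$,'' but $*\timesL(-)\circ i$ is the \emph{identity} on $\aM_*$---the unit isomorphism $*\timesL M\cong M$ is the defining property of a $*$-module. In fact $F_\boxtimes(*,-)=*\timesL\lmap(*,-)$ involves the \emph{reflector} $\lmap(*,-)$ (the left adjoint of $i$) composed with the coreflector $*\timesL(-)$, and these two adjoints play very different roles; note also that $*\timesL(-)$ is the right adjoint of the \emph{co}reflection pair, not the reflection pair as you write. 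Moreover, the appeal to $F_\boxtimes(*,-)$ being ``homotopically trivial on fibrant objects'' cannot establish a characterization of fibrations, which is a point-set rather than homotopical condition (and it is circular before the model structure exists). The clean route is to transfer along the free--forgetful adjunction whose left adjoint is $*\timesL\L(-)\colon\spaces\to\aM_*$: its right adjoint is precisely the underlying space of $\lmap(*,-)$, so the theorem's description of fibrations then holds \emph{by construction}, and one separately checks---using the natural weak equivalence $*\timesL X\to X$ of Proposition~\ref{t-pr-L-space-unit}---that the transferred weak equivalences agree with those detected by the forgetful functor to $\L$-spaces.
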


The functor $* \timesL (-)$ is part of a Quillen equivalence between
$\alg{\L}{\spaces}$ and $\aM_*$.  This implies in particular that
there is a composite Quillen equivalence between $\spaces$ and
$\aM_*$.  In addition, just as for $\sL$-spaces, for cofibrant
$*$-modules $X$ and $Y$ there is a weak equivalence $X \boxtimes Y \to
X \times Y$.

Next, observe that the monads $\T$ and $\PP$ on $\L$-spaces
restrict to define monads on $\aM_*$.  The algebras over these monads
are monoids and commutative monoids for $\boxtimes$, respectively.  Thus, a
$\boxtimes$-monoid in $\aM_*$ is a $\timesL$-monoid in $\L$ which
is also a $*$-module.  The functor $* \timesL (-)$ gives us a means to
functorially replace $\ainfty$ and $\einfty$ spaces with
$\boxtimes$-monoids and commutative $\boxtimes$-monoids which are
weakly equivalent as $\ainfty$ and $\einfty$ spaces respectively.
Standard lifting techniques provide model structures on $\aM_*[\T]$
and $\aM_*[\PP]$ in which the weak equivalences and fibrations are
determined by the forgetful functor to $\aM_*$.

We are now in a position to define categories of modules.

\begin{Definition}\label{def-l1modules}
If $G$ is a monoid in $\aM_*$, then a $G$-module is a $*$-module $P$
together with a map
\[
    G \boxtimes P \to P
\]
satisfying the usual associativity and unit conditions.  We write
$\EKMM_{G}$ for the category of $G$-modules.
\end{Definition}

Once again, there is a model structure on the category of $G$-modules
in which the weak equivalences and fibrations are determined by the
forgetful functor to $*$-modules.

Let $\linf_{\sL}$ denote the composite functor $* \timesL \linf
F_{\sL}(S,-)$ from $S$-modules to $\aM_*$.  Again, recall that the
inclusion functor $\aM_* \to \alg{\L}{\spaces}$ has both a left and a
right adjoint: the right adjoint from $\L$-spaces to $*$-modules is
given by $* \timesL (-)$, and the left adjoint by $\lmap (*, -)$.  As
a consequence, the right adjoint of the functor $\splus$ from $\aM_*$
to $\aM_S$ turns out to be $\linf_{\sL}$.  In addition, recall that
this implies that to lift right adjoints from $\alg{\L}{\spaces}$ to
$\aM_*$ we forget from $\aM_*$ to $\alg{\L}{\spaces}$, apply the
functor, and then compose with $* \timesL (-)$.

In particular, for an object of $\aM_*[\T]$ or $\aM_*[\PP]$ we
compute $\GLsym$ as the composite $* \timesL \GLsym (-)$ and therefore
for an object of $\aM_S[\T]$ or $\aM_S[\PP]$ we find that $\GLsym$ is
computed as
\begin{equation}\label{stargl}
\GLsym R = * \timesL \GLsym \linf_{\sL} R.
\end{equation}
We have the following elaboration of
Proposition~\ref{t-pr-mbl-spaces-mbl-spectra}, connecting the category
$\aM_*$ to the category of EKMM $S$-modules.

\begin{Proposition}\label{t-pr-star-mod-mbl-s-mod}
\hspace{5 pt}
\begin{enumerate}
\item
If $X$ and $Y$ are $*$-modules, then is a natural isomorphism of
$S$-modules
\[
   \splus (X\boxtimes Y) \iso \splus X \Smash_{S} \splus Y
\]
which is compatible with the commutativity isomorphism $\tau$.  That
is, $\splus$ is a strong symmetric monoidal functor from $*$-modules
to $S$-modules.

\item
There is a continuous Quillen adjunction
\begin{equation}
       \splus: \aM_* \leftrightarrows \aM_S: \linf_{\sL}
\end{equation}
between topological model categories.
\item If $R$ is a associative $S$-algebra, then $\linf_{\sL}$ is
  a monoid in $\aM_*$ and $\GL{R}$ is a group-like monoid in $\aM_*$.
  Similarly, if $R$ is a commutative $S$-algebra, then $\linf_{\sL}$
  is a commutative monoid in $\aM_{*}$ and $\GL{R}$ is a group-like
  commutative monoid in $\aM_*$.

\item Let $G$ be a monoid in $*$-modules, and let $P$ be a $G$-module.
  Then $\splus P$ is an $\splus G$-module, and $\splus$ and
  $\linf_{\sL}$ restrict to give a continuous adjunction
\[
   \splus: \CatOf{$G$-modules} \leftrightarrows \CatOf{$\splus
     G$-modules}: \linf_{\sL}.
\]
\end{enumerate}
\end{Proposition}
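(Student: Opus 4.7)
The plan is to derive each statement by transporting the corresponding fact about $\alg{\L}{\spaces}$ and $\alg{\L}{\spectra}$ from Proposition \ref{t-pr-mbl-spaces-mbl-spectra} across the unital projection functors $* \timesL (-)$ and $F_{\sL}(S, -)$, exactly as \cite{EKMM} passes from $\L$-spectra to EKMM $S$-modules. The unifying observation is that these projection functors are right adjoints to the respective inclusions and compute the unit of a genuine symmetric monoidal structure.

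For part (1), the first step is to verify that $\splus$ sends $*$-modules to $S$-modules. If $X$ is a $*$-module, so $\lambda\colon *\timesL X \xra{\iso} X$, then applying $\splus$ together with the $\L$-level isomorphism $\splus(Y\timesL Z)\iso \splus Y \SmashL \splus Z$ and the identification $\splus *\iso S$ gives $S \SmashL \splus X \iso \splus X$, which is precisely the $S$-module condition. The isomorphism $\splus(X\boxtimes Y)\iso \splus X \Smash_{S} \splus Y$ is then just Proposition \ref{t-pr-mbl-spaces-mbl-spectra}(1) restricted to these subcategories, since $\boxtimes$ and $\Smash_{S}$ agree with $\timesL$ and $\SmashL$ there; compatibility with $\tau$ is inherited. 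Part (2) then follows by combining the $\L$-level Quillen adjunction with the Quillen equivalences $\aM_*\htp \alg{\L}{\spaces}$ and $\aM_S\htp \alg{\L}{\spectra}$ produced by the unit projection functors, since fibrations and weak equivalences in $\aM_*$ and $\aM_S$ are defined by passage to the ambient $\L$-categories.

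Parts (3) and (4) then follow formally. Because part (1) makes $\splus$ strong symmetric monoidal, its right adjoint $\linf_{\sL}$ is canonically lax symmetric monoidal and so carries monoids (resp.\ commutative monoids) to monoids (resp.\ commutative monoids); this gives the monoid claim for $\linf_{\sL} R$. For $\GL{R}$, formula \eqref{stargl} presents it as $*\timesL \GLsym \linf_{\sL} R$, and since the pullback in \eqref{eq:12} defining $\GLsym$ can be formed inside the category of monoids in $*$-modules, $\GL{R}$ inherits the desired structure; group-likeness is a property of $\pi_{0}$ and is preserved. For (4), strong symmetric monoidal left adjoints automatically take module objects to module objects, so $\splus P$ acquires a canonical $\splus G$-module structure; the induced adjunction on module categories is produced by the usual lifting, whose right adjoint is $\linf_{\sL}$ followed by restriction of scalars along the unit $G\to \linf_{\sL}\splus G$.

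The main technical obstacle will be the Quillen adjunction in (2) and its compatibility with the monoid and module structures in (3) and (4). Specifically, one must verify that the relevant module and algebra model structures on $\aM_*$ lift from those on $\alg{\L}{\spaces}$ (standard lifting applies because the monads $\T$, $\PP$, and $G\boxtimes(-)$ preserve the requisite regularity), and that smash products of cofibrant objects in $\aM_*$ and $\aM_S$ are derived. Both are precisely parallel to the arguments in \cite[\S II, VII]{EKMM}, and (as advertised in \S\ref{sec:sketch-construction}) the verifications carry over essentially verbatim, with the cofibration hypothesis for $*$-modules taking the place of the cofibration hypothesis for $S$-modules; the detailed arguments are deferred to \cite{Blumberg-Cohen-Schlichtkrull}.
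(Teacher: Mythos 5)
The paper offers no proof of this result; as announced in \S\ref{sec:sketch-construction}, the foundational verifications for the $*$-module machinery are deferred to \cite{Blumberg-Cohen-Schlichtkrull}, so there is no in-text argument to compare against.

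That said, your sketch takes the expected route: transport Proposition~\ref{t-pr-mbl-spaces-mbl-spectra} across the unit projections $* \timesL (-)$ and $F_{\sL}(S,-)$, and appeal to the formal behavior of strong and lax symmetric monoidal adjunctions for the monoid and module clauses. In (1), the observation that $\splus * \cong S$ together with the $\L$-level monoidality isomorphism forces $\splus X$ to be an $S$-module for any $*$-module $X$ is exactly right, and the reductions in (3) and (4) are standard and sound.

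Two points need attention. Your claim that ``fibrations and weak equivalences in $\aM_*$ and $\aM_S$ are defined by passage to the ambient $\L$-categories'' is only half right: weak equivalences are detected by forgetting, but the stated model structure on $\aM_*$ declares a map to be a fibration when its image under $F_\boxtimes(*,-)$ is a fibration of spaces, with EKMM's model structure on $\aM_S$ analogous via $F_\sL(S,-)$. So establishing the Quillen adjunction in (2) genuinely requires checking that $\linf_\sL$ respects these fibration classes; it does not fall out of the chain of adjunctions by itself. Relatedly, a naive computation of the right adjoint of $\splus\colon\aM_*\to\aM_S$ by composing adjoints in the tower gives $* \timesL \linf(-)$, whereas the paper's $\linf_\sL$ carries the extra factor $F_\sL(S,-)$; you should say a word about why that factor appears (or why the two functors are interchangeable for the purposes of the statement). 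You correctly flag the Quillen-adjunction verification as the main obstacle, but the sketch should not assert that passage to the ambient categories disposes of it.
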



\subsection{Principal bundles} \label{sec:principal-bundles}

Suppose that $G$ is a monoid in $*$-modules.  We will be interested in
studying ``classifying space'' constructions on $G$, and for this we
must say something about geometric realization of simplicial
$\L$-spaces and $*$-modules.

\begin{Proposition}
\hspace{5 pt}
\begin{enumerate}
\item If $X_{\bullet}$ is a simplicial $\L$-space, then its geometric
realization (as a simplicial space) $|X_{\bullet}|$ is a $\L$-space.
\item If $X_{\bullet}$ and $Y_{\bullet}$ are simplicial $\L$-spaces, then
there is a natural isomorphism of $\L$-spaces
\[
    |X_{\bullet} \timesL Y_{\bullet}| \iso |X_{\bullet}| \timesL
|Y_{\bullet}|.
\] 
\item If $X_{\bullet}$ is a simplicial $*$-module, then its geometric
realization (as a simplicial space) $|X_{\bullet}|$ is a $*$-module.
\item If $X_{\bullet}$ and $Y_{\bullet}$ are simplicial $*$-modules, then
there is a natural isomorphism of $*$-modules
\[
    |X_{\bullet} \boxtimes Y_{\bullet}| \iso |X_{\bullet}| \boxtimes
|Y_{\bullet}|.
\] 
\end{enumerate}
\end{Proposition}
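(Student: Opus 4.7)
The plan rests on two well-known facts about geometric realization of simplicial spaces, which I will use as the backbone: (i) geometric realization is a left adjoint (it can be written as a coend $\int^{[n]} X_n \times \Delta^n$), so it commutes with all colimits computed levelwise; and (ii) for simplicial spaces valued in a convenient (compactly generated) category of spaces, geometric realization commutes with finite products, and more generally with any pullback against a constant simplicial diagram of spaces. Granting these, all four statements follow in turn.

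For part (1), the $\L$-space structure on $|X_\bullet|$ is obtained by realizing the action map $\Lin(1)\times X_\bullet\to X_\bullet$ levelwise and noting that, since $\Lin(1)$ is a constant simplicial space, the natural map $\Lin(1)\times|X_\bullet|\to|\Lin(1)\times X_\bullet|$ is a homeomorphism by (ii). Associativity and the unit axiom follow from the corresponding levelwise identities in $X_\bullet$ together with the same homeomorphism applied to $\Lin(1)\times\Lin(1)\times X_\bullet$. Part (2) is the main step: writing
\[
X_\bullet\timesL Y_\bullet \;=\; \Lin(2)\times_{\Lin(1)\times\Lin(1)}(X_\bullet\times Y_\bullet)
\]
as a reflexive coequalizer in simplicial spaces, I apply (i) to pull the realization inside the coequalizer and (ii), together with the fact that $\Lin(1)$ and $\Lin(2)$ are constant simplicial spaces, to identify $|X_\bullet\times Y_\bullet|\cong|X_\bullet|\times|Y_\bullet|$ and $|\Lin(k)\times(-)|\cong\Lin(k)\times|-|$. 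The result is the expression $\Lin(2)\times_{\Lin(1)\times\Lin(1)}(|X_\bullet|\times|Y_\bullet|)=|X_\bullet|\timesL|Y_\bullet|$, and naturality of each step gives that the resulting isomorphism respects the $\Lin(1)$-action coming from the left factor of $\Lin(2)$.

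For part (3), observe that a simplicial $*$-module is the same as a simplicial $\L$-space $X_\bullet$ for which the levelwise unit map $*\timesL X_n\to X_n$ is an isomorphism. Since $*$ is constant as a simplicial object, the isomorphism of part (2) gives
\[
*\timesL|X_\bullet|\;\cong\;|*\timesL X_\bullet|\;\cong\;|X_\bullet|,
\]
so $|X_\bullet|$ is a $*$-module. Part (4) is then immediate from part (2) combined with part (3): by definition $\boxtimes$ agrees with $\timesL$ on $*$-modules, and both $|X_\bullet\boxtimes Y_\bullet|$ and $|X_\bullet|\boxtimes|Y_\bullet|$ are $*$-modules, so the isomorphism of part (2) supplies the required natural isomorphism.

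The only genuine technical point, and therefore the main obstacle, is fact (ii): that geometric realization of simplicial spaces commutes with the particular finite products and equivariant coequalizers that appear in the formula for $\timesL$. In the compactly generated setting this is standard (it is the analogue of the commutation of realization with products used throughout \cite{EKMM}, and the argument is identical to the one given there for simplicial $\L$-spectra and the operadic smash product $\SmashL$, transported from spectra to spaces). Once this commutation is in hand, the entire proposition reduces to the formal manipulations above.
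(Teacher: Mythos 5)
Your proof is correct, but there is nothing in the paper to compare it against: this Proposition is one of the foundational space-level statements that the authors record without proof, deferring the details to the forthcoming paper of Blumberg--Cohen--Schlichtkrull \cite{Blumberg-Cohen-Schlichtkrull} (see the remark at the start of \S\ref{sec:sketch-construction}). Your route is the expected one and is the space-level transcription of the corresponding facts for simplicial $\L$-spectra and $S$-modules in \cite{EKMM}: geometric realization is a coend, hence commutes with the levelwise (reflexive) coequalizer defining $\timesL$, and in the compactly generated setting realization of simplicial spaces commutes with finite products, in particular with products against the constant simplicial spaces $\Lin(1)$ and $\Lin(2)$; parts (3) and (4) then follow since $|\ast|\iso\ast$ and $\boxtimes$ agrees with $\timesL$ on $*$-modules. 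Two small points are worth writing out explicitly: in part (3) you should verify that the composite isomorphism $\ast\timesL|X_{\bullet}|\iso|\ast\timesL X_{\bullet}|\iso|X_{\bullet}|$ coincides with the unit map $\lambda$ of the $\L$-space $|X_{\bullet}|$ (a naturality check, using that the comparison map of part (2) is induced levelwise), and in parts (1)--(2) the unit, associativity, and commutativity coherences of the induced structures are similar naturality checks. Neither affects the substance of the argument.
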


Thus, we can form the $*$-modules
\[
\xymatrix{ \relax\EL{G} \eqdef |B_\bullet (\ptspace,G,G)| && \relax
\BL{G} \eqdef B_\bullet (\ptspace,G,\ptspace).  }
\]

That is, $\BL{G}$ is the geometric realization of the simplicial
$*$-module which has simplices
\begin{equation} \label{eq:18}
       [n] \mapsto G^{\boxtimes (n-1)}.
\end{equation}

The face maps are induced by the multiplication $G \boxtimes G \to G$,
and the degeneracies from the unit $* \to G$.

We now will establish that $\BL{G}$ is in fact a model of the usual
classifying space.  Associated to the map $\Lin(1) \to *$, there is
the change of monoids functor $Q$ from $\alg{\L}{\spaces}$ to
$\spaces$ given by $QX = * \times_{\Lin(1)} X$.  The behavior of $Q$
is described by the following proposition:

\begin{Proposition} \label{t-pr-Q-U}
\hspace{5 pt}
\begin{enumerate}
\item{$Q$ is a strong symmetric monoidal functor from $\L$-spaces to
  spaces.} \item{Let $U$ be the forgetful functor from $\L$-spaces to spaces.
There is a natural transformation $U \to Q$ which is a weak
equivalence for cofibrant objects of $\alg{\L}{\spaces}$, $\aM_*$,
and $\aM_*[\T]$.}
\end{enumerate}
\end{Proposition}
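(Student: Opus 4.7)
The plan for (1) rests on a single algebraic observation about the linear isometries operad: the left $\Lin(1)$-action on $\Lin(n)$ has exactly one orbit for each $n$. Indeed, given $f_{1}, f_{2} \in \Lin(n)$, one constructs $h \in \Lin(1)$ with $h \circ f_{1} = f_{2}$ by setting $h \circ f_{1}(v) = f_{2}(v)$ on $\mathrm{Im}(f_{1})$ and extending isometrically across the orthogonal complement, which is possible since the complements of $\mathrm{Im}(f_{1})$ and $\mathrm{Im}(f_{2})$ in $\universe{U}$ are both countably infinite-dimensional. The comparison map for (1) then arises by unpacking
\[
Q(X \timesL Y) = \bigl( \Lin(2) \times_{\Lin(1) \times \Lin(1)} (X \times Y) \bigr) / \Lin(1),
\]
where the outer quotient is by the diagonal left $\Lin(1)$-action on $\Lin(2)$. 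Since that action has a unique orbit, the quotient $\Lin(2)/\Lin(1)$ is a single point (with the necessarily trivial residual right $\Lin(1) \times \Lin(1)$-action), and the formula reduces to $* \times_{\Lin(1) \times \Lin(1)} (X \times Y) \iso X/\Lin(1) \times Y/\Lin(1) = QX \times QY$. Coherence of the resulting natural isomorphism (associativity, unit, symmetry) follows from the corresponding coherence of $\timesL$ together with the same one-orbit property applied to each $\Lin(n)$.

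For (2), I would define the natural transformation $\alpha \colon U \to Q$ as the evident quotient map $X \to X/\Lin(1) = QX$. On a free $\L$-space $\L A = \Lin(1) \times A$, the map $\alpha$ is the projection $\Lin(1) \times A \to A$, which is a homotopy equivalence because $\Lin(1)$ is contractible. A general cofibrant $\L$-space is a retract of a cell object built by transfinite pushouts along maps $\L A \to \L B$ with $A \to B$ a generating cofibration of $\spaces$. The Cofibration Hypothesis (in the form used in the proof of Lemma~\ref{t-le-BCC-cof}) guarantees that each stage inclusion is an unbased Hurewicz cofibration; one checks that $Q$, being a left adjoint, sends these pushouts of cofibrations to pushouts of cofibrations. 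The gluing lemma then propagates the weak equivalence $\alpha$ inductively through the stages, and passage to the filtered colimit gives $\alpha_{X} \colon U X \heq Q X$ for cofibrant $X$.

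The cases of $\aM_{*}$ and $\aM_{*}[\T]$ reduce to what has already been established. Cofibrant $*$-modules are retracts of cell objects built from generating cofibrations $* \timesL \L A \to * \timesL \L B$; by Proposition~\ref{t-pr-L-space-unit}(2), each $* \timesL \L A$ is weakly equivalent to $\L A$ as an $\L$-space, so the same cellular induction applies. For cofibrant $\T$-algebras the strategy reduces to the case of a free $\T$-algebra on a cofibrant $*$-module $Y$: using that $Q$ preserves coproducts (as a left adjoint) and intertwines $\boxtimes$ with $\times$ by part (1), we identify $Q(\T Y) \iso \coprod_{n} (QY)^{\times n}$, and summand-wise the needed weak equivalence $U(Y^{\boxtimes n}) \heq (QY)^{\times n}$ is precisely the $*$-module case of (2) applied to $Y^{\boxtimes n}$. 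The main technical obstacle in all three cases is verifying that $Q$ preserves the colimits arising in the cellular filtrations up to weak equivalence; for $\T$-algebras in particular, this requires properness of the bar constructions implicit in $\T$-algebra pushouts, which follows from nondegeneracy of the operad basepoints by an argument along the lines of Proposition~\ref{t-pr-cof-c-alg-cof-ptd-space}.
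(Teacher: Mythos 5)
Your part (1) has a genuine error at its crux. The left $\Lin(1)$-action on $\Lin(n)$ is \emph{not} transitive, and the justification you give --- that the orthogonal complements of $\mathrm{Im}(f_1)$ and $\mathrm{Im}(f_2)$ in $\universe{U}$ are always countably infinite-dimensional --- is false: since $\universe{U}^n$ and $\universe{U}$ are isometrically isomorphic, $\Lin(n)$ contains surjective isometries, whose images have trivial complement. Concretely, if $f_2\in\Lin(2)$ is surjective and $f_1$ is not, there is no $h\in\Lin(1)$ with $h\circ f_1=f_2$: such an $h$ would have to carry the nonzero subspace $\mathrm{Im}(f_1)^{\perp}$ isometrically into $\mathrm{Im}(f_2)^{\perp}=0$, contradicting injectivity. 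In general $h\circ f_1=f_2$ is solvable only when $\dim\mathrm{Im}(f_1)^{\perp}\le\dim\mathrm{Im}(f_2)^{\perp}$; and since $\Lin(1)$ is a monoid rather than a group, ``lies in the orbit of'' is not even a symmetric relation, so the phrase ``exactly one orbit'' needs care.

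The statement you actually use --- that $Q\Lin(2)=\ast\times_{\Lin(1)}\Lin(2)$ is a point --- is nevertheless true, and this is precisely where the paper's proof differs from yours: fix an isometric isomorphism $\phi\colon\universe{U}^2\to\universe{U}$; precomposition with $\phi$ is a left-$\Lin(1)$-equivariant homeomorphism $\Lin(1)\iso\Lin(2)$, because every $f\in\Lin(2)$ equals $(f\phi^{-1})\circ\phi$. Hence every element of $\Lin(2)$ lies in the orbit of the single element $\phi$ (even though the action is not transitive), so the coequalizer is a point. With this substitute for your lemma, the rest of your reduction --- commuting the two quotients and identifying $(X\times Y)/(\Lin(1)\times\Lin(1))$ with $QX\times QY$ in compactly generated spaces --- goes through and recovers the paper's computation. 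Your part (2) is essentially the argument the paper intends: verify the claim on free objects using contractibility of $\Lin(1)$, then induct over a cell decomposition using the Cofibration Hypothesis and the gluing lemma, pass to retracts, and reduce the $\aM_*$ and $\aM_*[\T]$ cases to the $\L$-space case; the paper only sketches this step and defers the details to Blumberg--Cohen--Schlichtkrull.
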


\begin{proof}
Let $X$ and $Y$ be $\sL(1)$-spaces.  To show that $Q$ is strong
symmetric monoidal, we need to compare $* \times_{\sL(1)} (X \timesL
Y)$ and $(* \times_{\sL(1)} X) \times (* \times_{\sL(1)} Y)$.  Observe
that $\sL(2)$ is homeomorphic to $\sL(1)$ as a left $\sL(1)$-space, by
composing with an isomorphism $U^2 \rightarrow U$.  Therefore we have
isomorphisms
\begin{eqnarray*}
* \times_{\sL(1)} (X \boxtimes_{\sL} Y) & = & * \times_{\sL(1)} \sL(2)
\times_{\sL(1) \times \sL(1)} (X \times Y) \\ & \cong & (*
\times_{\sL(1)} X) \times (* \times_{\sL(1)} Y)
\end{eqnarray*}
One checks that the required coherence diagrams commute.  The result
now follows, as $* \times_{\sL(1)} * \cong *$.

The second part of the proposition follows in each case by observing
that it suffices to work with cell objects, and then inducting over
the cellular decomposition.  The interested reader should consult
\cite{Blumberg-Cohen-Schlichtkrull} for details.
\end{proof}

As a consequence of the first property, $Q$ takes monoids in $\aM_*$
to topological monoids and $G$-modules in $\aM_*$ to $QG$-spaces.  The
second property allows us to retain homotopical control.  Let $G$ be a
cofibrant $\boxtimes$-monoid; equivalently, $G$ is an
$A_\infty$-space.  Denote by $G'$ a weakly equivalent topological
monoid produced by any of the standard rectification techniques.  Then
the Proposition and the fact that $Q$ evidently commutes with
geometric realization implies that
\[
\xymatrix{
Q(\BL{G}) \cong B(QG) \heq BG' & \text{and} & Q(\EL{G}) \cong
E(QG)\heq EG'.
}
\]

\begin{Remark}
Note however that as a consequence of the first property, nothing like
the second property can be true in the setting of commutative monoids
in $\aM_*$.  That is, $Q$ takes commutative $\boxtimes$-monoids to
commutative topological monoids.  Since we know that commutative
topological monoids have the homotopy type of a product of
Eilenberg-Mac Lane spaces, in general this rectification process
cannot be a weak equivalence.
\end{Remark}

Now, $\EL{G}$ is a right $G$-space, and the projection
\[
\pi: \EL{G} \to \BL{G}
\]
is a map of $G$-spaces if $\BL{G}$ is given the ``trivial'' action
\[
    \BL{G} \boxtimes G \to \BL{G} \boxtimes \ptspace \to \BL{G}.
\]

Furthermore, the comparison afforded by $Q$ allows us to deduce that
the projection map $\pi$ is a model for the universal quasifibration.

\begin{Proposition} \label{t-EL-BL-quasi-fib}
Let $G$ be a group-like cofibrant monoid in $\aM_*[\T]$ with a
nondegenerate basepoint.  Then the map $\pi \colon \EL{G} \to \BL{G}$
is a quasifibration of underlying spaces.
\end{Proposition}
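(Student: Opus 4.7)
The strategy is to transfer the classical quasifibration result for well-pointed group-like topological monoids to our setting via the strong symmetric monoidal functor $Q = * \times_{\sL(1)}(-)$ of Proposition \ref{t-pr-Q-U}.

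First I would assemble the comparison square. Since $Q$ is strong symmetric monoidal and a left adjoint, it commutes with $\boxtimes$ and with geometric realization, so there are canonical isomorphisms $Q\EL{G} \cong E(QG)$ and $Q\BL{G} \cong B(QG)$, where $QG$ is the topological monoid obtained by applying $Q$ to $G$ and $E(QG) \to B(QG)$ is the classical bar-construction projection. The natural transformation $U \to Q$ of Proposition \ref{t-pr-Q-U} then yields a commutative square
\[
\begin{CD}
U\EL{G} @>U\pi>> U\BL{G} \\
@VVV @VVV \\
E(QG) @>>> B(QG)
\end{CD}
\]
in which both vertical maps are weak equivalences, provided $\EL{G}$ and $\BL{G}$ have the homotopy type of cofibrant $*$-modules. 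Part (2) of Proposition \ref{t-pr-Q-U} then applies, and cofibrancy follows from $G$ being cofibrant together with the nondegeneracy of its basepoint, which guarantees properness of the defining simplicial $*$-modules $B_\bullet(*,G,G)$ and $B_\bullet(*,G,*)$.

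Next, I would invoke the classical theorem (cf.~\cite{May:gils}) that for a well-pointed, group-like topological monoid $H$ the projection $E(H) \to B(H)$ is a quasifibration. Applied to $H = QG$: well-pointedness of $QG$ is inherited from that of $G$ since $Q$ preserves NDR-pairs (being essentially the quotient by the free action of the contractible space $\sL(1)$), and group-likeness of $QG$ transports along the weak equivalence $UG \to QG$ because it depends only on $\pi_0$.

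Finally I would conclude by pointwise comparison of fibers. For any $b \in \BL{G}$, with image $b' \in B(QG)$, the simplicial principal-bundle structure of the two-sided bar construction identifies the strict fibers of $U\pi$ at $b$ and of $E(QG) \to B(QG)$ at $b'$ with translates of $UG$ and $QG$ respectively, and the induced map of fibers is the weak equivalence $UG \to QG$. Since the bottom row is a quasifibration, its strict fiber $QG$ maps by a weak equivalence into the homotopy fiber; combined with the weak equivalences on total space and base in the comparison square, a five-lemma argument on long exact sequences of homotopy groups (equivalently, a chase on homotopy-fiber sequences) yields that the strict fiber $(U\pi)^{-1}(b)$ maps by a weak equivalence to the homotopy fiber of $U\pi$ at $b$. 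Since $b$ was arbitrary, $\pi$ is a quasifibration. The main obstacle is the cofibrancy/properness input underpinning the comparison, namely that $\EL{G}$ and $\BL{G}$ have the homotopy type of cofibrant $*$-modules; this rests on nondegeneracy of the basepoint of $G$ together with preservation of cofibrancy by $(-)^{\boxtimes n}$, and the reader is referred to \cite{Blumberg-Cohen-Schlichtkrull} for the foundational technical details.
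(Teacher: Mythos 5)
Your proposal is correct and follows essentially the same route as the paper: compare $\EL{G}\to\BL{G}$ with $E(QG)\to B(QG)$ via the natural transformation $U\to Q$, invoke the classical quasifibration theorem for the group-like, well-pointed topological monoid $QG$ (the paper cites \cite[7.6]{May:class}), and deduce by a fiberwise comparison that the inclusion of the strict fiber of $U\pi$ into its homotopy fiber is a weak equivalence. The only cosmetic difference is that the paper justifies the equivalences on total space and base by properness of the simplicial spaces and compatibility of $U$, $Q$ with realization, rather than phrasing it through cofibrancy of the realizations.
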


\begin{proof}
By the remarks above, $Q\EL{G} \cong E(QG)$ and $Q\BL{G} \cong
B(QG)$.  By naturality, there is a commutative diagram
\[
\xymatrix{
U\EL{G} \ar[r] \ar[d]^{U\pi} & E(QG) \ar[d]^{Q\pi} \\
U\BL{G} \ar[r]^f & B(QG).\\
}
\]
For any $p \in U\BL{G}$, $(U\pi)^{-1}(p) = UG$, $(Q\pi)^{-1}(fp) =
QG$, and the map between them is induced from the natural
transformation $U \to Q$.  Writing $F(U\pi)_p$ for the homotopy fiber
of $U\pi$ at $p$ and $F(Q\pi)_{fp}$ for the homotopy fiber of
$Q\pi$ at $fp$, we have a commutative diagram
\[
\xymatrix{
UG \cong (U\pi)^{-1}(p) \ar[r] \ar[d] & F(U\pi)_p \ar[d] \\
QG \cong (Q\pi)^{-1}(fp) \ar[r] & F(Q\pi)_{fp},\\
}
\]
where the horizontal maps are the natural inclusions of the actual
fiber in the homotopy fiber.  The hypotheses on $G$ ensure that
the vertical maps are weak equivalences: on the left, this follows
directly from Proposition~\ref{t-pr-Q-U}, and on the right, we use the
fact that $U\EL{G} \to Q\EL{G}$ and $U\BL{G} \to Q\BL{G}$ are weak
equivalences since $U$ and $Q$ commute with geometric realization and
all the simplicial spaces involved are proper.  Furthermore, since
$QG$ is a group-like topological monoid with a nondegenerate
basepoint, $Q\pi$ is a quasifibration \cite[7.6]{May:class}, and so
the inclusion of the actual fiber of $U\pi$ in the homotopy fiber of
$U\pi$ is an equivalence.  That is, the bottom horizontal map is an
equivalence.  Thus, we deduce that the top horizontal map is an
equivalence and so that $U\pi$ is a quasifibration.
\end{proof}

Given a map of $*$-modules $f:X\to \BL{G}$, let $P$ be the pull-back
in the category of $G$-modules
\begin{equation} \label{eq:46}
\xymatrix{ {P} \ar[r] \ar[d] & {\EL{G}} \ar[d]^{\pi}
\\
{X} \ar[r]^{f} \ar@{-->}[ur] & {\BL{G}.}  }
\end{equation}
About this situation we have the following.

\begin{Theorem} \label{t-pr-princ-bundle-obstr-thy}
Suppose that $G$ is a cofibrant group-like monoid in $\aM_*$, and
$f$ is a fibration.  Then there is a natural zigzag of weak
equivalences between the derived mapping space
$\map_{\aM_*/\BL{G}}(f,\pi)$ of lifts in the diagram
\eqref{eq:46} and the derived mapping space
$\map_{\aM_G}(P,G)$.
\end{Theorem}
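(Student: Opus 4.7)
The plan is to produce the zigzag by mediating both sides through the space of sections of $p_X \colon P \to X$, using the classical correspondence, for principal bundles, between sections and equivariant maps to the structure group.

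First, the universal property of the pullback square \eqref{eq:46} gives a natural bijection between lifts of $f$ through $\pi$ and sections of $p_X$. Since $f$ is a fibration and $\pi$ is a quasifibration by Proposition \ref{t-EL-BL-quasi-fib}, this pullback is a homotopy pullback, so the bijection promotes to a natural weak equivalence
$$\map_{\aM_*/\BL{G}}(f,\pi) \heq \map_{\aM_*/X}(\id_X, p_X).$$

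Next, I would construct a natural comparison
$$\Theta \colon \map_{\aM_G}(P,G) \xra{} \map_{\aM_*/X}(\id_X, p_X).$$
The classical formula is that a $G$-equivariant $\phi \colon P \to G$ together with the strict action $\alpha \colon P \boxtimes G \to P$ produces a section $x \mapsto \phi|_{P_x}^{-1}(e)$, valid because each fiber of a principal $G$-bundle is a $G$-torsor. In the present setting $G$ is only a group-like monoid in $\aM_*$ and $\pi$ is only a quasifibration, so I would realize $\Theta$ as a zigzag: replace $P$ by a cofibrant $G$-module $P^{c}$, use the canonical Morita-type identification $\map_{\aM_G}(P^{c},G) \cong \Gamma(X, P^{c}\boxtimes_G G)$, and then compose with the weak equivalence $P^{c}\boxtimes_G G \heq P$ over $X$ that comes from the isomorphism $P \boxtimes_G G \iso P$ and the choice of cofibrant replacement.

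Finally, I would verify that $\Theta$ is a weak equivalence by reducing to a universal case. Both functors $X/\BL{G} \mapsto \map_{\aM_G}(P,G)$ and $X/\BL{G} \mapsto \map_{\aM_*/X}(\id_X, p_X)$ send homotopy colimits of $X$'s (with map to $\BL{G}$) to homotopy limits of spaces: the first because $G$ is fibrant in $\aM_G$, the second because $p_X$ is the pullback of the quasifibration $\pi$ along the fibration $f$. A cell induction on $X$ over $\BL{G}$ then reduces to the base case $X = \ptspace$ mapping to the basepoint, where $P$ is weakly equivalent as a $G$-module to the fiber of $\pi$, which by Proposition \ref{t-EL-BL-quasi-fib} is weakly equivalent to $G$ itself; both mapping spaces then reduce to the derived endomorphism space $\map_{\aM_G}(G,G) \heq G$ under the Yoneda identification of $G$ as the free $G$-module on a point.

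The main obstacle is that $G$ is not a topological group but only a cofibrant group-like monoid in $\aM_*$, so the principal bundle picture holds only up to weak equivalence, and the point-set bijection between $\phi$ and $s$ above uses a ``homotopy inverse'' on $G$ that does not exist strictly. The resolution is exactly the combination of inputs in the hypothesis: the cofibrancy and group-likeness of $G$ (which make $\pi$ a quasifibration modeling a universal principal bundle, via Proposition \ref{t-EL-BL-quasi-fib}) together with the fibration hypothesis on $f$ (which ensures the strict pullback $P$ has the correct homotopy type, and that the fibers of $p_X$ are weakly equivalent to $G$). These inputs promote the strict action of $G$ on $P$ into a homotopy-principal action, which is what makes $\Theta$ a weak equivalence of derived mapping spaces.
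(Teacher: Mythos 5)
There is a genuine gap at the heart of your argument: the comparison map $\Theta$ is never actually constructed. Your ``canonical Morita-type identification'' $\map_{\aM_G}(P^{c},G)\cong\Gamma(X,P^{c}\boxtimes_G G)$ is not a formal fact: since $P^{c}\boxtimes_G G\cong P^{c}$, it asserts precisely that $G$-equivariant maps $P\to G$ coincide with sections of $P\to X$, which is the torsor-trivialization statement you are trying to prove. The classical bijection (send $\phi\colon P\to G$ to $x\mapsto p\cdot\phi(p)^{-1}$) uses strict inverses in $G$, and for a group-like monoid in $\aM_*$ there is no natural point-set map in either direction between $\map_{\aM_G}(P,G)$ and the space of sections. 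You acknowledge this, but ``the resolution is exactly the combination of inputs in the hypothesis'' is not an argument; cofibrancy and group-likeness of $G$ do not by themselves produce the comparison. Your cell-induction scheme also cannot start until $\Theta$ exists and is natural in $X$ over $\BL{G}$, and it has its own difficulties (the base case $\ptspace\to\BL{G}$ is not a fibration, so the statement being induced on must first be recast in a homotopy-invariant form).

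The paper's proof solves exactly this problem by a different route: it rectifies everything with the strong symmetric monoidal functor $Q$ of Proposition \ref{t-pr-Q-U}, so that $QG$ becomes an honest group-like topological monoid (CW type, nondegenerate basepoint), checks that $Q$ induces Quillen equivalences $\aM_*/\BL{G}\simeq\spaces/B(QG)$ and $\aM_G\simeq (QG)\spaces$ for cofibrant $G$, and then invokes Corollary \ref{corliftmon} of the appendix --- which rests on the Quillen equivalence between $G$-spaces and spaces over $BG$ (Shulman, via the composite adjunction $r_!q_!p^*\dashv p_*q^*r^*$) rather than on any direct point-set correspondence between equivariant maps and sections. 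The remaining step there is the identification $QP\heq P'$, which follows since $Q$ preserves the relevant homotopy pullbacks. If you want to salvage your outline, you would need to replace the middle step by an argument of this kind (a zigzag of Quillen adjunctions between $\aM_G$ and $\aM_*/\BL{G}$, or a rectification to genuine groups/monoids), since the direct ``sections versus equivariant maps'' dictionary is unavailable for monoids.
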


\begin{proof}
We will deduce this result from the corresponding result for
group-like monoids, using the functorial rectification process
provided by the functor $Q$.  Although this theorem in the classical
setting is folklore, only recently have modern proofs appeared in the
literature \cite{MR2380927}.  We discuss the situation in
Appendix~\ref{sec:princ-bundl-lift}, where the result appears as
Corollary~\ref{corliftmon}.

It is straightforward to verify that for cofibrant $G$, $Q$ induces a
Quillen equivalence between $\aM_*/\BL{G}$ and $\spaces/B
(QG)$, and so there is an equivalence of derived mapping spaces
\[
\map_{\aM_* / \BL{G}}(X, \EL{G}) \heq \map_{\spaces /
B(QG)}(QX, E(QG)).
\]
If $G$ is group-like, then $QG$ is a group-like topological monoid
which has the homotopy type of a $CW$-complex and a nondegenerate
basepoint, and so Corollary~\ref{corliftmon} gives us a weak
equivalence of derived mapping spaces
\[
\map_{\spaces / B(QG)}(QX, E(QG))\heq\map_{(QG) \spaces}(P', QG),
\]
where $P'$ is the homotopy pullback in the diagram
\[
\xymatrix{
P' \ar[r] \ar[d] & B(QG) \ar[d]\\
QX \ar[r] & E(QG).
}
\]

It is similarly straightforward to show that for cofibrant $G$, $Q$
induces a Quillen equivalence between $G\aM_*$ and
$QG\spaces,$ and so there is an equivalence of derived mapping spaces
\[
  \map_{G \aM_*}(P, G) \to \map_{(QG) \spaces}(QP, QG).
\]
The proof of the theorem will be complete once we have shown that $QP$
is naturally weakly equivalent to $P'$ as a $QG$-space.  But this
follows because $Q$ preserves homotopy limits up to a zigzag of
natural weak equivalences.
\end{proof}

\subsection{Thom spectra}
\label{sec:thom-spectra-2}

Now suppose that $R$ is a cofibrant EKMM $S$-algebra; by forgetting
the unit homeomorphism, an $\ainfty$ ring spectrum.  For the work of
this section, we need a model of $\GL{R}$ such that $\splus \GL{R}$ is
cofibrant as an $S$-algebra and there is an action of $\splus \GL{R}$
on $R$ which makes $R$ into a left $\splus \GL{R}$-module.

Based on Proposition~\ref{t-pr-mbl-spaces-mbl-spectra}, we observe
that the adjunction described in equation~\ref{eq:ainfadj} passes
through $\alg{\L}{\spaces}$ and takes values in $\ainfty$ ring
spectra, regarded as monoids under $\sma_{\sL}$ which are not
necessarily unital.  Therefore, we can use the adjunction in which
$\GLsym$ participates to obtain the desired action and promote to
$S$-algebras.


Even better, Proposition~\ref{t-pr-star-mod-mbl-s-mod} implies that
we have a version $\GL{R}$ which produces a group-like monoid in
$\aM_*$ such that $R$ obtains the structure of a $\splus
\GL{R}$-module directly from the defining adjunction.  That is, we
have the following structured version of the adjunction \ref{eq:ainfadj}:
\begin{equation} \label{eq:Lainfadj}
\xymatrix{ (\aM_*[\T])^{\times} \ar@<.5ex>[r] & {\aM_*[\T]}
\ar@<.5ex>[r]^-{\splus} \ar@<.5ex>[l]^-{\GLsym} & {\aM[\T]:
\GLsym.}  \ar@<.5ex>[l]^-{\linf_{\sL}} }
\end{equation}
where here recall that $\GLsym \colon \aM_*[\T] \to
(\aM_*[\T])^{\times}$ is given by equation~\ref{stargl}.
The adjoint of the identity $\GLsym R \to \GLsym R$ under this
adjunction is the map $\splus \GLsym R \to R$.  Taking a cofibrant
replacement $(\GL{R})^c$ in the category of monoids in $\aM_*$, we
have a map of $S$-algebras
\[
\splus (\GL{R})^c \to \splus \GL{R} \to R.
\]

For the remainder of the section, we will abusively refer to such a
model as $\GL{R}$.

\begin{Definition} \label{def-thom-ainfty}
The \emph{Thom spectrum} of $f$ is the derived smash product in the
homotopy category of $R$-modules
\[
Mf  \eqdef \splus P \Smash^{L}_{\splus \GL{R}} R.
\]
\end{Definition}

It is often useful to have a specific point-set model for this derived
smash product which involves only operations on $f$.  For this
purpose, we sometimes prefer to work with a model of $Mf$ given by
replacing a given map $f$ with a cofibrant-fibrant replacement in the
model structure on $\GL{R}$-modules over $\BL{\GL{R}}$.  While the
resulting spectrum $\splus P$ will not necessarily be cofibrant as a
$\splus \GL{R}$-module, it can be shown to be an extended cell module
(in the sense of \cite{EKMM}); this is a flatness condition that
ensures that smashing with $\splus P$ computes the correct homotopy
type.  Alternatively, we can replace $R$ as a $\splus \GL{R}$-$R$
bimodule by a cofibrant bimodule object $R'$; it then suffices to work
with a fibration to compute the derived functor of the Thom spectrum.

To describe orientations in this setting, we first observe that, by
construction, $Mf$ is a right $R$-module, and if $T$ is a right
$R$-module then there is a natural weak equivalence of derived mapping
spaces
\[
 \EKMM_{R} (Mf,T)\heq \EKMM_{\splus\GL{R}} (\splus P,T).
\]
By Proposition \ref{t-pr-star-mod-mbl-s-mod} there is a further
adjoint weak equivalence of derived mapping spaces
\[
\EKMM_{R} (Mf,T) \heq \EKMM_{\GL{R}} (P,\linf_{\sL} T).
\]
For example taking $T=R$ we have
\begin{equation} \label{eq:19}
\EKMM_{R} (Mf,R) \heq \EKMM_{\GL{R}} (P,\linf_{\sL} R).
\end{equation}

\begin{Definition} \label{def-orientation}
The space of \emph{orientations} of $Mf$ is the subspace of
components of $\EKMM_{R} (Mf,R)$ which correspond to
\[
\EKMM_{\GL{R}} (P,\GL{R}) \subseteq \EKMM_{\GL{R}}(P,\linf_{\sL} R)
\]
under the adjunction \eqref{eq:19}.  That is, we have a pull-back
diagram
\begin{equation} \label{eq:71}
\xymatrix{ {\CatOf{orientations} (Mf,R)} \ar[r]^-{\heq}
\ar@{>->}[d] & {\EKMM_{\GL{R}} (P,\GL{R})} \ar@{>->}[d]
\\
{\EKMM_{R} (Mf,R)} \ar[r]^-{\heq} & \EKMM_{\GL{R}}
(P, \linf_{\sL}{R}).}
\end{equation}
\end{Definition}

With this definition, Theorem \ref{t-pr-princ-bundle-obstr-thy}
implies the following.

\begin{Theorem} \label{t-th-orientation-ainfty}
The space of orientations of $Mf$ is weakly equivalent to the space
of lifts in the diagram \eqref{eq:46}.  In particular, the spectrum
$Mf$ is orientable if and only if $f:X\to \BL{\GL{R}}$ is null
homotopic. 
\end{Theorem}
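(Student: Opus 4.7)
The plan is to combine two ingredients already assembled in the paper: the definitional pullback for the orientation space and the principal bundle lifting theorem.

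First, I would read off from Definition \ref{def-orientation} and the adjunction weak equivalence \eqref{eq:19} that the homotopy pullback \eqref{eq:71} identifies $\CatOf{orientations}(Mf,R)$ with the homotopy pullback
\[
\EKMM_{\GL{R}}(P,\GL{R}) \times^{h}_{\EKMM_{\GL{R}}(P,\linf_{\sL} R)} \EKMM_{R}(Mf,R).
\]
Since the construction of $\GL{R}$ in \eqref{eq:12} exhibits $\GL{R}\hookrightarrow\linf_{\sL}R$ as the inclusion of the union of path components lying over $\pi_{0}(\linf_{\sL}R)^{\times}$, the right-hand vertical map in \eqref{eq:71} is itself an inclusion of components. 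The homotopy pullback therefore collapses to a natural weak equivalence
\[
\CatOf{orientations}(Mf,R)\heq\EKMM_{\GL{R}}(P,\GL{R}).
\]

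Second, after functorially replacing $f$ by a fibration in $\aM_{*}/\BL{\GL{R}}$—an operation that changes neither the homotopy type of $Mf$ nor the derived space of lifts—I would invoke Theorem \ref{t-pr-princ-bundle-obstr-thy} with $G=\GL{R}$ (cofibrant and group-like by our model construction), obtaining a natural zigzag of weak equivalences
\[
\map_{\aM_{*}/\BL{\GL{R}}}(f,\pi)\heq\EKMM_{\GL{R}}(P,\GL{R}).
\]
Concatenating with the equivalence of the previous paragraph yields the first assertion of the theorem. For the second assertion, I would observe that $\EL{\GL{R}}=|B_{\bullet}(\ptspace,\GL{R},\GL{R})|$ is weakly contractible, either by the standard extra-degeneracy simplicial contraction or, via Proposition \ref{t-pr-Q-U}, by the isomorphism $Q\EL{\GL{R}}\iso E(Q\GL{R})$. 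Since $\pi$ has a contractible (fibrant replacement of the) total space, the space of lifts of $f$ along $\pi$ is nonempty if and only if $f$ is null-homotopic: any null-homotopy lifts obviously through the contractible total space, and conversely a lift $g\colon X\to\EL{\GL{R}}$ can be null-homotoped and the null-homotopy pushed down through $\pi$.

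The main obstacle I anticipate is bookkeeping the naturality: I must verify that the two presentations of $\EKMM_{\GL{R}}(P,\GL{R})$—one arising from the pullback \eqref{eq:71} via the $\splus\dashv\linf_{\sL}$ adjunction of Proposition \ref{t-pr-star-mod-mbl-s-mod}, the other arising from Theorem \ref{t-pr-princ-bundle-obstr-thy}—are canonically and compatibly weakly equivalent, so that the concatenation is honestly a zigzag of weak equivalences of derived mapping spaces and not merely a correspondence on $\pi_{0}$. Everything else is essentially formal given the prior sections.
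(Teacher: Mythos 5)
Your argument is exactly the paper's: the definition of the orientation space already identifies it with $\EKMM_{\GL{R}}(P,\GL{R})$, and Theorem \ref{t-pr-princ-bundle-obstr-thy} (with $G=\GL{R}$, after replacing $f$ by a fibration) then identifies that mapping space with the derived space of lifts. The extra observation that $\EL{\GL{R}}$ is contractible, giving the ``orientable iff null'' addendum, is the intended reading and matches the paper.
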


To make contact with familiar notions of orientation, we'll be more
explicit about the adjunctions in Definition
\ref{def-orientation}.  For this it it helpful to observe that the
Thom spectrum of a point is equivalent to $R.$

\begin{Lemma} \label{t-le-thom-spectrum-point}
The Thom spectrum of
\[
    \{q \} \to X \to \BL\GL{R}
\]
is weakly equivalent to $R.$
\end{Lemma}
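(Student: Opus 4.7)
The plan is to identify the right $\GL{R}$-module homotopy type of the space $P$ appearing in Definition~\ref{def-thom-ainfty} for the composite map $f\colon \{q\}\to X\to \BL{\GL{R}}$, and then to invoke cancellation in the derived relative smash product. Since only the composition matters, we may regard $f$ simply as the inclusion of the point $f(q)\in \BL{\GL{R}}$, so the defining pullback square takes the form
\[
\xymatrix{
P \ar[r] \ar[d] & \EL{\GL{R}} \ar[d]^{\pi}\\
\{q\} \ar[r]^{f} & \BL{\GL{R}},
}
\]
exhibiting $P$ as the (homotopy) fiber of $\pi$ over $f(q)$.

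First I would invoke Proposition~\ref{t-EL-BL-quasi-fib} to conclude that $\pi$ is a quasifibration of underlying spaces with fiber $\GL{R}$. From the bar construction $\EL{\GL{R}} = |B_\bullet(\ptspace,\GL{R},\GL{R})|$, the actual fiber over the basepoint of $\BL{\GL{R}}$ is canonically $\GL{R}$ equipped with its right translation action. After passing to a fibrant replacement of $f$ in the model category of $\GL{R}$-modules over $\BL{\GL{R}}$ so that the derived pullback is correctly computed, $P$ will be weakly equivalent to $\GL{R}$ as a right $\GL{R}$-module. For a point $f(q)$ other than the basepoint I would use that $\GL{R}$ is group-like and translate by a suitable element.

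With this identification in hand, substituting into Definition~\ref{def-thom-ainfty} yields
\[
Mf \;=\; \splus P \Smash^{L}_{\splus \GL{R}} R \;\heq\; \splus \GL{R} \Smash^{L}_{\splus \GL{R}} R \;\heq\; R,
\]
where the final equivalence is the standard cancellation for the derived relative smash product; it applies because $\splus \GL{R}$ is cofibrant as an $S$-algebra and $R$ is equipped with a compatible left $\splus \GL{R}$-module structure by the adjunction discussion preceding Definition~\ref{def-thom-ainfty}.

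The main technical obstacle will be ensuring that the equivalence $P \heq \GL{R}$ holds as \emph{right $\GL{R}$-modules} and not merely as underlying spaces, and that $\splus$ commutes with the relevant homotopy pullback up to weak equivalence. Both assertions follow from the framework of \S\ref{sec:principal-bundles} together with Proposition~\ref{t-pr-star-mod-mbl-s-mod} (strong symmetric monoidality of $\splus$), but some care is required in passing smoothly between the space-level and spectrum-level models for equivariant homotopy types.
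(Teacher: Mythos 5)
Your proposal is correct and follows essentially the same route as the paper's proof: both identify $P$ with $\GL{R}$ as a right $\GL{R}$-module by appealing to Proposition~\ref{t-EL-BL-quasi-fib} (that $\EL\GL{R}\to\BL\GL{R}$ is a quasifibration with fiber $\GL{R}$), after taking a fibrant replacement of the point over $\BL\GL{R}$, and then conclude by cancellation in $\splus\GL{R}\Smash^{L}_{\splus\GL{R}}R\heq R$. No substantive differences to report.
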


\begin{proof}
Let $\{q \} \to \BL\GL{R}$ be the inclusion of a point.  The
instructions in Definition \ref{def-thom-ainfty} tell us that the
Thom spectrum is $\splus P \Smash^{L}_{\splus \GL{R}} R,$ where $P$ is
the homotopy pull-back in
\begin{equation*}
\xymatrix{
&
{P}
 \ar[r]
 \ar[d]
&
{\EL\GL{R}}
 \ar[d]\\
{\{q \}}
 \ar[r]
&
{Z}
\ar[r]
&
\BL\GL{R},
}
\end{equation*}
and $\{q \}\to Z \to \BL\GL{R}$ is a fibrant replacement of $\{q \}$
in $*$-modules over $\BL\GL{R}.$ Since
by Proposition \ref{t-EL-BL-quasi-fib}, $\EL\GL{R}\to \BL\GL{R}$ is a
quasifibration (with fiber $\GL{R},$) it follows that $\GL{R}\heq P$
as $\GL{R}$-modules.  The result follows easily from this.
\end{proof}

\begin{Corollary}\label{t-co-Th-ELGL}
Since $\EL\GL{R}\heq \ptspace$, we have
\[
   M (\EL\GL{R} \to \BL\GL{R}) \heq R.
\]
\end{Corollary}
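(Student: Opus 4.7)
The plan is to reduce this statement directly to Lemma \ref{t-le-thom-spectrum-point} by exploiting the homotopy invariance of the Thom spectrum construction. First, I would pick any point $q \in \EL\GL{R}$. Since $\EL\GL{R}\heq \ptspace$, the inclusion $\{q\}\hookrightarrow \EL\GL{R}$ is a weak equivalence, and it fits into a commutative triangle over $\BL\GL{R}$
\[
\xymatrix{
\{q\} \ar[r] \ar[dr] & \EL\GL{R} \ar[d]^{\pi} \\
 & \BL\GL{R},
}
\]
in which the diagonal is a point inclusion of the sort handled by Lemma \ref{t-le-thom-spectrum-point}.

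The key step is to verify that the Thom spectrum functor sends this weak equivalence over $\BL\GL{R}$ to a weak equivalence of $R$-modules. After replacing both $\{q\}\to\BL\GL{R}$ and $\pi$ by fibrations in the model category of $*$-modules over $\BL\GL{R}$, the induced map on pullbacks $P_{\{q\}} \to P_{\EL\GL{R}}$ is a weak equivalence of $\GL{R}$-modules (pullback of a fibration along a weak equivalence is a weak equivalence, and $P_{\EL\GL{R}}$ remains suitably cofibrant over $\splus \GL{R}$ by the extended cell module considerations discussed after Definition~\ref{def-thom-ainfty}). Applying $\splus(-)$ (which is strong symmetric monoidal by Proposition~\ref{t-pr-star-mod-mbl-s-mod}) and then $(-)\Smash^{L}_{\splus\GL{R}} R$ preserves this weak equivalence, giving
\[
   M\bigl(\{q\}\to\BL\GL{R}\bigr) \heq M\bigl(\EL\GL{R}\to\BL\GL{R}\bigr).
\]
Combining with Lemma~\ref{t-le-thom-spectrum-point}, which identifies the left-hand side with $R$, completes the argument.

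I do not expect any serious obstacle; this is essentially a formal consequence of homotopy invariance plus the previous lemma. As an alternative which avoids invoking homotopy invariance abstractly, one could argue more concretely: by Proposition~\ref{t-EL-BL-quasi-fib} the map $\pi$ is a principal quasifibration, so its pullback along itself is weakly equivalent, as a right $\GL{R}$-module, to the trivial bundle $\EL\GL{R}\times \GL{R}\to \EL\GL{R}$. Then the derived smash product computes as
\[
   Mf \heq \splus\bigl(\EL\GL{R}\times \GL{R}\bigr) \Smash^{L}_{\splus\GL{R}} R \heq \splus \EL\GL{R}\Smash R \heq \splus \ptspace \Smash R \heq R,
\]
using the contractibility of $\EL\GL{R}$ in the final step.
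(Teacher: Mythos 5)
Your proposal is correct and matches the paper's (implicit) argument: the corollary is stated as an immediate consequence of the contractibility of $\EL\GL{R}$ together with Lemma \ref{t-le-thom-spectrum-point}, i.e.\ exactly the homotopy-invariance reduction to the Thom spectrum of a point that you spell out. Your second, more computational route via the principal-bundle identification of the pullback is a fine alternative, but the first argument is already the paper's intended proof.
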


Now suppose that $f: X\to \BL\GL{R}$ is a fibration of $*$-modules,
and let $P$ be the pull-back in the diagram
\begin{equation} \label{eq:72}
\xymatrix{ {P} \ar[r] \ar[d] & {\EL{G}} \ar[d]^{\pi}
\\
{X} \ar[r]^{f} \ar@{-->}[ur]^{\tilde{a}} & {\BL{G},}  }
\end{equation}
and let $M = Mf$.  If $\tilde{a}$ is a lift as indicated,
then by passing to Thom spectra along $\tilde{a}$ we get a map of $R$-modules
\[
      a: M \to R
\]
which is the orientation associated to the lift $\tilde{a}.$

Conversely, suppose that $a:M \to R$ is a map of $R$-modules.  More
precisely, fix a cofibrant $\splus\GL{R}$-$R$-bimodule $\Rcf$, so that
the $R$-module  $\splus P \Smash_{\splus \GL{R}} \Rcf$
models $M$.  Each point $p\in P$ determines a $\GL{R}$-map
\[
 \GL{R}\to P
\]
and so a map of $R$-modules
\[
j_{p}:   \Rcf \heq \splus \GL{R}\Smash_{\splus \GL{R}} \Rcf \to M \to R.
\]
As $p$ varies the $j_{p}$ assemble into a map
\[
      P \to \EKMM_{R} (\Rcf,R).
\]
Put another way, we're studying the adjoint of the composite
\begin{align*}
\splus P & \longrightarrow F_{\splus \GL{R}}(\splus \GL{R}, \splus P)
\\ & \longrightarrow F_R(\splus \GL{R} \sma_{\splus \GL{R}} \Rcf, \splus P \sma_{\splus
  \GL{R}} \Rcf) = F_R(\Rcf,M) \xrightarrow{a} F_R(\Rcf,R).
\end{align*}

In Proposition \ref{inf-t-pr-Rwe-and-GL}, we show that
\[
\EKMM_{R} (\Rcf,R)\heq \linf_{\sL} R,
\]
and the resulting map
\[
   j: P \to \linf_{\sL} R
\]
corresponds to $a$ under the equivalence of derived mapping spaces
\[
  \EKMM_{R} (Mf,R) \heq \EKMM_{\GL{R}} (P,\linf_{\sL} R).
\]

Put another way,  for each $q\in X,$ Lemma
\ref{t-le-thom-spectrum-point} implies that
the Thom spectrum $M_{q}$ of
$\{ q \} \to X \to \BL\GL{R}$ is equivalent to $R.$
Passing to Thom spectra gives a map
\[
   i_{q}: M_{q} \to M \xra{a} R.
\]
By Lemma \ref{t-le-thom-spectrum-point}, $M_{q}$ is non-canonically
equivalent to $R$: indeed, a choice of point $p\in P$ lying over $q$
fixes an equivalence $\Rcf \heq M_{q}$ making the diagram
\[
\xymatrix{
{\Rcf}
\ar[dr]_{j_{p}}
\ar[rr]^{\heq}
&
&
{M_{q}}
 \ar[dl]^{i_{q}} \\
&
{R}
}
\]
commute.
Thus we have the following analogue of the standard description of
Thom classes as in for example \cite{MR643101}.

\begin{Proposition} \label{t-pr-characterize-orientations}
Suppose that $a:M \to R$ is a map of $R$-modules.  Then the following
are equivalent.
\begin{enumerate}
\item $a$ is an orientation.
\item For each $q\in X$, the map of
$R$-modules
\[
 i_{q}:  M_{q}\to M \xra{a} R
\]
is a weak equivalence.
\item For each $p\in P$, the map of $R$-modules
\[
j_{p}:     \Rcf \to M \xra{a} R
\]
is a weak equivalence.
\end{enumerate}
\end{Proposition}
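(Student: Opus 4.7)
I plan to establish the chain $(1) \Leftrightarrow (3) \Leftrightarrow (2)$. The equivalence $(1) \Leftrightarrow (3)$ will come from unwinding the adjunction \eqref{eq:19} into pointwise information on $P$, while $(3) \Leftrightarrow (2)$ will come from the commutative triangle displayed just before the proposition together with the surjectivity of $P \to X$.

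For $(1) \Leftrightarrow (3)$, let $j: P \to \linf_{\sL} R$ denote the adjoint of $a$ under the equivalence $\EKMM_{R}(Mf, R) \heq \EKMM_{\GL{R}}(P, \linf_{\sL} R)$. By Definition \ref{def-orientation} and the pullback \eqref{eq:59} defining $\GL{R}$, the map $a$ is an orientation iff $j$ lands in the union of components of $\linf_{\sL} R$ comprising $\GL{R}$; equivalently, iff $j(p) \in \GL{R}$ for every $p \in P$. On the other hand, each $p \in P$ determines a $\GL{R}$-equivariant map $\GL{R} \to P$ sending $g \mapsto p \cdot g$, whose composite with $j$ is the $\GL{R}$-map $\GL{R} \to \linf_{\sL} R$ classified by $j(p)$. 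Passing back through the adjunction identifies this with the $R$-module map $j_p: \Rcf \to R$, which represents the class $[j(p)] \in \pi_0 R$. Since $\Rcf$ is a free rank-one $R$-module, $j_p$ is a weak equivalence iff $[j(p)] \in (\pi_0 R)^\times$, iff $j(p) \in \GL{R}$. Hence $(1) \Leftrightarrow (3)$.

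For $(3) \Leftrightarrow (2)$, the commutative triangle displayed above shows that for any $p \in P$ lying over $q \in X$, a weak equivalence $\Rcf \heq M_q$ (depending on $p$) intertwines $j_p$ and $i_q$, so $j_p$ is a weak equivalence iff $i_q$ is. Since $\EL\GL{R} \to \BL\GL{R}$ is a quasifibration with nonempty fiber $\GL{R}$ (Proposition \ref{t-EL-BL-quasi-fib}), the pullback $P \to X$ is surjective, so each $q \in X$ admits some $p$ above it, completing the argument. The main obstacle is the careful tracking of the adjunction chain required to identify the pointwise map $j_p$ with both the ``evaluation'' $[j(p)]$ and with the composite $M_q \to M \to R$; once this identification is in hand, the proposition reduces to the two elementary facts that $\GL{R}$ is a union of path components of $\linf_{\sL} R$ and that a map $\Rcf \to R$ from a free rank-one $R$-module is a weak equivalence iff it represents a unit of $\pi_0 R$.
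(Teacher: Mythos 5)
Your proposal is correct and follows essentially the same route as the paper: the paper's argument is the discussion immediately preceding the Proposition, where the adjoint map $j: P \to \linf_{\sL} R$ of $a$ is constructed, $a$ being an orientation is identified with $j$ landing in the group-like components $\GL{R}$ (equivalently each $j_{p}$ representing a unit of $\pi_{0}R$, hence a weak equivalence of free rank-one modules), and the displayed triangle relating $j_{p}$ and $i_{q}$ over a point $p$ above $q$, together with the fact that every $q$ admits such a $p$, gives the equivalence with condition (2). Your chain $(1)\Leftrightarrow(3)\Leftrightarrow(2)$ is exactly this argument made explicit.
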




We now move on to discuss the Thom isomorphism in this setting.  In
this part of the section we tacitly assume we are working a point-set
model of the Thom spectrum functor throughout, although this is
suppressed from the notation and discussion.  Suppose we are given an
orientation in the form of a $\GL{R}$-map $s:
P\to \GL{R},$ corresponding to an $R$-module map
\[
     a: M \to R,
\]
where $M$ is the Thom spectrum associated to a map of $*$-modules $f
\colon X \to \BL{\GL{R}}$.  The diagonal map $\Delta \colon X \to X
\times X$ induces a diagram
\begin{equation}\label{eq:thomdiag}
\xymatrix{
X \ar[r]^-\Delta \ar[d]^f & X \times X \ar[dl]^{f\pi_2} \\
\BL{\GL{R}} \\
}
\end{equation}
in the category of $*$-modules over $\BL{\GL{R}}$, where here $\pi_1$
is the projection onto the first factor.  Passing to Thom spectra, we
obtain the $R$-module Thom diagonal map
\[
M\xra{\Delta} \splus X \Smash M.
\]
Now, we can form the composite
\begin{equation}\label{eq:thomdiageq}
    M \xra{\Delta} \splus X \Smash M \xra{1 \Smash a} \splus X \Smash
R
\end{equation}
as in \cite{MR609673}.  To see directly that this map is a weak
equivalence, we proceed by analyzing the diagonal on the level of
$*$-modules.  Specifically, passing to pullbacks the diagram
\eqref{eq:thomdiag} gives rise to a map $P \to X \times P$ of
$\GL{R}$-modules, where the action on $X \times P$ is induced from the
actions on $X$ and $P$ via the composite
\[
\xymatrix{
(X \times P) \boxtimes Z \ar[r] & ((X \times *) \boxtimes Z) \times ((*
  \times P) \boxtimes Z) \ar[r] & (X \boxtimes Z) \times (P \boxtimes Z).
}
\]
Applying the map $s$, we obtain the composite map of $\GL{R}$-modules
\[
P \to X \times P \to X \times \GL{R}.
\]
Since $s$ corresponds to a section of the map $P \to X$ induced by the
universal property of the pullback, this composite is a weak
equivalence, and so there is an induced weak equivalence of
$\splus \GL{R}$-modules
\[
\splus P \heq X_+ \sma \splus \GL{R}.
\]
Now, passing to Thom spectra we have an induced weak equivalence of
$R$-modules
\[
M = \splus P \Smash_{\splus \GL{R}} R \heq (X_+
\Smash \splus \GL{R}) \Smash_{\splus \GL{R}} \Smash R \heq \splus X
\Smash R.
\]
It is straightforward to check that this weak equivalence is the same
as the composite \eqref{eq:thomdiageq}.  Summarizing, we have the
following proposition.

\begin{Proposition} \label{t-pr-thom-iso}
If $a: M\to R$ is an orientation, then the map of right $R$-modules
\[
     M \xra{\Delta} \splus X \Smash M \xra{1 \Smash a} \splus X \Smash
R
\]
is a weak equivalence. 
\end{Proposition}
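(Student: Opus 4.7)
The plan is to unfold the Thom diagonal at the level of $*$-modules and use the section of the principal bundle $P\to X$ provided by the orientation to trivialize the bundle, after which smashing with $R$ over $\splus\GL{R}$ yields the Thom isomorphism. Recall that by Definition \ref{def-orientation}, the orientation $a\colon M\to R$ corresponds via the adjunction \eqref{eq:19} to a $\GL{R}$-equivariant map $s\colon P\to \GL{R}$.

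First, I would construct the $*$-module level Thom diagonal: the diagonal $\Delta\colon X\to X\times X$, viewed as a map in $\aM_{*}/\BL{\GL{R}}$ via the second projection to $\BL{\GL{R}}$, pulls back along $\pi\colon \EL{\GL{R}}\to \BL{\GL{R}}$ to give a $\GL{R}$-equivariant map $P\to X\times P$, where $X\times P$ is given the $\GL{R}$-action induced from the action on the second factor via the composite
\[
(X\times P)\boxtimes Z \to ((X\times *)\boxtimes Z)\times((*\times P)\boxtimes Z) \to (X\boxtimes Z)\times (P\boxtimes Z).
\]
Composing with $1\times s$ gives a $\GL{R}$-equivariant map $\phi\colon P\to X\times \GL{R}$. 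On passing to Thom spectra, $\phi$ induces the composite \eqref{eq:thomdiageq} by construction, so it suffices to show that $\phi$ is a weak equivalence of $\splus\GL{R}$-modules, and then smash with $R$ over $\splus\GL{R}$ to conclude.

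The crucial step is therefore showing that $\phi$ is a weak equivalence. The $\GL{R}$-equivariant map $s\colon P\to \GL{R}$ is equivalent to a section of the underlying map $P\to X$, and by Proposition \ref{t-EL-BL-quasi-fib} the projection $\EL{\GL{R}}\to \BL{\GL{R}}$ is a quasifibration of underlying spaces with fiber $\GL{R}$; hence so is the pullback $P\to X$. I would reduce to the classical statement — that a $G$-equivariant section of a principal $G$-quasifibration is a trivialization, up to weak equivalence — by applying the rectification functor $Q$ of Proposition \ref{t-pr-Q-U}, which is strong symmetric monoidal, commutes with pullbacks up to equivalence on cofibrant input, and replaces $\GL{R}$ by a weakly equivalent topological monoid $QG$ whose underlying space is a group-like $A_{\infty}$-space with nondegenerate basepoint. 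There the map $Q\phi\colon QP\to QX\times QG$ is, by construction, an isomorphism on fibers over every point of $QX$ (the fiber is $QG\to QG$, the identity), and since both $QP\to QX$ and the trivial bundle $QX\times QG\to QX$ are quasifibrations, a fiberwise equivalence is a weak equivalence over a paracompact base.

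Finally, smashing $\phi$ with $R$ over $\splus\GL{R}$ gives the chain of weak equivalences
\[
Mf=\splus P\Smash^{L}_{\splus\GL{R}}R \;\heq\; (\pt{X}\Smash \splus\GL{R})\Smash^{L}_{\splus\GL{R}}R \;\heq\; \pt{X}\Smash R,
\]
and a direct naturality check identifies this composite with the Thom diagonal followed by $1\Smash a$ as in \eqref{eq:thomdiageq}. The main obstacle is the trivialization step for $\phi$: while it is classical for strict topological groups, some care is required to transfer the statement to the $*$-module setting. I expect the cleanest way is via the functor $Q$ together with the quasifibration conclusion of Proposition \ref{t-EL-BL-quasi-fib}, after which the Thom isomorphism follows by a homotopy-invariance argument for the derived smash product with $R$.
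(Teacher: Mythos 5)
Your proposal follows essentially the same route as the paper: unfold the Thom diagonal at the $*$-module level, compose with the section $s$ to get the $\GL{R}$-equivariant map $P\to X\times\GL{R}$, observe that this is a weak equivalence, and then apply $\splus(-)\Smash_{\splus\GL{R}}R$. The paper's proof asserts the trivialization step more tersely (directly from the existence of the section); your elaboration via the rectification functor $Q$ and the quasifibration comparison is a reasonable way to justify that assertion, modulo the small slip that the fiber map $P_x\to\GL{R}$ is translation by $s(p_0)$ rather than the identity — still a weak equivalence since $\GL{R}$ is group-like.
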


\section{The space of units is the derived space of homotopy
automorphisms}

\label{sec:space-units-derived}

In the preceding sections, we associated to an $\ainfty$ or $\einfty$
ring spectrum $R$ the group-like $\ainfty$ or $\einfty$ space $\GL{R}$
which is the pull-back in the diagram
\[
\xymatrix{
\GL{R} \ar[r] \ar[d] & \linf R \ar[d] \\
(\pi_{0}R)^{\times} \ar[r] & \pi_{0}R.
}
\]
In this section, we connect this definition to a more conceptual
definition of $\GL{R}$ as a derived space of automorphisms of $R$.
Specifically, we show that this pullback description of $\GL{R}$ is
weakly equivalent to the derived space of homotopy automorphism of
$R$, considered as a module over itself.  One of the appealing facets
of the $\i$-categorical approach to these matters is that, as we shall
see in \S\ref{sec:units-via-infty}, it is this conceptual definition of
$\GL{R}$ which naturally presents itself.

Our analysis is closely related to the evident question of how to
define $\GL{R}$ in other modern categories of spectra (e.g. diagram
spectra).  We do not make any particular claim to novelty in this
section; in particular, May and Sigurdsson provide an excellent
discussion of the situation in \cite[\S 22.2]{MR2271789} (although
note that our use of $\End$ and $\Aut$ is slightly different than
theirs), and the conceptual description we describe is of course
implicit in the original definition in \cite{MQRT:ersers}.
Nonetheless, there are subtleties associated to the
interaction between cofibrant and fibrant replacements in categories
of commutative algebras and the underlying module categories that are
worth exposing.

To begin, let us be clear about what we mean by the space of homotopy
automorphisms of $R$ as an $R$-module.  Suppose that $\monspectra$ is
a symmetric monoidal simplicial or topological category of spectra,
and let $R$ be a monoid in $\monspectra,$ that is, an $S$-algebra.  By
the space of homotopy automorphisms of $R$, we mean the subspace of
$\Rmod (R,R)$ consisting of weak equivalences.  In order to make this
notion homotopically well-defined, we need to ``derive'' it in two
ways.  First, we should replace $R$ with a weakly equivalent
cofibrant-fibrant \emph{algebra} $R'$.  Then, we should find a
cofibrant-fibrant replacement $\Rcf$ of $R'$ as a module over itself.

\begin{Definition}\label{def-9}
If $R'$ is a cofibrant-fibrant algebra, and $M$ is a
cofibrant-fibrant $R'$-module, then the \emph{space of endomorphisms}
of $M$ is
\[
   \End (M) \eqdef \monspectra_{R'} (M,M).
\]
This is a monoid, and by definition the \emph{space of homotopy
automorphisms} of $M$ is the subspace of group-like components: that
is, $\Aut (M) = \GL{\End (M)}$ is the pull-back in the diagram
\[
\begin{CD}
    \Aut (M) @>>>  \End (M)  \\
@VVV @VVV \\
(\pi_{0} (\End (M))^{\times}) @>>> \pi_{0} \End (M).
\end{CD}
\]
Since $M$ is cofibrant and fibrant, we can equivalently define $\Aut
(M)$ to be the subspace of $\End (M)$ consisting of weak
equivalences.
If $R$ is an arbitrary algebra, then the \emph{derived space of
endomorphisms} of $R$ is the homotopy type
\[
\End (R) = \End (\Rcf) \eqdef \monspectra_{R'} (\Rcf,\Rcf),
\]
where $R'$ is a cofibrant-fibrant replacement of $R$ as an algebra,
and $\Rcf$ is a cofibrant-fibrant replacement of $R'$ as a module over
itself.  The \emph{derived space of homotopy automorphisms} of $R$ is the
homotopy type of the subspace
\[
\Aut (R) = \Aut (\Rcf) \subset \End (\Rcf) = \monspectra_{R'}(\Rcf,\Rcf).
\]
\end{Definition}

We have elected to use the notation $\Aut (R)$ for the space
of homotopy automorphisms of $\Rcf$, even though it is not a group,
just as we
have written $\GL{R}$ for the space of units, even though it is not a
group.
This is because both are groups in the $\i$-categorical sense, which is to say that they arise as $\i$-groupoids of automorphisms of objects in $\i$-categories;
equivalently, but from the homotopical point of view, they are loop spaces.
This notation is nearly inevitable in the
setting of $\i$-categories: as we shall see in
\S\ref{sec:units-via-infty}, in the $\infty$-category of
(cofibrant-fibrant) $R$-modules, the maximal $\i$-groupoid on the single
object $\Rcf$ is the space $B\Aut (\Rcf)$ which is a delooping of
$\Aut (\Rcf).$

As written, we have presented $\Aut (R)$ as a group-like topological
or simplicial monoid.  In practice, it is easier to access this
homotopy type if we let $R^{c}$ be a cofibrant replacement of $R'$,
and $R^{f}$ a fibrant replacement.   Then we have a homotopy
equivalence of spaces
\[
      \End (R) \heq \monspectra_{R'} (R^{c},R^{f}),
\]
with $\Aut (R)$ equivalent to the subspace of weak equivalences.

We shall compare this notion of $\GL{R}$ to the classical one, in the
setting of the $S$-modules of \cite{EKMM}.  Let $\spectra$ be the
Lewis-May-Steinberger category of spectra,  and let $\EKMM$ be the
associated category of $S$-modules.

\begin{Proposition} \label{inf-t-pr-Rwe-and-GL}
Let $R$ be a cofibrant and fibrant $S$-algebra or commutative
$S$-algebra in $\EKMM$.  Then the inclusion of derived mapping spaces
\[
  \Aut (R) \rightarrow \End (R)
\]
is a model for the inclusion
\[
  \GL{R} \rightarrow \linf R
\]
considered elsewhere in this paper.
\end{Proposition}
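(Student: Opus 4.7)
The plan is to identify the derived space $\End(R)=\monspectra_{R}(\Rcf,\Rcf)$ with $\linf R$ via the map that evaluates an endomorphism at the unit, and then observe that this identification restricts to a weak equivalence between $\Aut(R)$ and $\GL{R}$.

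First I would set up the comparison. Since every object of $\EKMM$ is fibrant as a spectrum, we may take $R'=R$, and choose a cofibrant replacement $\Rcf\to R$ in $R$-modules, so that $\End(R)\simeq \EKMM_R(\Rcf,R)$. The evaluation map $\End(R)\to\linf R$ is obtained by precomposing an $R$-module map $\Rcf\to R$ with a cofibrant approximation $S^c\to S\to \Rcf$ of the unit $S\to R$, and then using $\linf R\simeq \EKMM(S^c,R)$ (recalling that in EKMM the sphere spectrum is not itself cofibrant).

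Second I would prove this evaluation map is a weak equivalence. The free--forgetful adjunction
\[
R\smash_S(\slot)\colon \EKMM\leftrightarrows\EKMM_R\colon U
\]
carries $S^c$ to a cofibrant $R$-module $R\smash_S S^c\simeq R$, which we may take as our model for $\Rcf$. Then
\[
\EKMM_R(\Rcf,R)=\EKMM_R(R\smash_S S^c,R)\iso \EKMM(S^c,R)\heq\linf R.
\]
Independence of the choice of cofibrant replacement of $R$ as an $R$-module follows because any two such replacements are related by an $R$-module weak equivalence and $R$ is fibrant as an $R$-module; this gives the equivalence on derived mapping spaces.

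Third I would verify compatibility with the monoid structures and identify the subspace of weak equivalences. The composition monoid structure on $\End(R)$ corresponds under the equivalence above to the multiplicative monoid structure on $\linf R$ coming from the ring structure of $R$: this is the standard fact that composition of ``multiplication by $a$'' maps corresponds to multiplication in $\pi_*R$. Hence on path components we get an isomorphism of monoids $\pi_0\End(R)\iso\pi_0 R$. An $R$-module map $f\colon R\to R$ acts on $\pi_*R$ as multiplication by $f(1)\in\pi_0 R$, so $f$ is a weak equivalence iff $f(1)\in(\pi_0 R)^{\times}$. This means $\Aut(R)\subset\End(R)$ is exactly the preimage of $(\pi_0 R)^{\times}\subset\pi_0 R$, which is the defining pullback of $\GL{R}$. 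The claim follows.

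The main technical obstacle is Step 2: since in EKMM the unit $S$ is not cofibrant as an $S$-module and $R$ is not cofibrant as an $R$-module, one must be scrupulous about how cofibrant replacements are taken so that the evaluation map is well-defined at the point-set level and computes the correct derived mapping space. The trick of using $R\smash_S S^c$ as the $R$-module cofibrant replacement of $R$ (together with the change-of-rings adjunction) is what makes this transparent; alternatively one can invoke the adjunction between $\aM_*$ and $\EKMM$ of Proposition~\ref{t-pr-star-mod-mbl-s-mod} to reduce everything to the space-level adjunction on units that was used to define $\GL{R}$ in the first place.
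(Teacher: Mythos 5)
Your proposal is correct and follows essentially the same route as the paper: take $R\smash_S S^c$ (with $S^c = S\wedge_{\Lin}\L\sinf S^0$) as the cofibrant replacement of $R$ as a module over itself, use the change-of-rings adjunction to reduce to maps $S^c\to R$, and then identify the components of weak equivalences with $\GL{R}$. The only point you assert rather than prove is the equivalence $\EKMM(S^c,R)\heq\linf R$, which is precisely where the paper's argument invokes \cite[\S I, Cor.~8.7]{EKMM} (the weak equivalence $R\to F_{\Lin}(S,R)$), so you should be aware that this standard fact is carrying the real content of the step.
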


\begin{proof}
If $R$ is an associative or commutative $S$-algebra, then the
underlying $R$-module associated to $R$ will be fibrant.  Thus, we can
use $R$ for $R^{f}$.  In the notation of \cite{EKMM},  $S
\Smash_{\Lin} \L \sinf S$ is a cofibrant replacement for $S$ as an
$S$-module, and $R\Smash_{S}
\L\sinf S$  is a cofibrant replacement for $R$ as an $R$-module.  So
the derived mapping space $\EKMM_{R} (R^{c},R^{f})$ is given by
\begin{align*}
  \EKMM_{R}(R\Smash_{S}\L \sinf S^{0},R) & \iso
  \EKMM (S\Smash_{S}\L \sinf S^{0},R) \\
  & \iso
  \alg{\L}{\spectra} (\L\sinf S^{0},F_{\Lin} (S,R)) \\
& \iso     \spectra (\sinf S^{0},F_{\Lin} (S,R)) \\
  & \iso \linf F_{\Lin} (S,R).
\end{align*}

By \cite[\S{I}, Cor 8.7]{EKMM}, the natural map of $\L$-spectra
\[
    R \to F_{\Lin} (S,R)
\]
is a weak equivalence of $\L$-spectra, and so of spectra.
The weak equivalence
\[
   \EKMM_{R} (R\Smash_{S}\L \sinf S^{0},R) \heq \linf R
\]
follows since $\linf$ preserves weak equivalences.  It is then easy to
see that the subspace of $R$-module weak equivalences corresponds to
$\GL{R}$.
\end{proof}

The preceding proposition illustrates how useful it is that in the
Lewis-May-Steinberger and EKMM categories of spectra, an algebra or
commutative algebra $R$ is automatically fibrant as a module over
itself.  In particular, since $\GL{R}$ is identified as a subspace of
$\linf R$, it is straightforward to see how to identify the
multiplicative structure on $\GL{R}$.

In the setting of a category of diagram spectra $\C,$ (e.g. orthogonal
spectra), the
situation is somewhat more complicated.  For an associative
$S$-algebra $R$, one can carry out a similar analysis after passing to
a cofibrant-fibrant replacement of $R$ as an $S$-algebra, and the
pullback description of $\GL{R}$ in fact itself yields a genuine
topological monoid \cite[22.2.3]{MR2271789}.  But the situation for
commutative $S$-algebras in the diagrammatic setting is different.
The model structure on commutative $S$-algebras is lifted from the
positive model structure on (orthogonal) spectra, and in this
model structure the underlying $S$-module of a cofibrant-fibrant
commutative $S$-algebra will not be fibrant; indeed its zero space
will be $S^{0},$ and so
\[
      \C (S,R) = S^{0} \neq \C (S^{0},R^{f})
      \heq h\End (R).
\]
Of course, one can instead replace the given commutative $S$-algebra
by an associative $S$-algebra instead, but in this case it is
impossible to recover the $\einfty$ structure on $\GL{R}$.  To
describe $\GL{R}$ in this setting requires a different construction;
see \cite{MR2057776} or \cite{Lind} for a description.

The problem that arises above is a manifestation of Lewis's theorem
\cite{MR1124786} about the nature of symmetric monoidal categories of
spectra.  If $S = \sinf S^{0}$ is cofibrant (as it is in diagram
categories of spectra), then the zero space of a cofibrant-fibrant
commutative $S$-algebra must not be homotopically meaningful, as
otherwise we could make a cofibrant-fibrant replacement $S'$ of $S,$
and
\[
    \C (S,S') \heq QS^{0}
\]
would realize $QS^{0}$ as a commutative topological monoid.  On the
other hand, if the zero space of a cofibrant-fibrant commutative
$S$-algebra is homotopically meaningful, then $S$ cannot be cofibrant,
and the $(\sinf,\linf)$ adjunction must take a modified form (as it
does in the setting of EKMM spectra).

\section{Parametrized spectra, units and Thom spectra via $\infty$-categories}

\label{sec:units-via-infty}

\subsection{Introduction}

In this section, we show that the theory of $\i$-categories
developed by Joyal and Lurie provides a powerful technical and conceptual framework for the study of Thom
spectra and orientations.

In this setting, an $\ainfty$ ring spectrum $R$ has an associated
$\infty$-category $\Rmod$ of
(right)
$R$-modules.   We define an
\emph{$R$-line} to be a $R$-module $L$ which admits an equivalence
\[
   L \heq R.
\]
We define $\Rwe$ to be the sub-$\i$-category of $\Rmod$ in which the
objects are $R$-lines and in which the morphism space
\[
   \Rwe (L,M) \subset \Rmod (L,M)
\]
is the subspace of  equivalences.    As such $\Rwe$ is an
$\i$-groupoid, i.e. a Kan complex.  A \emph{trivialization} of an
$R$-line $L$ is an  equivalence
\begin{equation} \label{eq:54}
     L  \xra{\heq} R,
\end{equation}
and we define $\Rtriv$ to be the $\infty$-groupoid
\begin{equation} \label{eq:70}
    \Rtriv = \Rwe_{/R}
\end{equation}
of trivialized $R$-lines.  The forgetful map
\[
      \Rtriv \to \Rwe
\]
is a Kan fibration, and is our model for the fibration
\[
     E\GL{R} \to B\GL{R}.
\]
If $X$ is a Kan complex, then a map (equivalently of simplicial sets
or $\i$-categories)
\begin{equation} \label{eq:15}
f:   X \to \Rwe
\end{equation}
is a family of $R$-lines parametrized by $X.$  The Thom spectrum of $f$
is just the ($\i$-categorical) colimit
\[
    Mf \eqdef \colim (X \xra{f} \Rwe  \xra{} \Rmod).
\]
Using this definition and the description \eqref{eq:70} of $\Rtriv$,
one sees that the space of orientations $Mf \to R$ is equivalent to
the space of lifts
\begin{equation}\label{eq:55}
\xymatrix{ & {\Rtriv}
 \ar[d]_\iota \\
{X} \ar@{-->}[ur] \ar[r]_-{f} & {\Rwe.}  }
\end{equation}

It is possible to develop the theory of Thom spectra and orientations
using only these observations, together with some basic facts about
$\i$-categories from \cite{math.CT/0608040} and about symmetric
monoidal model categories of spectra; in fact, this is our approach in
\S\ref{sec:i-cat-ii}.  We recommend that the reader who is unfamiliar
with $\i$-categories begin with that treatment before reading this
section.  Nonetheless, this geodesic approach to the construction
obscures some of the essential clarification provided by the
$\i$-categorical point of view.

For example, in this setting, we can make precise the slogan that
$\Rwe$ is the classifying space for bundles of $R$-lines.
Specifically, there is a universal bundle of $R$-lines $\mathscr{L}$
over $\Rwe,$ and the map of simplicial sets \eqref{eq:15} classifies
the bundle of $R$-lines $f^{*}\mathscr{L}$ over $X.$  Moreover a lift
in \eqref{eq:55} corresponds to an equivalence
\[
    f^{*}\mathscr{L} \to R_{X},
\]
where $R_{X}$ denotes the trivial bundle of $R$-lines over $X.$

In this section, we sketch the theory of bundles of $R$-modules, and
use it to discuss Thom spectra and orientations.  The story we tell
reflects the close connection between abstract homotopy theory and
$\i$-category theory arising from the fact that $\i$-groupoids are a
model for spaces.  However, given an existing theory of
$\i$-categories, it is possible to construct the $\i$-category of
$\i$-groupoids without any mention of spaces whatsoever.  We adopt
this approach here: beginning with the notion of $\i$-category, we
summarize and expand on ideas from
\cite{math.CT/0608040,DAGI,DAGII,DAGIII} to review the construction of
$\i$-categories of $\i$-groupoids (spaces), stable $\i$-groupoids
(spectra), and bundles of stable $\i$-groupoids (bundles of spectra).
With these foundations in place, we discuss Thom spectra and
orientations.

This approach has the advantage of being both self-contained and
concise, but it has the disadvantage of being less concrete and
potentially hiding the relationship to the established theory.  In
particular, as discussed in Remark~\ref{rem:omit}, in the interests of
expositional manageability we have chosen not to discuss the
comparison between the approach to parametrized spectra discussed in
this section and the parametrized spectra of May-Sigurdsson; this
comparison will appear in a future paper.

However, at the level of Thom spectra and orientations, we do take
pains to relate the abstract theory of this section to more concrete
approaches: as mentioned above, in \S\ref{sec:i-cat-ii} we take in
some sense the opposite approach and show how to develop the
$\i$-categorical approach to Thom spectra and orientations, starting
with an existing symmetric monoidal category of spectra such as
$S$-modules or symmetric spectra.  Furthermore, in
\S\ref{sec:comp-thom-spectra}, we provide a comparison between our
various approaches to Thom spectra.

Our story in this section is independent of the material in
\S\ref{sec:constr-may-quinn}---\S\ref{sec:ainfty-thom-spectrum}.

\subsection{$\i$-Categories and $\i$-Groupoids}

\label{sec:i-categories-i}

For the purposes of this paper, an $\i$-category will always
mean a quasicategory in the sense of Joyal \cite{MR1935979}.  This is
the same as a weak Kan complex in the sense of Boardman and Vogt
\cite{MR0420609}; the different terminology reflects the fact that
these objects simultaneously generalize the notions of category and of
topological space.  Specifically, recall that a quasicategory is a
simplicial set which satisfies certain lifting properties.

Given two $\i$-categories $\C$ and $\D$, the $\i$-category of functors
from $\C$ to $\D$ is simply the simplicial set of maps from $\C$ to
$\D$ as simplicial sets.  More generally, for any simplicial set $X$
there is an $\i$-category of functors from $X$ to $\C$, $\Fun(X,\C)$:
By \HTT{Proposition 1.2.7.2, 1.2.7.3}, the simplicial set $\Fun(X,\C)$
is a quasicategory whenever $\C$ is, even for an arbitrary simplicial set
$X$.


An $\i$-groupoid is an $\i$-category with the property that its
homotopy category is a groupoid (cf. \HTT{\S 1.2.5}); equivalently
\cite{MR1935979}, an $\i$-category is a Kan complex, and so the
homotopy category is the fundamental groupoid of the underlying
simplicial set.  Just as ordinary categories are categories enriched
over $0$-groupoids, or sets, an $\i$-category is essentially a category
enriched over $\i$-groupoids, or spaces.  Indeed, if $\C$ is an
$\i$-category, and $a$ and $b$ are a pair of objects (vertices) of
$\C$, then the $\i$-groupoid $\C(a,b)$ of maps from $a$ to $b$ in $\C$
may be modeled as the fiber
$$
\C(a,b)\longrightarrow\Fun(\Delta^1,\C)\longrightarrow\Fun(\partial\Delta^1,\C)
$$
over the object $(a,b)$ of $\C\times\C\cong\Fun(\partial\Delta^1,\C)$
(see \HTT{\S 1.2.2} for more details on this and various other
homotopy equivalent models for $\C(a,b)$).  Since
$\i$-categories are to be thought of as enriched over $\i$-groupoids
in a higher categorical sense (which we will not make precise), we
should regard $\C(a,b)$ as only being defined up to equivalence, and
so we shouldn't expect a composition map
$\C(a,b)\times\C(b,c)\to\C(a,c)$, but rather a contractible space of
possible composites.  Nonetheless, we still write $gf:a\to c$ for a
composite of $f:a\to b$ and $g:b\to c$.

The description of $\Fun(\C,\D)$ described above gives rise to {\em
  categories} of $\i$-categories and $\i$-groupoids, but the real
power of this approach comes from having {\em $\i$-categories}
$\Cat_\i$ and $\Gpd_\i$ of $\i$-categories and $\i$-groupoids,
respectively.  We construct these $\i$-categories by a general
technique for converting a simplicial category to an $\i$-category:
there is a simplicial nerve functor $\N$ from simplicial
categories to $\i$-categories which is the right Quillen functor of a Quillen equivalence \HTT{\S 1.1.5.5, 1.1.5.12, 1.1.5.13}
$$
\mathfrak{C}:\Set_\Delta\rightleftarrows\Cat_\Delta:\N.
$$
Note that this process also gives rise to a standard passage from a simplicial
model category to an $\i$-category which retains the homotopical information encoded by the simplicial model structure.  Specifically, given a
simplicial model category $\mathcal{M}$, one restricts to the simplicial category on the
cofibrant-fibrant objects, $\mathcal{M}^\circ$.  Then applying the simplicial
nerve yields an $\i$-category $\N\mathcal{M}^\circ$.

Given $\i$-categories $\C$ and $\D$, we have the $\i$-category of
functors $\Fun(\D,\C)$ from $\D$ to $\C$; from this we obtain an
$\i$-groupoid $\map(\D,\C)$ by forgetting the non-invertible natural
transformations.  Then $\Cat_\i$ is the simplicial nerve of the
simplicial category of $\i$-categories, in which the mapping spaces
are made fibrant by restricting to maximal Kan subcomplexes, and
$\Gpd_\i$ is the full $\i$-subcategory of $\Cat_\i$ on the
$\i$-groupoids, or equivalently the simplicial nerve of the simplicial
category of $\i$-groupoids (here the mapping spaces are automatically
fibrant since natural transformations of $\i$-groupoids are always
invertible).

\subsection{Bundles of $\i$-groupoids}

\label{sec:bundles-i-groupoids}

Because $\i$-categories are enriched over $\i$-groupoids,
$\i$-groupoids play a role in $\i$-category theory analogous to that
of sets in ordinary category theory.  In particular, the $\i$-category
of $\i$-groupoids inherits one of the most important exactness
properties enjoyed by the category of sets: colimits commute with
base-change.  That is, if $X_\alpha$ is a diagram (possibly even
indexed by an $\i$-category $\C$) of $\i$-groupoids, with colimit
$$
\colim_\C X_\alpha\heq X,
$$
then, for any map $f:X'\to X$,
$$
X'\heq f^*X\heq\colim_\C f^*X_\alpha,
$$
where $X_\alpha$ is regarded as sitting over $X$ via the inclusion
into the colimit
\[X_\alpha\to\colim_\C X_\alpha\heq X.\]

To see this, it will be convenient to show a slightly stronger statement.

\begin{Proposition}
The base-change functor
$$
f^*:\Gpd_{\i/X}\longrightarrow\Gpd_{\i/X'}
$$
admits a right adjoint (in the $\i$-categorical sense, as in \HTT{\S 5.2.2}).
In particular, $f^*$ commutes with colimits.
\end{Proposition}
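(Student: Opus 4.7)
The plan is to reduce the statement to a formal consequence of the straightening/unstraightening equivalence, under which base-change becomes precomposition, and then invoke completeness of $\Gpd_\i$ to produce the right adjoint as a right Kan extension.

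First I would establish the key equivalence
\[
\Gpd_{\i/X}\;\htp\;\Fun(X,\Gpd_\i)
\]
for any $\i$-groupoid $X$. This is a special case of Lurie's straightening theorem \HTT{2.2.1.2}: a map $Y\to X$ of $\i$-groupoids is automatically both a left and a right fibration (its fibers are $\i$-groupoids), and straightening such a left fibration over $X$ yields a functor from $X$ to $\Gpd_\i$. The inverse functor is the unstraightening construction, which sends a functor $F:X\to\Gpd_\i$ to the total space of the associated left fibration. One should also verify naturality: given $f:X'\to X$, the base-change $f^*:\Gpd_{\i/X}\to\Gpd_{\i/X'}$ corresponds under these equivalences to the precomposition functor $f^*:\Fun(X,\Gpd_\i)\to\Fun(X',\Gpd_\i)$, since unstraightening is compatible with pullback of left fibrations along $f$.

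Having reduced to precomposition, I would invoke the general fact that precomposition along a map of simplicial sets admits a right adjoint whenever the target $\i$-category has sufficient limits. Concretely, the right adjoint is right Kan extension along $f$, as constructed in \HTT{\S 4.3.3}; its existence requires only that for each vertex $x$ of $X$ the limit
\[
(f_*G)(x)\;\htp\;\lim\bigl(X'_{x/}\to X'\xra{G}\Gpd_\i\bigr)
\]
exists in $\Gpd_\i$, where $X'_{x/}=X'\times_X X_{x/}$. Since $\Gpd_\i$ is complete (it is the underlying $\i$-category of the simplicial model category of simplicial sets with the Quillen model structure), all such limits exist, so $f_*$ is defined and is right adjoint to $f^*$ in the $\i$-categorical sense.

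Finally, the conclusion that $f^*$ preserves colimits is immediate from the existence of the right adjoint: any $\i$-categorical left adjoint preserves colimits (\HTT{5.2.3.5}). The main obstacle is really administrative rather than conceptual, namely checking that the straightening equivalence transports base-change to precomposition with the correct naturality; once this is in hand, the rest follows from standard adjoint functor machinery and completeness of $\Gpd_\i$.
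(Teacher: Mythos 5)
Your proof is correct, but it takes a genuinely different route from the paper's. You identify $\Gpd_{\i/X}$ with the functor $\i$-category $\Fun(X,\Gpd_\i)$ via straightening/unstraightening, transport base-change to precomposition, and then produce the right adjoint as a right Kan extension, appealing to completeness of $\Gpd_\i$. The paper instead works entirely at the simplicial-set level: it replaces $f$ by a Kan fibration, observes that $\Set_\Delta$ is an ordinary topos so that $f^*:\Set_{\Delta/X}\to\Set_{\Delta/X'}$ already has a point-set right adjoint $f_*$, checks that $(f^*,f_*)$ is a simplicial Quillen pair (where right properness of $\Set_\Delta$ is what makes $f^*$ left Quillen), and then cites \HTT{Proposition 5.2.2.12} to conclude that this Quillen adjunction induces an adjunction of the associated $\i$-categories. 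The paper's argument is more elementary in that it never invokes the straightening theorem or Kan extensions, only the topos structure of simplicial sets and the general machinery for passing simplicial Quillen adjunctions to $\i$-categories; it also sidesteps the ``administrative'' naturality check you correctly flag as the sticky point of your route. Your argument, on the other hand, is more transparent conceptually and places the result in the general framework of Kan extensions along maps of $\i$-groupoids; notice, though, that the equivalence $\Gpd_{\i/X}\htp\Fun(X,\Gpd_\i)$ that you take as a starting point is something the paper only establishes later (Proposition \ref{t-pr-G-space-space-over-bg}) by a different route, so your proof relies directly on \HTT{2.2.1.2} rather than anything already developed at that point in the paper's exposition.
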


\begin{proof}
This can be verified directly on the level of simplicial sets.
To do so, first note that, replacing $X'$ by an equivalent $\i$-groupoid if necessary, we may assume without loss of generality that $f:X'\to X$ is a fibration of (fibrant) simplicial sets.
Since $\Set_\Delta$ is an ordinary topos,
$$
f^*:\Set_{\Delta/X}\longrightarrow\Set_{\Delta/X'}
$$
admits a right adjoint
$$
f_*:\Set_{\Delta/X'}\longrightarrow\Set_{\Delta/X}
$$
which together comprise a Quillen pair, as $f^*$ preserves cofibrations and weak equivalences (here we are using right properness).
Moreover, this Quillen pair is compatible with simplicial model structures, so it extends to a simplicial adjunction of simplicial categories.
It therefore follows from \HTT{Proposition 5.2.2.12} that restricting to (co)fibrant objects and forming simplicial nerves yields the desired adjunction on the level of $\i$-categories.
\end{proof}

Given a Kan complex $X$, let $\mathrm{Fib}(X)$ denote the (large)
simplicial category of fibrations $Y\to X$ with target $X$.
In other words, $\mathrm{Fib}(X)$ is the full simplicial subcategory of simplicial sets over $X$ consisting of the fibrations.
This is a contravariant simplicial functor in Kan complexes $X$, so applying the simplicial nerve functor
$$
\N:\Cat_\Delta\longrightarrow\Set_\Delta
$$
yields a presheaf $\mathscr{F}$ of (large)
$\i$-categories on $\Gpd_\i$.

\begin{Proposition}
The fibrant simplicial category $\mathrm{Fib}(X)$ is a model for the slice $\i$-category
$\Gpd_{\i/X}$; that is, for each Kan complex $X$, there is an equivalence of $\i$-categories
$$
\mathscr{F}(X)=\N\mathrm{Fib}(X)\heq\Gpd_{\i/X}.
$$
\end{Proposition}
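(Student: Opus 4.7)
The plan is to realize $\mathrm{Fib}(X)$ as the full simplicial subcategory of cofibrant-fibrant objects in the slice model category $\Set_\Delta/X$ equipped with the slice Kan--Quillen model structure, and then identify $\N\mathrm{Fib}(X)$ with $\Gpd_{\i/X}$ using the standard comparison between simplicial model categories and their underlying $\i$-categories.

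First, I would verify that $\Set_\Delta/X$ carries a slice simplicial model structure in which weak equivalences and (co)fibrations are detected by the forgetful functor to $\Set_\Delta$. Every object is cofibrant. Since $X$ is a Kan complex, an object $p:Y\to X$ is fibrant precisely when $p$ is a Kan fibration (in which case $Y$ is automatically Kan). The mapping space $\Map_{/X}(Y,Y')$ is the fiber of $\Map(Y,Y')\to\Map(Y,X)$ over $p$, which is a Kan complex whenever $Y'\to X$ is a fibration. Consequently the full simplicial subcategory of cofibrant-fibrant objects is precisely $\mathrm{Fib}(X)$, and it is fibrant as a simplicial category.

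Next, I would invoke the general principle that for a simplicial model category $\mathcal{M}$, the simplicial nerve $\N(\mathcal{M}^\circ)$ of its cofibrant-fibrant subcategory is an $\i$-category presenting the underlying homotopy theory of $\mathcal{M}$ (as in \HTT{Theorem 4.2.4.1}). Applied to $\Set_\Delta/X$, this identifies $\N\mathrm{Fib}(X)$ as the $\i$-category underlying this slice model category. To finish, one compares this $\i$-category to $\Gpd_{\i/X}$ via the natural functor induced by sending a fibration to its structure map; essential surjectivity follows because any object of $\Gpd_{\i/X}$, represented by a map $Y\to X$ of Kan complexes, is equivalent in the slice to a Kan fibration via the standard mapping-cylinder factorization, while full faithfulness reduces to the statement that for fibrations $Y\to X$ and $Y'\to X$ the simplicial set $\Map_{/X}(Y,Y')$ already computes the derived mapping space in the $\i$-categorical slice.

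The main obstacle is this last comparison step, since the $\i$-categorical slice $\Gpd_{\i/X}$ is defined via pullbacks involving joins $\Delta^n\star\Delta^0$, whereas the simplicial slice $\mathrm{Fib}(X)$ uses the ordinary simplicial enrichment; making precise the statement that these two constructions agree up to equivalence is essentially the content of \HTT{Proposition 4.2.1.5} (compatibility of simplicial nerves with the formation of overcategories for fibrant simplicial categories), and the remaining verifications are direct calculations using the explicit descriptions of both sides.
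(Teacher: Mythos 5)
Your proposal is essentially sound, and its two substantive verifications coincide with the paper's: essential surjectivity comes from factoring an arbitrary $Z\to X$ as a homotopy equivalence followed by a Kan fibration, and full faithfulness comes from identifying the mapping space in the $\i$-categorical slice as the homotopy fiber of $\map(Z,Y)\to\map(Z,X)$, which agrees with the strict fiber --- i.e.\ with the simplicial mapping space of $\mathrm{Fib}(X)$ --- precisely because $Y\to X$ is a fibration. Where you genuinely differ is in how the comparison functor is produced. The paper's device is short and concrete: the forgetful projection $\mathscr{F}(X)\to\Gpd_\i$ induces $\mathscr{F}(X)_{/1_X}\to\Gpd_{\i/X}$, and since the identity fibration $1_X$ is a final object of $\mathscr{F}(X)$, the projection $\mathscr{F}(X)_{/1_X}\to\mathscr{F}(X)$ is a trivial fibration by \HTT{Proposition 1.2.12.4}; composing a section of it with the induced map gives the desired functor, after which only the two verifications above remain. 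You instead present $\mathrm{Fib}(X)$ as the cofibrant-fibrant objects of the slice model structure on $\Set_{\Delta/X}$ (your identification of fibrant objects and of the simplicial mapping spaces there is correct), but note that this framing buys little by itself: saying that $\N\mathrm{Fib}(X)$ is ``the underlying $\i$-category of the slice model category'' is just a restatement, and the claim that this agrees with the $\i$-categorical slice $\Gpd_{\i/X}$ is exactly the proposition being proved, so the whole burden still sits on the comparison step you yourself flag as the main obstacle.

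On that step your citations are off: \HTT{Theorem 4.2.4.1} concerns homotopy colimits versus $\i$-categorical colimits, and \HTT{Proposition 4.2.1.5} compares the join-based slice $\C_{/p}$ with the fat-join slice $\C^{/p}$ of an $\i$-category; neither literally asserts compatibility of the simplicial nerve with simplicially enriched overcategories. The fat-slice comparison (together with the identification of mapping spaces in the nerve of a fibrant simplicial category) can be assembled into the statement you need, but the cleanest repair is simply to construct the functor by the paper's final-object trick and then run your essential-surjectivity and full-faithfulness arguments, which are already the ones the paper uses.
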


\begin{proof}
The projection
$
\mathscr{F}(X)\rightarrow\Gpd_\i
$
induces a map
$
\mathscr{F}(X)_{/1_X}\rightarrow\Gpd_{\i/X},
$
where $1_X$ is the identity fibration $X\to X$.
The desired map is the composite
$$
\mathscr{F}(X)\longrightarrow\mathscr{F}(X)_{/1_X}\longrightarrow\Gpd_{\i/X},
$$
where the first map is a homotopy inverse of the projection $\mathscr{F}(X)_{/1_X}\to\mathscr{F}(X)$, which by \HTT{Proposition 1.2.12.4} is a trivial fibration as $1_X$ is a final object of $\mathscr{F}(X)$.
It is essentially surjective because any $Z\to X$ admits a factorization $Z\to Y\to X$ with $Z\to Y$ a homotopy equivalence and $Y\to X$ a fibration, so it only remains to check that it is fully faithful.
Indeed, if $Y\to X$ and $Z\to X$ are fibrations, then $\map_{/X}(Z/X,Y/X)$ is the homotopy fiber of the map
$$
\map(Z,Y)\longrightarrow\map(Z,X)
$$
given by composing with $Y\to X$ over the vertex defined by $Z\to X$; since $Y\to X$ is a fibration, this is equivalent to the ordinary fiber, which is the mapping space in $\mathrm{Fib}(X)$ and hence also in $\mathscr{F}(X)$.
\end{proof}

\begin{Proposition}\label{X=colim1}
Let $X$ be an $\i$-groupoid.
Then $X$ is a colimit (in the $\i$-category of $\i$-groupoids) of the constant functor $1:X\to\Gpd_\i$ with value the terminal $\i$-groupoid $1$.
\end{Proposition}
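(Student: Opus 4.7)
The plan is to verify the universal property of the colimit directly by computing mapping spaces, since the terminal $\i$-groupoid $1$ is particularly easy to handle under the colimit--constant adjunction. For any $\i$-groupoid $Y$, I would construct a chain of natural equivalences
\[
\Map_{\Gpd_\i}(\colim_X 1, Y) \simeq \Map_{\Fun(X,\Gpd_\i)}(1, c_Y) \simeq \Map_{\Gpd_\i}(X, Y),
\]
where $c_Y \colon X \to \Gpd_\i$ denotes the constant functor at $Y$. The first equivalence is the defining adjunction between $\colim_X$ and the constant-diagram functor $c_{(-)} \colon \Gpd_\i \to \Fun(X, \Gpd_\i)$. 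The conclusion $\colim_X 1 \simeq X$ then follows from the Yoneda lemma for $\i$-categories (\HTT{\S 5.1.3}).

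The main step is identifying the middle term with $\Map_{\Gpd_\i}(X, Y)$. A natural transformation from the constant functor at $1$ to the constant functor at $Y$ amounts to a coherent choice, for each vertex $x \in X$, of a point of $Y$ (a map $1 \to Y$), together with higher coherences over the simplices of $X$; this is precisely the data of a map of $\i$-groupoids $X \to Y$. Formally, it uses that limits of constant $\Gpd_\i$-valued diagrams compute the cotensor in the self-enrichment of $\Gpd_\i$, so that
\[
\Map_{\Fun(X,\Gpd_\i)}(c_1, c_Y) \simeq \lim_X c_Y \simeq Y^X \simeq \Map_{\Gpd_\i}(X, Y),
\]
combined with the identification $\Map_{\Gpd_\i}(1, Y) \simeq Y$.

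An alternative route, staying closer in spirit to the preceding proposition, would be to invoke the equivalence $\Fun(X, \Gpd_\i) \simeq \Gpd_{\i/X}$ for $X$ an $\i$-groupoid (essentially the unstraightening correspondence, under which the model $\N\mathrm{Fib}(X)$ of the previous proposition plays a natural role). Under this equivalence the constant functor at $1$ corresponds to the identity fibration $\id_X \colon X \to X$, and the colimit functor $\Fun(X, \Gpd_\i) \to \Gpd_\i$ corresponds to the forgetful functor $\Gpd_{\i/X} \to \Gpd_\i$, which sends $\id_X$ directly to $X$. This is also the left adjoint $p_!$ to base change along $p \colon X \to 1$, complementary to the right adjoint constructed in the previous proposition.

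I expect the main obstacle in either approach to be the careful justification of these identifications: the mapping-space route relies on the cotensor description of limits of constant $\Gpd_\i$-valued diagrams, while the unstraightening route relies on identifying the constant functor at $1$ with $\id_X$ under the equivalence $\Fun(X, \Gpd_\i) \simeq \Gpd_{\i/X}$. Both are standard facts from $\i$-category theory, but the mapping-space argument has the virtue of being essentially formal once the self-enrichment of $\Gpd_\i$ is in hand.
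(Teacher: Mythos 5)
Your argument is correct, but it follows a genuinely different route from the paper's. You verify the universal property directly: using the adjunctions $\colim\dashv\mathrm{const}\dashv\lim$ you reduce to the fact that the limit of the constant diagram at $Y$ indexed by the $\i$-groupoid $X$ is the cotensor $Y^X\heq\Map(X,Y)$, and then conclude by Yoneda. The paper instead argues by cell induction: it takes the full subcategory $\C\subset\Gpd_\i$ of those $X$ with $\colim_X 1\heq X$, notes $1\in\C$, shows $\C$ is closed under coproducts and pushouts and hence under all colimits by \HTT{Propositions 4.4.2.2 and 4.4.2.6}, and then builds an arbitrary $X$ from points via the skeletal filtration (with spheres as iterated pushouts of points), so that $\C=\Gpd_\i$. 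Your route buys brevity, naturality, and essentially for free the stronger statement $\colim_X c_Y\heq X\times Y$; its cost is that the cotensor formula $\lim_X c_Y\heq\Map(X,Y)$ is an imported input of roughly the same depth as the proposition itself (standard, e.g.\ via Bousfield--Kan homotopy limits of constant diagrams, but not among the tools the paper otherwise uses), whereas the paper's induction stays within the colimit-decomposition machinery of HTT and serves as the template reused in the proof of Proposition \ref{colimX_y=X}. One caution about your alternative route: in this paper the equivalence $\Fun(X,\Gpd_\i)\heq\Gpd_{\i/X}$ is Proposition \ref{t-pr-G-space-space-over-bg}, whose proof invokes Proposition \ref{X=colim1}, so relying on it here would be circular; you would instead have to appeal directly to the straightening theorem of \HTT{\S 2.2, \S 3.2} and additionally justify that under that equivalence the constant functor at $1$ corresponds to $\mathrm{id}_X$ and the colimit functor to the forgetful functor $\Gpd_{\i/X}\to\Gpd_\i$ (equivalently $p_!$ for $p:X\to 1$).
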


\begin{proof}
Let $\C$ denote the full $\i$-subcategory of $\Gpd_\i$ spanned by the $\i$-groupoids $X$ with the property that $\colim_X 1\simeq X$; note that $\C$ is nonempty, as it contains the terminal $\i$-groupoid $1$.
Now clearly $\C$ is closed under (possibly infinite) coproducts and, according to \HTT{Proposition 4.4.2.2}, $\C$ is closed under pushouts.
Hence, by \HTT{Proposition 4.4.2.6}, $\C$ is closed under arbitrary colimits, and any $\i$-groupoid $X$ may be built out of colimits from the terminal $\i$-groupoid: indeed, $S^0=1\coprod 1$, so inductively $S^n$ arises as the pushout of $1\leftarrow S^{n-1}\rightarrow 1$, and an arbitrary $X$ is a colimit $X=\colim_n\mathrm{sk}_n X$, where $\mathrm{sk}_0 X$ is discrete and $\mathrm{sk}_{n+1} X$ is obtained from $\mathrm{sk}_n X$ as a pushout $\coprod 1\leftarrow\coprod S^n\rightarrow\mathrm{sk}_n X$.
Hence $\C=\Gpd_\i$.
\end{proof}

\begin{Proposition} \label{t-pr-G-space-space-over-bg}
Let $X$ be an $\i$-groupoid.
Then the colimit functor
$$
\Fun(X,\Gpd_\i)\longrightarrow\Gpd_\i
$$
factors through $\Gpd_{\i/X}$, and the induced map
$$
\Fun(X,\Gpd_\i)\longrightarrow\Gpd_{\i/X}
$$
is an equivalence of $\i$-categories.  In particular if $X=BG$ is an
$\infty$-groupoid with a single object $*$ and $G$ is the
group-like monoidal $\infty$-groupoid $G=\map_{X}(*,*)$, then there we have
an equivalence
\[
    \Fun (BG,\Gpd_{\i}) \heq \Gpd_{\i/BG}
\]
between the $\i$-category of $\i$-groupoids with an action of $G$ and
the $\i$-category of $\i$-groupoids over $BG.$
\end{Proposition}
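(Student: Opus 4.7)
The plan is to appeal to the straightening--unstraightening equivalence of \HTT{Chapter 2, \S 3.2}, combined with the identification $\Gpd_{\i/X}\heq\N\mathrm{Fib}(X)$ from the preceding Proposition. For any Kan complex $X$, straightening/unstraightening produces an equivalence between $\Fun(X,\Gpd_\i)$ and the $\i$-category of left fibrations over $X$; since a left fibration to a Kan complex is automatically a Kan fibration (\HTT{Lemma 2.1.3.3}), this $\i$-category agrees with $\N\mathrm{Fib}(X)$, which we have just identified with $\Gpd_{\i/X}$.

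To construct the factorization explicitly, note that since $1\in\Gpd_\i$ is terminal, any functor $F:X\to\Gpd_\i$ admits an essentially unique natural transformation $F\to 1$ to the constant functor with value $1$. Taking colimits yields a canonical map $\colim_X F\to\colim_X 1\heq X$, where the equivalence is Proposition \ref{X=colim1}, realizing $\colim_X F$ as an object of $\Gpd_{\i/X}$ and defining a functor $\Phi:\Fun(X,\Gpd_\i)\to\Gpd_{\i/X}$.

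The crux of the argument is then to check that $\Phi$ agrees up to natural equivalence with the unstraightening equivalence obtained above. The relevant fact is essentially \HTT{Corollary 3.3.4.6}: under unstraightening a functor $F:X\to\Gpd_\i$ corresponds to a Kan fibration $p:Y\to X$ with $p^{-1}(x)\heq F(x)$, and the total space $Y$ is naturally equivalent to $\colim_X F$ in $\Gpd_\i$, while the projection $p$ corresponds to our factorization map $\colim_X F\to X$. An alternative direct verification exploits the base-change compatibility of colimits established earlier: for the putative inverse $\Psi$ sending $(p:Y\to X)$ to the fiber functor $x\mapsto p^{-1}(x)\heq Y\times_X\{x\}$, one computes $\colim_X\Psi(p)\heq\colim_{x\in X}(Y\times_X\{x\})\heq Y\times_X\colim_X 1\heq Y\times_X X\heq Y$, using Proposition \ref{X=colim1} once more; the other composite $\Psi\Phi\heq\mathrm{id}$ is handled by the same base-change argument applied fiberwise.

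The ``in particular'' clause is then immediate: by construction $BG$ is the $\i$-category with a single object whose endomorphism $\i$-groupoid is $G$, so a functor $BG\to\Gpd_\i$ is precisely an $\i$-groupoid equipped with a coherent $G$-action. The main obstacle is the identification of the colimit functor $\Phi$ with the unstraightening equivalence, and in particular ensuring the maps to $X$ match; this amounts to bookkeeping around the uniqueness of the natural transformation $F\to 1$ and the compatibilities recorded in \HTT{\S 3.3}.
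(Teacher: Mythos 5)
Your proof is correct, but it takes a genuinely different route from the one in the paper. The paper first reduces to the case where $X$ is connected (by decomposing $\Gpd_{\i/X}$ and $\Fun(X,\Gpd_\i)$ as products over $\pi_0 X$), writes the connected $X$ as $BG$ for $G$ a group-like simplicial monoid, and then invokes a Quillen equivalence, proved in its Appendix, between $G$-simplicial sets over $EG$ (equipped with the projective model structure) and simplicial sets over $BG$; passing to $\i$-categories via the simplicial nerve gives the statement. You instead appeal directly to the straightening--unstraightening equivalence of \HTT{\S 2.2, \S 3.2} together with \HTT{Lemma 2.1.3.3} (left fibrations over Kan complexes are Kan fibrations), which immediately identifies $\Fun(X,\Gpd_\i)$ with $\N\mathrm{Fib}(X)\heq\Gpd_{\i/X}$ with no connectedness reduction; the matching of this equivalence with the colimit functor then comes down to \HTT{\S 3.3.4}, as you note. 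Your alternative ``direct verification'' of the unit and counit via commutation of colimits with base-change is essentially the content of the paper's subsequent Proposition~\ref{colimX_y=X}, so you are in effect proving that lemma and the present one simultaneously by a bootstrap; this is legitimate, but the brief bookkeeping you flag (identifying the colimit's structure map to $X$ with the projection of the unstraightened fibration, and verifying $\Psi\Phi\heq\mathrm{id}$ by pulling back along $\{x\}\hookrightarrow X$) deserves a sentence rather than a gesture, since it is exactly where circularity would otherwise creep in. The trade-off: the paper's route is self-contained relative to its own model-categorical appendix and avoids citing the full straightening machinery, while your route is shorter and cleaner once straightening is available, and dispenses entirely with the somewhat inelegant reduction to connected $X$.
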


\begin{Remark} \label{rem-4}
Since $X$ is an $\i$-groupoid, we have
$X\heq X^{op}$, and so
\[
  \Fun (X^{op},\Gpd_{\i}) \heq \Gpd_{\i/X}
\]
as well.  This is a natural model for $\Gpd_{\i/X}$ when
we think of an $\i$-groupoid over $X$ as a presheaf of
$\i$-groupoids on $X$, as we do for example in Proposition
\ref{t-pr-yoneda-taut}.
\end{Remark}

\begin{proof}
First note that, while $\colim$ is not ordinarily a functor, we may model it as the derived functor of a (functorial choice of) colimit
$$
\colim:\Fun(\mathfrak{C}[X],\Set_\Delta)\longrightarrow\Set_\Delta
$$
on the level of model categories, where the model category on the left is equipped with the projective model structure.
Also note that the first claim is a special case of the following fact:  given $\i$-categories $\C$ and $\D$ such that $\C$ has a terminal object $1$ and given a functor $f:\C\to\D$, then $f$ factors through the projection $\D_{/f(1)}\to\D$, since $f$ determines a functor $\C\heq\C_{/1}\to\D_{/f(1)}$.
By Proposition \ref{X=colim1}, $X$ is a colimit for the terminal functor $1:X\to\Gpd_\i$, giving an equivalence $X\heq f(1)$ and thus a factorization of the colimit through $\Gpd_{\i/X}$.

It remains to show that this resulting map is an equivalence of
$\i$-categories, which is to say that it is fully faithful and
essentially surjective.  For this we may assume that $X$ is connected,
since otherwise $X\heq\coprod_{\pi_0 X}X_\alpha$ with each $X_\alpha$
connected and, given the result for connected $\i$-groupoids, we
deduce that
$$
\Fun(X,\Gpd_\i)\heq\prod_\alpha\Fun(X_\alpha,\Gpd_\i)\heq\prod_\alpha\Gpd_{\i/X_\alpha}\heq\Gpd_{\i/X}.
$$
But then $X\heq BG$, where $G$ is the group-like simplicial monoid of endomorphisms of any object of $\mathfrak{C}[X]$, and the colimit (in this case, the quotient by the action of $G$) determines a Quillen equivalence of simplicial model categories between $G$-simplicial sets over $EG$, equipped with the projective model structure, and simplicial sets over $BG$ (see Appendix~\ref{sec:princ-bundl-lift} for further details on this).
Passing to $\i$-categories yields an equivalence
$$
\N(\Set_\Delta^G)^\circ\heq\Fun(BG,\Gpd_\i)\heq\Gpd_{\i/BG}
$$
and hence also the desired equivalence
$$
\Fun(X,\Gpd_\i)\heq\Gpd_{\i/X}.
$$
\end{proof}

%
%
Said differently, $\mathscr{F}$ is represented (in the $\i$-category of large $\i$-categories) by the
$\i$-category of $\i$-groupoids.
Moreover, we may work with $\mathscr{F}(X)$ in place of the equivalent but much larger $\i$-groupoid $\Gpd_{\i/X}$ by replacing the colimit with a map
\begin{equation}\label{fun=fib}
\Fun(X,\Gpd_\i)\longrightarrow\mathscr{F}(X)
\end{equation}
described as follows.

There is a universal bundle of $\i$-groupoids $\mathscr{U}\to\Gpd_\i$
over $\Gpd_\i$, characterized by the fact that the fiber over the
$\i$-groupoid $T$ (a vertex of the quasicategory $\Gpd_\i$) is $T$
itself (see \HTT{\S 3.3.2} for a treatment of universal fibrations in more general contexts).  Given a functor $f:X\to\Gpd_\i$, the restriction
$f^*\mathscr{U}\to X$ of $\mathscr{U}\to\Gpd_\i$ along $f$ is an
$\i$-groupoid over $X$ such that the fiber of $f^*\mathscr{U}\to X$
over an object $x$ of $X$ is the $\i$-groupoid $f(x)$.  We think of
$Y=f^*\mathscr{U}\to X$ as the bundle of $\i$-groupoids classified by
the map $f:X\to\Gpd_\i$.  There is also an inverse procedure which
associates to an arbitrary bundle of $\i$-groupoids $Y\to X$ over $X$
a functor $f:X\to\Gpd_\i$, whose value on the object $x$ is equivalent
to the fiber $Y_x$ of $Y$ over $x$, but this process is less explicit (cf. \HTT{\S 2.2.1}).

We may also want to consider more specific types of fibrations over
$X$; in particular, we will be interested in those fibrations $Y\to X$
such that, for each object $x$ of $X$, the fiber $Y_x$ is equivalent
to a fixed $\i$-groupoid $F$.  These are precisely those fibrations
equivalent to ones of the form $f^*\mathscr{U}$ for some
$f:X\to\Gpd_\i$ which factors through the inclusion
$B\End(F)\to\Gpd_\i$, where $B\End(F)$ denotes the full
$\i$-subcategory of $\Gpd_\i$ on the object $F$ (an $\i$-category with
one object and endomorphisms $\End(F)$).  In other words, the functor
of fibrations with fiber $F$ is represented by the $\i$-category
$B\End(F)$; generally, we're more interested in its maximal
$\i$-subgroupoid, which is represented by the $\i$-groupoid
$B\Aut(F)$.  This description shows that it is enough to consider
principal $\Aut(F)$-bundles instead, since the two functors are
equivalent.

An $\i$-category $\C$ equipped with a mapping space {\em functor}
$$
\C^{op}\times\C\longrightarrow\Gpd_\i
$$
induces a Yoneda embedding
$$
\C\longrightarrow\Fun(\C^{op},\Gpd_\i).
$$
Of course, in $\i$-category theory, mapping spaces are not
uniquely defined, much less functorial.
In particular, there is not really a
canonical choice of a Yoneda embedding; instead, as with other
$\i$-categorical constructions, it's only defined up to a contractible
space of choices.

\begin{Remark}\label{rem-yoneda-rigid}
This ``problem'' goes away if we work instead with the more rigid model of simplicial categories.
Here we have a simplicial mapping space functor
$$
\mathfrak{C}[\C^{op}\times\C]\heq
\mathfrak{C}[\C]^{op}\times\mathfrak{C}[\C]\longrightarrow\Set_{\Delta}\overset{\mathrm{fib}}{\longrightarrow}\mathrm{Kan}
$$
which we may suppose (after taking a fibrant replacement) lands in the simplicial category of Kan complexes.  Passing
back via the simplicial nerve yields a functorial assignment of mapping spaces
$$
\C^{op}\times\C\heq\N\mathfrak{C}[\C^{op}\times\C]\longrightarrow\N\mathrm{Kan}=\Gpd_\i
$$
and hence a particular choice of Yoneda embedding
$\C\to\Fun(\C^{op},\Gpd_\i)=\Pre(\C)$.
In general, we define the Yoneda embedding $\C\to\Pre(\C)$ to be any functor equivalent to the one constructed in this way.
\end{Remark}

\begin{Proposition}\label{colimX_y=X}
Let $X\to Y$ be a map of $\i$-groupoids and let $Y\to\Gpd_{\i/Y}$ be the Yoneda embedding.
Then, as an $\i$-groupoid over $Y$, $X$ is a colimit of the composite $X\to Y\to\Gpd_{\i/Y}$.
\end{Proposition}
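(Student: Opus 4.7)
The proof is an instance of the $\i$-categorical density of the Yoneda embedding. First, I would reformulate the problem in terms of presheaves. By Proposition \ref{t-pr-G-space-space-over-bg} together with Remark \ref{rem-4}, there is an equivalence of $\i$-categories $\Gpd_{\i/Y}\simeq\Fun(Y^{op},\Gpd_\i)=\Pre(Y)$ under which an $\i$-groupoid $Z\to Y$ over $Y$ corresponds to the presheaf $F_Z\colon y\mapsto Z\times_Y Y_{/y}$, so that $F_Z(y)$ is the homotopy fiber of $Z\to Y$ over $y$. Under this equivalence the Yoneda embedding $Y\to\Gpd_{\i/Y}$ described in the footnote, $y\mapsto(Y_{/y}\to Y)$, corresponds to the ordinary Yoneda embedding $Y\to\Pre(Y)$, $y\mapsto\Map_Y(-,y)$, because $Y_{/y}\to Y$ is the right fibration classified by the representable presheaf $\Map_Y(-,y)$.

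Second, since colimits in $\Fun(Y^{op},\Gpd_\i)$ are computed pointwise, it suffices to check that for every object $y$ of $Y$ the canonical map
\[
\colim_{x\in X}\Map_Y(y,f(x))\xra{\heq} F_X(y)=X_y
\]
is an equivalence, where the left-hand side is the colimit in $\Gpd_\i$ of the composite $X\xra{f} Y\to\Pre(Y)$ evaluated at $y$. By the straightening/unstraightening equivalence, this colimit is the total space of the right fibration over $X$ with fiber $\Map_Y(y,f(x))$ over $x$; by construction this is the pullback $X\times_Y Y_{/y}$ of $Y_{/y}\to Y$ along $f$. Because $Y$ is an $\i$-groupoid, $Y_{/y}\to Y$ is equivalent to the map $1\to Y$ picking out $y$, so the pullback is precisely the homotopy fiber $X_y$, as required. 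It remains to verify that these pointwise equivalences assemble into an equivalence of presheaves, which follows from the functoriality of straightening.

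The main obstacle is not any individual step but rather the bookkeeping required to match the various equivalences of $\i$-categories consistently; in particular the identification that the geometric Yoneda embedding $y\mapsto Y_{/y}$ corresponds, via $\Gpd_{\i/Y}\simeq\Pre(Y)$, to the representable presheaf functor. Alternatively, one can deduce the proposition directly from the $\i$-categorical co-Yoneda lemma (HTT, Proposition 5.1.5.3), which asserts that every presheaf $F$ on an $\i$-category $\C$ is canonically the colimit of the composite $\C_{/F}\to\C\to\Pre(\C)$; applied to $\C=Y$ and $F=F_X$, and using the fact that $Y_{/F_X}$ is equivalent to $X$ with its projection $f$ to $Y$, this gives the desired conclusion at once.
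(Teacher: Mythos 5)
Your proof is correct, but it takes a different route from the paper's. The paper stays on the fibration side: it base-changes the decomposition $Y\heq\colim_Y 1$ of Proposition \ref{X=colim1} along $X\to Y$, using that base-change commutes with colimits (the first proposition of \S\ref{sec:bundles-i-groupoids}), to obtain $X\heq\colim_Y X_y$, and then reduces to the case $Y=1$, where the claim is again Proposition \ref{X=colim1}. You instead transport the statement across $\Gpd_{\i/Y}\heq\Fun(Y^{op},\Gpd_\i)$, compute the colimit of $X\to Y\to\Pre(Y)$ pointwise, and identify each value with the homotopy fiber $X_y$ by recognizing the representable $\Map_Y(-,y)$ as the path fibration $Y_{/y}\to Y$, which over an $\i$-groupoid is equivalent to $\{y\}\to Y$. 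The one step to source carefully is your appeal to ``straightening/unstraightening'' for the claim that the colimit over $X$ of $x\mapsto\Map_Y(y,f(x))$ is the total space $X\times_Y Y_{/y}$: within this paper that is exactly the content of Proposition \ref{t-pr-G-space-space-over-bg}, which says the equivalence $\Fun(X,\Gpd_\i)\heq\Gpd_{\i/X}$ is \emph{implemented by} the colimit functor; citing it there, and noting that the comparison map comes from the evident cocone so the pointwise equivalences assemble automatically, closes the small gap you flag as ``functoriality of straightening.'' Your alternative via the density/co-Yoneda statement \HTT{Lemma 5.1.5.3}, together with the identification $Y_{/F_X}\heq X$ over $Y$ (again unstraightening), is the quickest argument but imports a result the paper never otherwise uses; the paper's route buys self-containedness, reusing only the two facts just proved in this subsection, while yours buys an explicit pointwise description of the colimit presheaf that makes the role of the representables transparent.
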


\begin{proof}
Since $Y\simeq\colim_Y 1$ and colimits commute with base-change, we obtain a decomposition $X\simeq\colim_Y X_y$, where $X_y$ denotes the fiber (calculated in the $\i$-category of $\i$-groupoids) of $X\to Y$ over the object $y$; that is, $X_y:Y\to\Gpd_\i$ is the functor obtained from $Y\to X$ via the equivalence $\Gpd_{\i/Y}\simeq\Fun(Y,\Gpd_\i)$.
As in the proof of Proposition \ref{X=colim1}, we easily reduce to the case in which $Y$ is the terminal $\i$-groupoid $1$, in which case $X$ is evidently a colimit of the functor $1\to\Gpd_\i$ with value $X$.
\end{proof}

We'll be most interested in the case where $Y=BG$ is the one-object $\i$-groupoid associated to a group-like monoidal $\i$-groupoid $G$; i.e. $G\simeq\map_{BG}(*,*)$.
Then
$$\Gpd_{\i/BG}\simeq\Fun(BG,\Gpd_\i)$$
is the $\i$-category of $\i$-groupoids equipped with an action of $G$, and the Yoneda embedding
$$BG\to\Gpd_{\i/BG}\simeq\Fun(BG^{op},\Gpd_\i)$$
sends $X\to BG$ (viewed as a generalized element of the $\i$-groupoid $BG$) to $X\to BG$ (viewed as an $\i$-groupoid over $BG$), or, equivalently, to the fiber of $X\to BG$ over $*\to BG$ equipped with its natural $G\simeq\map_{/BG}(*,*)$-action (viewed as a functor $BG\to\Gpd_\i$).

\begin{Corollary} \label{t-pr-P-colim-X-to-BG}
Let $X\to BG$ be an $\i$-groupoid over $BG$, and write $P(X)=X\times_{BG} *$ for the associated $G$-$\i$-groupoid.
Then $P(X)$ is a colimit of the composite $X\to BG$ followed by the Yoneda embedding $BG\to\Fun(BG^{op},\Gpd_\i)$.\hfill$\Box$
\end{Corollary}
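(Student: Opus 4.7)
The plan is to reduce the statement to Proposition~\ref{colimX_y=X} by transporting along the equivalence of Proposition~\ref{t-pr-G-space-space-over-bg}. Concretely, I will apply Proposition~\ref{colimX_y=X} with $Y=BG$ and then verify that the equivalence $\Gpd_{\i/BG}\simeq\Fun(BG^{op},\Gpd_\i)$ identifies the object $X\to BG$ on one side with the $G$-$\i$-groupoid $P(X)$ on the other, carrying the Yoneda embedding on the source to the Yoneda embedding on the target.

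First, Proposition~\ref{colimX_y=X} applied to the map $X\to BG$ yields that, as an $\i$-groupoid over $BG$, $X$ is the colimit (taken in $\Gpd_{\i/BG}$) of the composite
\[
X\longrightarrow BG\longrightarrow\Gpd_{\i/BG},
\]
where the second arrow is the Yoneda embedding. Next, Proposition~\ref{t-pr-G-space-space-over-bg} (combined with the observation in Remark~\ref{rem-4} that $BG\heq BG^{op}$ since $BG$ is an $\i$-groupoid) provides an equivalence of $\i$-categories
\[
\Gpd_{\i/BG}\heq\Fun(BG^{op},\Gpd_\i),
\]
under which an $\i$-groupoid $Y\to BG$ corresponds to the fiber $Y\times_{BG}*$ equipped with its natural (right) $G$-action, where $G\heq\map_{/BG}(*,*)$. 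In particular, $X\to BG$ corresponds to $P(X)=X\times_{BG}*$ with its $G$-action. Since equivalences of $\i$-categories preserve colimits, the colimit statement of Proposition~\ref{colimX_y=X} transports across this equivalence.

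It remains to check that the Yoneda embedding $BG\to\Gpd_{\i/BG}$ is identified, under the equivalence above, with the Yoneda embedding $BG\to\Fun(BG^{op},\Gpd_\i)$. This is the main coherence point: the universal object of $\Gpd_{\i/BG}$ represented by the vertex $*$ of $BG$ is a canonical ``path fibration'' $BG_{/*}\to BG$, whose fiber over $*$ is the space of self-maps $G=\map_{BG}(*,*)$ equipped with its right $G$-action by composition. Under the equivalence $\Gpd_{\i/BG}\heq\Fun(BG^{op},\Gpd_\i)$, this corresponds to the functor $BG^{op}\to\Gpd_\i$ represented by $*$, which is precisely the image of $*$ under the Yoneda embedding of $BG$. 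Combining the three inputs then gives exactly the conclusion: $P(X)$, regarded as an object of $\Fun(BG^{op},\Gpd_\i)$, is the colimit of $X\to BG\to\Fun(BG^{op},\Gpd_\i)$.

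The main obstacle is the last coherence check identifying the two Yoneda embeddings across the equivalence. This is essentially a naturality statement for the representability machinery of \HTT{\S 3.3.2}/\HTT{\S 2.2.1} and can be made precise either by rigidifying as in Remark~\ref{rem-yoneda-rigid} (so both sides are given by honest simplicial mapping-space functors) or by invoking the uniqueness of Yoneda embeddings up to contractible choice; once this is in place, the rest is formal transport along an equivalence of $\i$-categories.
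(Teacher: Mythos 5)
Your proposal is correct and follows essentially the same route as the paper: the Corollary is declared an immediate consequence (hence the $\Box$), and the paper's identification of the Yoneda embedding $BG\to\Gpd_{\i/BG}\heq\Fun(BG^{op},\Gpd_\i)$ with the assignment $X\mapsto P(X)$ is spelled out in the paragraph directly preceding the statement, so your explicit unpacking of that transport along the equivalence of Proposition~\ref{t-pr-G-space-space-over-bg} is exactly what the authors had in mind.
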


\subsection{Bundles of stable $\i$-groupoids}
\label{sec:infinity-category-r}

A pointed $\i$-category $\C$ with finite limits is said to be {\em stable} if the endofunctor $$\Omega:\C\longrightarrow\C,$$ defined by sending $X$ to the limit of the diagram $\ast\to X\leftarrow\ast$, is an equivalence \cite[Corollary 10.12]{DAGI}.  Strictly
speaking, $\Omega$, as defined, is not actually a functor on the level
of $\i$-categories (though it can be strictified to form one), but
it's enough to check that $\Omega$ induces an endo-equivalence of the
homotopy category of $\C$.  A morphism of stable $\i$-categories is an
exact functor, meaning a functor which preserves finite limits and
colimits \cite[\S 5]{DAGI}.

More generally, given any $\i$-category $\C$ with finite limits,
the {\em stabilization} of $\C$ is the limit (in the
$\i$-category of $\i$-categories) of the tower
$$
\xymatrix{\cdots \ar[r]^{\Omega} & \C_*\ar[r]^{\Omega} & \C_*},
$$
where $\C_*$ denotes the pointed $\i$-category associated to $\C$ (the
full $\i$-subcategory of $\Fun(\Delta^1,\C)$ on those arrows whose
source is a final object $*$ of $\C$).  See \cite[\S 10]{DAGI} for more on stabilization.  Provided $\C$ is presentable,
$\Stab(\C)$ comes equipped with a stabilization functor $\Sigma^\i_+:\C\to\Stab(\C)$ functor from $\C$ \cite[Proposition 17.4]{DAGI},
formally analogous to the suspension spectrum functor, and left
adjoint to the zero-space functor $\Omega^\i_-:\Stab(\C)\to\C$ (the
subscript indicates that we forget the basepoint).  Heuristically,
$\text{Stab}$ is left adjoint to the inclusion into the $\i$-category
of presentable $\i$-categories of the full $\i$-subcategory of
presentable {\em stable} $\i$-categories (note that morphisms of
presentable $\i$-categories are colimit-preserving functors).  In
other words, a morphism of presentable $\i$-categories $\C\to\D$ such
that $\D$ is stable factors (uniquely up to a contractible space of
choices) through the stabilization $\Sigma^\i_+:\C\to\text{Stab}(\C)$
of $\C$ (cf. \cite[Corollary 17.5]{DAGI}).

According to \cite[Corollary 6.24]{DAGIII},
$\Stab (\Gpd_{\i})$ is a symmetric monoidal
$\i$-category under the smash product, in such a way that
$$
\Sigma^\i_+:\Gpd_\i\longrightarrow\Stab(\Gpd_\i)
$$
is a strong symmetric monoidal functor (with respect to the cartesian
monoidal structure on $\Gpd_\i$).  In particular, $S=\Sigma^\i_+ 1$ is
a unit for a symmetric monoidal structure $\land$ on $\Stab(\Gpd_\i)$.
Here $1$ denotes the terminal $\i$-groupoid, the unit for the
cartesian monoidal structure on $\Gpd_\i$.

Since $S$ is the unit of the symmetric monoidal structure on
$\Stab(\Gpd_\i)$, an $S$-algebra is just a monoid for the
$\land$-product.  Given an $S$-algebra $R$, we write $\Rmod$ for the
$\i$-category of right $R$-modules.  If $R$ is commutative then
$\Rmod$ inherits a symmetric monoidal structure $\land_R$ for which
$R$ is the unit.


Now, for any $\i$-groupoid $X$, the equivalence
$$
\Gpd_{\i/X}\heq\Fun(X,\Gpd_\i)
$$
induces an equivalence
of stabilizations
$$
\Stab(\Gpd_{\i/X})\heq\Stab(\text{Fun}(X,\Gpd_\i)).
$$
Since limits in functor categories are computed pointwise, one easily
checks that
$$
\Stab(\Gpd_{\i/X})\heq\Stab(\Fun(X,\Gpd_\i))\heq\Fun(X,\Stab(\Gpd_\i));
$$
that is, the $\i$-category of bundles of stable $\i$-groupoids over
$X$ --- the stabilization of the $\i$-category of bundles of
$\i$-groupoids over $X$ --- is equivalent to the $\i$-category of
functors from $X$ to stable $\i$-groupoids.

The resulting $\infty$-category $\Stab(\Gpd_{\i/X})$ is closed
symmetric monoidal under the {\em fiberwise} smash product $\land_X$ \cite{MR2271789}, as well as enriched, tensored and cotensored over $\Gpd_{\i/X}$.  We
write $S_X$ for the sphere over $X$, the unit of the symmetric
monoidal product $\land_X$.  Note that $\Stab(\Gpd_{\i/X})$ is
naturally equivalent to the $\infty$-category $\Mod{S_X}$ of
$S_X$-modules.

All of this works equally well for any $S$-algebra $R$.  Writing
$p:X\to 1$ for the projection to the terminal Kan complex, then
$$
R_X\eqdef p^*R
$$
is an associative $S_X$-algebra, and we may form the $\i$-category
$\Mod{R_X}$ of right $R_X$-modules.
We also have an
equivalence
\begin{equation}\label{Rfun=Rfib}
\Fun(X,\Mod{R})\overset{\!\!\!\heq}{\longrightarrow}\Mod{R_X},
\end{equation}
the $R$-module analogue of the equivalence
\eqref{fun=fib}, which we may think of as arising from pulling back a
universal bundle of $R$-modules $\mathscr{E}
$ over
$\Mod{R}$.

\begin{Remark}\label{rem:omit}
Since they are not necessary to obtain the main results of this paper,
we have chosen to omit detailed discussion of certain foundational
results necessary for a full theory of parametrized spectra in the
$\infty$-categorical context.  For one thing, for consistency one
should compare the $\infty$-category $\Fun(X,\spectra)$ to the
May-Sigurdsson model category of spectra parametrized by $X$.
Although this comparison is conceptually straightforward, the
technical details are not insubstantial and so we leave the comparison
for another paper.  Note however that we do implicitly perform a part
of such a comparison in our discussion of the equivalence of various
definitions of the Thom spectrum functor.

Furthermore, we also have chosen to defer the explicit construction of
the universal bundle of $R$-modules over $\Mod{R}$, as that requires
the significantly more complicated theory of left fibrations of
(stable) $\i$-groupoids over $\i$-categories; see \HTT{\S 2} for
details of the unstable theory.  Fortunately, this construction is
again not actually necessary for our work in this paper, as any bundle
of $R$-modules over an $\i$-category $\C$ is classified (up to
equivalence) by a functor $\C\to\Mod{R}$.  In other words, the reader
is free to take $\Fun(\C,\Mod{R})$ as the {\em definition} of
$\Mod{R_\C}$, in which case the equivalence
$\Fun(\C,\Mod{R})\heq\Mod{R_\C}$ is actually equality.  From this
point of view, the universal bundle of $R$-modules $\mathscr{E}
$ over $\Mod{R}$ corresponds to the identity map $\Mod{R}\to\Mod{R}$,
and pulling back the universal bundle along a functor $f:\C\to\Mod{R}$
corresponds to precomposing the identity with $f$.
\end{Remark}

A map of spaces $f:X\to Y$ gives rise to a restriction functor
$$
f^*:\Mod{R_Y}\to\Mod{R_X}
$$
which admits a right adjoint $f_*$ as well as a left adjoint $f_!$.
This means that, given an $R_X$-module $L$ and an $R_Y$-module $M$,
there are natural equivalences of $\i$-groupoids
$$
\Mod{R_Y}(f_!L,M)\heq\Mod{R_X}(L,f^*M)\qquad\text{and}\qquad\Mod{R_X}(f^*M,L)\heq\Mod{R_Y}(M,f_*L).
$$
If $M\heq\psi^*\mathscr{E}$ is the bundle of $R$-modules classified
by the functor $\psi:Y\to\Mod{R}$, then $f^*M\heq
f^*\psi^*\mathscr{E}$; similarly, if $L\heq\varphi^*\mathscr{E}$ for
some $\varphi:X\to\Mod{R}$, then $f_!M$ and $f_*M$ are classified by
the left and right Kan extensions, respectively, of $\varphi$ along
$f$.

The {\em projection formula} asserts that
the adjoint
$$
f_!(L\land_X f^*M)\longrightarrow f_!L\land_Y M
$$
of the composite
$$
L\land_X f^*M\longrightarrow f^*f_!L\land_X f^*M\heq f^*(f_!L\land_Y
M)
$$
is an equivalence (here we are using the fact that $f^*$ is strong
monoidal).  To see this, we merely need to examine the fiber over an
object $y$ of $Y$:
\begin{eqnarray*}
f_!(L\land_X f^*M)_y\heq\colim_{x\in X_y}(L_x\land
f^*M_x)\heq\colim_{x\in X_y}(L_x\land M_y) \\ \heq(\colim_{x\in
X_y}L_x)\land M_y\heq f_!L_y\land M_y\heq (f_!L\land_Y M)_y.
\end{eqnarray*}

This has some notable consequences.  Given a second $R_Y$-module $N$,
the equivalences
\begin{eqnarray*}
\map(L,f^*\Map_X(M,N))\heq
\map(f_!L\land_X M,N)\heq
\map(L\land_Y f^*M,f^*N)\heq \map(L,\Map_Y(f^*M,f^*N))
\end{eqnarray*}
and
\begin{eqnarray*}
\map(M,\Map_Y(f_!L,N))\heq \map(f_!L\land_Y M,N)\heq
\map(L\land_X f^*M,f^*N)\heq
(M,f_*\Map_X(L,f^*N))
\end{eqnarray*}
imply that the natural maps
\begin{eqnarray*}
f^*\Map_Y(M,N)\longrightarrow\Map_X(f^*M,f^*N) \qquad\text{and}\qquad
\Map_Y(f_!L,N)\longrightarrow f_*\Map_X(L,f^*N)
\end{eqnarray*}
are equivalences of $R_X$-modules and $R_Y$-modules, respectively.  It
also follows that the tensor of the space $f:X\to Y$ over $Y$ with the
$R_Y$-module $M$ is given by $f_!f^*M$: indeed, $f\heq f_!\id_X$, so
\begin{equation}\label{pull-push=tensor}
\map(f_!f^*M,N)\heq\map(f^*M,f^*N)\heq\map(\id_X,f^*\map_Y(M,N))\heq\map(f,\map_Y(M,N)).
\end{equation}
In other words,
\begin{equation}\label{f_!f^*}
\Sigma^\infty_Y f_+\land_{S_Y} M\heq f_!f^*M,
\end{equation}
and in particular $\Sigma^\infty_{Y} f_+\land_{S_Y}R_Y\heq f_!f^*R_Y$ is the free $R_Y$-module on $f$, where $f_+:X+Y\to Y$ is $X\to Y$ plus a disjoint basepoint $Y\to Y$.

\subsection{Bundles of $R$-lines} \label{sec:glr-symm-mono}


\begin{Definition} \label{inf-def-1}
An $R$-line is an $R$-module $M$ which admits an $R$-module
equivalence $M\heq R$.
\end{Definition}

Let $\Rwe$ denote the full $\i$-subgroupoid of $\Rmod$ spanned by the
$R$-lines.  This is not the same as the full $\i$-subcategory of
$\Rmod$ on the $R$-lines, as a map of $R$-lines is by definition an
equivalence.  We regard $\Rwe$ as a pointed $\i$-groupoid via the
distinguished object $R$.

\begin{Remark}
$\Rwe$ is a subcategory of the $\infty$-category $\Rmod$ in the sense
of \HTT{1.2.11}.  Indeed, our definition really specifies $\Ho\Rwe$ as
a subgroupoid of the homotopy category $\Ho\Rmod$ of $\Rmod$ (the full
subgroupoid consisting of those $R$-modules which are {\em isomorphic}
to $R$ in $\Ho\Rmod$).  Then $\Rwe$ is obtained as the pullback of
simplicial sets
\[
\begin{CD}
\Rwe @>>> \Rmod \\
@VVV @VVV \\
\N\Ho\Rwe @>>> \N\Ho \Rmod.
\end{CD}
\]
Since limits of weak Kan complexes are again weak Kan complexes,
$\Rwe$ is an $\i$-category; moreover, its homotopy category is the
groupoid $\Ho\Rwe$, basically by construction, so in fact $\Rwe$ is an
$\i$-groupoid.
\end{Remark}

Fix an $\i$-groupoid $X$ and write $p:X\to 1$ for the projection to
the terminal $\i$-groupoid.  Recall that $R_X=p^*R$ is the unit of the
symmetric monoidal $\i$-category $\Mod{R_X}$ of bundles of $R$-modules
over $X$.

Given an $\i$-category $\C$, we write $\mathrm{Iso}(\C)$ for the
maximal $\i$-subgroupoid of $\C$, obtained by throwing away all
noninvertible arrows of $\C$.  Thus, if $a$ and $b$ are objects of
$\C$, $\mathrm{Iso}(\C)(a,b)$ is the subcomplex of $(\C)(a,b)$
consisting of the {\em equivalences}.

\begin{Definition}
The $\i$-groupoid $\Line{R_X}$ is the full $\i$-subgroupoid of
$\mathrm{Iso}(\Mod{R_X})$ on those $R_X$-modules $M$ such that, for
all objects $x$ of $X$, the fiber $M_x$ of $M$ over $x$ is equivalent
to $R$.
\end{Definition}

\begin{Proposition}\label{RXline=isoRXmod}
There is an equivalence
$$
\Line{R_X}(L,M)\heq\mathrm{Iso}(\Mod{R_X})(L,M),
$$
natural in $R_X$-lines $L$ and $M$.
\end{Proposition}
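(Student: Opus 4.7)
The plan is to treat this proposition as essentially a definitional consequence of the construction of $\Line{R_X}$. Recall that $\Line{R_X}$ was introduced as the full $\i$-subgroupoid of $\mathrm{Iso}(\Mod{R_X})$ spanned by those $R_X$-modules which are fiberwise equivalent to $R$; thus the assertion that $L$ and $M$ are $R_X$-lines amounts precisely to saying that they are vertices of $\Line{R_X}$. Granting this, the identification $\Line{R_X}(L,M) \iso \mathrm{Iso}(\Mod{R_X})(L,M)$ becomes a special case of the general principle that morphism spaces in a full $\i$-subcategory coincide with those of the ambient $\i$-category.

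To make the argument rigorous, I would fix the standard pullback model for mapping spaces:
\[
\mathrm{Iso}(\Mod{R_X})(L,M) \iso \Fun(\Delta^1, \mathrm{Iso}(\Mod{R_X})) \times_{\Fun(\partial\Delta^1, \mathrm{Iso}(\Mod{R_X}))} \{(L,M)\},
\]
as in \HTT{\S 1.2.2}, and analogously for $\Line{R_X}(L,M)$. The full $\i$-subgroupoid $\Line{R_X}\hookrightarrow \mathrm{Iso}(\Mod{R_X})$ admits the concrete description as the maximal sub-simplicial set all of whose vertices lie in the chosen collection: every $n$-simplex of $\mathrm{Iso}(\Mod{R_X})$ whose $0$-faces are $R_X$-lines already lies in $\Line{R_X}$. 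Consequently, the two pullbacks above are not merely equivalent but in fact equal as simplicial sets as soon as $L$ and $M$ lie in $\Line{R_X}$, which yields the claim (indeed with $\heq$ strengthened to $\iso$).

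Naturality in $L$ and $M$ requires no further argument, since both sides are restrictions of the same bifunctorial mapping-space construction on $\mathrm{Iso}(\Mod{R_X})$ to the full sub-$\i$-groupoid of $R_X$-lines. I do not foresee any serious obstacle: the content of the proposition is best viewed as a notational convenience, recording that we may compute $\Line{R_X}(L,M)$ as the subspace of equivalences inside the ambient derived mapping space $\Mod{R_X}(L,M)$; this is what will allow us subsequently to identify $\Line{R_X}$ as the classifying space for bundles of $R$-lines and to develop the $\i$-categorical theory of Thom spectra.
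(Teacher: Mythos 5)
Your proof is correct and follows exactly the same route as the paper: the entire content is that $\Line{R_X}$ is by definition a \emph{full} $\i$-subgroupoid of $\mathrm{Iso}(\Mod{R_X})$, so mapping spaces coincide. The paper's proof is a one-line statement of this fact; your unwinding via the pullback model for mapping spaces is a harmless (and accurate) elaboration, including the observation that the equivalence is in fact an isomorphism of simplicial sets.
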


\begin{proof}
$\Line{R_X}$ a full $\i$-subgroupoid of $\mathrm{Iso}(\Mod{R_X})$.
\end{proof}

The restriction of the universal $R$-module bundle $\mathscr{E}$ over
$\Mod{R}$ along the inclusion $j:\Line{R}\to\Mod{R}$ is the universal
$R$-line bundle $\mathscr{L}=j^*\mathscr{E}$ over $\Line{R}$.  This is
the analogue of the tautological line bundle in geometry: the fiber
over the line $l\in\mathbb{P}^\infty$ is the line itself.

\begin{Proposition}\label{RXline=map(X,Rline)}
There is a commutative diagram of $\i$-categories
$$
\xymatrix{
\Fun(X,\Line{R})\ar[d]\ar[r] & \Line{R_X}\ar[d]\\
\Fun(X,\Mod{R})\ar[r] & \Mod{R_X}}
$$
in which the vertical maps are inclusions of full $\i$-subgroupoids
and the horizontal maps are equivalences.
\end{Proposition}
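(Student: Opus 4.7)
The plan is to derive the top equivalence from the bottom one by restricting to the $R$-line sub-$\i$-groupoids on both sides. The bottom horizontal is exactly the equivalence \eqref{Rfun=Rfib}, which is an equivalence of $\i$-categories and hence restricts to an equivalence of the maximal sub-$\i$-groupoids $\mathrm{Iso}(\Fun(X,\Mod{R}))\heq\mathrm{Iso}(\Mod{R_X})$. Since $\Line{R}\subset\Mod{R}$ and $\Line{R_X}\subset\Mod{R_X}$ are full $\i$-subgroupoids of these maximal subgroupoids (the former by definition, the latter by Proposition~\ref{RXline=isoRXmod}), and since $X$ is a Kan complex, $\Fun(X,\Line{R})$ is itself a Kan complex sitting as a full $\i$-subgroupoid of $\mathrm{Iso}(\Fun(X,\Mod{R}))$. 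Consequently it suffices to check that the equivalence on maximal subgroupoids identifies the essential image of $\Fun(X,\Line{R})$ with $\Line{R_X}$.

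The key technical input is the identification of pointwise evaluation with fibers: under \eqref{Rfun=Rfib}, a functor $F\colon X\to\Mod{R}$ corresponds to the bundle $M=F^*\mathscr{E}$, and for each object $x$ of $X$ the value $F(x)$ is equivalent to the fiber $M_x$. This follows from the naturality of \eqref{Rfun=Rfib} in $X$: pullback along a point $x\colon 1\to X$ intertwines evaluation at $x$ in $\Fun(X,\Mod{R})$ with the fiber functor $\Mod{R_X}\to\Mod{R}$, and the identity functor $\Mod{R}\to\Mod{R}$ corresponds under \eqref{Rfun=Rfib} to the universal bundle $\mathscr{E}$, whose fibers are tautologically the $R$-modules being indexed.

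Given this identification, the two defining conditions line up perfectly: $F$ factors through $\Line{R}$ iff $F(x)\heq R$ for every object $x$ of $X$, iff $M_x\heq R$ for every $x$, iff $M$ lies in $\Line{R_X}$. Combined with the restriction of the equivalence on maximal subgroupoids (which gives fully faithfulness on the relevant full $\i$-subgroupoids), this produces the desired top horizontal equivalence $\Fun(X,\Line{R})\heq\Line{R_X}$ and the commutativity of the square is tautological from the construction. The main subtlety is the fiber-versus-evaluation identification in the previous paragraph; everything else is formal once that is in hand.
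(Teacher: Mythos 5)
Your proof takes essentially the same route as the paper: both reduce the top equivalence to the bottom one via \eqref{Rfun=Rfib} and the observation that, under that equivalence, a functor $F\colon X\to\Mod{R}$ factors through $\Line{R}$ if and only if the corresponding bundle has all fibers equivalent to $R$, i.e. lies in $\Line{R_X}$. You spell out the maximal-subgroupoid restriction and the point-versus-fiber identification more explicitly than the paper, but the core argument and the decisive step are the same.
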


\begin{proof}
The fibers of an $R_X$-module $L$ are equivalent to $R$ if and only if
$L\heq\varphi^*\mathscr{E}$ for some $\varphi:X\to\Mod{R}$ such that
$\varphi$ factors through the inclusion $\Line{R}\to\Mod{R}$.  Hence
$\Line{R_X}$ is the full $\i$-subgroupoid of $\Mod{R_X}$ on those
objects in the essential image of $\Fun(X,\Line{R})$ via the map
$\Fun(X,\Mod{R})\to\Mod{R_X}$ of \ref{Rfun=Rfib}; this map is an
equivalence, so $\Fun(X,\Line{R})\heq\Line{R_X}$.
\end{proof}

\begin{Corollary}
The fiber over the $\i$-groupoid $X$ of the projection
$\Gpd_{\i/\Line{R}}\to\Gpd_\i$ is equivalent to the $\i$-groupoid
$\Line{R_X}$.
\end{Corollary}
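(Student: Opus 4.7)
The plan is to reduce the statement to the preceding Proposition \ref{RXline=map(X,Rline)} by identifying the fiber of $\Gpd_{\i/\Line{R}}\to\Gpd_\i$ over $X$ with the mapping $\i$-groupoid $\Fun(X,\Line{R})$. Once that identification is in place, the equivalence $\Fun(X,\Line{R})\heq\Line{R_X}$ of Proposition \ref{RXline=map(X,Rline)} gives the result immediately.

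To identify the fiber, I would note that the projection $\Gpd_{\i/\Line{R}}\to\Gpd_\i$ arises as the pullback of the source-target projection $(s,t)\colon\Fun(\Delta^1,\Gpd_\i)\to\Gpd_\i\times\Gpd_\i$ along the map $\Gpd_\i\to\Gpd_\i\times\Gpd_\i$ sending $Y$ to $(Y,\Line{R})$. Consequently, its homotopy fiber over $X\in\Gpd_\i$ is the mapping $\i$-groupoid $\map_{\Gpd_\i}(X,\Line{R})$. Because $\Line{R}$ is itself an $\i$-groupoid, this mapping object is computed by the functor $\i$-category $\Fun(X,\Line{R})$, which is automatically an $\i$-groupoid.

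Alternatively, and more concretely, I would verify the identification by hand: an object of the (homotopy) fiber is a triple $(Y\to\Line{R},\gamma\colon Y\heq X)$, and sending such a triple to the composite $X\xra{\gamma^{-1}}Y\to\Line{R}$ defines a functor to $\Fun(X,\Line{R})$. The inverse sends $f\colon X\to\Line{R}$ to $(X\xra{f}\Line{R},\id_X)$, and the two composites are readily seen to be naturally equivalent to the identities.

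The main subtlety lies in the first step, confirming that in the $\i$-categorical setting the homotopy fiber really reduces to $\Fun(X,\Line{R})$ without any further contribution from auto-equivalences of $X$ in $\Gpd_\i$. This works precisely because the target $\Line{R}$ is an $\i$-groupoid: every commutative square over $\Line{R}$ commutes up to a canonical homotopy, so the space of choices of equivalence $Y\heq X$ is absorbed into a contractible space of identifications. With that resolved, everything else is formal application of Proposition \ref{RXline=map(X,Rline)}.
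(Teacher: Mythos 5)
Your proposal is correct and takes essentially the same route as the paper: identify the fiber of the slice projection over $X$ with the mapping $\i$-groupoid $\Gpd_\i(X,\Line{R})\heq\Fun(X,\Line{R})$, then invoke Proposition \ref{RXline=map(X,Rline)}. One minor remark: the identification of that fiber with the mapping space is the general fact that the fiber of $\C_{/b}\to\C$ over $a$ models $\C(a,b)$ for any object $b$ of any $\i$-category $\C$ (the slice projection is a right fibration), so the step does not actually hinge on $\Line{R}$ being an $\i$-groupoid as your last paragraph suggests.
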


\begin{proof}
$\Line{R_X}\heq\Fun(X,\Line{R})\heq\Gpd_\i(X,\Line{R})$, and, in
general, the $\i$-groupoid $\C(a,b)$ of maps from $a$ to $b$ in the
$\i$-category $\C$ may be calculated as the fiber over $a$ of the
projection $\C_{/b}\to\C$.
\end{proof}

\begin{Definition}
A {\em trivialization} of an $R_X$-module $L$ is an $R_X$-module
equivalence $L\to R_X$.  The $\i$-category $\Triv{R_X}$ of trivialized
$R$-lines is the slice category
\[
\Triv{R_X}\eqdef\Line{R_X}_{/R_X}.
\]
\end{Definition}

The objects of $\Triv{R_X}$ are {\em trivialized} $R_X$-lines, which
is to say $R_X$-lines $L$ with a trivialization $L\to R_X$; more
generally, an $n$-simplex $\Delta^n\to\Triv{R_X}$ of $\Triv{R_X}$ is a
map $\Delta^n\star\Delta^0\to\Line{R_X}$ of $\Line{R_X}$ which sends
$\Delta^0$ to $R_X$.  There is a canonical projection
$$
\iota_X:\Triv{R_X}\longrightarrow\Line{R_X}
$$
which sends the $n$-simplex $\Delta^n\star\Delta^0\to\Line{R_X}$ to
the $n$-simplex $\Delta^n\to\Delta^n\star\Delta^0\to\Line{R_X}$;
according to (the dual of) \HTT{Corollary 2.1.2.4}, this is a right fibration, and hence a
fibration as $\Line{R_X}$ is an $\i$-groupoid \HTT{Lemma 2.1.3.2}.

When $X$ is the terminal Kan complex, we write $\Triv{R}$ in place of
$\Triv{R_X}$ and $\iota:\Triv{R}\longrightarrow\Line{R}$ in place of
$\iota_X$.  Given a map $f:X\to\Line{R}$, we will refer to a
factorization
$$
\xymatrix{ & {\Rtriv}
 \ar[d]_\iota \\
{X} \ar@{-->}[ur] \ar[r]_-{f} & {\Rwe.}  }
$$
of $f$ through $\iota$ as a {\em trivialization} of $f$;
i.e. $\mathrm{Triv}(f)$ is the fiber over $f$ of
$\iota_X:\Fun(X,\Triv{R})\to\Fun(X,\Line{R})$.

\begin{Proposition}\label{inf-t-pr-egl-bgl-fib}
Let $X$ be an $\i$-groupoid.  Then $\Triv{R_X}$ is a contractible
$\i$-groupoid, and the fiber of the fibration
$$
\iota_X:\Triv{R_X}\longrightarrow\Line{R_X}
$$
over an $R_X$-line $L$ is the (possibly empty) $\i$-groupoid
$\mathrm{Triv}(L)$ of trivializations of $L$.
\end{Proposition}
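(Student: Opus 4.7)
The plan is to verify the two assertions separately, both following from unwinding the definitions together with standard facts about slices and mapping spaces in quasicategories.

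For the contractibility of $\Triv{R_X}=\Line{R_X}_{/R_X}$: first observe that the projection $\Line{R_X}_{/R_X}\to\Line{R_X}$ is a right fibration by the dual of \HTT{Proposition 2.1.2.2}, and since its target $\Line{R_X}$ is a Kan complex, this right fibration is in fact a Kan fibration by \HTT{Lemma 2.1.3.3}. Thus $\Triv{R_X}$ is a Kan complex. The identity morphism $\id_{R_X}\colon R_X\to R_X$ is a terminal object of the slice $\Line{R_X}_{/R_X}$. I would then invoke the general fact that a Kan complex with a terminal object is contractible: if $x$ is terminal in the Kan complex $G$, then every vertex $y$ receives an essentially unique map to $x$ so $G$ is connected, and for any $y$ the mapping space $G(y,y)$ is equivalent to $G(y,x)\heq\ast$ via composition with any equivalence $y\heq x$. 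Since the homotopy groups $\pi_n(G,y)$ for $n\geq 1$ are computed from $G(y,y)$, all homotopy groups of $G$ vanish, so $G\heq\ast$.

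For the identification of the fiber: by definition of the slice, an $n$-simplex of $\Triv{R_X}=\Line{R_X}_{/R_X}$ is a map $\Delta^n\star\Delta^0\to\Line{R_X}$ whose restriction to the final vertex $\Delta^0$ is the object $R_X$, and $\iota_X$ sends such a simplex to its restriction along the inclusion $\Delta^n\hookrightarrow\Delta^n\star\Delta^0$. The fiber over an $R_X$-line $L$ therefore consists of those $(n+1)$-simplices of $\Line{R_X}$ whose final vertex is $R_X$ and whose restriction along $\Delta^n\hookrightarrow\Delta^n\star\Delta^0$ is the constant simplex at $L$. These assemble into the mapping $\i$-groupoid $\Line{R_X}(L,R_X)$, and by Proposition~\ref{RXline=isoRXmod} this is equivalent to $\mathrm{Iso}(\Mod{R_X})(L,R_X)$, which is precisely the (possibly empty) $\i$-groupoid $\mathrm{Triv}(L)$ of $R_X$-module equivalences $L\xra{\heq}R_X$.

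There is no real obstacle to the argument: both claims reduce to basic properties of slice $\i$-categories recorded in \HTT{\S 1.2}. The only mildly delicate point is the bookkeeping in the second step, where one must be careful that the simplicial structure on $\Line{R_X}_{/R_X}$ (defined via joins with $\Delta^0$) genuinely recovers the simplicial structure on the mapping space $\Line{R_X}(L,R_X)$ when one fixes the source of the morphism; once this identification is in hand, the appeal to Proposition~\ref{RXline=isoRXmod} is immediate.
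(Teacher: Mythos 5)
Your proposal is correct and follows essentially the same route as the paper: the paper's proof also identifies the fiber of the slice projection $\Line{R_X}_{/R_X}\to\Line{R_X}$ over $L$ with the mapping space $\Line{R_X}(L,R_X)=\mathrm{Triv}(L)$, and deduces contractibility of $\Triv{R_X}$ from the fact that it is an $\i$-groupoid with a final object. You merely spell out the details (Kan fibration, terminal object implies contractible, the pinched-mapping-space model of the fiber) that the paper leaves implicit.
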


\begin{proof}
Once again, use the description of $\C(a,b)$ as the fiber over $a$ of
the projection $\C_{/b}\to\C$, together with the fact that if $\C$ is
an $\i$-groupoid then $\C_{/b}$, an $\i$-groupoid with a final object,
is contractible.
\end{proof}

\begin{Corollary}\label{triv(f)=triv(f^*L)}
For any $f:X\to\Line{R}$ there is a commutative diagram of
$\i$-groupoids
$$
\xymatrix{
\mathrm{Triv}(f)\ar[r]\ar[d]          & \Fun(X,\Triv{R})\ar[r]\ar[d] & \Fun(X,\Line{R})\ar[d]\\
\mathrm{Triv}(f^*\mathscr{L})\ar[r] & \Triv{R_X}\ar[r] & \Line{R_X}}
$$
in which each of the squares is cartesian and each of the vertical
maps is an equivalence.
\end{Corollary}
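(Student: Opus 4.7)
The plan is to establish the right-hand square as a homotopy pullback with vertical equivalences, and then obtain the left-hand square by taking fibers over $f$ and $f^{*}\mathscr{L}$ respectively.

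First I would verify the two vertical equivalences on the right. The rightmost map $\Fun(X,\Line{R})\to\Line{R_X}$ is an equivalence by Proposition \ref{RXline=map(X,Rline)}. For the middle map, the key observation is that $\Triv{R}=\Line{R}_{/R}$ and $\Triv{R_X}=\Line{R_X}_{/R_X}$, so it suffices to compare $\Fun(X,\Line{R}_{/R})$ with $\Fun(X,\Line{R})_{/\mathrm{const}_{R}}$ and then transport along the equivalence of the right column. The general fact that $\Fun(X,\C_{/c})\heq\Fun(X,\C)_{/\mathrm{const}_c}$ follows from the definition of the slice $\i$-category together with the observation that $\Fun(X,-)$ preserves limits and right fibrations. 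Under the equivalence $\Fun(X,\Line{R})\heq\Line{R_X}$, the constant functor at $R$ corresponds to $R_X=p^{*}R$ (which by definition is pulled back from $R$ along the projection $p\colon X\to 1$), so $\Fun(X,\Triv{R})\heq\Line{R_X}_{/R_X}=\Triv{R_X}$.

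Next I would argue that the right square is a homotopy pullback. Both horizontal maps are right fibrations of $\i$-groupoids (hence Kan fibrations): the bottom by Proposition \ref{inf-t-pr-egl-bgl-fib}, and the top because $\Fun(X,-)$ preserves right fibrations. The square commutes by the naturality of the constructions in the previous paragraph, and since both vertical arrows are equivalences, the square is automatically a homotopy pullback of Kan complexes.

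Finally, the left square is obtained by taking the fiber of the right square over the vertex $f\in\Fun(X,\Line{R})$ on top and the corresponding vertex $f^{*}\mathscr{L}\in\Line{R_X}$ on bottom. By definition, the top fiber is $\mathrm{Triv}(f)$, and by Proposition \ref{inf-t-pr-egl-bgl-fib} the bottom fiber is $\mathrm{Triv}(f^{*}\mathscr{L})$; this identifies the left square and shows it is cartesian. Moreover, since the right square is a homotopy pullback whose right and middle vertical arrows are equivalences, the induced map on fibers — namely the leftmost vertical arrow — is also an equivalence. The only subtlety, and thus the step I would take most care with, is the compatibility asserted in the middle vertical equivalence: one must check that the identification $\Fun(X,\Line{R}_{/R})\heq\Line{R_X}_{/R_X}$ is compatible with the projection maps to $\Fun(X,\Line{R})\heq\Line{R_X}$, so that the right square genuinely commutes on the nose (or at least up to a specified homotopy arising naturally from the construction).
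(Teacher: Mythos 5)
Your proposal is correct and follows essentially the same route as the paper: the paper's proof likewise rests on the identification $\Fun(X,\Line{R}_{/R})\heq\Fun(X,\Line{R})_{/p^*R}\heq\Line{R_X}_{/R_X}$ to get the cartesian right-hand square, and then obtains the left-hand square by taking the fiber over $f$. You merely spell out details the paper leaves implicit (the horizontal maps being right fibrations, the pullback criterion for squares with vertical equivalences, and the fiber identification via Proposition \ref{inf-t-pr-egl-bgl-fib}).
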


\begin{proof}
Indeed,
$$
\Fun(X,\Line{R}_{/R})\heq\Fun(X,\Line{R})_{/p^*R}\heq\Line{R_X}_{/R_X},
$$
so the right-hand square is cartesian.  The left-hand square is
obtained by taking the fiber over $f$.
\end{proof}

%

\subsection{Thom $R$-modules and orientations}
\label{sec:thom-r-modules}



\begin{Definition}
The Thom $R$-module is the functor
$$
M:\Gpd_{\i/\Line{R}}\longrightarrow\Rmod
$$
which sends $f:X\to\Line{R}$ to
$$
Mf\eqdef p_!f^*\mathscr{L},
$$
the push-forward of the restriction along $f$ of the universal
$R$-line bundle $\mathscr{L}$.
\end{Definition}

\begin{Remark} \label{rem-2}
In terms of the equivalences
\[
    \Mod{R_{X}} \heq \Fun (X,\Mod{R})
\]
and
\[
     \Line{R_{X}}\heq \Fun(X,\Line{R}),
\]
the $R_{X}$-line $f^{*}\mathscr{L}$ corresponds to the map
\[
     X \xra{f} \Rwe \xra{} \Rmod,
\]
and then $Mf$ is the colimit
\[
     Mf = \colim (X \xra{f} \Rwe \xra{} \Rmod).
\]
As we shall see, this is a very useful formula for the Thom spectrum.
It is the only formula available in the development in
\S\ref{sec:i-cat-ii}.
\end{Remark}


We also obtain another useful characterization in the following
proposition.

\begin{Proposition}\label{quotient}
Let $G$ be a group-like monoidal $\i$-groupoid with delooping $BG$ ---
that is, an $\i$-groupoid with one object $*$ and an equivalence
$G\heq\Omega BG\heq\Aut_{BG}(*)$ --- and suppose that $X\heq
BG$.  Then
$$
Mf\heq R/G,
$$
where $G$ acts on $R$ by $R$-module maps via $\Omega f:G\heq\Omega
BG\to\Omega\Line{R}\heq\Aut_R(R)$.
\end{Proposition}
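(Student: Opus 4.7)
The plan is to apply the colimit description of the Thom $R$-module noted in Remark \ref{rem-2}, namely
$$Mf \heq \colim(X \xra{f} \Line{R} \xra{j} \Rmod),$$
and then interpret the colimit of a functor out of $BG$ as a homotopy quotient.

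First I would pick a basepoint of $X\heq BG$ and use it, together with the fact that every object of $\Line{R}$ is equivalent to $R$, to identify the composite $BG \xra{f} \Line{R} \xra{j} \Rmod$ up to equivalence with a functor that sends the unique object $*$ of $BG$ to $R\in\Rmod$. The data of such a pointed functor $BG\to\Rmod$ is equivalent to the data of a map of group-like monoidal $\i$-groupoids
$$\Omega f\colon G\heq \Omega BG \longrightarrow \Omega\Line{R} \heq \Aut_{\Line{R}}(R)\heq \Aut_R(R),$$
where the last equivalence holds because $\Line{R}$ is a full $\i$-subgroupoid of $\Rmod$ so equivalences of $R$-lines are the same as $R$-module equivalences (Proposition \ref{RXline=isoRXmod}). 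This produces precisely the $G$-action on $R$ described in the statement.

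Next I would invoke the tautological description of the colimit of a functor $BG\to\C$ as the homotopy quotient by the induced $G$-action: by definition the colimit of $BG\xra{} \Rmod$ corresponding to a $G$-action on $R$ by $R$-module maps is the homotopy quotient $R/G$. Alternatively, one can deduce this from Corollary \ref{t-pr-P-colim-X-to-BG}, which identifies the $G$-$\i$-groupoid $X\times_{BG} *$ (for $X\to BG$) as the colimit of $X\to BG$ followed by the Yoneda embedding; applied to the identity $BG\to BG$ and then stabilized and tensored up to $R$, this formally produces $R/G$ as the colimit.

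The only nontrivial step is the first: making precise the passage from an unpointed functor $f:BG\to\Line{R}$ to the description of the induced $G$-action on $R$ as $\Omega f$. This is essentially the statement that for an $\i$-groupoid $\C$ with object $c$, the full $\i$-subgroupoid on $c$ is $B\Aut_\C(c)$, together with the fact that loops on a pointed map between connected $\i$-groupoids recover the induced map of automorphism $\i$-groupoids; both facts are standard and used repeatedly in the surrounding sections. Everything else is formal once the colimit formula from Remark \ref{rem-2} is in hand.
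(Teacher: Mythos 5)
Your proposal is correct and is essentially the paper's own argument: the paper likewise observes that $R/G$ is by definition the colimit of the composite $BG\to B\Aut_R(R)\heq\Line{R}\to\Mod{R}$ classifying $f^*\mathscr{L}$, and that $Mf=p_!f^*\mathscr{L}$ is exactly this colimit (a left Kan extension along $p:BG\to\ast$), which is the content of your use of Remark \ref{rem-2}. The extra care you take in identifying the action via $\Omega f$ is a fleshing-out of what the paper treats as immediate, not a different route.
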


\begin{proof}
Evidently, $R/G$ is the colimit of the composite functor
$$
BG\to B\Aut_R(R)\heq\Line{R}\to\Mod{R}
$$
classifying the $R_{BG}$-module $f^*\mathscr{L}$.  But a colimit is a
special case of a left Kan extension; namely, the left Kan extension
along the projection $p:BG\to\ast$ to the point.
\end{proof}



With these in place, one can analyze the space of orientations in a
straightforward manner, as follows.  First of all observe that, by
definition, we have a weak equivalence
\[
   \Mod{R} (Mf,R) \heq \Mod{R_{X}} (f^{*}\mathscr{L},p^{*}R).
\]

%

\begin{Definition} \label{inf-def-2}
The $\i$-groupoid of \emph{orientations} of $Mf$ is the
``pull-back''
\begin{equation}\label{inf-eq:2}
\begin{CD}
\Rorient (Mf,R) @>>> \Rmod (Mf,R) \\
@V\heq VV @VV \heq V \\
\Line{R_X}(f^*\mathscr{L},p^*R) @>>> \Mod{R_X}(f^*\mathscr{L},p^*R).
\end{CD}
\end{equation}
\end{Definition}

The $\i$-groupoid $\Rorient (Mf,R)$ enjoys an obstruction theory
analogous to that of the space of orientations described in Definition
\ref{def-orientation}.  The following theorem is the analogue in this
context of Theorem \ref{t-th-orientation-ainfty}.

\begin{Theorem} \label{t-th-or-thy-infty-lifting}
Let $f:X\to\Rwe$ be a map, with associated Thom $R$-module $Mf$.
Then the $\i$-groupoid of orientations $Mf\to R$ is equivalent to
the $\i$-groupoid of lifts in the diagram
\begin{equation} \label{eq:26}
\xymatrix{ & {\Rtriv}
 \ar[d]_\iota \\
{X} \ar@{-->}[ur] \ar[r]_-{f} & {\Rwe.}  }
\end{equation}
\end{Theorem}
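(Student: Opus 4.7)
The plan is to chase through Definition \ref{inf-def-2} of orientations using the adjunction $p_! \dashv p^*$ and the standard description of mapping spaces in an $\i$-category as fibers of slice projections; the theorem will then follow immediately from Corollary \ref{triv(f)=triv(f^*L)}.

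First I would unpack the defining pullback for $\Rorient(Mf,R)$. Since by definition $Mf = p_!f^*\mathscr{L}$ and $R_X = p^*R$, the adjunction $p_! \dashv p^*$ yields a natural equivalence
\[
\Rmod(Mf, R) \heq \Mod{R_X}(f^*\mathscr{L}, R_X),
\]
and Proposition \ref{RXline=isoRXmod} identifies $\Line{R_X}(f^*\mathscr{L}, R_X)$ with the full subspace of equivalences in $\Mod{R_X}(f^*\mathscr{L}, R_X)$. The square in Definition \ref{inf-def-2} then collapses to a natural equivalence
\[
\Rorient(Mf, R) \heq \Line{R_X}(f^*\mathscr{L}, R_X).
\]

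Next I would invoke the general $\i$-categorical fact, already used in the proof of Proposition \ref{inf-t-pr-egl-bgl-fib}, that in any $\i$-category the mapping space from $a$ to $b$ is equivalent to the fiber over $a$ of the slice projection from the overcategory at $b$. Applied to the $\i$-groupoid $\Line{R_X}$ together with the definition $\Triv{R_X} = \Line{R_X}_{/R_X}$, this yields a natural equivalence
\[
\Line{R_X}(f^*\mathscr{L}, R_X) \heq \mathrm{Triv}(f^*\mathscr{L}),
\]
where the right-hand side is the fiber of $\iota_X: \Triv{R_X} \to \Line{R_X}$ over $f^*\mathscr{L}$, as in Proposition \ref{inf-t-pr-egl-bgl-fib}.

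Finally, Corollary \ref{triv(f)=triv(f^*L)} supplies a natural equivalence $\mathrm{Triv}(f^*\mathscr{L}) \heq \mathrm{Triv}(f)$, and by construction $\mathrm{Triv}(f)$ is precisely the $\i$-groupoid of lifts in \eqref{eq:26}. Composing the three equivalences proves the theorem. The main obstacle here is purely organizational: every step is either a direct application of a result established earlier in this section or a standard $\i$-categorical identity, so the only genuine work is checking that the chain of equivalences assembles naturally in $f$.
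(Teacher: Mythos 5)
Your argument is correct and follows essentially the same route as the paper: the paper's one-line proof also rests on Corollary \ref{triv(f)=triv(f^*L)}, with the identification $\Rorient(Mf,R)\heq\Line{R_X}(f^*\mathscr{L},p^*R)$ built into Definition \ref{inf-def-2} and the fiber description of $\iota_X$ supplied by Proposition \ref{inf-t-pr-egl-bgl-fib}. You have simply made explicit the adjunction and slice-fiber steps that the paper leaves implicit.
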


\begin{proof}
According to Corollary \ref{triv(f)=triv(f^*L)}, the $\i$-groupoid
$\mathrm{Triv}(f)$ of factorizations of $f$ through $\iota$ is
equivalent to the $\i$-groupoid $\mathrm{Triv}(f^*\mathscr{L})$ of
$R_X$-module equivalences from $f^*\mathscr{L}$ to $p^*R$.
\end{proof}

\begin{Corollary} \label{t-thom-iso-infty-cat-I}
An orientation of $Mf$ determines an equivalence of $R$-modules
\[
Mf \heq \splus X \Smash R.
\]
\end{Corollary}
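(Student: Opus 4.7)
The plan is to string together three results stated earlier in the section: the identification of orientations with trivializations (Theorem~\ref{t-th-or-thy-infty-lifting} together with Corollary~\ref{triv(f)=triv(f^*L)}), the functoriality of the pushforward $p_!$, and the projection-formula identity \eqref{f_!f^*}. I would first invoke Theorem~\ref{t-th-or-thy-infty-lifting} (and its proof via Corollary~\ref{triv(f)=triv(f^*L)}) to replace the datum of an orientation with an $R_X$-module equivalence
\[
u: f^{*}\mathscr{L} \xra{\heq} p^{*}R = R_{X}.
\]
This is essentially tautological in the $\infty$-categorical formalism, since $\Rtriv = \Rwe_{/R}$, and it is where all the homotopical content of the statement lives.

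Next, I would apply the exact functor $p_{!}: \Mod{R_{X}} \to \Rmod$ to $u$, producing an equivalence
\[
Mf \;=\; p_{!} f^{*}\mathscr{L} \;\xra{\heq}\; p_{!} p^{*} R
\]
of $R$-modules; this uses only that $p_{!}$ respects equivalences. The last step is to identify the right-hand side with $\splus X \Smash R$ by specializing the identity \eqref{f_!f^*} to the projection $p: X \to 1$ and the $S$-module $M = R$, which gives
\[
p_{!} p^{*} R \;\heq\; \Sigma^{\infty}_{1} p_{+} \Smash_{S_{1}} R \;=\; \splus X \Smash R,
\]
since $S_{1} = S$ and $\Sigma^{\infty}_{1} p_{+}$ is just $\splus X$.

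There is really no hard step here: the content is packaged entirely into \eqref{f_!f^*}, which was itself a direct consequence of the projection formula (verified fiberwise using commutativity of colimits with base change). The only thing to be careful about is naturality in the chosen orientation $u$, so that the composite equivalence $Mf \heq \splus X \Smash R$ is well-defined and functorial up to the contractible space of choices implicit in the $\infty$-categorical setting; this is automatic since each of the three intermediate equivalences is natural in $u$. I would end the proof with a brief remark that, under the further lift $f = b\gl{\iota}\circ g$ of \S\ref{sec:comm-s-algebra}, this equivalence recovers the classical Thom isomorphism $Mg \Smash R \heq \splus X \Smash R$ via the identification \eqref{eq:7}.
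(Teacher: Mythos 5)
Your proposal is correct and follows essentially the same route as the paper: pass from an orientation to an $R_X$-module equivalence $f^*\mathscr{L}\heq p^*R$ (the paper reads this directly off the defining square \eqref{inf-eq:2} rather than going through Theorem \ref{t-th-or-thy-infty-lifting}, but these are the same identification), apply $p_!$, and conclude via the canonical equivalence $p_!p^*R\heq\Sigma^\i_+ X\land R$ of \eqref{pull-push=tensor}/\eqref{f_!f^*}.
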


\begin{proof}
The desired map is the composite
$$
\Rorient(Mf,R)\to\mathrm{Iso}(\Mod{R_X})(f^*\mathscr{L},p^*R)\to\mathrm{Iso}(\Mod{R})(p_!f^*\mathscr{L},p_!p^*R)\to\mathrm{Iso}(\Mod{R})
(p_!f^*\mathscr{L},\Sigma^\i_+ X\land R),
$$
where the first map is the equivalence of \ref{inf-eq:2}, the second
map applies $p_!$, and the last map composes with the canonical
equivalence $p_!p^*R\to\Sigma^\i_+ X\land R$ of
\eqref{pull-push=tensor}.
\end{proof}

\subsection{$\i$-category of $R$-modules from model categories of
spectra}
\label{sec:i-category-r}

Our $\i$-categorical treatment of Thom spectra has been based on the
the symmetric monoidal $\i$-category of spectra developed in
\cite{DAGI,DAGII,DAGIII}.   We associate to an algebra $R$ in $\Stab
(\Gpd_{\i})$ the $\i$-category $\Rwe$, and then a Thom spectrum
functor
\[
   (\Gpd_{\i})_{/\Rwe} \xra{} \Rmod.
\]
In \S\ref{sec:ainfty-thom-spectrum} 
we
associate to an EKMM $S$-algebra $R$ the space $B\GL{R},$ and our Thom
spectrum construction may
be viewed as a functor
\[
   \spaces_{/ B\GL{R}}  \to \EKMM_{R}.
\]
In \S\ref{sec:comp-thom-spectra}, we will compare these two approaches
to Thom spectra to each other and to Thom spectra in the literature.
To prepare for this comparison, we show here how to set
up our $\i$-categorical analysis, starting from a (symmetric) monoidal
simplicial model category of spectra, such as the
$S$-modules of \cite{EKMM} or the symmetric spectra of
\cite{MR1695653}.

We first associate to an algebra $R$ in a simplicial
monoidal $\i$-category of spectra an $\i$-category $\Rwe$ of
$R$-lines, as above, and we show that $\Rwe$ has the homotopy type of
$B\GL{R}.$  It is straightforward then to construct a Thom
spectrum functor
\[
   (\Gpd_{\infty})_{/\Rwe} \to \Rmod,
\]
as above.

We also show that there is an algebra $R'$ in $\Stab(\Gpd_{\i})$ such
that
\[
        \Rmod \heq \Mod{R'},
\]
and so
\[
    (\Gpd_{\infty})_{/\Rwe} \heq (\Gpd_{\infty})_{/\Line{R'}},
\]
in such a way that the two $\i$-categorical Thom
spectrum functors
\[
    (\Gpd_{\infty})_{/\Rwe} \to \Rmod
\]
and
\[
    (\Gpd_{\infty})_{/\Line{R'}} \to \Mod{R'}
\]
are obviously equivalent.

Let $\monspectra$ be a symmetric monoidal simplicial model category of
spectra such as the $S$-modules of \cite{EKMM}, or the symmetric
spectra of \cite{MR1695653}, and let $R$
be a cofibrant and fibrant associative algebra in
$\monspectra.$   We require $R$ to be
cofibrant and fibrant so that the derived spaces of self-equivalences of
$R$ has the homotopy type of $\GL{R}$; see Proposition
\ref{inf-t-pr-Rwe-and-GL}.

Let $\RmodT$ be the simplicial model category of
$R$-modules.   From $\RmodT$ we form the $\infty$-category
$\Rmod$ in the usual way, as we now recall.  If $C$ is a simplicial
model category, let $\MtoI{C}$ denote the full subcategory of $C$
consisting of cofibrant-fibrant objects.  Now take the simplicial
nerve \HTT{1.1.5.5} to obtain the $\infty$-category
\[
      \Rmod \eqdef \N \MtoI{\RmodT}.
\]
For cofibrant-fibrant $L$ and $M$, the mapping spaces $\RmodT (L,M)$
are Kan complexes, and it follows \HTT{1.1.5.9} that $\Rmod$ is an
$\infty$-category.

Once we have $\Rmod$, we can as in Definition \ref{inf-def-1} define
an $R$-line to be an $R$-module $L$ which admits a weak equivalence
\[
    L \to R,
\]
and let $\Rwe$ be the maximal $\infty$-subgroupoid of $\Rmod$ whose
objects are the cofibrant-fibrant $R$-lines.   Equivalently, we can
define $\RweT$ to be the subcategory of $\RmodT$ in
which the objects are $R$-lines $M,$ and in which the space of
morphisms from $L$ to $M$ is the subspace of $\RmodT (L,M)$
consisting of weak equivalences
\[
    L \xra{\heq} M.
\]
We can then set
\[
\MtoI{\Rwe}_{T} \eqdef \MtoI{\Rmod}_{T} \cap \Rwe_{T},
\]
and
\begin{align*}
      \Rwe  \iso \N \MtoI{\Rwe}_{T}.
\end{align*}
Explicitly, the set of $n$-simplices of
$\N\MtoI{\Rwe}_{T}$ is the set of simplicial functors
\[
    \mathfrak{C}[\Delta^n]\to \MtoI{\Rwe}_{T}.
\]
In particular,
\begin{enumerate}
\item  $\Rwe_{0}$ consists of fibrant-cofibrant $R$-modules which are $R$-lines;
\item  $\Rwe_{1}$ consists of the $R$-module weak equivalences
\[
     L \to M;
\]
\item $\Rwe_{2}$ consists of diagrams (not necessarily commutative)
\[
\xymatrix{
{L}
 \ar[r]^{f}
 \ar[dr]_{h}
&
{M}
 \ar[d]^{g}
\\
&
{N,}
}
\]
together with a path in $\RweT (L,N)$ from $gf$ to $h.$
\end{enumerate}
This is precisely the sort of data we proposed in \eqref{eq:37} as the
transitions functions for a bundle of free rank-one $R$-modules.

Recall from \HTT{1.2.2}  and \S\ref{sec:i-categories-i} that if
$Y$ is any  $\infty$-category and $a,b\in Y$, then  the space of maps
$Y (a,b) = \map_{Y} (a,b)$ from  $a$ to $b$ is a homotopy type; it can be
modeled as the $\infty$-groupoid $\hom_{Y} (a,b)$ which is
the fiber in the diagram of
simplicial sets
\begin{equation}\label{eq:51}
\begin{CD}
\hom_{Y} (a,b) @>>> Y^{\Delta^{1}} \\
@VVV @VVV \\
(a,b) @>>> Y^{\partial \Delta^{1}}.
\end{CD}
\end{equation}
If $Y=\N C$ is the nerve of a simplicial category $C$, then
\HTT{Theorem 1.1.5.12} implies that
\begin{equation}\label{eq:47}
    \hom_{Y} (a,b) \heq \hom_{C} (a,b).
\end{equation}

Taking $Y=\Rmod = \N\MtoI{\RmodT}$ and recalling that objects of $\Rmod$ are
cofibrant-fibrant $R$-modules, we see that $\Rmod
(L, M)$ has the homotopy type of the \emph{derived} space of $R$-module
maps from $M$ to $N.$  In view of the equivalence \eqref{eq:47}, we
shall adopt the convention that for arbitrary $R$-modules $M$ and $N$
(not necessarily cofibrant-fibrant), $\Rmod (M,N)$ will mean the
associated derived mapping space.

We note that $R$ itself may not be a
cofibrant-fibrant $R$-module, and we fix a
cofibrant-fibrant replacement $\Rcf.$  By construction,
$\Rwe$ is a connected $\infty$-groupoid, and so equivalent to
the full $\infty$-subgroupoid $B\Aut (\Rcf)$ with one object $\Rcf$
and morphisms
\[
 \Rwe (\Rcf,\Rcf) \heq \text{ (by convention) } \Rwe (R,R),
\]
the $\infty$-groupoid of derived self-equivalences of $\Rcf.$
This is the mapping space we studied in
\S\ref{sec:space-units-derived}, and in this setting Proposition
\ref{inf-t-pr-Rwe-and-GL} becomes the following.

\begin{Lemma} \label{inf-t-le-Rwe-and-GL-late}
Suppose that $R$ is a cofibrant and fibrant algebra in $\monspectra.$
Let $L$ and $M$ be objects of $\Rwe$.  The inclusion of
derived mapping spaces
\[
  \Rwe (R,R) = \Rwe (\Rcf,\Rcf) \rightarrow \Rmod (\Rcf, \Rcf) = \Rmod (R,R)
\]
is a model for the inclusion
\[
  \GL{R} \rightarrow \linf R.
\] 
\end{Lemma}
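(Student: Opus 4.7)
The plan is to adapt the proof of Proposition \ref{inf-t-pr-Rwe-and-GL}, which handled the EKMM case, to the general monoidal simplicial model category $\monspectra$. The entire argument reduces to a single computation: the derived mapping space $\Rmod(\Rcf,\Rcf)$ is naturally equivalent to $\linf R$ in a way that identifies its subspace of components consisting of weak equivalences with $\GL{R}\subset\linf R$.

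To carry this out, first let $R^{c}\to R$ be a cofibrant replacement in $\RmodT$; since $\Rcf$ is fibrant, the derived mapping space may be modeled as $\RmodT(R^{c},\Rcf)$. The free $R$-module functor $R\otimes(\slot)\colon\monspectra\to\RmodT$ is left adjoint to the forgetful functor, and a cofibrant replacement $R^{c}$ is (equivalent to) $R\otimes S^{c}$, where $S^{c}$ is a cofibrant replacement of the unit. The adjunction then gives a natural weak equivalence
\[
\RmodT(R^{c},\Rcf)\;\heq\;\monspectra(S^{c},\Rcf).
\]
Because $\Rcf$ is weakly equivalent to $R$ as an object of $\monspectra$ (the forgetful functor creates weak equivalences), and because the right-hand side is, by definition, a derived mapping space representing $\linf$ applied to its target, we obtain $\monspectra(S^{c},\Rcf)\heq\linf R$. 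Assembling these equivalences yields a natural zig-zag of weak equivalences
\[
\Rmod(\Rcf,\Rcf)\;\heq\;\linf R,
\]
exactly as in the EKMM argument.

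It remains to identify the subspace of weak equivalences. Since the equivalences in $\Rmod(\Rcf,\Rcf)$ are precisely those components whose underlying element of $\pi_{0}\Rmod(\Rcf,\Rcf)\cong\pi_{0}R$ is a unit, the above zig-zag of equivalences restricts, over the pullback defining $\GL{R}$ in \eqref{eq:59}, to an identification
\[
\Rwe(\Rcf,\Rcf)\;\heq\;\GL{R}
\]
compatible with the inclusions into $\linf R$. This gives the desired model for $\GL{R}\to\linf R$.

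The main obstacle is ensuring the identification $R^{c}\heq R\otimes S^{c}$ and the passage to $\linf R^{f}$ (a fibrant replacement of $R$) behave correctly across the different model categories one might wish to consider; as explained in \S\ref{sec:space-units-derived}, when the unit $S$ is cofibrant (e.g.\ symmetric/orthogonal spectra) one must be careful that the underlying module of a fibrant-cofibrant algebra is sufficiently fibrant (which holds in the associative case but fails for commutative algebras in diagram spectra), while in the EKMM setting fibrancy of the module underlying $R$ is automatic. In each case the computation above goes through with only minor modifications, and the general case is thereby reduced to the already-established Proposition \ref{inf-t-pr-Rwe-and-GL}.
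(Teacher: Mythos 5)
Your argument is correct and is essentially the paper's: the paper proves this Lemma simply by pointing back to Proposition \ref{inf-t-pr-Rwe-and-GL}, whose proof is exactly your free--forgetful adjunction computation made explicit in the EKMM model, where $S^{c}=S\Smash_{\Lin}\L\sinf S^{0}$, $R^{c}=R\Smash_{S}\L\sinf S^{0}$, and the identification $\monspectra(S^{c},\Rcf)\heq\linf R$ rests on the cited weak equivalence $R\to F_{\Lin}(S,R)$. Your closing caveat about how $R\otimes S^{c}\to R$ and fibrancy behave in other models (associative versus commutative algebras in diagram spectra) is precisely the point the paper handles in \S\ref{sec:space-units-derived}, so nothing essential is missing.
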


With the $\i$-category $\Rwe$ in place, we can develop the theory of
Thom spectra as in \S\ref{sec:glr-symm-mono} and
\ref{sec:thom-r-modules}; we do a little of this in
\S\ref{sec:i-cat-ii}.  For now,  we define a
\emph{trivialization} of an $R$-line $L$ to be an
equivalence of $R$-modules
\[
    L \rightarrow \Rcf,
\]
and we define the $\infty$-category $\Rtriv$ of trivialized $R$-lines
to be the slice category
\[
    \Rtriv  = \Rwe_{/\Rcf}.
\]

\begin{Proposition}\label{inf-ii-t-pr-egl-bgl-fib}
The forgetful map
\[
    \Rtriv \to \Rwe
\]
is a Kan fibration.  Indeed this fibration 
is a  model for
\[
   \ptspace \heq E\GL{R} \rightarrow B\GL{R}.
\]
\end{Proposition}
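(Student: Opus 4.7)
The plan is to split the proposition into two assertions and dispatch each using tools already invoked in the paper. First, I will show that the projection $\Rtriv = \Rwe_{/\Rcf} \to \Rwe$ is a Kan fibration. This is a direct application of the dual of \HTT{Corollary 2.1.2.4} (a slice projection $\C_{/a} \to \C$ is a right fibration), followed by \HTT{Lemma 2.1.3.2} (a right fibration whose target is a Kan complex is automatically a Kan fibration). Since $\Rwe$ is by construction an $\i$-groupoid, this argument is structurally identical to the one used just before Proposition \ref{inf-t-pr-egl-bgl-fib} for the map $\iota_{X}$; no new input is required.

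The second assertion---that this Kan fibration models $E\GL{R} \to B\GL{R}$---reduces to identifying homotopy types of total space and base. For the total space, $\Rtriv = \Rwe_{/\Rcf}$ is a slice of an $\i$-groupoid with final object $\id_{\Rcf}$, hence a contractible Kan complex; this is precisely the argument of Proposition \ref{inf-t-pr-egl-bgl-fib}. So $\Rtriv$ is a legitimate model for $E\GL{R} \htp \ptspace$.

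For the base, I will argue $\Rwe \heq B\GL{R}$ in two steps. Since every $R$-line admits an $R$-module equivalence to $R$, and hence to the chosen cofibrant-fibrant replacement $\Rcf$, the Kan complex $\Rwe$ is connected. Therefore the inclusion of the full $\i$-subgroupoid on the single vertex $\Rcf$---namely $B\Aut(\Rcf)$---into $\Rwe$ is an equivalence. By Lemma \ref{inf-t-le-Rwe-and-GL-late} (the $\i$-categorical reformulation of Proposition \ref{inf-t-pr-Rwe-and-GL}), the endomorphism space $\Rwe(\Rcf,\Rcf) = \Aut(\Rcf)$ is weakly equivalent to $\GL{R}$, so $\Rwe \heq B\Aut(\Rcf) \heq B\GL{R}$. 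Taking the fiber of the Kan fibration $\Rtriv \to \Rwe$ over the basepoint $\Rcf$ recovers $\Rwe(\Rcf,\Rcf) \heq \GL{R}$, which is compatible with the identification of the total fibration with $\GL{R} \to \ptspace \to B\GL{R}$.

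The only minor obstacle is making the identification $\Rwe \heq B\Aut(\Rcf)$ rigorous in the $\i$-categorical sense; but this is exactly the content of the general principle that a connected $\i$-groupoid is equivalent to the delooping of the automorphism group of any of its objects, combined with the comparison in Lemma \ref{inf-t-le-Rwe-and-GL-late}. All other ingredients are either formal properties of slice $\i$-categories or have been established in the preceding discussion.
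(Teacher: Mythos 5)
Your proposal is correct and follows essentially the same route as the paper: after noting that $\Rtriv = \Rwe_{/\Rcf}$ is a contractible Kan complex and that the projection to the $\i$-groupoid $\Rwe$ is a Kan fibration (the paper phrases this via the path-space description, while you cite the slice/right-fibration facts from \cite{math.CT/0608040} already invoked in the proof of Proposition \ref{inf-t-pr-egl-bgl-fib}), both arguments invoke Lemma \ref{inf-t-le-Rwe-and-GL-late} to identify the fiber $\Rwe(\Rcf,\Rcf)$ with $\GL{R}$. The only cosmetic difference is that you spell out the identification $\Rwe \heq B\Aut(\Rcf) \heq B\GL{R}$ of the base, which the paper leaves implicit once the fiber and contractibility of the total space are in hand.
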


\begin{proof}
$\Rtriv$ is the simplicial set
of paths  ending at $\Rcf$ in the $\infty$-groupoid (Kan complex) $\Rwe.$
In particular  $\Rtriv$ is a contractible Kan complex, and the map
\[
   \Rtriv \to \Rwe
\]
is a Kan fibration;  by the discussion of \eqref{eq:51} and Lemma
\ref{inf-t-le-Rwe-and-GL-late}, the fiber is $\Aut (\Rcf) \heq \Rwe
(R,R) \heq \GL{R}.$
\end{proof}

To see that the resulting theory of Thom spectra agrees with the
theory developed in sections \ref{sec:glr-symm-mono} and
\ref{sec:thom-r-modules} amounts to comparing notions of $\Rmod.$  More
precisely, we'd like to know that the
$\i$-category of algebras in $\monspectra$ is equivalent
to $\Alg (\Stab (\Gpd_{\i})),$ and that if $R$ is an algebra in
$\monspectra$ corresponding to an algebra $R'$ in $\Stab (\Gpd_{\i}),$ then the
$\i$-categories of $R$-modules and $R'$-modules are equivalent.

The work of \cite{MR1806878,MR1922205,MR1819881,EKMM} establishes the
necessary Quillen equivalences for the model categories of
spectra of Lewis-May-Steinberger, $S$-modules of \cite{EKMM}, and
symmetric and orthogonal spectra \cite{MR1806878,MR1695653}.  In
\cite{DAGI,DAGII},  Lurie relates the $\i$-category
$\Stab (\Gpd_{\infty})$ to symmetric spectra.

Let $\symmspectra$ denote the category of symmetric spectra.
Example 8.21  of \cite{DAGIII} establishes an equivalence of symmetric
monoidal $\i$-categories
\begin{equation}\label{eq:16}
     \Stab (\Gpd_{\infty}) \heq \N\MtoI{\symmspectra},
\end{equation}
which by \cite[Example 1.6.14]{DAGII} induces an equivalence
\[
\theta: \Alg (\Stab (\Gpd_{\infty})) \heq
      \N \MtoI{\Alg (\symmspectra)}
\]
of $\i$-categories of algebras.

Let $R$ be an algebra in $\Alg (\Stab (\Gpd_{\infty})),$ so $\theta
(R)$ is a cofibrant-fibrant algebra in $\symmspectra.$  Then we have
$\Rmod$ as in \S\ref{sec:infinity-category-r}, and may construct
\[
    \Mod{\theta (R)} \eqdef \N \MtoI{\CatOf{$\theta (R)$--modules in
    $\symmspectra$}},
\]
as above.  According to
Theorem 2.5.3 of \cite{DAGII} we have an equivalence of
$\i$-categories
\begin{equation} \label{eq:61}
    \Rmod \heq \Mod{\theta (R)},
\end{equation}
as desired.

We summarize the discussion as we shall use it in
\S\ref{sec:comp-thom-spectra}.  Let $\monspectra$ be any simplicial
symmetric monoidal model category
of spectra, connected by a zig-zag of Quillen equivalences to
$\symmspectra.$  Suppose that
$\monspectra$ has a theory of algebras and modules similarly equivalent to that of
$\symmspectra.$  If follows that there are equivalences of $\i$-categories
\begin{align*}
     \N\MtoI{\monspectra}& \xra{\heq} \Stab (\Gpd_{\infty}) \\
     \N\MtoI{\Alg (\monspectra)} & \xra{\heq} \Alg (\Stab (\Gpd_{\infty})).
\end{align*}
Moreover, let $R'$ be a cofibrant and fibrant algebra in $\monspectra,$
and let $R$ a corresponding algebra in $\Stab (\Gpd_{\infty}).$  If
$X$ is a space, let $\Sing{X}$ denote its singular complex (an
$\infty$-groupoid).   Then according to Proposition
\ref{inf-ii-t-pr-egl-bgl-fib} and \eqref{eq:61},
we have equivalences
\[
        \Sing{(B\GL{R'})} \heq \Line{R'} \heq \Rwe,
\]
and so we may regard a map of spaces $f: X\to B\GL{R'}$ equivalently
as a map of $\i$-groupoids
\[
   \Sing{f}: \Sing{X} \to \Sing{(B\GL{R'})} \heq \Rwe.
\]

\section{Morita theory and Thom spectra}\label{sec:comp-thom-spectra}

In this section we interpret the construction of the Thom spectrum
from the perspective of Morita theory.  This viewpoint is implicit in
the ``algebraic'' definition of the Thom spectrum in
Section~\ref{sec:ainfty-thom-spectrum} as the derived smash product
\[Mf\eqdef \splus P \sma^{L}_{\splus \GL{R}} R,\]
where $R$ is the $\splus \GL{R}$-$R$ bimodule specified by the
canonical action of $\splus \GL{R}$ on $R$.  Recalling that the target
category of $R$-modules is stable, we can regard the Thom spectrum as
essentially given by a functor from (right) $\splus \GL{R}$-modules to
$R$-modules.

Now, roughly speaking, Morita theory (more precisely, the
Eilenberg-Watts theorem) implies that any continuous functor from
(right) $\splus \GL{R}$-modules to (right) $R$-modules which preserves
homotopy colimits and takes $\GL{R}$ to $R$ can be realized as
tensoring with an appropriate $(\splus \GL{R})$-$R$ bimodule.  In
particular, this tells us that the Thom spectrum functor is
characterized amongst such functors by the additional data of the
action of $\GL{R}$ on $R$.

Put another way, a continuous homotopy-colimit preserving functor
\[T \colon \Mod{\splus \GL{R}} \to \Mod{R}\]
can be determined to be the Thom spectrum functor by inspection of the
induced bimodule structure on $R \heq T(\splus \GL{R})$ obtained from
the map
\[
\xymatrix{
\splus \GL{R} \ar[r] & F_{\splus \GL{R}}(\splus \GL{R},\splus
\GL{R}) \ar[r]^T &  F_R(T(\splus
\GL{R}), T(\splus \GL{R})).
}
\]
By the adjunction characterizing $\GL{R}$, this
data is equivalent to specifying a multiplicative map $\GL{R} \to
\GL{R}$ (or equivalently a map $B\GL{R} \to B\GL{R}$).

Beyond its conceptual appeal, this viewpoint on the Thom spectrum
functor provides the basic framework for comparing the different
constructions which we have discussed in this paper (both with one
another, and with the ``neo-classical'' construction of Lewis and May
and the parametrized construction of May and Sigurdsson).  On the
face of it, the definition of the Thom spectrum in
Section~\ref{sec:units-via-infty} as the functor which sends
$f:X\to\Line{R}$ to
\[Mf\eqdef p_!f^*\mathscr{L},\]
where $\mathscr{L}$ is the universal parametrized spectrum over
$\Line{R}$, has a rather different (``geometric'') character.  In fact,
however, these definitions are essentially the same, under the
equivalence between parametrized spectra and modules.

After discussing the analogue of the classical Eilenberg-Watts theorem
in the context of ring spectra in Subsection~\ref{sec:morita}, we work
out a version of this classification in the setting of parametrized
spectra in Subsection~\ref{sec:colim-pres-funct}, using the techniques
made available by the $\i$-categorical perspective.  Here it turns out
to be possible to use the connections between modules, parametrized
spectra, and presheaves to obtain a classification of
colimit-preserving functors between $\i$-categories that specializes
to imply that a colimit-preserving functor from the infinity category
of spaces over $B\GL{R}$ to $R$-modules is determined by a map
$B\GL{R} \to B\GL{R}$.

This $\i$-category machinery then provides the foundation of the
comparison of the different definitions of Thom spectra in the paper.
As a consistency check, we verify that the classification obtained in
Subsection~\ref{sec:colim-pres-funct} agrees with that of the previous
subsection; the data of a map $B\GL{R} \to B\GL{R}$ determines the
same functor in each framework.  We then employ this axiomatic
perspective to show that the ``algebraic'' Thom spectrum functor
induces a functor on $\i$-categories which agrees with the
``geometric'' definition.  We also give a direct argument for the
comparison between the two definitions of the Thom spectrum functor;
although this argument appears to skirt the underlying Morita theory,
we believe it provides a useful concrete depiction of the situation.

Finally, the close relationship between our $\i$-categorical
construction of the Thom spectrum and the definition of May and
Sigurdsson \cite[23.7.1,23.7.4]{MR2271789} allows us to compare that
construction (and by extension the ``neo-classical'' Lewis-May
construction) to the ones discussed herein.

\subsection{The Eilenberg-Watts theorem for categories of module spectra}
\label{sec:morita}

The key underpinning of classical Morita theory is the Eilenberg-Watts
theorem, which for rings $A$ and $B$ establishes an equivalence
between the category of colimit-preserving functors $\Mod{A} \to
\Mod{B}$ and the category of $(A,B)$-bimodules.  The proof of the theorem
proceeds by observing that any functor $T\colon \Mod{A} \to \Mod{B}$
specifies a bimodule structure on $TA$ with the $A$-action given by
the composite
\[A \to F_A (A,A) \to F_B(TA, TA).\]
It is then straightforward to check that the functor $- \otimes_A TA$
is isomorphic to the functor $T$, using the fact that both of these
functors preserve colimits.

In this section, we discuss the generalization of this result to the
setting of categories of module spectra.  The situation here is more
complicated than in the setting of rings; for instance, it is
well-known that there are equivalences between categories of module
spectra which are not given by tensoring with bimodules, and there are
similar difficulties with the most general possible formulation of the
Eilenberg-Watts theorem.  However, much of the subtlety here comes
from the fact that unlike in the classical situation, compatibility
with the enrichment in spectra is not automatic (see for example the
excellent recent paper of Johnson \cite{Niles} for a comprehensive
discussion of the situation).  By assuming our functors are enriched,
we can recover a close analogue of the classical result.

Let $A$ and $B$ be (cofibrant) $S$-algebras, and let $T$ be an
enriched functor $T\colon \Mod{A} \to \Mod{B}$.  Specifically, we
assume that $T$ induces a map of function spectra $F_A(X,Y) \to
F_B(TX,TY)$. Furthermore, assume that $T$ preserves tensors (and in
particular is homotopy-preserving) and preserves homotopy colimits.
For instance, these conditions are satisfied if $T$ is a Quillen
left-adjoint.  The assumption that $T$ is homotopy-preserving in
particular means that $T$ preserves weak equivalences between
cofibrant objects and so admits a total left-derived functor $T^L
\colon \Ho{\Mod{A}} \to \Ho{\Mod{B}}$.  Furthermore, $T(A)$ is an
$A$-$B$ bimodule with the bimodule structure induced just as above.

Using the work of \cite{Niles} and an elaboration of
the arguments of \cite[4.1.2]{MR1928647} (see also
\cite[4.20]{MR2122154}) we now can prove the following Eilenberg-Watts
theorem in this setting.  We will work in the EKMM categories of
modules, so we can assume that all objects are fibrant.

\begin{Proposition}\label{prop:watts}
Given the hypotheses of the preceding discussion, there is a natural
isomorphism in $\Ho{\Mod{B}}$ between the total left-derived functor
$T^L(-)$ and the derived smash product $(-) \sma^L T(A)$, regarding
$T(A)$ as a bimodule as above.
\end{Proposition}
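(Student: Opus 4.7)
The plan is to build a natural transformation $\alpha_X\colon X \sma^L_A T(A) \to T^L(X)$ of functors $\Ho \Mod{A} \to \Ho \Mod{B}$, verify that it is an equivalence when $X$ is a free $A$-module (on a cofibrant spectrum), and then bootstrap to all cofibrant $X$ via a bar-resolution argument, exploiting the hypothesis that both functors preserve homotopy colimits.

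First I would construct $\alpha$ from the enriched structure of $T$. For every right $A$-module $X$ the enrichment provides a map of spectra $F_A(A,X)\to F_B(T(A),T(X))$; identifying $F_A(A,X)\iso X$ and taking the adjoint with respect to $T(A)$ yields a map of right $B$-modules $X \sma T(A)\to T(X)$. A diagram chase, using the fact that the left $A$-action on $T(A)$ is defined by $A\to F_A(A,A)\xra{T}F_B(T(A),T(A))$, shows that this map coequalizes the two composites $X \sma A \sma T(A) \rightrightarrows X \sma T(A)$ that present the tensor over $A$, so it descends to a natural map $\alpha_X\colon X \sma_A T(A)\to T(X)$. Passing to cofibrant replacements for $X$ produces the natural transformation on the derived level; since all objects are fibrant in EKMM modules, no further fibrancy adjustments are needed.

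Second I would verify $\alpha$ is an equivalence on the free modules $X \iso A \sma K$ with $K$ a cofibrant spectrum. On one hand $(A \sma K)\sma_A T(A)\iso K \sma T(A)$; on the other, the assumption that $T$ preserves tensors with spectra gives $T(A\sma K)\iso K \sma T(A)$; unwinding the construction shows $\alpha_{A\sma K}$ is precisely the resulting identification, which in the case $K=S$ reduces $\alpha_A$ to the canonical isomorphism $A\sma_A T(A)\iso T(A)$. For a general cofibrant $X$, I would use the two-sided simplicial bar resolution $B_\bullet(X,A,A)\to X$, each level of which is free as a right $A$-module. Because $(-)\sma^L_A T(A)$ preserves geometric realization tautologically and $T^L$ preserves it by the homotopy-colimit preserving hypothesis, and because $\alpha$ is a levelwise equivalence by the free case, passage to realization shows $\alpha_X$ is a weak equivalence. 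The isomorphism asserted in the proposition is then $\alpha^{-1}$ in $\Ho \Mod{B}$.

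The main obstacle is to check that the enriched naturality genuinely forces $\alpha$ to be bilinear for the $A$-$B$-bimodule structure on $T(A)$, and that the bar resolution and its image under $T$ are sufficiently cofibrant (in particular that $B_\bullet(X,A,A)$ is a proper simplicial object whose realization models $X$, and that $T$ carries this to a proper simplicial $B$-module). This is exactly the subtlety distinguishing the spectral Eilenberg--Watts theorem from its classical ancestor; the framework of \cite{Niles} identifies the class of spectrally enriched, colimit-preserving functors that are represented by smashing with a bimodule, and the bar-construction argument of \cite[4.1.2]{MR1928647} (compare \cite[4.20]{MR2122154}) supplies the extension from free modules to arbitrary cofibrant $X$, so the proof amounts to assembling these two ingredients in the present enriched setting.
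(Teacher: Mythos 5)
Your proposal is correct and follows essentially the same outline as the paper: build the natural map $(-)\sma_A T(A)\to T(-)$ from the spectral enrichment of $T$, observe it is an equivalence on free modules, and bootstrap to all cofibrant modules using the fact that both sides preserve homotopy colimits. The only cosmetic difference is that you carry out the inductive step via the two-sided bar resolution $B_\bullet(X,A,A)$ while the paper uses a cellular induction, and the paper handles the derived-functor bookkeeping by replacing $T(A)$ with a cofibrant bimodule $T'$ rather than only replacing the variable $X$; both devices serve the same purpose.
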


\begin{proof}
By continuity, there is a natural map of $B$-modules
\[(-) \sma_A T(A) \to T(-).\]
Let $T'$ denote a cofibrant replacement of $T(A)$ as an $A$-$B$
bimodule.  Since the functor $(-) \sma_A T'$ preserves weak
equivalences between cofibrant $A$-modules, there is a total
left-derived functor $(-) \sma_A^L T'$ which models $(-) \sma_A^L
T(A)$.  Thus, the composite
\[
(-) \sma_A T' \to (-) \sma_A T(A) \to T(-).
\]
descends to the homotopy category to produce a natural map
\[
(-) \sma_A^L T(A) \to T^L(-).
\]
The map is clearly an equivalence for the free $A$-module of rank
one; i.e. $A$.  Since both sides commute with homotopy colimits, we
can inductively deduce that the first map is an equivalence for all
cofibrant $A$-modules, and this implies that the map of derived
functors is an isomorphism.
\end{proof}

We now specialize to the case of Thom spectra.  Recall that for an
$S$-algebra $R$, $\GL{R}$ is an $\ainfty$ space and hence a monoid in
the category of $*$-modules. As such, there is a well-defined
notion of a $\GL{R}$-module (Definition~\ref{def-l1modules}) and
moreover we defined the bar constructions $\BL \GL{R}$ and $\EL\GL{R}$
such that the map $\EL\GL{R} \to \BL\GL{R}$ models the universal
quasifibration (Section~\ref{sec:principal-bundles}).

Given a fibration of $*$-modules $f \colon X \to \BL\GL{R}$, we first
took the pullback of the diagram,
\[
\xymatrix{
X \ar[r] & \BL\GL{R} & \ar[l] \EL\GL{R} \\
}
\]
to obtain a $\GL{R}$-module $P$.  This procedure defines a functor
from $*$-modules over $\BL\GL{R}$ to $\GL{R}$-modules; since we are
assuming $f$ is a fibration, we are computing the derived functor.
Next, since the category $\Mod{R}$ is stable, a continuous functor
from $\GL{R}$-modules to $\Mod{R}$ factors through the stabilization
$\Mod{\splus \GL{R}}$.  That is, up to equivalence a continuous
functor from $\GL{R}$-modules to $R$-modules is determined by a
continuous functor from $\splus \GL{R}$-modules to $R$-modules,
which is to say a $(\splus\GL{R},R)$-bimodule.

One might like to deduce a characterization of the
Thom spectrum functor as a functor from $\spaces / B\GL{R}$ to
$R$-modules from Proposition~\ref{prop:watts}.  However, it turns out
to be technically involved to state such a theorem precisely, because
the derived Thom spectrum functor as we have constructed it
``algebraically'' is presented as a composition of a right derived
functor (which is an equivalence) and a left derived functor.  We
remark that much of the technical difficulty in the neo-classical
theory of the Thom spectrum functor arises from the difficulties
involved in dealing with point-set models of such composites.

Fortunately, this is the kind of formal situation that the
$\i$-category framework handles well.  The functor $f\mapsto P$
induces an equivalence of homotopy categories; by Theorem
\ref{t-pr-princ-bundle-obstr-thy}, it is an equivalence of enriched
categories.  In particular, it induces an equivalence of
$\i$-categories
\[
    \N\MtoI{\EKMM_{*/\BL{\GL{R}}}} \heq \N \MtoI{\EKMM_{\GL{R}}};
\]
in the $\i$-category setting, this is an instance of
Proposition \ref{t-pr-G-space-space-over-bg}.
In light of the above discussion, the abstract characterization of the
Thom spectrum functor is immediate.

\begin{Proposition}\label{abstract-alg-thom}
Let $T:\Mod{\GL{R}}\to\Mod{R}$ be a continuous, colimit-preserving functor which sends $\GL{R}$ to an $R$-module $R'$ homotopy equivalent to $R$ in such a way that
$$
\GL{R}\simeq\mathrm{hEnd}_{\Mod{\GL{R}}}(\GL{R})\longrightarrow\mathrm{hEnd}_{\Mod{R}}(R')\simeq\mathrm{hEnd}_{\Mod{R}}(R)
$$
is homotopy equivalent to the inclusion $\GL{R}\simeq\mathrm{hAut}(R)\to\mathrm{hEnd}(R)$.
Then $T^L$, the left-derived functor of $T$, is homotopy equivalent to $\Sigma^\i_+(-)\land^L_{\Sigma^\i_+\GL{R}}R:\Mod{\GL{R}}\to\Mod{R}$.
\end{Proposition}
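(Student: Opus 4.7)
The plan is to deduce the proposition from the Eilenberg-Watts style result, Proposition \ref{prop:watts}, after first reducing from $\GL{R}$-modules to $\splus \GL{R}$-modules and then identifying the relevant bimodule.

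First I would observe that, since the target $\Mod{R}$ is a stable $\i$-category and $T$ is continuous and colimit-preserving, the universal property of stabilization (in the form used in \S\ref{sec:infinity-category-r}) shows that $T$ factors uniquely up to equivalence as
\[
\Mod{\GL{R}} \xra{\splus} \Mod{\splus\GL{R}} \xra{\overline{T}} \Mod{R},
\]
where $\overline{T}$ is continuous and colimit-preserving. Thus it suffices to identify $\overline{T}^L$ with $(-)\Smash^{L}_{\splus\GL{R}} R$. Next, I would apply Proposition \ref{prop:watts} to the functor $\overline{T}$, obtaining a natural equivalence
\[
\overline{T}^L(-) \heq (-)\Smash^{L}_{\splus \GL{R}} \overline{T}(\splus \GL{R}),
\]
where $\overline{T}(\splus \GL{R})\heq T(\GL{R}) \heq R'$ is canonically a $(\splus \GL{R}, R)$-bimodule; the right $R$-module structure comes from the target enrichment, and the left $\splus \GL{R}$-module structure is induced from
\[
\splus\GL{R}\to F_{\splus\GL{R}}(\splus\GL{R},\splus\GL{R}) \xra{\overline{T}} F_R(\overline{T}(\splus\GL{R}),\overline{T}(\splus\GL{R})).
\]

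The remaining and main step is to show that this bimodule is equivalent to $R$ with its canonical $\splus\GL{R}$-action, and this is where the hypothesis on endomorphisms plays a decisive role. Passing to adjoints under $(\splus,\linf_\sL)$ and using Proposition \ref{inf-t-pr-Rwe-and-GL}, the left action of $\splus\GL{R}$ on $R'\simeq R$ corresponds to a multiplicative map $\GL{R}\to \Aut(R)$, equivalently to an element of $\Map_{\mathrm{Mon}}(\GL{R},\GL{R})$. By assumption, the composite map of derived endomorphism monoids agrees with the inclusion $\GL{R}\heq \Aut(R)\hookrightarrow\End(R)$, so this element is homotopic to the identity $\id_{\GL{R}}$, and therefore the induced bimodule structure on $R'\heq R$ is equivalent to the canonical one. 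Substituting back into the Eilenberg-Watts equivalence gives $\overline{T}^L(-)\heq (-)\Smash^{L}_{\splus\GL{R}}R$, and precomposing with $\splus$ yields the claim.

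The hard part will be the bimodule identification: one must verify carefully that the homotopy of multiplicative maps $\GL{R}\to\Aut(R)$ promotes to an equivalence of $(\splus\GL{R},R)$-bimodule structures on $R$, not just an equivalence of underlying $R$-modules together with a homotopy of action maps. This is a coherence question which is cleanest to handle by working in the $\i$-category of bimodules (or equivalently, of functors out of the two-object $\i$-category $B\GL{R}\Rmod$), where the space of $(\splus\GL{R},R)$-bimodule structures on $R$ up to equivalence is precisely the space of multiplicative maps $\GL{R}\to\Aut(R)$; the endomorphism hypothesis then says exactly that our bimodule corresponds to the canonical point of this space.
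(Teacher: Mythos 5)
Your proposal is correct and takes essentially the same route as the paper's proof: reduce to $\splus\GL{R}$-modules using stability of $\Mod{R}$, invoke Proposition \ref{prop:watts} to write $T^L$ as smashing with the bimodule $T(\splus\GL{R})\heq R$, and use the hypothesis on derived endomorphisms to identify the bimodule structure with the canonical one. The paper's argument is simply a terser version of this, treating only implicitly the coherence point you flag about promoting the homotopy of multiplicative maps $\GL{R}\to\Aut(R)$ to an equivalence of bimodules.
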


\begin{proof}
The stability of $\Mod{R}$ and Proposition \ref{prop:watts} together imply that $T^L$ is homotopy equivalent to $\Sigma^\i_+(-)\land^L_{\Sigma^\i_+\GL{R}} B$ for some $(\Sigma^\i_+\GL{R},R)$-bimodule $B$.
Since $T(\Sigma^\i_+\GL{R})\simeq R$, we must have $B\simeq R$; since the left action of $\GL{R}$ on itself induces (via the equivalence $R'\simeq R$) the canonical action of $\GL{R}$ on $R$, we conclude that $B\simeq R$ as $(\Sigma^\i_+\GL{R},R)$-bimodules.
\end{proof}

\subsection{Colimit-preserving functors from the $\i$-categorical perspective}

\label{sec:colim-pres-funct}

We now switch to the context of $\i$-categories.  In this section we
develop a general description of functors between $\i$-categories
which preserve colimits.  Specializing to module categories, we will
recover the version of the Eilenberg-Watts theorem which forms the
basis of the comparison of the various Thom spectrum functors
discussed in this paper.  The setting of $\i$-categories turns out to
be technically felicitous for performing the general comparisons we
need; our descriptions will arise from elementary considerations of
cocomplete $\i$-categories, the relationship between modules,
parametrized objects, and presheaves, and stabilization.

We begin by considering cocomplete $\i$-categories.  Let $\C$ be a
small $\i$-category, and consider the $\i$-topos
$$
\Pre(\C):=\text{Fun}(\C^{\op},\Gpd_\i)
$$
of presheaves of $\i$-groupoids on $\C$.  It comes equipped with a fully
faithful functor
\begin{equation} \label{eq:63}
\C\longrightarrow\Pre(\C),
\end{equation}
the Yoneda embedding, which (as we discussed in Remark
\ref{rem-yoneda-rigid}) is only really defined up to a contractible
space of choices.

The fact that $\Pre(\C)$ is equivalent to the nerve of the simplicial
model category of simplicial presheaves on $\mathfrak{C}[\C]$
(equipped, say, with the projective model structure) implies that
$\Pre(\C)$ is cocomplete as an $\i$-category.  Furthermore, just like
in ordinary category theory, the Yoneda embedding $\C\to\Pre(\C)$ is
in some sense initial among cocomplete $\i$-categories under $\C$.

\begin{Lemma}[\HTT{5.1.5.6}] For any cocomplete $\i$-category $\D$,
precomposition with the Yoneda embedding induces an equivalence of
$\i$-categories
\begin{equation}\label{eq:74}
\Fun^\mathrm{colim}(\Pre(\C),\D)\longrightarrow\Fun(\C,\D),
\end{equation}
where $\Fun^\mathrm{colim}(-,-)$ denotes the full $\i$-subcategory of
$\Fun(-,-)$ on the colimit-preserving functors.
\end{Lemma}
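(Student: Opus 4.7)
The plan is to exhibit an inverse to the restriction functor \eqref{eq:74} via left Kan extension along the Yoneda embedding $y \colon \C \to \Pre(\C)$. More precisely, given a functor $f \colon \C \to \D$, one defines
\[
    F = \mathrm{Lan}_y f \colon \Pre(\C) \to \D,
\]
and the goal is to show two things: (a) $F$ preserves colimits, and (b) the assignments $f \mapsto F$ and $F \mapsto F \circ y$ are mutually inverse up to coherent equivalence.

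First I would establish the $\i$-categorical density theorem (``co-Yoneda lemma''): every presheaf $X \in \Pre(\C)$ is canonically the colimit of the composite
\[
    (\C_{/X})^{\natural} \longrightarrow \C \xrightarrow{\ y\ } \Pre(\C),
\]
where $\C_{/X}$ denotes the $\i$-category of elements of $X$ (i.e.\ the pullback of the universal right fibration $\Pre(\C)_{/X} \to \Pre(\C)$ along $y$). This is the $\i$-categorical analogue of Proposition \ref{X=colim1} applied inside the $\i$-topos $\Pre(\C)$, and in fact follows from that proposition combined with the equivalence $\Pre(\C)_{/X} \simeq \Pre(\C_{/X})$. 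Granted this, the values of any colimit-preserving $F \colon \Pre(\C) \to \D$ are determined by its restriction to $\C$, because
\[
    F(X) \simeq F\bigl(\colim_{\C_{/X}} y\bigr) \simeq \colim_{\C_{/X}} (F \circ y).
\]
This formula also forces the definition of the inverse: we send $f \colon \C \to \D$ to the functor $F$ whose value on $X$ is $\colim_{\C_{/X}} f$, which is precisely the pointwise formula for $\mathrm{Lan}_y f$. Existence of these pointwise left Kan extensions is guaranteed by cocompleteness of $\D$ together with smallness of $\C_{/X}$ (using that $\C$ is small).

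Next I would verify that this construction really lands in colimit-preserving functors. Given a colimit diagram $X \simeq \colim_I X_i$ in $\Pre(\C)$, the identification $\C_{/X} \simeq \colim_I \C_{/X_i}$ (colimits commute with base change in the $\i$-topos $\Pre(\C)$, as used throughout \S\ref{sec:bundles-i-groupoids}) lets us rewrite $F(X)$ as an iterated colimit and thereby as $\colim_I F(X_i)$. This shows (a). For (b), the composite $\mathrm{Lan}_y f \circ y$ is naturally equivalent to $f$ because the slice $\C_{/y(c)}$ has the terminal object $\mathrm{id}_c$, so the defining colimit collapses; conversely, given any colimit-preserving $F \colon \Pre(\C) \to \D$, the density theorem gives a natural equivalence $\mathrm{Lan}_y(F \circ y) \simeq F$ by the displayed formula above.

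The main obstacle is the density theorem: making precise that the canonical cocone $y \downarrow X \to \{X\}$ exhibits $X$ as a colimit in the $\i$-categorical (homotopy-coherent) sense, rather than merely in the homotopy category. Once this is available, the rest of the argument is essentially formal, reducing to the universal property of left Kan extensions and the computation of slice $\i$-categories under representables. Everything else---cocompleteness of $\Pre(\C)$, functoriality of $\mathrm{Lan}_y$, the identification $\mathrm{Lan}_y f \circ y \simeq f$---follows from the general machinery of presentable $\i$-categories and left Kan extensions developed in \cite{math.CT/0608040}, and we need not reproduce it.
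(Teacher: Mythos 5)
Your proposal is sound, but note that the paper does not prove this lemma at all: it is quoted verbatim from Lurie, and the bracketed citation \HTT{5.1.5.6} \emph{is} the paper's proof. What you have written is, in outline, a reconstruction of Lurie's own argument for that result: the inverse is left Kan extension along the Yoneda embedding, whose existence uses cocompleteness of $\D$ and smallness of the slices $\C_{/X}$; the density theorem \HTT{5.1.5.3} identifies $X$ with $\colim_{\C_{/X}} y$ and forces the pointwise formula; and the two assignments are checked to be mutually inverse. So you are not taking a different route so much as unpacking the citation, which is legitimate and instructive. Two points in your sketch are compressed enough to deserve a flag. First, the density theorem does not quite ``follow from Proposition \ref{X=colim1} combined with $\Pre(\C)_{/X}\heq\Pre(\C_{/X})$'': that slice equivalence (\HTT{5.1.6.12}) is itself of comparable depth to density, and even granting it you still need to know that the terminal presheaf on $\C_{/X}$ is the colimit of its Yoneda embedding, i.e.\ that $\colim_{e}\map(c,e)$ is contractible because the coslice has an initial object — this uses the dictionary between pointwise colimits of space-valued functors and total spaces of the corresponding fibrations, not just the groupoid statement of Proposition \ref{X=colim1}. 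Second, your verification that $\mathrm{Lan}_y f$ preserves colimits via ``$\C_{/X}\heq\colim_I \C_{/X_i}$ and interchange of colimits'' needs the decomposition results of \HTT{4.2.3} (it is not just base change in the topos); a cleaner alternative is to exhibit $\mathrm{Lan}_y f$ as a left adjoint, with right adjoint $d\mapsto\Map_\D(f(-),d)\in\Pre(\C)$, after which colimit-preservation is automatic. Since you explicitly defer these to \cite{math.CT/0608040}, neither is a fatal gap, but be aware that they are precisely the content of the result being cited rather than routine bookkeeping.
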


We shall be particularly interested in the case that $\C$ is an
$\i$-groupoid, so that
\[
    \Fun (\C^{op},\Gpd_{\i}) \heq \Gpd_{\i/\C},
\]
as in Remark \ref{rem-4}.   Suppose given a functor $g: \C\to \D.$  To
an $\i$-groupoid over $\C$
\[
    f: X\to \C,
\]
we can associate the colimit
\[
   \colim g\circ f: X \to \C \to \D.
\]
Insofar as we have a functorial model for this colimit (by making use, say, of functorial homotopy colimits in simplicial categories), then this procedure determines a colimit-preserving functor
\[
      \Gpd_{\i/\C} \heq \Fun (\C^{op},\Gpd_{\i}) \heq \Pre (\C)
      \to \D,
\]
with the property that its restriction along the Yoneda embedding
\[
    \C \to \Fun (\C^{op},\Gpd_{\i})
\]
is equivalent to $g$.
This is just a restatement of Proposition \ref{colimX_y=X}.

\begin{Corollary}  \label{t-co-yoneda-inverse}
Any inverse $\Fun(\C,\D)\to\Fun^{\colim}(\Pre(\C),\D)$ of the equivalence \eqref{eq:74}, given by restriction along the Yoneda
embedding $\C\to\Gpd_{\i/\C}$, sends $g$ to a colimit-preserving functor whose value on $f:X\to\C$ is equivalent to $\colim g\circ f$.\hfill$\Box$
\end{Corollary}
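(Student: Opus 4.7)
The plan is to exploit Proposition \ref{colimX_y=X} essentially verbatim, since the paper already hints that the corollary is a restatement of that result. Let me lay out what I would write.

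First I would fix the notation. Let $H:\Fun(\C,\D)\to\Fun^{\colim}(\Pre(\C),\D)$ be any inverse to the equivalence \eqref{eq:74} of \HTT{5.1.5.6}. Denote the Yoneda embedding by $y:\C\to\Pre(\C)\simeq\Gpd_{\i/\C}$. The defining property of $H$ is that for every $g:\C\to\D$, the composite $H(g)\circ y:\C\to\D$ is equivalent to $g$, and $H(g)$ preserves colimits. I want to compute $H(g)(f)$ for an object $f:X\to\C$ of $\Gpd_{\i/\C}$.

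The key step is to rewrite $f$ as a colimit of representables. Since $\C$ is here taken to be an $\i$-groupoid (the only setting in which we are using the identification $\Pre(\C)\simeq\Gpd_{\i/\C}$), Proposition \ref{colimX_y=X} applies with $Y=\C$ and gives that $f:X\to\C$, viewed as an object of $\Gpd_{\i/\C}$, is a colimit in $\Gpd_{\i/\C}\simeq\Pre(\C)$ of the composite
\[
X\xra{f}\C\xra{y}\Pre(\C).
\]
That is, $f\simeq\colim_{x\in X}\, y(f(x))$ in $\Pre(\C)$.

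Now I apply the colimit-preserving functor $H(g)$ to this identification. Because $H(g)$ preserves colimits and $H(g)\circ y\simeq g$, we obtain
\[
H(g)(f)\;\simeq\;H(g)\bigl(\colim_{x\in X} y(f(x))\bigr)\;\simeq\;\colim_{x\in X}\,H(g)(y(f(x)))\;\simeq\;\colim_{x\in X}\,g(f(x))\;=\;\colim\,(g\circ f),
\]
which is the desired formula. The naturality in $g$ and in $f$ is automatic since every step is given by equivalences arising from the universal properties invoked.

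There is essentially no obstacle: the entire content is packaged in Proposition \ref{colimX_y=X} together with the universal property of $\Pre(\C)$. The only caveat worth flagging in the write-up is that the formula $\colim(g\circ f)$ should be interpreted in the $\i$-categorical sense (e.g.\ via a functorial homotopy colimit on a simplicial model), so that the assignment $f\mapsto\colim(g\circ f)$ actually defines a functor $\Pre(\C)\to\D$; the equivalence above then shows that this functor is (up to canonical equivalence) $H(g)$ itself.
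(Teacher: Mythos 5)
Your proof is correct and follows essentially the paper's own route: both arguments rest on Proposition \ref{colimX_y=X} (writing $f:X\to\C$ as a colimit of the composite of $f$ with the Yoneda embedding) together with the fact that the inverse functor lands in colimit-preserving functors restricting to $g$ along Yoneda. The only cosmetic difference is direction — you evaluate an arbitrary inverse on $f$ directly, whereas the paper first builds the functor $f\mapsto\colim(g\circ f)$ via a functorial colimit model and then invokes uniqueness of the inverse — but the mathematical content is the same.
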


Lastly, we specialize to the context of stable categories.

\begin{Lemma}[\cite{DAGI}, Corollary 17.5] Let $\C$ and $\D$ be
presentable $\i$-categories such that $\D$ is stable.  Then
$$
\Omega^\i_-:\Stab(\C)\longrightarrow\C
$$
admits a left adjoint
$$
\Sigma^\i_+:\C\longrightarrow\Stab(\C),
$$
and precomposition with the $\Sigma^\i_+$ induces an equivalence of
$\i$-categories
$$
\Fun^{\colim}(\Stab(\C),\D)\longrightarrow\Fun^{\colim}(\C,\D).
$$
\end{Lemma}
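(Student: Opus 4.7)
The plan is to construct the left adjoint $\Sigma^\i_+$ via the adjoint functor theorem for presentable $\i$-categories, and then extract the equivalence of functor categories from a universal property of $\Stab(\C)$ as a sequential colimit in the $\i$-category $\mathrm{Pr}^L$ of presentable $\i$-categories and colimit-preserving functors.

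First I would construct $\Sigma^\i_+$. Since $\C$ is presentable, so is $\C_*=\C_{*/}$ \HTT{Proposition 5.5.3.11}, and the loop endofunctor $\Omega\colon\C_*\to\C_*$ (right adjoint to the ordinary suspension $\Sigma$) preserves limits and is accessible. The stabilization $\Stab(\C)=\lim(\cdots\xra{\Omega}\C_*\xra{\Omega}\C_*)$ is then a limit of presentable $\i$-categories along accessible limit-preserving functors, hence itself presentable \HTT{Proposition 5.5.3.13}. The functor $\Omega^\i_-\colon\Stab(\C)\to\C$ factors as a projection from this limit followed by the forgetful $\C_*\to\C$; both are accessible limit-preserving right adjoints, so their composite admits a left adjoint $\Sigma^\i_+$ by the adjoint functor theorem \HTT{Corollary 5.5.2.9}.

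For the equivalence of functor categories, the key observation is that $\Stab(\C)$ can equivalently be described as a sequential \emph{colimit} in $\mathrm{Pr}^L$,
$$
\Stab(\C)\heq\colim\bigl(\C_*\xra{\Sigma}\C_*\xra{\Sigma}\cdots\bigr).
$$
This follows from the duality $\mathrm{Pr}^L\heq(\mathrm{Pr}^R)^{\op}$ obtained by passing to adjoints, which interchanges the limit tower (using $\Omega$) and the colimit tower (using $\Sigma$). For any presentable $\D$, mapping out of this colimit in $\mathrm{Pr}^L$ identifies $\Fun^{\colim}(\Stab(\C),\D)$ with the homotopy limit
$$
\lim\bigl(\cdots\xra{-\circ\Sigma}\Fun^{\colim}(\C_*,\D)\xra{-\circ\Sigma}\Fun^{\colim}(\C_*,\D)\bigr).
$$
When $\D$ is stable, each transition map collapses to an equivalence: for any colimit-preserving $F\colon\C_*\to\D$, the fact that $\Sigma$ is a finite colimit in $\C_*$ (the cofiber of $X\to *$) implies $F\circ\Sigma\heq\Sigma_\D\circ F$, and $\Sigma_\D$ is an equivalence on the stable $\i$-category $\D$. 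Thus the limit collapses to a single copy of $\Fun^{\colim}(\C_*,\D)$. A further equivalence $\Fun^{\colim}(\C_*,\D)\heq\Fun^{\colim}(\C,\D)$ is given by precomposition with the free-basepoint functor $X\mapsto X_+$: this expresses the universal property of $\C_*$ as the initial presentable pointed $\i$-category receiving a colimit-preserving map from $\C$, valid whenever $\D$ is pointed (in particular when $\D$ is stable, since every colimit-preserving functor into a pointed $\i$-category is automatically pointed). Composing these equivalences identifies the chain with precomposition along $\Sigma^\i_+\colon\C\xra{(-)_+}\C_*\to\Stab(\C)$.

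The main technical obstacle is justifying the colimit description of $\Stab(\C)$ in $\mathrm{Pr}^L$ via the $\mathrm{Pr}^L$-$\mathrm{Pr}^R$ duality, and verifying that the canonical coprojection $\C_*\to\Stab(\C)$ arising in that colimit cone is genuinely the left adjoint of the projection whose composite with the forgetful $\C_*\to\C$ equals $\Omega^\i_-$; one needs this in order to recognize $\Sigma^\i_+$ as the concrete composite $\C\xra{(-)_+}\C_*\to\Stab(\C)$. A secondary subtle point is the identification $F\circ\Sigma\heq\Sigma_\D\circ F$: it requires that a colimit-preserving functor between pointed presentable $\i$-categories preserve the particular finite colimit (a pushout along a map to a zero object) that defines $\Sigma$, which in turn uses that such an $F$ sends zero objects to zero objects.
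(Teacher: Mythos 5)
The paper offers no proof of this Lemma---it is imported verbatim from \cite{DAGI}, Corollary 17.5---so the only comparison available is with Lurie's argument there. Your proposal is correct and follows essentially that argument: presentability of the $\Omega$-tower limit together with the adjoint functor theorem produces $\Sigma^\i_+$, the duality $\mathrm{Pr}^L\heq(\mathrm{Pr}^R)^{\op}$ rewrites $\Stab(\C)$ as a sequential colimit along $\Sigma$ in $\mathrm{Pr}^L$, stability of $\D$ (invertibility of $\Sigma_{\D}$) collapses the resulting tower of functor categories, and the free-pointed universal property of $\C_*$ supplies the final equivalence $\Fun^{\colim}(\C_*,\D)\heq\Fun^{\colim}(\C,\D)$; the two subtleties you flag (that the coprojection $\C_*\to\Stab(\C)$ is left adjoint to the projection, and that a colimit-preserving functor into a pointed target preserves zero objects and hence intertwines suspensions) are resolved exactly as you indicate.
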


Combining the universal properties of stabilization and the Yoneda
embedding, we obtain the following equivalence of $\i$-categories.

\begin{Corollary}
Let $\C$ and $\D$ be $\i$-categories such that $\D$ is stable and
cocomplete.  Then there are equivalences of $\i$-categories
$$
\Fun^{\colim}(\Stab(\Pre(\C)),\D)\heq\Fun^{\colim}(\Pre(\C),\D)\heq\Fun(\C,\D).
$$
\end{Corollary}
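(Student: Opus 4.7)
The plan is to deduce the corollary by concatenating the two lemmas stated immediately above it, after verifying that the hypotheses required to apply the stabilization lemma are in place.

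First, I would invoke the second lemma (HTT 5.1.5.6) directly, with the small $\i$-category $\C$ and the cocomplete target $\D$, to obtain the second equivalence
\[
\Fun^{\colim}(\Pre(\C),\D) \heq \Fun(\C,\D),
\]
given by restriction along the Yoneda embedding $\C \to \Pre(\C)$. No additional work is needed here; this is the universal property of the free cocompletion.

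Next, to produce the first equivalence I want to apply the stabilization lemma (DAGI Corollary 17.5) with the role of ``$\C$'' played by $\Pre(\C)$. The hypothesis I must verify is that $\Pre(\C)$ is presentable: this holds because $\C$ is small, and $\Pre(\C) = \Fun(\C^{op},\Gpd_\i)$ is then a presheaf $\i$-topos, which is presentable (indeed, it is a localization of a presentable $\i$-category, and it is modeled by the simplicial presheaf category with its projective model structure, as noted just above Corollary \ref{t-co-yoneda-inverse}). With this in hand, the stabilization lemma yields
\[
\Fun^{\colim}(\Stab(\Pre(\C)),\D) \heq \Fun^{\colim}(\Pre(\C),\D),
\]
given by precomposition with $\Sigma^\i_+ : \Pre(\C) \to \Stab(\Pre(\C))$; this is where the assumption that $\D$ is stable is used.

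Concatenating the two equivalences gives the desired chain of equivalences of $\i$-categories. The only potential subtlety — and hence the ``main obstacle,'' though it is a mild one — is making sure the two universal properties compose compatibly: that is, that both equivalences can be realized simultaneously as restriction along the composite $\C \to \Pre(\C) \xra{\Sigma^\i_+} \Stab(\Pre(\C))$. This follows formally since both equivalences are instances of restriction along a left adjoint to a forgetful/evaluation functor, and composition of left adjoints corresponds to composition of the right adjoints (here the Yoneda evaluation and $\Omega^\i_-$). No further computation is required.
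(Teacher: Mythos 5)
Your proposal is correct and is essentially the paper's own argument: the paper's proof reads simply ``This follows from the last two lemmas,'' i.e.\ apply the Yoneda lemma (HTT 5.1.5.6) for the second equivalence and the stabilization lemma (DAGI Corollary 17.5), with $\Pre(\C)$ in the role of the presentable source, for the first. Your extra remarks checking presentability of $\Pre(\C)$ and the compatibility of the two restrictions are fine but not needed beyond what the paper leaves implicit.
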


\begin{proof}
This follows from the last two lemmas.
\end{proof}

Now suppose that $\C$ and $\D$ have distinguished objects, given by
maps $*\to\C$ and $*\to\D$ from the trivial $\i$-category $*$.  Then
$\Pre(\C)$ and $\Stab(\Pre(\C))$ inherit distinguished objects via the
maps
$$
*\longrightarrow\C\overset{i}{\longrightarrow}\Pre(\C)\overset{\Sigma^\i}{\longrightarrow}\Stab(\Pre(\C)),
$$
where $i$ denotes the Yoneda embedding.  Note that the fiber sequence
$$
\Fun_{*/}(\C,\D)\longrightarrow\Fun(\C,\D)\longrightarrow\Fun(*,\D)\heq\D
$$
shows that the $\i$-category of pointed functors is equivalent to the
fiber of the evaluation map $\Fun(\C,\D)\to\D$ over the distinguished
object of $\D$.

\begin{Proposition}
Let $\C$ and $\D$ be $\i$-categories with distinguished objects such
that $\D$ is stable and cocomplete.  Then there are equivalences of
$\i$-categories
$$
\Fun_{*/}^{\colim}(\Stab(\Pre(\C)),\D)\heq\Fun_{*/}^{\colim}(\Pre(\C),\D)\heq\Fun_{*/}(\C,\D).
$$
\end{Proposition}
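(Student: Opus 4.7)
The plan is to deduce this from the preceding corollary by passing to fibers of evaluation functors. Recall that a pointed functor from $\C$ to $\D$ is a functor together with an equivalence between its value on the distinguished object $* \to \C$ and the distinguished object of $\D$; thus, as indicated in the paragraph preceding the statement, $\Fun_{*/}(\C, \D)$ fits into a fiber sequence of $\i$-categories
$$
\Fun_{*/}(\C,\D) \longrightarrow \Fun(\C,\D) \longrightarrow \Fun(*,\D) \heq \D,
$$
where the second map is evaluation at the distinguished object $* \to \C$, and the fiber is taken over the distinguished object of $\D$. Analogous fiber sequences describe $\Fun_{*/}^{\colim}(\Pre(\C),\D)$ and $\Fun_{*/}^{\colim}(\Stab(\Pre(\C)),\D)$, using the distinguished objects of $\Pre(\C)$ and $\Stab(\Pre(\C))$ that arise from the composites with $i$ and $\Sigma^\i \circ i$ respectively.

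First I would verify that the two equivalences
$$
\Fun^{\colim}(\Stab(\Pre(\C)),\D) \heq \Fun^{\colim}(\Pre(\C),\D) \heq \Fun(\C,\D)
$$
of the preceding corollary are compatible with evaluation at the distinguished objects; that is, the squares obtained by mapping each of the three $\i$-categories to $\D$ via evaluation commute up to equivalence. For the right-hand equivalence, this is immediate: restriction along the Yoneda embedding $\C \to \Pre(\C)$ sends a colimit-preserving functor $F \colon \Pre(\C) \to \D$ to $F \circ i$, and in particular evaluation at the distinguished object of $\C$ is carried to evaluation at $i(*)$, which is by construction the distinguished object of $\Pre(\C)$. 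Similarly, for the left-hand equivalence, precomposition with $\Sigma^\i \colon \Pre(\C) \to \Stab(\Pre(\C))$ carries evaluation at the distinguished object of $\Stab(\Pre(\C))$ to evaluation at $\Sigma^\i(i(*))$, which is the distinguished object of $\Pre(\C)$ after applying the suspension spectrum functor.

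Next I would pass to fibers: for each of the three $\i$-categories mapping down to $\D$, take the fiber of the evaluation functor over the distinguished object of $\D$. Since fibers of equivalent maps between $\i$-categories are equivalent (this is a standard fact about limits in $\Cat_\i$), the equivalences from the preceding corollary restrict to equivalences of the fibers, which are precisely the pointed functor $\i$-categories appearing in the statement. This yields the two desired equivalences.

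The only real subtlety is checking the compatibility of the equivalences with evaluation at distinguished objects; once that is in place, the result is formal. This amounts to unwinding the definition of the Yoneda embedding as a pointed functor (as discussed in Remark \ref{rem-yoneda-rigid}) and the adjunction unit $\C \to \Omega^\i \Sigma^\i \C$ on a pointed object, and these identifications hold up to the expected contractible space of choices.
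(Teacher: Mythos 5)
Your proposal is correct and follows essentially the same route as the paper: the paper's proof is simply "take the fiber of the equivalences $\Fun^{\colim}(\Stab(\Pre(\C)),\D)\heq\Fun^{\colim}(\Pre(\C),\D)\heq\Fun(\C,\D)$ over $*\to\D$," using the fiber-sequence description of pointed functors set up just before the statement. You have merely made explicit the compatibility of the equivalences with evaluation at the distinguished objects, which the paper leaves implicit.
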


\begin{proof}
Take the fiber of
$\Fun^{\colim}(\Stab(\Pre(\C)),\D)\heq\Fun^{\colim}(\Pre(\C),\D)\heq\Fun(\C,\D)$
over $*\to\D$.
\end{proof}

\begin{Corollary}
Let $G$ be a group-like monoidal $\i$-groupoid $G$, let $BG$ be a
one-object $\i$-groupoid with $G\heq\Aut_{BG}(*)$, and let $\D$ be a
stable and cocomplete $\i$-category with a distinguished object $*$.
Then
$$
\Fun_{*/}^{\colim}(\Stab(\Pre(BG)),\D)\heq\Fun_{*/}^{\colim}(\Pre(BG),\D)\heq\Fun_{*/}(BG,\D)\heq\Fun(BG,B\Aut_\D(*));
$$
that is, specifying an action of $G$ on the distinguished object $*$
of $\D$ is equivalent to specifying a pointed colimit-preserving
functor from $\Pre(BG)$ (or its stabilization) to $\D$.
\end{Corollary}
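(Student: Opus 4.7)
The plan is to derive the first two equivalences as an immediate specialization of the preceding Proposition to $\C=BG$, equipped with the distinguished object given by its unique vertex $\ast$. This yields
$$\Fun_{*/}^{\colim}(\Stab(\Pre(BG)),\D)\heq\Fun_{*/}^{\colim}(\Pre(BG),\D)\heq\Fun_{*/}(BG,\D),$$
so no further work is needed here.

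For the remaining equivalence $\Fun_{*/}(BG,\D)\heq\Fun(BG,B\Aut_\D(\ast))$, I would use the fact that $BG$ is a connected one-object $\i$-groupoid. Any functor $F\colon BG\to\D$ sends the (automatically invertible) arrows of $BG$ to equivalences in $\D$, and therefore factors through the maximal $\i$-subgroupoid $\mathrm{Iso}(\D)\subset\D$. If in addition $F$ is pointed, so that $F(\ast)\heq\ast$, then connectivity of $BG$ forces the image of $F$ to lie in the connected component of $\ast$ in $\mathrm{Iso}(\D)$, which by definition is $B\Aut_\D(\ast)$. Conversely, any functor $BG\to B\Aut_\D(\ast)$, post-composed with the inclusion $B\Aut_\D(\ast)\hookrightarrow\D$, automatically sends $\ast$ to $\ast$, since $B\Aut_\D(\ast)$ has $\ast$ as its unique vertex. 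One then assembles these two assignments into mutually inverse functors between $\Fun_{*/}(BG,\D)$ and $\Fun(BG,B\Aut_\D(\ast))$.

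The only nontrivial step is the identification of mapping $\i$-groupoids: a morphism in $\Fun(BG,B\Aut_\D(\ast))$ is a natural transformation determined by its component at $\ast$, which takes values in $\Aut_\D(\ast)$, together with appropriate coherences; one must check that this datum agrees with the corresponding datum for the associated pointed functors $BG\to\D$. The key point here is that $B\Aut_\D(\ast)$ is a full $\i$-subcategory of $\mathrm{Iso}(\D)$, so mapping spaces in it coincide with the corresponding mapping spaces in $\D$, and the identification is essentially formal. This is the main (and only) obstacle, but once it is verified, the final clause of the corollary — that an action of $G$ on $\ast\in\D$, i.e.\ a map of group-like monoidal $\i$-groupoids $G\to\Aut_\D(\ast)$, is the same data as a map of $\i$-groupoids $BG\to B\Aut_\D(\ast)$ — is immediate from the standard delooping correspondence between group-like monoidal $\i$-groupoids and pointed connected $\i$-groupoids.
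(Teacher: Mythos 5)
Your treatment of the first two equivalences and of the factorization step coincides with the paper's: the paper's entire proof is the remark that a base-point preserving functor $BG\to\D$ factors through $B\End_\D(*)$ (the full subcategory on the object $*$) and hence, since $BG$ is an $\i$-groupoid, through $B\Aut_\D(*)$; your route through $\mathrm{Iso}(\D)$ is the same argument with the two factorizations performed in the other order (and note, as a minor point, that the component of $*$ in $\mathrm{Iso}(\D)$ is only \emph{equivalent} to the one-object groupoid $B\Aut_\D(*)$, not equal to it by definition).

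Where you go beyond the paper is also where there is a genuine gap: the claimed pair of mutually inverse functors between $\Fun_{*/}(BG,\D)$ and the \emph{unpointed} functor category $\Fun(BG,B\Aut_\D(*))$, with the mapping-space identification dismissed as formal because $B\Aut_\D(*)$ is full in $\mathrm{Iso}(\D)$. Fullness is not the issue. In the paper's setup $\Fun_{*/}(BG,\D)$ is the fiber of the evaluation map $\Fun(BG,\D)\to\D$ over $*$, so a morphism in it is a natural transformation whose component at the unique object of $BG$ is (equivalent to) $\mathrm{id}_*$, whereas a natural transformation in $\Fun(BG,B\Aut_\D(*))$ has an arbitrary component in $\Aut_\D(*)$. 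Already for $G$ trivial the two sides differ: $\Fun_{*/}(\ast,\D)$ is contractible, while $\Fun(\ast,B\Aut_\D(*))=B\Aut_\D(*)$. For a discrete group $G$ acting in an ordinary category $\D$, the fiber is the set of homomorphisms $G\to\Aut_\D(*)$, while $\Fun(BG,B\Aut_\D(*))$ is the groupoid of homomorphisms and conjugations, so even $\pi_0$ disagrees. What your factorization argument (and the paper's one-liner, which is silent about morphisms) actually yields is an equivalence $\Fun_{*/}(BG,\D)\heq\Fun_{*/}(BG,B\Aut_\D(*))$ with the \emph{pointed} functor category, and the last term of the corollary has to be read in that sense; this is also how the paper uses it afterwards, since an action of $G$ on $*$, i.e.\ a monoidal map $G\to\Aut_\D(*)$, corresponds to a pointed map $BG\to B\Aut_\D(*)$ rather than an unpointed one. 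So either prove the pointed form of the last equivalence (which your argument essentially does) or be aware that the unpointed identification you assert cannot be established -- the examples above rule it out.
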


\begin{proof}
A base-point preserving functor $BG\to\D$ necessarily factors through
$B\End_\D(*)$, and hence also through $B\Aut_\D(*)$ since $BG$ is an
$\i$-groupoid.
\end{proof}

Note that the $\i$-category $\Fun(BG,B\Aut_\D(*))$ is actually an
$\i$-groupoid, as $B\Aut_\D(*)$ is an $\i$-groupoid.

Putting this all together, consider the case in which the target
$\i$-category $\D$ is the $\i$-category of right $R$-modules for an
associative $S$-algebra $R$, pointed by the free rank one $R$-module
$R$.  Then $\Aut_\D(*)\heq GL_1 R$, so the space of pointed
colimit-preserving maps from the $\i$-category of spaces over $BG$ to the $\i$-category of $R$-modules is
equivalent to the space of monoidal maps from $G$ to $\GL{R}$.


\subsection{$\i$-categorical Thom spectra, revisited}
\label{sec:i-categorical-thom}

In this section, we return to the definition of Thom spectra from
\S\ref{sec:units-via-infty} and interpret that construction in light
of the work of the previous subsections.
Let $R$ be an algebra in $\Stab (\Gpd_{\i})$,
and form  the $\i$-categories $\Rmod$ and $\Rwe.$    Given a map of
$\i$-groupoids
\[
    f: X \to \Rwe,
\]
the Thom spectrum is the push-forward of the restriction to $X$ of the
tautological $R$-line bundle $\mathscr{L}$; that is,
$$
Mf=p_!f^*\mathscr{L}.
$$
Let $q$ denote the projection to the point from $\Line{R}$, so that
$p=q\circ f$ and we may therefore rewrite $Mf$ as
$$
Mf\heq q_!f_{!}f^*\mathscr{L}\heq q_!(\Sigma^\i_{\Line{R}} f_+\land_{\Line{R}}\mathscr{L}),
$$
where the second equivalence follows from formula \eqref{f_!f^*}.
This exhibits $M$ as a composite of left adjoints:
$\Sigma^\i_{\Line{R}}(-)_+$ followed by
$(-)\land_{\Line{R}}\mathscr{L}$ followed by $q_!$.
In particular, (the $\i$-categorical) $M$ itself is a left adjoint, so
it preserves ($\i$-categorical) colimits.

\begin{Proposition} \label{t-pr-yoneda-taut}
The restriction of $M: \Gpd_{\i/\Line{R}} \to \Rmod$ along the Yoneda
embedding
$$
\Line{R}\longrightarrow\Fun(\Line{R}^{op},\Gpd_\i)\heq\Gpd_{\i/\Line{R}}
$$
is equivalent to the tautological inclusion
\begin{equation}\label{canonicalinclusion}
\Line{R}\longrightarrow\Mod{R}.
\end{equation}
\end{Proposition}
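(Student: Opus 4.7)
The plan is to compute $M \circ y$ directly and identify it with the canonical inclusion $j: \Line{R} \to \Mod{R}$. Two inputs make this easy: the reinterpretation of $M$ as a colimit, and the standard description of the Yoneda embedding on an $\i$-groupoid.

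First, under the equivalence $\Mod{R_X} \heq \Fun(X, \Mod{R})$ of \eqref{Rfun=Rfib}, the $R_X$-module $f^*\mathscr{L}$ corresponds to the composite $j \circ f: X \to \Mod{R}$, since $\mathscr{L} = j^*\mathscr{E}$ and pullback corresponds to precomposition. The pushforward $p_!$ along $p: X \to *$ is left Kan extension to the point, namely the colimit. Hence
\[
Mf \;\heq\; \colim_X(j \circ f).
\]
Second, since $\Line{R}$ is an $\i$-groupoid, footnote~7 identifies the Yoneda embedding $y$ with the functor assigning to $L$ the slice projection $f_L: \Line{R}_{/L} \to \Line{R}$. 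The identity $\id_L$ is a final object of $\Line{R}_{/L}$, so the inclusion $\{\id_L\} \hookrightarrow \Line{R}_{/L}$ is cofinal (cf.\ \HTT{\S 4.1}), yielding
\[
M(y(L)) \;\heq\; \colim_{\Line{R}_{/L}}(j \circ f_L) \;\heq\; (j \circ f_L)(\id_L) \;=\; j(L).
\]

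The main obstacle is to promote this pointwise equivalence into a coherent equivalence of functors $\Line{R} \to \Mod{R}$, since in the $\i$-categorical framework natural transformations are only defined up to contractible spaces of choices. One clean approach is to assemble the slices into a universal left fibration $\mathscr{S} \to \Line{R}$ representing $y$, together with its ``identity section'' $s: \Line{R} \to \mathscr{S}$ given fiberwise by $L \mapsto \id_L$. The fiberwise cofinality of $s$ then identifies the composite $M \circ y$ (fiberwise colimit along $\mathscr{S} \to \Line{R}$ followed by $j$) with the composite of $s$ and $j$, which is simply $j$; this packages all of the pointwise cofinality arguments into a single functorial statement and delivers the required equivalence $M \circ y \heq j$.
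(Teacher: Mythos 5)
Your proposal is correct in substance, but it takes a genuinely different route from the paper. The paper never evaluates the Yoneda embedding objectwise: it observes that $M$ preserves colimits and that $Mf\heq\colim(X\xra{f}\Line{R}\to\Mod{R})$ (Remark \ref{rem-2}), so that $M$ is precisely the colimit-preserving extension of the tautological inclusion $j$ produced by Corollary \ref{t-co-yoneda-inverse} (i.e.\ by the universal property \HTT{5.1.5.6} of $\Pre(\Line{R})$); the conclusion that $M$ restricts along Yoneda to $j$ then comes for free, with all coherence supplied by that equivalence of functor $\i$-categories. You instead compute the restriction by hand: using the slice description $y(L)=(\Line{R}_{/L}\to\Line{R})$ and cofinality of the inclusion of the final object $\id_L$, you get $M(y(L))\heq j(L)$ pointwise, and you correctly identify that the remaining work is to make this natural in $L$. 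Your proposed fix does work, and in fact simplifies here because $\Line{R}$ is an $\i$-groupoid: the fibration you call $\mathscr{S}$ is just the path fibration $\Fun(\Delta^1,\Line{R})\to\Line{R}$ (target projection), the identity section is an equivalence of Kan complexes (each slice $\Line{R}_{/L}$ is contractible), hence cofinal, and left Kan extension along the target projection of $j$ composed with the source projection recovers $M\circ y$; so the cofinality argument does assemble into an equivalence of functors. The trade-off: your argument is more concrete and makes the mechanism visible (final objects in slices), but it forces you to build the naturality by hand, essentially re-deriving a special case of what the paper's Corollary \ref{t-co-yoneda-inverse} packages; citing that corollary would let you skip your last paragraph entirely.
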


\begin{proof}
Consider the colimit-preserving functor $\Gpd_{\i/\Line{R}}\to\Mod{R}$ induced by the canonical inclusion $\Line{R}\to\Mod{R}$.
As we explain in Corollary \ref{t-co-yoneda-inverse}, it sends
$X\to\Line{R}$ to the colimit of the composite
$X\to\Line{R}\to\Mod{R}$.  As we explain in Remark \ref{rem-2}, this
is equivalent to the Thom spectrum functor $M.$
\end{proof}

The following corollary is now an immediate consequence of the
analysis of the previous subsection.

\begin{Corollary} \label{t-co-axiom-geom-thom-functor}
A functor $\Gpd_{\i/\Line{R}}\to\Mod{R}$ is equivalent to $M$ if and only if it
preserves colimits and its restriction along the Yoneda embedding
$\Line{R}\to\Fun(\Line{R}^{op},\Gpd_\i)\heq\Gpd_{\i/\Line{R}}$ is
equivalent to the tautological inclusion \eqref{canonicalinclusion} of
$\Line{R}$ into $\Mod{R}$.
\end{Corollary}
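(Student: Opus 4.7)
The plan is to deduce this characterization from the universal property of presheaf $\i$-categories together with Proposition \ref{t-pr-yoneda-taut}. One direction is already established: Proposition \ref{t-pr-yoneda-taut} shows that $M$ preserves colimits and that its restriction along the Yoneda embedding is equivalent to the tautological inclusion $\Line{R}\to\Mod{R}$. The work is therefore to establish that these two properties characterize $M$ up to equivalence.

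The first step is to recognize the target $\Gpd_{\i/\Line{R}}$ as a presheaf $\i$-category. Since $\Line{R}$ is an $\i$-groupoid, the equivalences discussed around \eqref{fun=fib} and in Remark \ref{rem-4} give
\[
\Gpd_{\i/\Line{R}} \heq \Fun(\Line{R}^{\op}, \Gpd_\i) = \Pre(\Line{R}),
\]
and the Yoneda embedding $\Line{R}\to\Pre(\Line{R})$ corresponds to the inclusion $\Line{R}\to\Gpd_{\i/\Line{R}}$ sending an $R$-line $L$ to the slice $\Line{R}_{/L}\to\Line{R}$.

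Next, I would invoke the universal property of the presheaf $\i$-category, as recalled in the previous subsection (\HTT{5.1.5.6}): for any cocomplete $\i$-category $\D$, precomposition with the Yoneda embedding yields an equivalence
\[
\Fun^{\colim}(\Pre(\Line{R}),\D) \xrightarrow{\heq} \Fun(\Line{R},\D).
\]
Applied to $\D = \Mod{R}$ (which is cocomplete as an $\i$-category of modules over an algebra in the presentable stable $\i$-category $\Stab(\Gpd_\i)$), this shows that any colimit-preserving functor $F:\Gpd_{\i/\Line{R}}\to\Mod{R}$ is determined up to a contractible space of choices by its restriction along the Yoneda embedding.

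Finally, if $F$ satisfies the hypotheses of the corollary, then its restriction along the Yoneda embedding is equivalent to the tautological inclusion $\Line{R}\to\Mod{R}$; by Proposition \ref{t-pr-yoneda-taut}, so is the restriction of $M$. The equivalence \eqref{eq:74} then forces $F\heq M$ in $\Fun^{\colim}(\Gpd_{\i/\Line{R}},\Mod{R})$. There is essentially no serious obstacle; the only point requiring mild care is checking that the identification $\Gpd_{\i/\Line{R}}\heq\Pre(\Line{R})$ carries the inclusion of $\Line{R}$ featured in the statement of the corollary to the Yoneda embedding featured in the universal property, but this is tautological once one uses the rigid model of the Yoneda embedding as in Remark \ref{rem-yoneda-rigid}.
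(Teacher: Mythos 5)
Your proposal is correct and follows essentially the same route as the paper: the paper deduces the corollary immediately from the universal property of presheaf $\i$-categories (\HTT{5.1.5.6}, via the identification $\Gpd_{\i/\Line{R}}\heq\Fun(\Line{R}^{\op},\Gpd_\i)$) together with Proposition \ref{t-pr-yoneda-taut} and the fact that $M$, being a composite of left adjoints, preserves colimits. The only trivial slip is attributing colimit-preservation of $M$ to Proposition \ref{t-pr-yoneda-taut} itself; that fact is established just before it, from the decomposition $Mf\heq q_!(\Sigma^\i_{\Line{R}} f_+\land_{\Line{R}}\mathscr{L})$.
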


%

\subsection{Comparing notions of Thom spectrum}
\label{sec:comparison}


In this section, we show that, on underlying $\i$-categories, the
``algebraic'' Thom $R$-module functor is equivalent to the
``geometric'' Thom spectrum functor via the characterization of
Corollary~\ref{t-co-axiom-geom-thom-functor}.  The proof of this
result also implies that the two versions of the Eilenberg-Watts
theorem we have discussed in this section agree up to natural weak
equivalence.

Let $\EKMM_{S}$ be the category of EKMM $S$-modules.  According to the
discussion in \S\ref{sec:i-category-r}, there is an equivalence of
$\i$-categories
\begin{equation} \label{eq:65}
     \N \MtoI{\EKMM_{S}} \heq \Stab (\Gpd_{\i})
\end{equation}
which induces an equivalence of $\i$-categories of algebras
\begin{equation} \label{eq:64}
     \N\MtoI{\Alg (\EKMM_{S})} \heq \Alg (\Stab (\Gpd_{\i})).
\end{equation}
Let $R$ be a cofibrant-fibrant EKMM $S$-algebra, and let $R'$ be
the corresponding algebra in $\Alg (\Stab (\Gpd_{\i})).$   The
equivalence \eqref{eq:65} induces an equivalence of $\i$-groupoids
\begin{equation} \label{eq:66}
   \N\MtoI{\Mod{R}} \heq \Mod{R'}.
\end{equation}
Proposition \ref{inf-ii-t-pr-egl-bgl-fib} gives an equivalence of
$\i$-groupoids
\begin{equation}\label{eq:67}
   \Sing{B\GL{R}} \heq     \N (\MtoI{(\Line{R})})
\end{equation}
and so putting \eqref{eq:66} and \eqref{eq:67} together we have an
equivalence of $\i$-categories
\[
     \N (\MtoI{(\TT/B\GL{R})}) \heq \Gpd_{\i/\Line{R'}}.
\]

\begin{Corollary} \label{t-co-thom-spectra-equivalent}
The functor
\[
     \Gpd_{\i/\Mod{R'}}\heq \N\MtoI{(\TT/B\GL{R})} \xra{\N M}
                             \N\MtoI{(\Mod{R})} \heq \Mod{R'},
\]
obtained by passing the Thom $R$-module functor of
\S\ref{sec:ainfty-thom-spectrum} though the indicated equivalences, is
equivalent to the Thom $R'$-module functor of
\S\ref{sec:units-via-infty}.
\end{Corollary}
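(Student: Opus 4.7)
The plan is to apply the axiomatic characterization of Corollary~\ref{t-co-axiom-geom-thom-functor}. Write $M^{\mathrm{alg}}$ for the functor $\N\MtoI{(\TT/B\GL{R})} \to \N\MtoI{(\Mod{R})}$ induced by the algebraic Thom spectrum construction of \S\ref{sec:ainfty-thom-spectrum}, transported along the equivalences \eqref{eq:66} and \eqref{eq:67} to a functor
\[
 M^{\mathrm{alg}}\colon \Gpd_{\i/\Line{R'}} \longrightarrow \Mod{R'}.
\]
By Corollary~\ref{t-co-axiom-geom-thom-functor}, it suffices to check that (i) $M^{\mathrm{alg}}$ preserves $\i$-categorical colimits, and (ii) the restriction of $M^{\mathrm{alg}}$ along the Yoneda embedding $\Line{R'}\to\Gpd_{\i/\Line{R'}}$ is equivalent to the canonical inclusion $\Line{R'}\hookrightarrow\Mod{R'}$.

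For (i), recall from Definition~\ref{def-thom-ainfty} that on a fibration $f\colon B\to\BL\GL{R}$ the algebraic Thom spectrum is the derived smash product $\splus P \Smash^{L}_{\splus \GL{R}} R$. This factors as a composite of three left Quillen functors: the pullback $\EKMM_{*/\BL\GL{R}}\to \EKMM_{\GL{R}}$ (a Quillen equivalence by the proof of Theorem~\ref{t-pr-princ-bundle-obstr-thy}), the monoidal functor $\splus\colon \EKMM_{\GL{R}}\to \EKMM_{\splus\GL{R}}$, and the derived smash product $(-)\Smash^{L}_{\splus\GL{R}}R\colon \EKMM_{\splus\GL{R}}\to \EKMM_R$. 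Each descends, under passage to simplicial nerves of cofibrant-fibrant subcategories, to a colimit-preserving map of presentable $\i$-categories; hence so does their composite $M^{\mathrm{alg}}$.

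For (ii), the Yoneda embedding sends an object $x\in\Line{R'}$ to the left fibration $\Line{R'}_{/x}\to\Line{R'}$; under the equivalence \eqref{eq:67} with $\Sing{B\GL{R}}$, this corresponds, up to equivalence, to a pointed map $*\to\BL\GL{R}$ picking out the object. Lemma~\ref{t-le-thom-spectrum-point} identifies the value of $M^{\mathrm{alg}}$ on such a map with $R$ itself, naturally in $x$, so $M^{\mathrm{alg}}$ factors (up to equivalence) as a pointed map $\Line{R'}\to B\Aut_{\Mod{R'}}(R')\subset\Mod{R'}$. To check this factorization agrees with the tautological inclusion, it suffices, because both $\Line{R'}$ and $B\Aut_{\Mod{R'}}(R')\heq\Line{R'}$ are connected $\i$-groupoids with a single object up to equivalence, to verify that the induced map on endomorphism spaces
\[
 \GL{R} \heq \Aut_{\Line{R'}}(R') \longrightarrow \Aut_{\Mod{R'}}(R') \heq \GL{R}
\]
is an equivalence. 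This is the Eilenberg--Watts statement Proposition~\ref{abstract-alg-thom}: the action of $\GL{R}$ on $R=M^{\mathrm{alg}}(\GL{R})$ induced by the functor $M^{\mathrm{alg}}$ is the canonical one, by construction of the bimodule structure in Definition~\ref{def-thom-ainfty} (where the left action is simply the unit map $\splus\GL{R}\to R$), and the identification of $\Aut_{\Mod{R'}}(R')$ with $\GL{R}$ via Proposition~\ref{inf-t-pr-Rwe-and-GL} is compatible with this action.

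The main obstacle is bookkeeping: one must verify that the zig-zag of Quillen equivalences and $\i$-categorical equivalences assembling \eqref{eq:65}, \eqref{eq:66}, and \eqref{eq:67} is genuinely coherent with the pointed basepoint structure and with the $\GL{R}$-equivariance in Lemma~\ref{t-le-thom-spectrum-point}. Once one grants that all these equivalences intertwine the natural actions of $\GL{R}$ on $R$ (on the EKMM side) and of $\Aut_{\Line{R'}}(R')$ on $R'$ (on the $\i$-categorical side), the argument reduces, as above, to the universal property of Corollary~\ref{t-co-axiom-geom-thom-functor} together with the Eilenberg--Watts identification, which is the conceptual heart of the comparison.
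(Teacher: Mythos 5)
Your proposal is correct and follows essentially the same route as the paper: both verify the hypotheses of Corollary \ref{t-co-axiom-geom-thom-functor} for the transported algebraic functor (colimit preservation plus identification of the restriction along the Yoneda embedding), with the key input being the Eilenberg--Watts statement, Proposition \ref{abstract-alg-thom}. One caution: your intermediate reduction that it ``suffices to verify the induced map on endomorphism spaces $\GL{R}\to\GL{R}$ is an equivalence'' is too weak as stated (a nontrivial self-equivalence of $B\GL{R}$ could intervene), but your closing appeal to Proposition \ref{abstract-alg-thom} --- that the induced action of $\GL{R}$ on $R$ is the canonical one, compatibly with Proposition \ref{inf-t-pr-Rwe-and-GL} --- is exactly the stronger statement the paper itself uses, so the argument stands.
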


\begin{proof}
Let $\C$ denote the topological category with a single object $*$ and
$\map_\C(*,*)=\GL{R}=\Aut_R(R^\circ)\simeq\Aut_{R'}(R')$.
Note that $\C$ is naturally a topological subcategory of
$\Mod{\GL{R}}$ (the full topological subcategory of $\GL{R}$) and by
definition a topological subcategory of $\Mod{R}$.  Note also that
\[
   \N\C \simeq B\Aut (R')\simeq \Line{R'}.
\]
According to Proposition \ref{abstract-alg-thom}, the continuous functor
$$
T^L:\Mod{\GL{R}}\longrightarrow\Mod{R}
$$
has the property that its restriction to $\C$ is equivalent to the
inclusion of the topological subcategory $\C\to\Mod{R}$.  Taking
simplicial nerves, and recalling that
\[
     \N \Mod{\GL{R}}^{\circ} \heq  \N \MtoI{(\TT/B\GL{R})} \heq \Fun
     (\N\C^{op},\Gpd_{\i})\Mod{\GL{R}},
\]
we see that
$$
\N T^L:\Fun(\N\C^{op},\Gpd_\i)\simeq\N\Mod{\GL{R}}^\circ\longrightarrow\N\Mod{R}^\circ\simeq\Mod{R'}
$$
is a colimit-preserving functor whose restriction along the Yoneda
embedding $\N\C\to\Fun(\N\C^{op},\Gpd_\i)\simeq\Gpd_{\i/\Line{R'}}$ is
equivalent to the inclusion of the $\i$-subcategory
$\N\C\simeq\Line{R'}\to\Mod{R'}$. It therefore follows from Proposition \ref{t-co-axiom-geom-thom-functor} that $\N T^L$ is equivalent to the ``geometric'' Thom spectrum functor of \S\ref{sec:units-via-infty}.
\end{proof}


\begin{Remark}
The argument also implies the following apparently more general result.
Recall from \ref{sec:colim-pres-funct} that {\em any} map $k: B\GL{R} \to B\GL{R}$ defines a functor from the $\i$-category of spaces over $B\GL{R}$ to the $\i$-category of $R$-modules, defined by sending $f: X \to B\GL{R}$ to the colimit of the composite
\begin{equation}\label{eq:co-param-mor-eq}
X \xra{f} B\GL{R} \xra{k} B\GL{R} \to \Rmod.
\end{equation}
On the other hand, according to
Proposition~\ref{prop:algebraic-colimit} below, we can describe the
derived smash product from section \ref{sec:morita} associated to $k$
as the colimit of the composite
\[
X \xra{f} B\GL{R} \xra{k} B\GL{R} \xra{\splus} \Mod{\splus \GL{R}}
\xra{(-) \sma_{\splus \GL{R}} R} \Rmod.
\]
Since both functors are given by the formula $M(k\circ f)$, the Thom
$R$-module of $f$ composed with $k$, we conclude that these two
procedures are equivalent for any $k$, not just the identity.
\end{Remark}

\subsection{The algebraic Thom spectrum functor as a colimit}
\label{sec:algebr-thom-spectr}

We sketch another approach to the comparison of the ``geometric'' and
``algebraic'' Thom spectrum functors.     This approach has the
advantage of giving a direct comparison of the two functors.  It has
the disadvantage that it does
not characterize the Thom spectrum functor among functors
\[
   \TT/B\GL{R} \to \Mod{R},
\]
and it does not exhibit the conceptual role played by Morita
theory.  Instead, it identifies both functors as colimits.  In this
sense it is a direct
generalization of the argument we gave in \eqref{eq:56}, for the case
of a discrete ring $R$ and a discrete space $X.$

Suppose that $R$ is a
monoid in $S$-modules.   Let $\Rmod$ be the associated $\i$-category of
$R$-modules, let $\Rwe$ be the the sub-$\i$-groupoid of
$R$-lines, and let $j: \Rwe \to \Rmod$ denote the inclusion.

Let $X$ be a space.  The ``geometric'' Thom spectrum functor sends a
map $f: \Sing{X}\to \Line{R}$ to
\[
    \colim (\Sing{X} \xra{\Sing{f}}\Line{R}  \xra{j}
    \Rmod).
\]

As in \S\ref{sec:thom-spectra-2}, let $G = (\GL{R})^{c}.$
In \S\ref{sec:i-category-r} we  showed that $\Sing{\BL{G}}  \heq \Rwe$.
But observe that we  also have a natural equivalence
\[
       \Sing{\BL{G}} \heq \Line{G}.
\]
That is, let $\Mod{G} = \N \MtoI{\EKMM_{G}}$  be the
$\i$-category of $G$-modules, and let $\Line{G}$ be the maximal
$\i$-groupoid generated by the $G$-lines, that is, cofibrant and
fibrant $G$-modules which admit a weak equivalence to $G.$
By construction, $\Line{G}$ is connected, and so equivalent to $B\Aut
(G)\heq \BL{G}.$

The construction of the ``algebraic Thom
spectrum'' begins by associating  to a fibration of $*$-modules $f: X\to
\BL{G}$ the $G$-module $P$
which is the pull-back in
\begin{equation} \label{eq:76}
\xymatrix{
{P}
 \ar[r]
 \ar[d]
&
{\EL{G}}
 \ar[d]\\
{X}
 \ar[r]
&
{\BL{G}.}
}
\end{equation}
The association $f\mapsto P$ defines a functor
\[
    \EKMM_{*/\BL{G}} \to \EKMM_{G}
\]
which induces an equivalence of homotopy categories; by Theorem
\ref{t-pr-princ-bundle-obstr-thy} it is an equivalence of enriched
homotopy categories.  As discussed in Proposition
\ref{t-pr-G-space-space-over-bg}, this corresponds to an equivalence
of $\i$-categories
\begin{equation} \label{eq:75}
      \Gpd_{\i/\Line{G}} \heq \Mod{G}.
\end{equation}
The key observation is the following.  Let $k: \Line{G}\to \Mod{G}$
denote the tautological inclusion.  To a map of $\i$-groupoids
\[
   f: X\to \Line{G},
\]
we can associate the $G$-module
\[
      P_{f} =  \colim (X \xra{f} \Line{G}\xra{k} \Mod{G} ).
\]

\begin{Lemma}
The functor $f \mapsto P_{f}$ gives the equivalence \eqref{eq:75}.
\end{Lemma}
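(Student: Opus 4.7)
The plan is to apply the universal property of the Yoneda embedding, namely the analogue of Corollary \ref{t-co-axiom-geom-thom-functor} in this $G$-equivariant setting: a colimit-preserving functor out of $\Gpd_{\i/\Line{G}} \heq \Fun(\Line{G}^{op},\Gpd_\i)$ is determined, up to a contractible space of choices, by its restriction along the Yoneda embedding $\Line{G} \to \Gpd_{\i/\Line{G}}$. Concretely, I will show that both the equivalence \eqref{eq:75} and the functor $f \mapsto P_f$ preserve colimits and both restrict along the Yoneda embedding to the tautological inclusion $k:\Line{G}\to\Mod{G}$.

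First I would verify that the equivalence of \eqref{eq:75} is colimit-preserving. This is where we use that the equivalence of enriched homotopy categories produced by the pullback construction $f\mapsto P$ of diagram \eqref{eq:76} (see Theorem \ref{t-pr-princ-bundle-obstr-thy}) arises from a Quillen equivalence; since it is modeled by the derived functor of a left adjoint (pullback along $\EL{G}\to\BL{G}$), the induced functor on $\i$-categories preserves all $\i$-categorical colimits. On the other side, the functor $f\mapsto P_f$ is by construction a colimit functor (equivalently, a left Kan extension along $\Line{G}\to\ast$ composed with $k_!$), so it also preserves colimits.

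Next I would check that both functors agree on the Yoneda embedding. Since $\Line{G}$ is the one-object $\i$-groupoid $B\Aut_G(G)\heq\BL{G}$, the Yoneda embedding is represented (up to equivalence) by the tautological pointing $\ast\to\BL{G}$. For the colimit functor $f\mapsto P_f$, the colimit of a single point mapping into $\Mod{G}$ is simply the image, so the restriction is equivalent to $k$. For the pullback construction, pulling $\EL{G}\to\BL{G}$ back along a point of $\BL{G}$ returns the fiber of this (quasi)fibration, which by Proposition \ref{t-EL-BL-quasi-fib} is equivalent to $G$ with its standard right $G$-module structure; in other words, the value on the Yoneda point is again $G\in\Line{G}\subset\Mod{G}$. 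The induced map on automorphisms $\Aut_{\Line{G}}(\ast)\to\Aut_{\Mod{G}}(G)$ is, in both cases, the canonical identification.

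The main obstacle will be making the second half of step two completely precise: one must track the model-categorical replacements that enter the definition of $P$ (a fibrant replacement in $*$-modules over $\BL{G}$, cf.\ the proof of Lemma \ref{t-le-thom-spectrum-point}) and verify that, after passing to $\i$-categories, the value on the distinguished point of $\Line{G}$ is indeed $G$ as a $G$-module, together with the correct multiplicative self-action. This is essentially the space-level analogue, with $G$ in place of $R$, of Lemma \ref{t-le-thom-spectrum-point}, and the argument there adapts verbatim: any lift of $\ast\to\BL{G}$ to $\EL{G}$ produces a $G$-equivariant equivalence $G\heq P$, and a check of what this does on endomorphism monoids shows that the resulting map $G\to\Aut_{\Mod{G}}(G)$ is the canonical one. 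With both functors colimit-preserving and both restricting to $k$ along the Yoneda embedding, the universal property cited above forces them to be equivalent, completing the proof.
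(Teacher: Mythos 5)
Your proposal is correct in substance, but it is packaged differently from the paper's argument. The paper proves the Lemma ``along the same lines as Corollary \ref{t-pr-P-colim-X-to-BG}'': for a given $f:X\to\Line{G}$ one decomposes $X$ as a colimit of its fibers (ultimately of points) over $\Line{G}$, as in Proposition \ref{X=colim1} and Proposition \ref{colimX_y=X}, and then uses that the equivalence \eqref{eq:75} preserves colimits to identify its value on $f$ with $\colim(k\circ f)=P_f$, the whole thing resting on the identification of the value at a point with $G$ carrying its translation action. You instead argue at the level of functors, invoking the universal property of $\Pre(\Line{G})\heq\Gpd_{\i/\Line{G}}$ (the equivalence \eqref{eq:74}, as in Corollary \ref{t-co-yoneda-inverse} and the axiomatic characterization of Corollary \ref{t-co-axiom-geom-thom-functor}): two colimit-preserving functors agreeing on the Yoneda embedding are equivalent. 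This is the characterization-style route the paper deliberately sets aside in \S\ref{sec:algebr-thom-spectr} in favor of a direct identification of both functors as colimits; since Corollary \ref{t-co-yoneda-inverse} is itself a restatement of Proposition \ref{colimX_y=X}, the two arguments share their essential content, and both hinge on exactly the same key check that you isolate --- that the pullback equivalence carries the Yoneda point of $\Line{G}\heq\Sing{\BL{G}}$ to $G$ with its canonical right $G$-action, coherently on automorphism monoids (this is in effect what Theorem \ref{t-pr-princ-bundle-obstr-thy} and Proposition \ref{t-pr-G-space-space-over-bg} encode, and the paper treats it with roughly the same brevity that you do). One small correction: the functor $f\mapsto P$ of \eqref{eq:76} is modeled by the \emph{right} Quillen adjoint (pullback along $\EL{G}\to\BL{G}$ is weakly equivalent to $p_*q^*r^*$ in the notation of the appendix), not a left adjoint; this does not hurt you, since colimit-preservation follows immediately from the fact that \eqref{eq:75} is an equivalence of $\i$-categories, but you should cite that reason rather than left-adjointness.
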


In other words, if $f: X\to \BL{G}$ is a fibration of $*$-modules,
then we can form $P$ as in \eqref{eq:76}.  Alternatively, we can form
\[
   \Sing{f}: \Sing{X}\to \Sing{\BL{G}} \heq \Mod{G},
\]
and then form $P_{\Sing{f}} = \colim (k \Sing{f}),$ and then we have
an equivalence of $G$-modules
\[
      P_{\Sing{f}} \heq P.
\]
The proof of the Lemma follows the same lines as Corollary
\ref{t-pr-P-colim-X-to-BG}, which treats the case that $G$ is a
group-like monoid in $\i$-groupoids.

\begin{Proposition}\label{prop:algebraic-colimit}
Let $f: X\to \BL{G}$ be a fibration of $*$-modules.  The ``algebraic''
Thom spectrum functor sends $f$ to
\[
   \colim (\Sing{X} \xra{\Sing{f}} \Sing{\BL{G}} \heq \Line{G}
   \xra{k} \Mod{G} \xra{\splus} \Mod{\splus G} \xra{\Smash_{\splus G}
   R} \Rmod ).
\]
\end{Proposition}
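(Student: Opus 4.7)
The plan is to combine the preceding Lemma, which identifies the $G$-module $P$ arising from the fibration diagram \eqref{eq:76} with the $\i$-categorical colimit $P_{\Sing{f}} = \colim(\Sing{X} \xra{\Sing{f}} \Sing{\BL{G}} \heq \Line{G} \xra{k} \Mod{G})$, with the observation that the functors $\splus$ and $(\slot)\Smash_{\splus G} R$ are left adjoints and hence preserve $\i$-categorical colimits.

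First, I would recall the definition from \S\ref{sec:thom-spectra-2}: by Definition \ref{def-thom-ainfty}, the algebraic Thom spectrum of $f \colon X \to \BL{G}$ is $Mf = \splus P \Smash^L_{\splus G} R$, where $P$ is the pullback \eqref{eq:76}. Since $f$ is a fibration by assumption, $P$ represents the correct homotopy type of the derived functor. Next, applying the preceding Lemma, we have an equivalence of $G$-modules
\[
P \heq P_{\Sing{f}} \eqdef \colim\bigl(\Sing{X} \xra{\Sing{f}} \Line{G} \xra{k} \Mod{G}\bigr),
\]
where the colimit is taken in the $\i$-category $\Mod{G}$.

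The second step is to commute the remaining two functors past the colimit. The functor $\splus \colon \Mod{G} \to \Mod{\splus G}$ induced by the Quillen adjunction of Proposition \ref{t-pr-star-mod-mbl-s-mod}(4), when passed to $\i$-categories as in \S\ref{sec:i-category-r}, is a left adjoint to $\linf_{\sL}$ and therefore preserves colimits in the $\i$-categorical sense. Likewise, the derived smash product $(\slot)\Smash^L_{\splus G} R \colon \Mod{\splus G} \to \Rmod$ is a left adjoint (to the function spectrum functor $F_{\splus G}(R, \slot)$ in the underlying model categorical setting, passed to $\i$-categories), and so preserves $\i$-categorical colimits. Composing, we conclude
\[
Mf \heq \splus P_{\Sing{f}} \Smash^L_{\splus G} R \heq \colim\bigl(\Sing{X} \xra{\Sing{f}} \Line{G} \xra{k} \Mod{G} \xra{\splus} \Mod{\splus G} \xra{\Smash_{\splus G} R} \Rmod\bigr),
\]
which is precisely the claim.

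The main obstacle is the preceding Lemma itself, which in turn requires verifying that the equivalence of $\i$-categories \eqref{eq:75} between $\Gpd_{\i/\Line{G}}$ and $\Mod{G}$ sends a map $f$ to the colimit of the composite $X \xra{f} \Line{G} \xra{k} \Mod{G}$. As indicated in the excerpt, this proceeds along the lines of Corollary \ref{t-pr-P-colim-X-to-BG}, where one uses that $\Line{G} \heq B\Aut(G) \heq \BL{G}$ is the classifying $\i$-groupoid of a group-like monoidal $\i$-groupoid, and that the Yoneda embedding $\BL{G} \to \Gpd_{\i/\BL{G}} \heq \Mod{G}$ sends the unique object to the free rank-one $G$-module $G$ itself; the colimit formula then recovers the pullback $P$ via base change along $\ast \to \BL{G}$ combined with the fact that colimits commute with base change in $\i$-groupoids. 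Apart from this, the remaining verifications are formal: one must confirm that the left adjoints above indeed model the derived smash product correctly on the underlying $\i$-categories, which is handled by the general comparison machinery of \S\ref{sec:i-category-r}.
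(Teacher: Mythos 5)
Your proposal is correct and follows essentially the same route as the paper: apply the preceding Lemma to identify $P$ with $\colim(k\circ\Sing{f})$ in $\Mod{G}$, then pass $\splus$ and $(\slot)\Smash_{\splus G}R$ through the colimit since both are left adjoints. (One tiny slip: the right adjoint of $(\slot)\Smash_{\splus G}R$ is $F_{R}(R,\slot)$ with $\splus G$-action induced by the left action on the bimodule $R$, not $F_{\splus G}(R,\slot)$; this does not affect the argument.)
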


\begin{proof}
By the Lemma, we have
\begin{equation}\label{eq:1}
   P \heq \colim (\Sing{X} \xra{\Sing{f}} \Sing{\BL{G}} \heq \Line{G}
   \xra{k} \Mod{G}),
\end{equation}
and so
\begin{align*}
    Mf = \splus P \Smash_{\splus G} R & \heq
 \splus \colim (k\Sing{f}) \Smash_{\splus G}R \\
& \heq \colim (\splus k \Sing{f}) \Smash_{\splus G} R \\
& \heq \colim (\splus k \Sing{f} \Smash_{\splus G} R).
\end{align*}
This last is the colimit in the statement of the Lemma.
\end{proof}

From this point of view, the coincidence of the two Thom spectrum
functors amounts to the fact that diagram
\[
\xymatrix{
{\BL{G}} \ar[r]
 \ar[dr]
&
\Line{G}
 \ar[d]_{\heq}^{\splus (\slot) \Smash_{\splus G} R}
 \ar[rr]^{k}
& &
{\Mod{G}}
 \ar[d]^{\splus (\slot) \Smash_{\splus G} R}
\\
& {\Rwe}
 \ar[rr]_{j}
& &
{\Rmod}
}
\]
evidently commutes.

\subsection{The ``neo-classical'' Thom spectrum functor}
\label{sec:neo-classical-thom}

In this final section we compare the Lewis-May operadic Thom
spectrum functor to the Thom spectrum functors discussed in this
paper.  Since the May-Sigurdsson construction of the Thom spectrum in
terms of a parametrized universal spectrum over $B\GL{S}$
\cite{MR2271789}[23.7.4] is easily seen to be equivalent to the
space-level Lewis-May description, this will imply that all of the
known descriptions of the Thom spectrum functor agree up to homotopy.
Our comparison proceeds by relating the Lewis-May model to the
quotient description of Proposition~\ref{quotient}.

We begin by briefly reviewing the Lewis-May construction of the Thom
spectrum functor; the interested reader is referred to Lewis' thesis,
published as Chapter IX of \cite{LMS:esht}, and the excellent
discussion in Chapter 22 of \cite{MR2271789} for more details and
proofs of the foundational results below.  Nonetheless, we have tried
to make our discussion relatively self-contained.

The starting point for the Lewis-May construction is an explicit
construction of $\GL{S}$ in terms of a diagrammatic model of infinite
loop spaces.  Let $\sI_c$ be the symmetric monoidal category of finite
or countably infinite dimensional real inner product spaces and linear
isometries.  Define an $\sI_c$-space to be a continuous functor from
$\sI_c$ to spaces.  The usual left Kan extension construction gives
the diagram category of $\sI_c$-spaces a symmetric monoidal
structure.  It turns out that monoids and commutative monoids for this
category model $\ainfty$ and $\einfty$ spaces; for technical felicity,
we focus attention on the commutative monoids which satisfy two
additional properties:
\begin{enumerate}
\item The map $T(V) \to T(W)$ associated to a linear isometry $V \to
  W$ is a homeomorphism onto a closed subspace.
\item Each $T(W)$ is the colimit of the $T(V)$, where $V$ runs over
  the finite dimensional subspaces of $W$ and the maps in the colimit
  system are restricted to the inclusions.
\end{enumerate}
Denote such a functor as an $\sI_c$-FCP (functor with cartesian
product) \cite[23.6.1]{MR2271789}; the requirement that $T$ be a
diagrammatic commutative monoid implies the existence of a ``Whitney
sum'' natural transformation $T(U) \times T(V) \to T(U \oplus V)$.
This terminology is of course deliberately evocative of the notion of
$FSP$ (functor with smash product), which is essentially an orthogonal
ring spectrum \cite{MR1806878}.

An $\sI_c$-FCP gives rise to an $\einfty$ space structured by the
linear isometries operad; specifically, $T(\R_\infty) = \colim_V T(V)$
is an $\sL$-space with the operad maps induced by the Whitney sum
\cite[1.9]{MQRT:ersers}, \cite[23.6.3]{MR2271789}.  In fact, as
alluded to above one can set up a Quillen equivalence between the
category of $\sI_c$-FCP's and the category of $\einfty$ spaces,
although we do not discuss this matter herein (see \cite{Lind} for a
nice treatment of this comparison).

Moving on, we now focus attention on the $\sI_c$-FCP specified by
taking $V \subset \R^\infty$ to the space of based homotopy
self-equivalences of $S^V$; this is classically denoted by $F(V)$.
Passing to the colimit over inclusions, $F(\R^\infty) = \colim_V F(V)$
becomes a $\L$-space which models $\GL{S}$ --- this is essentially one
of the original descriptions from \cite{MQRT:ersers}.  Furthermore,
since each $F(V)$ is a monoid, applying the two-side bar construction
levelwise yields an FCP specified by $V \mapsto BF(V)$; here $BF(V)$
denotes the bar construction $B(*,F(V),*)$, and the Whitney sum
transformation is defined using the homeomorphism $B(*, F(V), *)
\times B(*, F(W), *) \cong B(*, F(V) \times F(W), *)$.  The colimit
$BF(\R^\infty)$ provides a model for $B\GL{S}$.

Now, since $F(V)$ acts on $S^V$, we can also form the two-sided bar
construction $B(*,F(V),S^V)$, abbreviated $EF(V)$, and there is a universal quasifibration
\[
\pi_V \colon EF(V) = B(*,F(V),S^V) \longrightarrow B(*,F(V),*) = BF(V)
\]
which classifies
spherical fibrations with fiber $S^V$.  Given a map $X \to
BF(\R^\infty)$, by pulling back subspaces $BF(V) \subset
BF(\R^\infty)$ we get an induced filtration on $X$; denote the space
corresponding to pulling back along the inclusion of $V \in \R^\infty$
by $X(V)$ \cite[IX.3.1]{LMS:esht}.

Denote by $Z(V)$ the pullback
\[
\xymatrix{
X(V) \ar[r] & BF(V) & \ar[l] EF(V).
}
\]
The $V$th space of the Thom prespectrum is then obtained by taking the
Thom space of $Z(V) \to X(V)$, that is by collapsing out the section
induced from the base point inclusion $* \to S^V$; denote the resulting
prespectrum by $TF$ \cite[IX.3.2]{LMS:esht} (note that some work is
involved in checking that these spaces in fact assemble into a
prespectrum).

Next, we will verify that the prespectrum $TF$ associated to the
identity map on $BF(\R^\infty)$ is stably equivalent to the homotopy
quotient $S/\GL{S} \heq S/F(\R^\infty)$.  For a point-set description of
this homotopy quotient, notice that it follows from
Proposition~\ref{t-pr-star-mod-mbl-s-mod} that the category of EKMM
(commutative) $S$-algebras is tensored over (commutative) monoids in
$*$-modules: the tensor of a monoid in $*$-modules $M$ and an
$S$-algebra $A$ is $\splus M \sma A$, with multiplication
\[
(\splus M \sma A) \sma (\splus M \sma A) \cong (\splus M \sma \splus
M) \sma (A \sma A) \cong (\splus (M \boxtimes M)) \sma (A \sma A) \to
(\splus M) \sma A.
\]

Thus, we can model the homotopy quotient as a bar construction in the
category of (commutative) $S$-algebras.  However, we can also describe
the homotopy quotient as $\colim_V S/F(V)$, where here we use the
structure of $F(V)$ as a monoid acting on $S^V$.  It is this
``space-level'' description we will employ in the comparison below.

We find it most convenient to reinterpret the Lewis-May construction
in this situation, as follows:  The Thom space in this case is by
definition the cofiber $(EF(V),BF(V))$ of the inclusion $BF(V) \to
EF(V)$ induced from the base point inclusion $* \to S^V$.
Now,
$$
BF(V)\heq */F(V)$$
and similarly
$$
EF(V)\heq S^V/F(V).
$$
Hence the Thom space is likewise the cofiber $(S^V,*)/F(V)$ of the
inclusion $*\to S^V$, viewed as a {\em pointed} space.

More generally, we can regard the prespectrum $\{MF(V)\}$ as
equivalently described as
\[
MF(V)\eqdef S^V/F(V),
\]
the homotopy quotient of the {\em pointed} space $S^V$ by $F(V)$ via
the canonical action, with structure maps induced from the quotient
maps $S^V\to S^V/F(V)$ together with the pairings
$$
MF(V)\land MF(W)\heq S^V/F(V)\land S^W/F(W)\longrightarrow
S^{V\oplus W}/F(V)\times F(W)\longrightarrow S^{V\oplus W}/F(V \oplus
W),
$$
where $F(V)\times F(W)\to F(V \oplus W)$ is the Whitney sum map of
$F$.  It is straightforward to check that the structure maps in terms
of the bar construction described in \cite[IX.3.2]{LMS:esht} realize
these structure maps.

The associated spectrum $MF$ can then be identified as $\colim_V
S/F(V) \heq S/F(\R^\infty)$.  A key point is that the Thom spectrum
functor can be described as the colimit over shifts of the Thom spaces
\cite[IX.3.7,IX.4.4]{LMS:esht}:
\[
MF = \colim_V \Sigma^{-V} \Sigma^\i MF(V).
\]
Furthermore, using the bar construction we can see that the spectrum
quotient $(\Sigma^V S) / F(V)$ is equivalent to $\Sigma^\i S^V /
F(V)$.  Putting these facts together, we have the following chain of
equivalences:
\begin{eqnarray*}
MF & = & \colim_V\Sigma^{-V}\Sigma^\i
MF(V) = \colim_V\Sigma^{-V}\Sigma^\i S^V /F(V) \\
& \heq & \colim_V
\Sigma^{-V} (\Sigma^V S)/F(V) \heq \colim_V (\Sigma^{-V} \Sigma^V
S)/F(V) \htp S/F(\R^\infty).
\end{eqnarray*}

More generally, a slight elaboration of this argument implies the
following proposition.


\begin{Proposition}\label{L-Mquotient}
The Lewis-May Thom spectrum $MG$ associated to a group-like $A_\i$ map
$\varphi:G\to GL_1 S$ modeled by the map of $\sI_c$-FCPs $G \to F$ is
equivalent to the spectrum $S/G$, the homotopy quotient of the sphere
by the action of $\varphi$.
\end{Proposition}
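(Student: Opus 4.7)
The plan is to generalize the chain of equivalences given immediately above the statement, which treated the special case $G=F$ with $\varphi$ the identity. The key observation is that the Lewis-May construction is manifestly functorial in maps of $\sI_c$-FCPs, so the map $\varphi$ induces a levelwise Thom prespectrum, and the essential content is simply to rerun the same computation while tracking $\varphi$.

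First, I would observe that the classifying map $B\varphi\colon BG(\R^\infty)\to BF(\R^\infty)$ is the colimit over $V$ of the maps $BG(V)\to BF(V)$ obtained by applying the two-sided bar construction levelwise to $\varphi_V\colon G(V)\to F(V)$. Consequently, the Lewis-May filtration of $BG(\R^\infty)$ obtained by pulling back the filtration $\{BF(V)\}$ coincides up to homotopy with the intrinsic filtration $\{BG(V)\}$, and pulling the universal quasifibration $\pi_V\colon EF(V)\to BF(V)$ back along $BG(V)\to BF(V)$ yields a quasifibration naturally identified with $EG(V)=B(\ptspace, G(V), S^V)\to BG(V)$, where $G(V)$ acts on $S^V$ through $\varphi_V$. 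This identification is a standard consequence of the functoriality of the two-sided bar construction in the middle variable, together with the compatibility of $\varphi$ with the Whitney sum pairings.

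Next, I would identify the $V$th space of the resulting Thom prespectrum $TG$ as the cofiber of the section $BG(V)\to EG(V)$ induced by $\ptspace\to S^V$; exactly as in the $F$ case, this is homeomorphic to the pointed homotopy quotient $S^V/G(V)$. The prespectrum structure maps are inherited from the Whitney sum pairings for $G$ via the corresponding pairings for $F$, and I would then rerun the equivalences displayed just above the statement:
\[
MG = \colim_V\Sigma^{-V}\sinf MG(V) = \colim_V\Sigma^{-V}\sinf\bigl(S^V/G(V)\bigr) \heq \colim_V\Sigma^{-V}\bigl((\Sigma^V S)/G(V)\bigr) \heq \colim_V\bigl((\Sigma^{-V}\Sigma^V S)/G(V)\bigr) \heq S/G(\R^\infty) \heq S/G.
\]
The last identification uses the definition of $S/G$ as a bar construction in $S$-algebras via the tensor of $S$-algebras over monoids in $*$-modules, together with the fact that this bar construction commutes with the filtered colimit over $V$.

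The main obstacle will be the passage from $\sinf(S^V/G(V))$ to $(\Sigma^V S)/G(V)$ in a manner that is natural in $V$, so that the levelwise equivalences assemble under the colimit into an equivalence of spectra. This is essentially the same coherence point that had to be checked in the case $G=F$, but one must additionally verify that the identifications are natural with respect to $\varphi$; once this is in hand, the rest of the argument is formal, and the proposition follows by composing the displayed chain of equivalences.
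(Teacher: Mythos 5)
Your proposal is correct and is essentially the paper's own argument: the paper proves the case $G=F$ explicitly and then asserts the proposition follows by "a slight elaboration of this argument," which is precisely the elaboration you carry out (pulling back the universal quasifibration levelwise, identifying $MG(V)$ with $S^V/G(V)$ via the bar construction, and rerunning the chain of equivalences through the colimit over $V$). The coherence point you flag — identifying $\sinf(S^V/G(V))$ with $(\Sigma^V S)/G(V)$ compatibly with the structure maps — is the same one the paper handles by checking that the bar-construction structure maps realize the prespectrum structure maps of \cite[IX.3.2]{LMS:esht}.
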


Note that any $A_\i$ map $X \to F(\R^{\infty})$ can be rectified to a map of
$\sI_c$-FCPs $X' \to F$ \cite{Lind}.

\begin{Corollary}\label{t-co-LM-thom-comp}
Given a map of spaces $f:X\to BGL_1 S$, write $\Pi_\i Mf$ for the
stable $\i$-groupoid associated to the Lewis-May Thom spectrum $Mf$.
Then $\Pi_\i Mf\heq M\Pi_\i f$, where $$\Pi_\i f:\Pi_\i X\to\Pi_\i
BGL_1 S\heq 
\Line{S}$$ is the associated map of
$\i$-groupoids.
\end{Corollary}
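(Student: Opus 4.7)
The plan is to deduce the corollary from Proposition \ref{L-Mquotient} by appealing to the axiomatic characterization of the Thom spectrum functor given by Corollary \ref{t-co-axiom-geom-thom-functor}. Recall that this characterization states that a functor $\Gpd_{\i/\Line{S}} \to \Mod{S}$ is equivalent to the Thom spectrum functor $M$ if and only if it preserves colimits and its restriction along the Yoneda embedding $\Line{S} \to \Gpd_{\i/\Line{S}}$ is equivalent to the tautological inclusion $\Line{S} \hookrightarrow \Mod{S}$. So the task is to show that the Lewis--May construction, viewed through the equivalences $\Sing B\GL{S} \heq \Line{S}$ and $\N\MtoI{\monspectra} \heq \Stab(\Gpd_\i)$ of \S\ref{sec:i-category-r}, determines a functor $M^{LM} : \Gpd_{\i/\Line{S}} \to \Mod{S}$ satisfying both conditions.

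First I would verify the Yoneda-restriction condition. Under $\Sing B\GL{S} \heq \Line{S}$, a vertex of $\Line{S}$ corresponds to a pointed map $* \to B\GL{S}$. The Lewis--May Thom spectrum associated to such a map is manifestly the sphere spectrum $S$, and naturality in this identification presents the restriction as the tautological inclusion carrying the distinguished $S$-line to $S \in \Mod{S}$. More robustly, for any group-like $A_\i$ map $G \to \GL{S}$ modeled by an $\sI_c$-FCP map, Proposition \ref{L-Mquotient} identifies the Lewis--May Thom spectrum $MG$ with the homotopy quotient $S/G$; on the other hand, Proposition \ref{quotient} identifies $M$ applied to the composite $BG \to \Line{S} \to \Mod{S}$ with the same quotient $S/G$. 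Thus the two functors agree on all maps of the form $BG \to B\GL{S}$ arising from group-like $A_\i$ maps.

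The main obstacle is colimit preservation of the Lewis--May construction on the $\i$-categorical level. Since any $\i$-groupoid $\Pi_\i X$ is a colimit of its points (Proposition \ref{X=colim1}), and each point contributes the trivial map whose Lewis--May Thom spectrum is $S$, agreement of $M^{LM}$ with $M$ will follow from colimit preservation together with the Yoneda-restriction established above. To verify colimit preservation, I would exploit the explicit presentation of $MF$ as $\colim_V \Sigma^{-V}\Sigma^\i (S^V/F(V))$ discussed before Proposition \ref{L-Mquotient}: since $\Sigma^\i_+$, smash products, shifts, and sequential colimits along such filtrations all commute with homotopy colimits in spaces over the base, the Thom prespectrum and hence the spectrum $MF$ sends homotopy colimits of spaces over $B\GL{S}$ to homotopy colimits of $S$-modules. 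Alternatively, one could bypass a direct argument by routing through Corollary \ref{t-co-thom-spectra-equivalent}: the May--Sigurdsson reformulation \cite[23.7.4]{MR2271789} of the Lewis--May Thom spectrum as the base change along $X \to *$ of a pullback of the universal parametrized spectrum over $B\GL{S}$ matches almost definitionally the ``algebraic'' construction of Definition \ref{def-thom-ainfty}, and that construction has already been shown to agree with the $\i$-categorical functor $M$.

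Either route reduces the corollary to the compatibility of point-set models with their $\i$-categorical shadows; the hard part is really the colimit-preservation step, since checking it directly for Lewis--May requires the technical statement that the Thom space construction commutes with filtered colimits and pushouts of fibrations up to the cofibration/tameness conditions familiar from Chapter IX of \cite{LMS:esht}. Once this is in hand, Corollary \ref{t-co-axiom-geom-thom-functor} immediately produces the natural equivalence $\Pi_\i Mf \heq M\Pi_\i f$.
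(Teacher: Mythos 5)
Your route is genuinely different from the paper's, and as written it has a gap. You want to invoke the characterization of Corollary \ref{t-co-axiom-geom-thom-functor}, but that corollary can only be applied once you have produced the Lewis--May construction as an actual colimit-preserving functor of $\i$-categories $\Gpd_{\i/\Line{S}}\to\Mod{S}$ and identified its restriction along the Yoneda embedding with the tautological inclusion \emph{as a functor}. Neither step is supplied. The colimit preservation you sketch (via the filtration $MF=\colim_V\Sigma^{-V}\Sigma^\i MF(V)$, or via \cite[IX.4.3]{LMS:esht}) is a point-set/model-category statement about the classical construction on spaces over $B\GL{S}$; upgrading it to a functor on $\Gpd_{\i/\Line{S}}$ with all coherences is exactly the ``composite of a right derived equivalence with a left derived functor'' difficulty the paper flags and deliberately avoids, and your alternative detour through May--Sigurdsson relies on a comparison the paper explicitly defers (Remark \ref{rem:omit}). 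More seriously, your verification of the Yoneda condition is only object-level: computing $M(\ast\to B\GL{S})\heq S$ and $MG\heq S/G$ does not determine the restricted functor up to equivalence. As in Proposition \ref{abstract-alg-thom}, one must also check that the induced map on endomorphisms $\GL{S}\heq\Aut(S^{\circ})\to\End(S^{\circ})$ is the canonical inclusion (equivalently, that the induced self-map of $B\GL{S}$ is the identity up to homotopy); without that, the restriction could differ from the tautological inclusion by a nontrivial self-equivalence $k:B\GL{S}\to B\GL{S}$, and the characterization would only identify your functor with $f\mapsto M(k\circ\Pi_\i f)$.

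By contrast, the paper's proof is much more economical and needs none of this machinery: it uses colimit preservation of both Thom functors only to decompose $X$ into its connected components, writes each component as $BG$ for a group-like $A_\i$ space $G$ (rectified to an $\sI_c$-FCP map), and then compares the two quotient descriptions directly --- $Mf\heq S/G$ by Proposition \ref{L-Mquotient} and $M\Pi_\i f\heq S/G$ by Proposition \ref{quotient}. You already have both of these propositions in hand (they appear as your ``Yoneda-restriction'' check), so the quickest repair is to drop the axiomatic framework, note that both functors send $X\heq\coprod_\alpha X_\alpha$ to the corresponding wedge, and conclude componentwise; if you insist on the characterization route, you must additionally construct the $\i$-functor and verify the endomorphism-level condition above.
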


\begin{proof}
A basic property of this (and any) Thom spectrum functor is that it
preserves colimits \cite[IX.4.3]{LMS:esht}.  Thus, we can assume that
$X$ is connected.  In this case, $X\heq BG$ for some group-like
$A_\i$ space $G$, and $f:BG\to BGL_1 S$ is the delooping of an $A_\i$
map $G\to GL_1 S$, so $Mf\heq S/G$ by Proposition \ref{L-Mquotient},
so $\Pi_\i Mf\heq M\Pi_\i f$ by Proposition \ref{quotient}.
\end{proof}

\appendix

\section{Spaces over $BG$ and $G$-spaces}

\subsection{The simplicial case}

\label{sec:princ-bundl-lift-new}

In this subsection we give a quick proof of the Quillen equivalence
between simplicial sets over $BG$ and $G$-simplicial sets (with
respect to the projective model structure) for a simplicial monoid
$G$, where $BG$ denotes the diagonal of the bisimplicial set obtained
by regarding $G$ as a simplicial category with one object.  This
result is folklore, and in the case of simplicial groups is an old
result of Dwyer and Kan \cite{MR789792}.  Nonetheless, since no proof
of the specific result we need appears in the literature, we include
one here.  Note that, throughout this subsection, all model categories
are simplicial model categories, and all morphisms of model categories
are simplicial Quillen adjunctions; furthermore, we will sometimes
refer to simplicial sets as spaces.

Write $\Set_{\Delta/BG}$ and $\Set^G_{\Delta}$ for the simplicial model categories of spaces over $BG$, equipped with the over-category model structure, and $G$-spaces, equipped with the projective model structure, respectively.
Interpolating between these is the simplicial model category $\Set^G_{\Delta/EG}$ of $G$-spaces over the $G$-space $EG$.
The projection $p:EG\to *$ to the terminal object induces an adjunction of simplicial model categories
$$
p_!:\Set^G_{\Delta/EG}\rightleftarrows\Set^G_{\Delta}:p^*
$$
(we always write the left adjoint on the left) which is in fact a Quillen equivalence, as $p$ is a weak equivalence of $G$-spaces.
Note that that pullback functor $p^*$ is also the left adjoint of a Quillen equivalence
$$
p^*:\Set^G_{\Delta}\rightleftarrows\Set^G_{\Delta/EG}:p_*,
$$
where the right adjoint $p_*$ sends $P\to EG$ to the $G$-space of sections $\map_{/EG}(EG,P)$.

Now, if we regard $BG$ as a trivial $G$-space and $q:EG\to BG$ as a $G$-map, then we obtain a similar base-change Quillen adjunction
$$
q_!:\Set^G_{\Delta/EG}\rightleftarrows\Set^G_{\Delta/BG}:q^*
$$
which clearly is not in general a Quillen equivalence; nevertheless, we claim that restriction along the unique simplicial monoid morphism $r:G\to *$ induces yet another Quillen adjunction
$$
r_!:\Set^G_{\Delta/BG}:\rightleftarrows\Set_{\Delta/BG}:r^*
$$
such that the composite Quillen adjunction
$$
r_!q_!:\Set^G_{\Delta/EG}\rightleftarrows\Set^G_{\Delta/BG}\rightleftarrows\Set_{\Delta/BG}:q^*r^*
$$
is a Quillen equivalence.
To see this, it's enough to show that the derived unit and counit transformations are equivalences.
But actually, more is true: $r_!q_!$ and $q^*r^*$ are inverse isomorphisms.
Indeed,
$$r_!q_!(P\to EG)=(P/G\to EG/G=BG)$$
and
$$q^*r^*(X\to BG)=(X\times_{BG}EG\to BG\times_{BG}EG=BG),$$
so the fact that colimits are compatible with base-change implies that
$$
q^*r^*r_!q_!(P\to EG)=(P/G\times_{BG}EG\to BG\times_{BG}EG)\cong(P\to EG)
$$
and
$$
r_!q_!q^*r^*(X\to BG)=(X\times_{BG}EG/G\to BG\times_{BG}EG/G)\cong(X\to BG).
$$
Combining this with the Quillen pair $(p^*,p_*)$ above we obtain the following result.

\begin{Proposition}
Let $G$ be a
simplicial monoid.
Then the composite adjunction
$$
r_!q_!p^*:\Set^G_\Delta\rightleftarrows\Set^G_{\Delta/EG}\rightleftarrows\Set^G_{\Delta/BG}\rightleftarrows\Set_{\Delta/BG}:p_*q^*r^*
$$
is a Quillen equivalence.
\end{Proposition}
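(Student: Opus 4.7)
The plan is to exhibit the displayed adjunction as the composition of two Quillen equivalences that have essentially already been set up in the paragraphs preceding the statement, and then to invoke the standard fact that a composition of Quillen equivalences (with left adjoints composing on the left) is again a Quillen equivalence. Concretely, the proposed factorization is
$$
\Set^G_\Delta \xrightarrow{\,p^*\,} \Set^G_{\Delta/EG} \xrightarrow{\,r_!q_!\,} \Set_{\Delta/BG},
$$
with right adjoints $p_*$ and $q^*r^*$, so that it suffices to verify that each arrow is the left adjoint of a Quillen equivalence.

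For the first arrow, the text has already noted that $(p^*,p_*)$ is a Quillen equivalence, since $p \colon EG \to *$ is a weak equivalence of $G$-spaces and so pullback-sections along it realizes the standard zig-zag identifying $G$-spaces with $G$-spaces over any contractible free $G$-space. For the second arrow, the composite $(r_!q_!, q^*r^*)$ is a Quillen adjunction as the composite of the two evident base-change Quillen adjunctions $(q_!,q^*)$ and $(r_!,r^*)$. The text has just verified, using compatibility of colimits with base change, that $r_!q_!$ and $q^*r^*$ are inverse isomorphisms of underlying categories. Once one has a Quillen adjunction whose unit and counit are already isomorphisms in the underlying categories, it is tautologically a Quillen equivalence: the derived unit and counit agree (after cofibrant/fibrant replacement) with the underlying isomorphisms and hence are weak equivalences.

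Composing the two Quillen equivalences then gives the desired Quillen equivalence
$$
r_!q_!p^* : \Set^G_\Delta \rightleftarrows \Set_{\Delta/BG} : p_*q^*r^*.
$$
I do not anticipate any genuine obstacle: the computational heart of the argument — that quotient by the free $G$-action on $EG$-objects is inverse to fiberwise pullback along $EG\to BG$ — has been done in the lines immediately above the statement, and the only remaining check is the formal one that Quillen equivalences compose. If a subtlety is going to appear it will be in making sure the projective model structure on $\Set^G_\Delta$ is the one under which the relevant adjunctions are Quillen (so that, in particular, $r_!$ genuinely sends projective cofibrations over $BG$ to cofibrations over $BG$), but this is immediate from the definition of the projective structure on $G$-spaces and the standard slice-category model structures.
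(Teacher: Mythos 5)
Your factorization of the adjunction is exactly the paper's: the paper also composes the Quillen equivalence $(p^*,p_*)$ with the composite base-change adjunction $(r_!q_!,q^*r^*)$, whose underlying functors it has just computed to be inverse isomorphisms. The gap is in the one general principle you invoke to finish: it is \emph{not} true that a Quillen adjunction whose underlying unit and counit are isomorphisms is ``tautologically'' a Quillen equivalence. The identity adjunction from a model structure to a nontrivial left Bousfield localization of it (for instance, from the Joyal to the Kan--Quillen model structure on $\Set_\Delta$) is a Quillen adjunction whose underlying functors are literally inverse identities, but it is not a Quillen equivalence, because the derived unit $X\to G\bigl((FX)^{\mathrm{fib}}\bigr)$ involves a fibrant replacement in the target and the right adjoint need not carry that replacement map to a weak equivalence. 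So ``underlying isomorphism'' alone does not let you identify the derived unit and counit with the strict ones.

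What must be added is that the right adjoint $q^*r^*$ preserves \emph{all} weak equivalences (not merely those between fibrant objects), and likewise that $r_!q_!$ does; then the fibrant and cofibrant replacement maps appearing in the derived unit and counit are sent to weak equivalences, the derived transformations coincide up to weak equivalence with the strict isomorphisms, and the Quillen equivalence follows. In the situation at hand this is where the hypothesis on $G$ does real work: $q^*r^*$ is pullback along $q\colon EG\to BG$, which is a Kan fibration when $G$ is a simplicial group, so pulling back preserves weak equivalences by right properness of simplicial sets, and $r_!q_!$, being strictly inverse to it, preserves them as well. (This is also the point at which a group, or at least group-like, hypothesis is needed: for a bare simplicial monoid the strict inverse-isomorphism computation you quote from the preceding paragraph already fails, since $EG\to BG$ is no longer principal.) Your worry about $r_!$ preserving cofibrations is the easy part; the derived-versus-strict comparison is the step that needs the argument.
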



%

\subsection{Principal bundles and lifting for group-like topological monoids}
\label{sec:princ-bundl-lift}

In this section we give a proof of the analogue of Theorem
\ref{t-pr-princ-bundle-obstr-thy} in the context of topological
monoids.  Our proof depends on a comparison between the homotopy
theory of free $G$-spaces and spaces over $BG$; it is an immediate
consequence of the recent work of Shulman \cite{MR2380927}, who uses
the technical foundations laid in the very careful development of
May-Sigurdsson \cite{MR2271789}.

We can assume without loss of generality that $G$ is a group-like
topological monoid with the homotopy type of a $CW$-complex and a
nondegenerate base point (i.e. the inclusion map $* \to G$ is a
Hurewicz cofibration); we refer to such a $G$ as satisfying the
standard hypotheses.  Associated to $G$ is the free right $G$-space
$EG$ defined as the two-sided bar construction $B(*,G,G)$.  There is a
projection map $\pi \colon B(*,G,G) \to B(*,G,*) = BG$ which is a
quasifibration (and a fibration if $G$ is in fact a topological
group) \cite[7.6]{May:class}.

Mimicking the notation of the previous subsection, let $G \aU$ denote
the category of $G$-spaces, let $\aU / BG$ denote the category of
spaces over $BG$, and let $G \aU / EG$ denote the category of
$G$-spaces over $EG$.  Again we have an adjoint
pair $(p_!, p^*)$ of functors
\[
p_! \colon G \aU / EG \rightleftarrows G \aU \colon p^*,
\]
where $p^*$ is the pullback and $p_!$ is simply given by composition.
There is also an adjoint pair $(p^*, p_*)$, where $p_*$ takes $X \to
EG$ to the $G$-space of sections $\map_{\aU / EG}(EG,X)$.

Regarding $BG$ as a trivial $G$-space, the projection $\pi \colon EG \to BG$ is a $G$-map and so there is a similar pair of adjoint functors $(q_!, q_*)$
\[
q_! \colon G\aU / EG \rightleftarrows G \aU / BG \colon q_*.
\]
Finally, associated to the map of monoids $r \colon G \to *$ there is an
adjoint pair $(r_!, r^*)$ of ``change of monoids'' functors
\[
r_! \colon G\aU / BG \rightleftarrows \aU / BG \colon r^*,
\]
where $r^*$ is the pullback (which assigns the trivial action) and
$r_!$ takes $X$ to $X \times_G *$.

Putting this together, we obtain a composite adjunction
\[
r_!q_! \colon G\aU / EG \rightleftarrows G\aU / BG \rightleftarrows \aU / BG
\colon q^*r^*,
\]
and combining this with the adjunction $(p^*,p_*)$ above we
obtain the composite adjunction
\[
r_!q_!p^* \colon G\aU \rightleftarrows G\aU / EG \rightleftarrows G\aU
/ BG \rightleftarrows \aU / BG \colon p_*q^*r^*.
\]

The homotopy theory of free $G$-spaces can be encoded in the model
structure on $G\aU$ in which the weak equivalences and fibrations are
detected via the forgetful functor to $\aU$.  For $\aU / BG$, we use
the $m$-model structure considered in \cite{MR2271789} based on work of
Cole on mixed model structures \cite{MR2203016}.  Here the weak
equivalences are the maps which induce a weak equivalence after
forgetting to $\aU$ and the fibrations are the Hurewicz fibration.
Cofibrant objects have the homotopy type of $CW$-complexes.

In this setting, and under the standard hypotheses on $G$, Shulman
proves that $(r_! q_! p^*, p_* q^* r^*)$ is a Quillen equivalence
\cite[8.5]{MR2380927}.  An immediate consequence of this is the desired
comparison of mapping spaces.

\begin{Corollary}
There are equivalences of derived mapping spaces
\[
\map_{\aU /BG}(X,Y)\heq\map_{G\aU}(p_*q^*r^*X,p_*q^*r^*Y)
\]
and
\[
\map_{G\aU}(P,Q)\heq\map_{\aU/BG}(r_!q_!p^*P,r_!q_!p^*Q)
\]
which are natural in spaces $X$ and $Y$ over $BG$ and $G$-spaces $P$
and $Q$.
\end{Corollary}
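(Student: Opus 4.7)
The plan is to deduce both equivalences from Shulman's Quillen equivalence recalled immediately above, together with the general fact that a topologically enriched Quillen equivalence induces equivalences on derived mapping spaces. Write $F = r_!q_!p^*$ and $G' = p_*q^*r^*$ for the left and right adjoints in
\[
F \colon G\aU \rightleftarrows \aU/BG \colon G',
\]
and observe that since both model categories are topological and the adjunction is compatible with the enrichment, there is a natural homeomorphism
\[
\map_{\aU/BG}(FP,Y) \cong \map_{G\aU}(P,G'Y)
\]
for all $P \in G\aU$ and $Y \in \aU/BG$.

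First I would prove the first equivalence. Fix a cofibrant $X$ and a fibrant $Y$ in $\aU/BG$, so that $\map_{\aU/BG}(X,Y)$ already represents the derived mapping space. Since $G'$ is a right Quillen functor, $G'Y$ is fibrant in $G\aU$; taking a cofibrant replacement $\varphi \colon (G'X)^c \to G'X$ in $G\aU$, the derived mapping space $\map_{G\aU}(G'X, G'Y)$ is represented by $\map_{G\aU}((G'X)^c, G'Y)$, which the enriched adjunction identifies with $\map_{\aU/BG}(F(G'X)^c, Y)$. The composite
\[
F(G'X)^c \xrightarrow{F\varphi} FG'X \xrightarrow{\epsilon_X} X
\]
is the derived counit of the Quillen equivalence at $X$, hence a weak equivalence; since $Y$ is fibrant, restriction along this map is a weak equivalence of mapping spaces, yielding $\map_{\aU/BG}(X,Y) \heq \map_{G\aU}(G'X,G'Y)$.

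The second equivalence is proved by the dual argument. Take $P$ cofibrant and $Q$ fibrant in $G\aU$; then $FP$ is cofibrant in $\aU/BG$. Taking a fibrant replacement $\iota \colon FQ \to (FQ)^f$ in $\aU/BG$ and applying the enriched adjunction gives a homeomorphism
\[
\map_{\aU/BG}(FP,(FQ)^f) \cong \map_{G\aU}(P, G'(FQ)^f),
\]
and the composite $Q \to G'FQ \to G'(FQ)^f$ is the derived unit at $Q$, a weak equivalence because $(F,G')$ is a Quillen equivalence. Since $P$ is cofibrant, this identifies $\map_{G\aU}(P,Q)$ with $\map_{\aU/BG}(FP, FQ)$ as derived mapping spaces. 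Naturality in all variables is immediate from functoriality of cofibrant/fibrant replacement and of the adjunction, so there is no real obstacle beyond Shulman's theorem, which has already done the substantive work of verifying that the derived unit and counit are weak equivalences.
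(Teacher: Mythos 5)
Your route is exactly the one the paper intends: the Corollary is stated there without proof, as an ``immediate consequence'' of Shulman's Quillen equivalence, and your argument simply unwinds the standard fact that an enriched Quillen equivalence induces equivalences of derived mapping spaces via the enriched adjunction plus the derived unit and counit. However, as written both halves contain a (co)fibrancy mismatch that is not vacuous in this setting. In the first half you take $X$ only cofibrant and then assert that $F(G'X)^c\to FG'X\to X$ is ``the derived counit at $X$, hence a weak equivalence''; for a Quillen equivalence this is guaranteed only at \emph{fibrant} $X$, and in the $m$-model structure on $\aU/BG$ not every object is fibrant (the fibrant objects are the Hurewicz fibrations over $BG$). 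Since $G'=p_*q^*r^*$ involves a strict pullback along $EG\to BG$, there is no reason for it, or for the counit, to be homotopically meaningful on a non-fibrant $X\to BG$. Dually, in the second half the hypothesis ``$Q$ fibrant'' is vacuous in $G\aU$ (fibrations are created in $\aU$, so every $G$-space is fibrant); what your argument actually needs is $Q$ \emph{cofibrant}, both so that $FQ$ models the left derived functor and so that the derived unit $Q\to G'((FQ)^f)$ is a weak equivalence.

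Both slips are repaired by the standard maneuver: since the derived mapping spaces and the derived functors $\mathbf{L}F$, $\mathbf{R}G'$ depend only on the objects up to weak equivalence, you may assume from the outset that $X$ is cofibrant \emph{and} fibrant in the first half, and that $P$ and $Q$ are cofibrant (and automatically fibrant) in the second; with those replacements your argument goes through verbatim and recovers the deduction the paper leaves implicit.
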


\begin{Corollary}\label{corliftmon}
There are equivalences of derived mapping spaces
\[
\map_{\aU
  /BG}(X,EG)\heq\map_{G\aU} (X\times_{BG}EG,EG\times_{BG}EG)\heq
\map_{G\aU} (X\times_{BG}EG,G),
\]
natural in spaces $X$ over $BG$.
\end{Corollary}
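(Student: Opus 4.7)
The plan is to deduce Corollary \ref{corliftmon} from the preceding Corollary by specializing the derived adjunction to $Y=EG$, and then to identify $EG\times_{BG}EG$ with $G$ as a $G$-space.

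First I would observe that, under the standing hypotheses on $G$, both $(p_{!},p^{*})$ and $(r_{!}q_{!},q^{*}r^{*})$ are Quillen equivalences --- the former because $EG$ is contractible, the latter because $r_{!}q_{!}$ and $q^{*}r^{*}$ are mutually inverse isomorphisms of categories. Composing them yields a Quillen equivalence $(p_{!}q^{*}r^{*},\,r_{!}q_{!}p^{*})\colon\aU/BG\rightleftarrows G\aU$ whose left adjoint sends $X\to BG$ to the $G$-space $X\times_{BG}EG$ (with $G$-action on the $EG$-factor). The induced derived adjunction, applied to $X$ and $EG$ in $\aU/BG$, gives the natural weak equivalence
\[
\map_{\aU/BG}(X,EG)\heq\map_{G\aU}(X\times_{BG}EG,\,EG\times_{BG}EG),
\]
which is the first equivalence. (Equivalently, one may feed the second formula of the preceding Corollary with $P=X\times_{BG}EG$ and $Q=EG\times_{BG}EG$ and invoke the derived counit identification $r_{!}q_{!}p^{*}(X\times_{BG}EG)\heq X$ in $\aU/BG$, which is itself part of the Quillen equivalence just described.)

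Next I would exhibit a $G$-equivariant weak equivalence $EG\times_{BG}EG\heq G$. Giving $EG\times_{BG}EG$ its second-factor $G$-action, the projection onto the first factor becomes a $G$-equivariant quasifibration with contractible base $EG$ and fiber $G$ (with its translation action). Trivializing this bundle --- for instance via the bar description $EG=B(\ast,G,G)$ --- produces a $G$-equivariant equivalence $EG\times_{BG}EG\heq EG\times G$, after which projecting off the contractible $EG$-factor yields $EG\times_{BG}EG\heq G$. Applying $\map_{G\aU}(X\times_{BG}EG,-)$ then delivers the second equivalence in the statement, with naturality in $X$ inherited from that of the underlying constructions.

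The principal obstacle is technical rather than structural: because $G$ is only a group-like topological monoid, $EG\to BG$ is merely a quasifibration, so one must be careful that the Quillen equivalences above genuinely compute the desired derived mapping spaces. This requires working in the $m$-model structure on $\aU/BG$ (whose fibrations are the Hurewicz fibrations) together with Shulman's Quillen-equivalence theorem \cite{MR2380927} under the standing hypotheses --- precisely the input already packaged into the preceding Corollary.
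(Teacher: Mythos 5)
There is a genuine gap in your main argument: the pair $(p_!q^*r^*,\,r_!q_!p^*)$ is not an adjunction, so the ``induced derived adjunction'' you invoke for the first equivalence does not exist as stated. In the appendix's setup $q^*$ and $r^*$ are the \emph{right} adjoints of $(q_!,q^*)$ and $(r_!,r^*)$, and $r_!q_!p^*$ is itself the \emph{left} adjoint of the composite Quillen equivalence $(r_!q_!p^*,\,p_*q^*r^*)\colon G\aU\rightleftarrows\aU/BG$; you cannot compose $p_!$ (a left adjoint out of $G\aU/EG$) with $q^*r^*$ (a right adjoint into $G\aU/EG$) to produce an adjoint pair with the Borel construction. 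Indeed the Borel construction is not right adjoint to $X\mapsto X\times_{BG}EG$ at the point-set level: testing on $X=BG$ would require a natural homeomorphism between $\map_{G\aU}(EG,P)$ and the space of sections of $P\times_G EG\to BG$, and these are only weakly equivalent (that weak equivalence is exactly the content of Shulman's theorem, not a formal adjunction). Your parenthetical fallback is closer to a correct argument, but it presupposes $r_!q_!p^*(X\times_{BG}EG)\heq X$, i.e.\ that $X\times_{BG}EG$ models the derived value of the genuine right adjoint $p_*q^*r^*$ --- and you justify that only by appeal to ``the Quillen equivalence just described,'' which is the flawed one. So the key point-set identification is left unestablished.

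The paper's proof supplies precisely that missing identification, and does so very economically: it applies the first equivalence of the preceding Corollary with $Y=EG$ (so with the actual right adjoint $p_*q^*r^*$), and then observes that $p_*q^*r^*X=\map_{/EG}(EG,X\times_{BG}EG)\heq X\times_{BG}EG$ because $EG$ is contractible, while $p_*q^*r^*EG\heq G$ via $G\heq EG\times G\cong EG\times_{BG}EG$. Your identification of $EG\times_{BG}EG$ with $G$ as a $G$-space agrees with the paper's and is fine; what needs repair is the first step, which should go through the right adjoint $p_*q^*r^*$ (or, equivalently, through a proof that $X\times_{BG}EG$ computes it) rather than through a Quillen equivalence in the opposite handedness.
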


\begin{proof}
We simply need to know that $X\times_{BG}EG$ and $G$ are weakly
equivalent to $p_*q^*r^*X$ and $p_*q^*r^*EG$, respectively.
But $$X\times_{BG}EG\heq\map_{/EG}(EG,X\times_{BG}EG)=p_*q^*r^*X,$$
and the equivalence for $G$ follows from the equivalence $G\heq
EG\times G\cong EG\times_{BG}EG$.
\end{proof}

\section{$\infty$-categories and symmetric monoidal model categories of spectra}

\label{sec:i-cat-ii}

In order to give a self-contained treatment of Thom spectra and
orientations in the setting of $\i$-categories, in
\S\ref{sec:units-via-infty} we used the symmetric monoidal
$\i$-category of spectra developed in \cite{DAGI,DAGII,DAGIII}.
In this section, we show that one can give an account of the obstruction
theory for orientations using only the techniques developed in
\cite{math.CT/0608040} and general facts about associative ring
spectra and their categories of modules.    We hope that our treatment of
Thom spectra and orientations from this point of view will serve as a
useful invitation to $\i$-categories, for those familiar with
symmetric monoidal model categories of spectra.   With this in mind,
we have made this section somewhat more self-contained than strictly
necessary, at the price of some redundancy with \S\ref{sec:units-via-infty}.

\subsection{Symmetric monoidal categories of $R$-modules}
\label{sec:infty-ii-category-r}

Let $\monspectra$ be a symmetric monoidal simplicial model category of
spectra such as the $S$-modules of \cite{EKMM}, or the symmetric
spectra of \cite{MR1695653}, and let $R$ be a cofibrant and fibrant
associative algebra in $\monspectra.$    We build the $\i$-category
$\Rwe$ as in \S\ref{sec:i-category-r}.

Namely, let $\RmodT$ be the
simplicial model category of $R$-modules, and let $\MtoI{\RmodT}$  be the
full subcategory of $\RmodT$ consisting of cofibrant-fibrant
$R$-modules.  Now take the simplicial nerve \HTT{1.1.5.5}
to obtain the $\infty$-category
\[
      \Rmod \eqdef \N \MtoI{\RmodT}.
\]
For cofibrant-fibrant $L$ and $M$, the mapping spaces $\RmodT (L,M)$
are Kan complexes, and it follows \HTT{1.1.5.9} that $\Rmod$ is an
$\infty$-category.

As in Definition \ref{inf-def-1}, define
an $R$-line to be an $R$-module $L$ which admits a weak equivalence
\[
    L \to R,
\]
and let $\Rwe$ be the full $\infty$-subgroupoid of $\Rmod$ whose
objects are the $R$-lines.
As discussed in \S\ref{sec:i-category-r},
\begin{enumerate}
\item  $\Rwe_{0}$ consists of fibrant-cofibrant $R$-modules which are $R$-lines;
\item  $\Rwe_{1}$ consists of the $R$-module weak equivalences
\[
     L \to M;
\]
\item $\Rwe_{2}$ consists of diagrams (not necessarily commutative)
\[
\xymatrix{
{L}
 \ar[r]^{f}
 \ar[dr]_{h}
&
{M}
 \ar[d]^{g}
\\
&
{N,}
}
\]
together with a path in $\RmodT (L,N)$ from $gf$ to $h,$
\end{enumerate}
and so forth.  This is precisely the sort of data we proposed in \eqref{eq:37} as the
transitions functions for a bundle of free rank-one $R$-modules.

Fix a cofibrant-fibrant $R$-line $\Rcf.$ Let
\[
   \Aut (\Rcf) = \Rwe (\Rcf,\Rcf) \subset \Rmod (\Rcf,\Rcf),
\]
and let $B\Aut (\Rcf)$ be the full sub-$\i$-category of $\Rwe$ on the
single object $\Rcf.$  Since by construction $\Rwe$ is a connected
$\i$-groupoid, the inclusion
\[
     B\Aut (\Rcf) \to \Rwe
\]
is an equivalence.  Define
a \emph{trivialization} of an $R$-line $L$ to be an
equivalence of $R$-modules
\[
    L \rightarrow \Rcf,
\]
and let the $\infty$-category $\Rtriv$ of trivialized $R$-lines
be the slice category
\[
    \Rtriv  = \Rwe_{/\Rcf}.
\]
As discussed in Proposition \ref{inf-ii-t-pr-egl-bgl-fib}, the
forgetful map
\[
   \Rtriv  \to \Rwe
\]
is a Kan fibration, which is a model in this setting for $E\GL{R} \to
B\GL{R}.$

\subsection{Parametrized $R$-lines and Thom spectra}
\label{sec:parametrized-r-lines}

Now let $X$ be a Kan complex.
Suppose that we are given a map of
$\infty$-categories (i.e. of simplicial sets)
\[
    f: X \to \Rwe.
\]
Thus $f$ assigns
\begin{enumerate}
\item to each $0$-simplex $p\in X$ an $R$-line $f (p)$;
\item to each path $\gamma$ from $p$ to $q$ a weak equivalence of
$R$-modules
\[
     f (\gamma): f (p) \heq f (q);
\]
\item to each $2$-simplex $\sigma: \Delta^{2}\to X,$ say
\[
\xymatrix{
{p}
 \ar[d]_{\sigma_{01}}
 \ar[dr]^{\sigma_{02}}
\\
{q}
 \ar[r]_{\sigma_{12}}&
{r,}
}
\]
a path $f (\sigma)$ in $\Rwe (f (p), f (r))$ from $f (\sigma_{12})f
(\sigma_{01})$ to $f (\sigma_{02})$;
\end{enumerate}
and so forth.

This illustrates nicely the idea, developed in
\S\ref{sec:units-via-infty}, that the $\infty$-category
$\Rwe^{X}=\Fun (X,\Rwe)$ is a model for the $\infty$-category of $R$-lines
parametrized by $X$, and that $f$ corresponds to a bundle
\[
     \mathscr{L} \to X
\]
of $R$-lines over $X.$

For now, consider the diagram
\[
\xymatrix{
{X}
  \ar[r]^-{f}
  \ar[d]_-{p}
&
{\Rwe}
 \ar[r]^-{j}
&
{\Rmod.}
\\
{\ptspace}
 \ar@{-->}[urr]_-{L_{p} (fj)}
}
\]

\begin{Lemma} \label{inf-t-le-colimit-exists}
The map of $\infty$ categories $jf$ admits a colimit; equivalently,
there is a left Kan extension of $jf$ along $p.$  (See
\HTT{1.2.13,4.3}).
\end{Lemma}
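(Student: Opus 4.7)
The plan is to reduce the existence of $\colim(jf)$ to the cocompleteness of the $\infty$-category $\Rmod$, and then observe that the identification of a colimit with a left Kan extension along $p$ is essentially a matter of definitions.

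First I would verify that the simplicial model category $\RmodT$ is combinatorial. This amounts to noting that $\RmodT$ is cofibrantly generated (given the standing hypothesis that $\monspectra$ is either the $S$-modules of \cite{EKMM} or a model category of diagram spectra such as \cite{MR1695653}, and using that cofibrantly generated model structures lift to module categories over a cofibrant-fibrant algebra) and that its underlying category is locally presentable. Combined with the fact that $\RmodT$ is simplicial, \HTT{Proposition A.3.7.6} together with \HTT{Corollary 4.2.4.8} will then imply that the $\infty$-category $\Rmod = \N \MtoI{\RmodT}$ is presentable; in particular it admits all small colimits \HTT{Corollary 5.5.2.4}. Since $X$ is a small Kan complex, the diagram $jf \colon X \to \Rmod$ therefore has a colimit.

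Second, I would appeal to the general fact, worked out in \HTT{\S 4.3.3}, that for any diagram $F \colon K \to \mathscr{D}$ in an $\infty$-category $\mathscr{D}$, a left Kan extension of $F$ along the unique map $K \to \Delta^{0}$ to the terminal simplicial set is precisely the data of a colimit of $F$ in $\mathscr{D}$. Taking $K = X$, $\mathscr{D} = \Rmod$, and $F = jf$, this immediately supplies a left Kan extension $L_{p}(jf)$ of $jf$ along $p$, and conversely any such left Kan extension furnishes a colimit of $jf$. The two formulations in the lemma are therefore equivalent in the strong sense that their spaces of solutions are canonically equivalent Kan complexes.

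The only substantive content is the cocompleteness assertion, and since this is a formal consequence of presentability of $\Rmod$, I do not expect any genuine obstacle. The main technical point one must be careful about is that the category-theoretic colimit computed in $\RmodT$ is typically not the $\infty$-categorical colimit; rather, the latter is modeled by a homotopy colimit in $\RmodT$, which exists by the cofibrant-replacement and simplicial tensor machinery of the model structure. This is precisely what \HTT{Theorem 4.2.4.1} guarantees in the combinatorial simplicial setting, and it is what underlies the cited cocompleteness result.
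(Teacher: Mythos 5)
Your second step (colimit along $X$ equals left Kan extension along $p:X\to\Delta^{0}$, via \HTT{\S 4.3.3}) is fine, but the first step has a gap as written. You deduce cocompleteness of $\Rmod$ from the claim that $\RmodT$ is a \emph{combinatorial} simplicial model category, i.e.\ cofibrantly generated with locally presentable underlying category, so that \HTT{A.3.7.6} applies. That claim fails for the main example the section is built around: when $\monspectra$ is the category of EKMM $S$-modules (or any model built on compactly generated topological spaces, such as orthogonal spectra in spaces), the underlying category of $\RmodT$ is not locally presentable, so $\RmodT$ is not combinatorial and the presentability machinery cannot be invoked directly. One could repair this by transporting the problem along a zig-zag of Quillen equivalences to a combinatorial model such as symmetric spectra in simplicial sets, but that is an additional argument (equivalences of $\i$-categories and preservation of colimits) that your proposal does not supply.

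The paper's own proof deliberately avoids presentability: it chooses a cofinal map $k:\N(I)\to X$ from the nerve of an ordinary category (\HTT{4.2.3.14}), uses cofinality (\HTT{4.1.1.8}) to reduce the colimit of $jf$ to that of $jfk$, and then invokes \HTT{4.2.4.1} to identify the latter with a homotopy colimit of a diagram $I\to\RmodT$, which exists because $\RmodT$ is a (cocomplete) simplicial model category --- no combinatoriality needed, so the argument covers the topological models on an equal footing. If you restrict attention to symmetric spectra in simplicial sets your route goes through and is in fact a clean, more global statement (presentability of $\Rmod$ gives \emph{all} small colimits at once), but to cover the stated generality you either need the paper's reduction to an ordinary indexing category or an explicit comparison with a combinatorial model.
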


\begin{Remark}  We can't take the colimit in $\Rwe,$ since $\Rwe$ does
not have colimits (it doesn't even have sums).
\end{Remark}

\begin{proof}
Again we take $\Rmod  = \N\MtoI{\Rmod}_{T}$ as our $\i$-category of
$R$-modules.   According to \HTT{4.2.3.14}, there is an ordinary
category $I$ and a cofinal map
\[
   k: \N (I) \to X.
\]
Consider
\[
     \N (I) \xra{k} X \xra{f} \Rwe \xra{j} \Rmod.
\]
According to \HTT{4.1.1.8}, a colimit of $jfk$ is the same thing as a
colimit of $jf.$
According to \HTT{4.2.4.1}, a colimit of $jfk$ is the same thing as a
homotopy colimit of
\[
     I \to \RmodT.
\]
This homotopy colimit exists because $\RmodT$ is a simplicial model
category.
\end{proof}

\begin{Definition}  \label{def-3}
The \emph{Thom spectrum} associated to $f$ is the
$R$-module spectrum
\[
  Mf = \colim jf = \Ho L_{p}jf.
\]
\end{Definition}

For example, let $MR$ be the Thom spectrum associated to identity map
of $\Rwe,$ or equivalently and more suggestively, to the inclusion
\[
   B\Aut (\Rcf) \xra{\heq} \Rwe.
\]

\begin{Proposition}\label{t-pr-univquotient}
$$
MR \heq \Rcf/\Aut (\Rcf).
$$
\end{Proposition}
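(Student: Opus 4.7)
The plan is to reduce the colimit defining $MR$ to a colimit over the one-object $\i$-groupoid $B\Aut(\Rcf)$, at which point the result becomes a direct instance of the general quotient description already established earlier (Proposition \ref{quotient}, and the analogous unstable discussion in Section \ref{sec:bundles-i-groupoids}).

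Here are the steps in order. First, recall that $\Rwe$ is a connected $\i$-groupoid (this was observed immediately after the definition of $\Rwe$), so the fully faithful inclusion
\[
i\colon B\Aut(\Rcf)\longrightarrow\Rwe
\]
on the single object $\Rcf$ is essentially surjective and hence an equivalence of $\i$-categories. Second, since equivalences of $\i$-categories are cofinal, pulling back preserves colimits: by \HTT{4.1.1.8} the colimit $\colim(j\colon\Rwe\to\Rmod)$ exists (by Lemma \ref{inf-t-le-colimit-exists}) and is naturally equivalent to
\[
\colim\bigl(B\Aut(\Rcf)\xrightarrow{i}\Rwe\xrightarrow{j}\Rmod\bigr).
\]
Third, the composite $j\circ i$ is a pointed functor from $B\Aut(\Rcf)$ to $\Rmod$ sending the unique object to $\Rcf$ and inducing on mapping spaces the canonical map $\Aut(\Rcf)\to\End_{\Rmod}(\Rcf)$; that is, it classifies $\Rcf$ together with its canonical left action by $\Aut(\Rcf)$ via $R$-module equivalences.

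The final step is to identify this colimit with the homotopy quotient. This is exactly the content of Proposition \ref{quotient}: the colimit of a functor $BG\to\Mod{R}$ classifying an action of a group-like monoidal $\i$-groupoid $G$ on an $R$-module $N$ is the homotopy quotient $N/G$, as the colimit is the left Kan extension along the projection $BG\to\ast$. Applying this with $G=\Aut(\Rcf)$ and $N=\Rcf$ yields $MR\heq\Rcf/\Aut(\Rcf)$, as required. The only step that requires any real care is the cofinality of the equivalence $i$, but this is standard: any equivalence of $\i$-categories is both initial and final, so it preserves and reflects colimits of any diagram it extends, which is precisely what we need to pass from a colimit over $\Rwe$ to a colimit over its equivalent full subcategory $B\Aut(\Rcf)$.
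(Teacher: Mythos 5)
Your proof is correct and takes essentially the same approach as the paper: the paper's own (very terse) proof likewise identifies $\Rcf/\Aut(\Rcf)$ with the colimit of the composite $B\Aut(\Rcf)\heq\Rwe\to\Rmod$, treating the passage from the colimit over $\Rwe$ to the colimit over $B\Aut(\Rcf)$ (your cofinality step) and the identification of that colimit with the homotopy quotient as immediate. You have simply made explicit what the paper compresses into the ``equivalently'' in the definition of $MR$ and the phrase ``the standard construction of the (homotopy) quotient,'' citing Proposition \ref{quotient} for the final identification.
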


\begin{proof}
By definition, $\Rcf/\Aut (\Rcf)$ is the colimit of the composite functor
$$
B\Aut (\Rcf) \heq \Rwe \to \Rmod.
$$
But this is just the standard construction of the (homotopy) quotient.
\end{proof}

Before discussing orientations, we describe explicitly the mapping
property of this colimit.  Note that a functor
$\ptspace\to \Rwe$ is just a choice of $R$-line, and we write $\iota$
for the composition
\[
   \iota: X \xra{p} \ptspace \xra{\Rcf} \Rwe.
\]

\begin{Lemma}  \label{inf-t-le-homotopy-adjunction}
There is an adjunction
\begin{equation} \label{inf-eq:1}
    \Rmod (Mf,\Rcf) \heq
    \Rmod^{X} (jf,j\iota)
\end{equation}
\end{Lemma}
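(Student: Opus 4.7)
The plan is to obtain this as an instance of the universal property of the colimit in the $\i$-category $\Rmod$, i.e.\ of the fact that $\colim$ is left adjoint to the constant diagram functor. First, I would unwind the definitions on the right-hand side. Since $\iota$ is by construction the composite $X \xra{p} \ptspace \xra{\Rcf} \Rwe$, the functor $j\iota: X \to \Rmod$ is the constant functor at $\Rcf$. Hence $\Rmod^{X}(jf, j\iota)$ is precisely the space of natural transformations from $jf$ to the constant diagram at $\Rcf$, i.e.\ the space of cocones on $jf$ with vertex $\Rcf$.

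On the left-hand side, Definition \ref{def-3} gives $Mf = \colim jf$ (which exists by Lemma \ref{inf-t-le-colimit-exists}). The asserted equivalence is then exactly the defining universal property of the colimit in an $\i$-category: mapping out of $\colim jf$ into an object $\Rcf$ is equivalent to giving a cocone on $jf$ with vertex $\Rcf$. In the language of \HTT{\S1.2.13}, $\colim jf$ is initial in the $\i$-category of cocones on $jf$, which immediately yields an equivalence of $\i$-groupoids between $\Rmod(\colim jf, \Rcf)$ and the fiber of the forgetful functor from cocones to $\Rmod$ over $\Rcf$ — and this fiber is $\Rmod^{X}(jf, \text{const}_{\Rcf})$.

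For a concrete construction of this equivalence compatible with the model category $\RmodT$, I would follow the strategy used in the proof of Lemma \ref{inf-t-le-colimit-exists}: apply \HTT{4.2.3.14} to choose a cofinal map $k: \N(I) \to X$ from the nerve of an ordinary category, so that by \HTT{4.1.1.8} colimits and cocones on $jf$ and on $jfk$ agree, and then invoke \HTT{4.2.4.1} to identify $\colim jfk$ with the ordinary homotopy colimit of the diagram $I \to \RmodT$. The adjunction between homotopy colimits in a simplicial model category and homotopy limits of mapping spaces (equivalently, derived spaces of natural transformations to constant diagrams) then supplies the desired equivalence.

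No serious obstacle is expected: the content of the lemma is the defining universal property of the colimit, and the only work is to confirm naturality and that the $\i$-categorical colimit in $\Rmod$ really is computed by the homotopy colimit in $\RmodT$, which is the substance of Lemma \ref{inf-t-le-colimit-exists}.
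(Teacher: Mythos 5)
Your proposal is correct and follows essentially the same route as the paper: the paper proves the lemma by citing that the left Kan extension $p_{!}$ along $p\colon X\to\ptspace$ is left adjoint to the constant functor $p^{*}$ (\HTT{4.3.3.7}), and then reading off the equivalence of mapping spaces from the adjunction (\HTT{5.2.2.7}) with $g=jf$, $Y=\Rcf$, which is exactly your ``$\colim \dashv \mathrm{const}$'' universal-property argument. The additional reduction via cofinality to an ordinary homotopy colimit is not needed here (it is only used in the paper to prove the existence statement, Lemma \ref{inf-t-le-colimit-exists}), but it does no harm.
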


\begin{Remark}
The  ``adjunction'' \eqref{inf-eq:1} is the
$\infty$-category analog of the usual property of the colimit.
It is a homotopy equivalence between the indicated mapping spaces.
\end{Remark}


\begin{proof}
According to \HTT{4.3.3.7}, the left Kan extension
\[
p_{!}: \Rmod^{X} \rightarrow \Rmod
\]
is a left adjoint of the constant functor $p^{*}: \Rmod \rightarrow
\Rmod^{X}.$  By \HTT{5.2.2.7}, this means in particular that for any
object $g$ of
$\Rmod^{X}$ and $Y$ of $\Rmod,$ there is a natural homotopy
equivalence
\[
    \Rmod (p_{!}g,Y) \heq \Rmod^{X} (g,p^{*}Y).
\]
Taking $g=jf$ and $Y=\Rcf$ so $p_{!}g=Mf$ and $p^{*}Y =j\iota$
gives the result.
\end{proof}

\subsection{The space of orientations}

With these in place, one can analyze the space of orientations in a
straightforward manner, just as in \S\ref{sec:units-via-infty}.
Notice that we have a natural map
\[
    \Rwe^{X} (f,\iota) \to \Rmod^{X} (jf,j\iota);
\]
by definition, this is just the inclusion of a
set of path components.  The following Definition is equivalent to
Definition \ref{inf-def-2}, except that we need the material in
\S\ref{sec:units-via-infty} to justify the notation
\[
     \Rwe^{X} (f,\iota) \heq \Line{R_X}(f^*\mathscr{L},p^{*}\Rcf),
\]
and so forth.

\begin{Definition} \label{inf-ii-def-2}
The space of \emph{orientations} of $Mf$ is the pull-back
\begin{equation}\label{inf-ii-eq:2}
\begin{CD}
\Rorient (Mf,R) @>>> \Rmod (Mf,R) \\
@V\heq VV @VV \heq V \\
\Rwe^{X} (f,\iota) @>>> \Rmod^{X} (jf,j\iota),
\end{CD}
\end{equation}
where the right vertical equivalence is the adjunction of Lemma
\ref{inf-t-le-homotopy-adjunction}.
\end{Definition}

To describe the obstruction theory associated to $\infty$-groupoid
$\Rorient (Mf,R),$ let $\map_{f}
(X,\Rtriv)$ be the simplicial set which is the pull-back in the diagram
\[
\begin{CD}
\map_{f} (X,\Rtriv) @>>> \map (X,\Rtriv) \\
@VVV @VVV \\
\{f \} @>>> \map (X,\Rwe).
\end{CD}
\]
That is, $\map_{f} (X,\Rtriv)$ is the mapping simplicial set of
lifts in the diagram
\begin{equation} \label{inf-ii-eq:26}
\xymatrix{
&
{\Rtriv}
 \ar[d] \\
{X}
 \ar@{-->}[ur]
 \ar[r]_-{f}
&
{\Rwe.}
}
\end{equation}
We recapitulate in the current setting the statement and proof of
Theorem \ref{t-th-or-thy-infty-lifting}.

\begin{Theorem} \label{t-th-inf-ii-or-thy-infty-lifting}
Let $f: X \to \Rwe$ be a map, and let $Mf$ be the associated
$R$-module Thom
spectrum.   Then there is an equivalence
\[
   \map_{f} (X,\Rtriv) \heq
   \Rwe^{X} (f,\iota).
\]
\end{Theorem}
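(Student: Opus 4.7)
The plan is to establish the equivalence via the universal property of the overcategory, using only the definition $\Rtriv = \Rwe_{/\Rcf}$ and the fact that $\Rwe$ is a Kan complex. First I would invoke (the dual of) HTT Proposition 4.2.1.2 (see also \S1.2.9), which identifies, for any simplicial set $Y$ and any vertex $c$ of an $\infty$-category $\C$, the simplicial set $\Fun(Y, \C_{/c})$ with the simplicial set of maps $Y \star \Delta^0 \to \C$ whose restriction to the cone point $\Delta^0$ is $c$. Taking $Y = X$ and $\C = \Rwe$, this gives a natural isomorphism of simplicial sets over $\Rwe^X$ between $\Rtriv^X$ and the simplicial set of maps $X \star \Delta^0 \to \Rwe$ sending $\Delta^0$ to $\Rcf$.

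Next I would form the (homotopy) fiber over $f \in \Rwe^X$ on both sides. Because $\Rtriv \to \Rwe$ is a Kan fibration (Proposition \ref{inf-ii-t-pr-egl-bgl-fib}) and $\Rwe$ is a Kan complex, so is the induced map $\Rtriv^X \to \Rwe^X$, so the ordinary fiber $\map_f(X,\Rtriv)$ computes the correct homotopy type. On the other side, the isomorphism of the previous paragraph identifies this fiber with the simplicial set of maps $X \star \Delta^0 \to \Rwe$ whose restriction to $X$ is $f$ and whose restriction to the cone point is $\Rcf$.

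Finally I would recognize the latter simplicial set as the standard model $\hom^R_{\Rwe^X}(f,\iota)$ of the space of natural transformations from $f$ to the constant functor $\iota$ in $\Fun(X,\Rwe) = \Rwe^X$: indeed, unwinding the join, such a map is precisely an edge in $\Rwe^X$ from $f$ to $\iota$, with higher simplices giving the higher coherence data. Since $\Rwe$ is an $\infty$-groupoid, so is $\Rwe^X$, hence every such edge is automatically an equivalence, so this mapping space is exactly $\Rwe^X(f,\iota)$. Composing these identifications gives the claimed equivalence.

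The main obstacle, such as it is, is bookkeeping around the two standard models of the slice (the Joyal ``right'' slice used in the join adjunction versus the equivalent ``left'' slice of HTT \S4.2.1) and making sure the direction of the natural transformation obtained matches the orientation of the arrow $f \to \iota$ in $\Rwe^X$; this is harmless because $\Rwe^X$ is a Kan complex, so source and target may be interchanged after inverting. A secondary point to check is that the isomorphism from the join adjunction is compatible with the projection to $\Rwe^X$, which is immediate from its naturality in $Y$.
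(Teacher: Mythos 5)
Your argument is correct, and it shares the paper's overall strategy---compute the space of lifts as the fiber over $f$ of the Kan fibration $\Rtriv^{X}\to\Rwe^{X}$ and identify that fiber with a model of the mapping space $\Rwe^{X}(f,\iota)$---but it runs the identification through a different piece of simplicial bookkeeping, and the difference is worth spelling out. The paper tacitly uses the path-space model of $\Rtriv$ (maps $\Delta^{1}\to\Rwe$ ending at $\Rcf$, as in the proof of Proposition \ref{inf-ii-t-pr-egl-bgl-fib}), for which the exponential law $\map(\Delta^{1},\map(X,\Rwe))\iso\map(X,\map(\Delta^{1},\Rwe))$ is an isomorphism on the nose, so the fiber over $f$ is literally the $\Delta^{1}$-model $Z(f,\iota)$ with $Z=\Rwe^{X}$. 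You instead work with the join-slice $\Rwe_{/\Rcf}$ of the definition, via the adjunction identifying maps $Y\to\Rwe_{/\Rcf}$ with maps $Y\star\Delta^{0}\to\Rwe$ carrying the cone point to $\Rcf$; this has the virtue of using the stated definition of $\Rtriv$ directly, but the identifications you describe as isomorphisms, or as holding ``precisely,'' are in fact only natural weak equivalences. For instance $\Fun(X,\Rwe_{/\Rcf})$ is not isomorphic to $\Fun(X,\Rwe)_{/\iota}$ (already for $X=\Delta^{1}$ the zero-simplices are, on one side, $2$-simplices of $\Rwe$ with last vertex $\Rcf$ and, on the other, squares $\Delta^{1}\times\Delta^{1}\to\Rwe$), and correspondingly the fiber of $\Rtriv^{X}\to\Rwe^{X}$ over $f$ is not literally the right mapping space from $f$ to $\iota$ in $\Rwe^{X}$; it is canonically equivalent to it via the slice/fat-slice comparison \HTT{Proposition 4.2.1.5}, applied after exponentiating by $X$ and passing to fibers, which is legitimate because all projections in sight are right fibrations over the Kan complex $\Rwe^{X}$ and hence Kan fibrations. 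This is exactly the ``bookkeeping'' you flag at the end, so your proof goes through; what the paper's route buys is that the corresponding step is an equality, at the cost of silently exchanging the join-slice of the definition for its path-space model, while your route buys fidelity to the definition of $\Rtriv$ at the cost of invoking the model comparison. Your remark about the direction of the transformation is fine: unwinding the join, a lift produces a transformation from $f$ to $\iota$, matching $\Rwe^{X}(f,\iota)$ without needing to invert anything, though the Kan-complex argument covers it in any case.
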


\begin{proof}
Let $Z=\Rwe^{X}.$  We have a fibration
\[
\begin{CD}
  Z (f,\iota) @>>>  Z^{\Delta^{1}} \\
   @VVV  @VVV \\
\{(f,\iota)\}@>>>  Z^{\partial \Delta^{1}}.
\end{CD}
\]
Pulling back along the inclusion of $Z\times \iota$ gives the
fibration
\[
\begin{CD}
  Z (f,\iota) @>>>  (Z,\iota)^{(\Delta^{1},1)}\\
   @VVV  @VVV \\
\{f\}   @>>>  Z.
\end{CD}
\]
The adjunction (of mapping simplicial sets)
\[
     \map (\Delta^{1},\map (X,\Rwe )) \iso \map (X,\map (\Delta^{1},\Rwe ))
\]
identifies
\[
 (Z,\iota)^{(\Delta^{1},1)} \iso \map (X,\Rtriv),
\]
and so this fibration becomes
\[
\begin{CD}
  Z (f,\iota) @>>>  \map (X,\Rtriv)\\
   @VVV  @VVV \\
\{f\}   @>>>  Z,
\end{CD}
\]
as required.
\end{proof}

\def\cprime{$'$}

\end{document}